\documentclass[12pt]{article}

\usepackage{amsmath,amssymb,amsthm,amsxtra,mathrsfs}
\usepackage[shortlabels]{enumitem}
\setlist{topsep=0em,partopsep=0em,parsep=0em,itemsep=0em}
\usepackage{hyperref}
\usepackage{tikz}
\usepackage{tikz-cd}
\usepackage[margin=1in]{geometry}
\usepackage{stmaryrd}

\numberwithin{equation}{section}

\newtheorem{theorem}{Theorem}[section]
\newtheorem{lemma}[theorem]{Lemma}
\newtheorem{proposition}[theorem]{Proposition}
\newtheorem{corollary}[theorem]{Corollary}

\theoremstyle{definition}
\newtheorem{definition}[theorem]{Definition}
\newtheorem{remark}[theorem]{Remark}
\newtheorem{question}[theorem]{Question}
\newtheorem*{convention}{Conventions}

\newcommand*{\C}{\mathbb{C}}
\newcommand*{\Q}{\mathbb{Q}}
\newcommand*{\Z}{\mathbb{Z}}
\newcommand*{\R}{\mathbb{R}}
\newcommand*{\N}{\mathbb{N}}
\newcommand*{\cp}{\mathbb{P}}
\newcommand*{\sei}{\mathbb{S}}
\newcommand*{\seiht}{\widehat{\mathbb{S}}}

\newcommand*{\td}{\widetilde}
\newcommand*{\wht}{\widehat}

\newcommand*{\id}{\mathrm{id}}
\newcommand*{\ev}{\mathrm{ev}}
\newcommand*{\pt}{\mathrm{pt}}
\newcommand*{\End}{\operatorname{End}}
\newcommand*{\Hom}{\operatorname{Hom}}
\newcommand*{\rank}{\operatorname{rank}}
\newcommand*{\pr}{\mathrm{pr}}
\newcommand*{\U}{\mathrm{U}}
\newcommand*{\ft}{\mathrm{FT}}
\newcommand*{\lt}{\mathrm{LT}}
\newcommand*{\qdm}{\mathrm{QDM}}
\newcommand*{\res}{\mathrm{Res}}
\newcommand*{\spf}{\operatorname{Spf}}
\newcommand*{\eff}{C^\textnormal{eff}}
\newcommand*{\ch}{\mathrm{ch}}
\newcommand*{\Td}{\mathrm{Td}}

\newcommand*{\gsp}{\mathcal{H}}
\newcommand*{\gcn}{\mathcal{L}}
\newcommand*{\lb}{\mathcal{L}}
\newcommand*{\sft}{\mathcal{S}}
\newcommand*{\sftht}{\widehat{\mathcal{S}}}
\newcommand*{\pot}{\mathcal{F}}
\newcommand*{\bigj}{\mathcal{J}}
\newcommand*{\sq}{\mathcal{Q}}
\newcommand*{\mdl}{\mathcal{M}}
\newcommand*{\nm}{N}
\newcommand*{\vnm}{\mathcal{N}^\textnormal{vir}}
\newcommand*{\thck}{\mathcal{T}}
\newcommand*{\obs}{\mathcal{E}}
\newcommand*{\fms}{\mathcal{F}}
\newcommand*{\uvf}{\mathcal{C}}
\newcommand*{\zlc}{\mathcal{Z}}
\newcommand*{\shf}{\mathcal{O}}
\newcommand*{\ofd}{\mathcal{X}}
\newcommand*{\nds}{\mathcal{S}}

\newcommand*{\nov}[1]{\C\llbracket #1\rrbracket}
\newcommand*{\fps}[1]{\llbracket #1\rrbracket}
\newcommand*{\extnov}[2]{(\!({#1})\!)\llbracket #2\rrbracket}
\newcommand*{\laur}[1]{(\!({#1})\!)}
\newcommand*{\vfc}[1]{[#1]^\textnormal{vir}}

\newcommand*{\btau}{\boldsymbol{\tau}}
\newcommand*{\bth}{\boldsymbol{\theta}}
\newcommand*{\bt}{\mathbf{t}}
\newcommand*{\cc}{\mathbf{c}}
\newcommand*{\ff}{\mathbf{f}}
\newcommand*{\cft}{\mathscr{F}}
\newcommand*{\dft}{\mathsf{F}}
\newcommand*{\fds}{\mathsf{H}}
\newcommand*{\idl}{\mathfrak{m}}
\newcommand*{\q}{\mathfrak{q}}

\title{The quartic threefold is symplectically irrational}
\author{Jiaji Cai}
\date{}

\begin{document}

\maketitle

\begin{abstract}
	We prove that smooth quartic threefolds are symplectically irrational: they cannot be related to projective space by a sequence of symplectic blow-ups, blow-downs, and deformations. This recovers the classical irrationality theorem of Iskovskikh--Manin. The obstruction is constructed from big quantum cohomology, using the multiplicities of eigenvalues of quantum multiplication by the Euler vector field. To prove its invariance, we establish a decomposition theorem for quantum cohomology under symplectic blow-ups, following the work of Iritani.
\end{abstract}


\section{Introduction}

A smooth complex projective variety is said to be rational if it is isomorphic to projective space outside Zariski closed subsets. The weak factorization theorem \cite{AKMW02} implies that this is equivalent to the existence of a sequence of blow-ups and blow-downs relating the variety to $\cp^n$. This alternative characterization can be generalized to symplectic manifolds.
\begin{definition}\label{def:rat}
	Two symplectic manifolds are \emph{symplectic birational equivalent} if they are related by a finite sequence of symplectic blow-ups, blow-downs, and symplectic deformations, in any order. A symplectic manifold is \emph{symplectically rational} if it is symplectic birational equivalent to complex projective space; otherwise it is said to be \emph{symplectically irrational}.
\end{definition}

If a smooth complex projective variety is algebraically rational, then by the weak factorization theorem it is also symplectically rational.

A symplectic blow-up is well-defined up to deformation, so it is natural to allow deformations in the definition of a symplectic birational equivalence, thereby giving the notion a more topological flavor.
Moreover, Gromov's h-principle for isosymplectic embeddings \cite{Gro86} implies that in codimension  at least $4$ there are far more symplectic submanifolds than algebraic subvarieties. Consequently, symplectic blow-ups can be considerably wilder than their algebraic counterparts; see, for example, \cite{McD84}.

\begin{question}\label{qst}
	Which smooth complex hypersurfaces in $\cp^{n+1}$ are symplectically rational?
\end{question}
Smooth complex hypersurfaces of a fixed degree are all symplectomorphic. Those of degree at most 2 are algebraically rational, and hence symplectically rational. On the other hand, those of degree at least $n+2$ are symplectically irrational: they are not uniruled, and uniruledness is a symplectic birational invariant by the results of Koll\'ar \cite{Kol98}, Ruan \cite{Rua99}, and Hu--Li--Ruan \cite{HLR08}. Thus, for $n\le3$, the only remaining cases whose symplectic rationality is unknown are the cubic and quartic hypersurfaces in $\cp^4$.
In \cite{Smi23}, Smith asked whether the cubic threefold is symplectically irrational. While we do not answer this question here, we prove the corresponding statement for the quartic threefold:

\begin{theorem}\label{thm:main}
	The quartic threefold is symplectically irrational.
\end{theorem}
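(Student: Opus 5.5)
The plan is to build a symplectic birational invariant from big quantum cohomology, following the strategy suggested in the abstract and in the spirit of Katzarkov--Kontsevich--Pantev--Yu atoms. For a closed symplectic manifold $X$ and a point $\tau$ of big quantum cohomology, consider the operator $E\star_\tau$ of quantum multiplication by the Euler vector field. Its eigenvalues and generalized eigenspace dimensions give a multiset of ``blocks''. We then generate the invariant by these blocks, taken up to deformation of $\tau$ and up to the pieces that blow-ups add. The invariant will be the collection of multiplicities of eigenvalues that cannot be split further by moving $\tau$.

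\textbf{Step 1 (blow-up decomposition).} For the symplectic blow-up $\widetilde X$ of $X$ along a symplectic submanifold $Z$ of codimension $r$, we prove an Iritani-type decomposition of the quantum $D$-module:
\[
\qdm(\widetilde X)\cong \qdm(X)\oplus \qdm(Z)^{\oplus (r-1)}
\]
after a change of parameters, formally near a suitable large-radius point. Iritani proved this algebraically. Symplectically, we would use degeneration and virtual localization: deform to the normal cone, apply a symplectic-sum formula, and use the equivariant Fourier transform and Teleman's/Iritani's analysis of the projective bundle $\cp(N_Z\oplus\C)$. Only the existence of such a decomposition compatible with the Euler grading is needed. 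This is the step I expect to be the \textbf{main obstacle}, because Iritani's proof uses algebraic ingredients (crepant/toric mirror theorems) that must be replaced with symplectic virtual techniques valid for non-algebraic $Z$.

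\textbf{Step 2 (the invariant).} As a consequence of Step 1, the spectral decomposition of $E\star_\tau$ on $\widetilde X$ refines into the decompositions for $X$ and for $Z$, each taken $r-1$ times. Define the invariant $\mathcal{A}(X)$ as the multiset of ``atoms'': the minimal spectral summands, counted up to equivalence and with multiplicity, generated under this relation by induction on dimension. By Step 1, $\mathcal{A}(\widetilde X)=\mathcal{A}(X)\cup\mathcal{A}(Z)^{r-1}$, and deformation invariance is automatic. The conclusion is that $X$ and $\cp^n$ are symplectic birationally equivalent only if the atoms of $X$ appear among atoms of manifolds of dimension at most $2n-4$, together with those of $\cp^n$. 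In dimension $3$, $Z$ is a point or a curve, and curves contribute only atoms of dimension at most $1$ (of size 1 or $2g$-type pieces). Hence a rational threefold has all atoms of ``small size''. More precisely, there is no eigenvalue block of $E\star_\tau$ of generic multiplicity greater than the bound coming from curves and points.

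\textbf{Step 3 (computation for the quartic).} For the quartic threefold $Y$, we have $H^3(Y)$ of rank $60$. Quantum multiplication at small $\tau$ has the eigenvalue $0$ with large multiplicity, containing $H^3$. Using the known deformation of big quantum cohomology in the odd directions, or at least a semisimplicity-failure argument, we show that for all $\tau$ a single eigenvalue block of dimension at least $30$ (the $H^3$ part, which stays non-split since odd classes pair symplectically) persists and cannot be decomposed into curve-type atoms. Curve atoms have the $H^1$ of a single curve as an indecomposable piece. To finish, we rule out a match with $H^1$ of a genus-$30$ curve: by a Hodge/grading argument, the block sits in the wrong Euler-grading degree, since for a curve it is shifted by $\mathrm{Gr}$-degree $1$ relative to the threefold. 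Alternatively, we show the quantum product on the block is not that of $\qdm(C)$, since its eigenvalue differs from those allowed for curves $C$ of genus at least $2$, which have only the $0$ eigenvalue. Hence $Y$ is symplectically irrational.
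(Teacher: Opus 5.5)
Your proposal has a genuine gap at its core. You assert that a large eigenvalue block of $E\star_\tau$ for the quartic ``persists for all $\tau$'' and cannot split, but you give no mechanism for this. That is precisely the hard part, since a bulk deformation can a priori split a degenerate eigenvalue of small quantum multiplication into several smaller ones.

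The justification you offer does not work. $E\star$ is self-adjoint for the Poincar\'e pairing, and on $H^3$ this pairing is skew. That only forces eigenvalues to have \emph{even} multiplicity, so it allows the $60$-dimensional block to break into pieces of size $2$. Your starting claim that this block has eigenvalue $0$ at $\tau=0$ is also unsupported: $(c_1\star\gamma,\delta)=Q\langle\gamma,\delta\rangle_{0,2,1}$ for $\gamma,\delta\in H^3$, and these invariants come from the one-dimensional family of lines and need not vanish. Deforming in odd directions would in addition require super versions of both the F-manifold calculus and the blow-up theorem.

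The paper proves non-splitting on even cohomology only (Theorem~\ref{thm:eigen}). It differentiates the Cayley--Hamilton identity $\prod_i(E-\lambda_i)=0$ along the vector fields $E^k$, using Manin's identities $[E^k,V\circ W]=[E^k,V]\circ W+V\circ[E^k,W]+kE^{k-1}V\circ W$ and $[E^k,E^l]=(l-k)E^{k+l-1}$. This gives $\sum_i(\lambda_i^k-E^k(\lambda_i))\prod_{j\ne i}(E-\lambda_j)=0$. Since $H^{\mathrm{even}}$ is generated by $c_1=P$, the elements $1,E,E^2,E^3$ are independent, so $E^k(\lambda_i)=\lambda_i^k$. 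These ODEs propagate the triple small-quantum eigenvalue $-24Q$ to all even bulk parameters.

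Second, the $H^3$-block is the wrong place to look: once odd cohomology is included, no size bound survives blow-ups. Blowing up $\cp^3$ along a genus-$30$ curve $C$ gives a rational threefold with $b_3=60$, exactly as for the quartic, with $H^1(C)$ entering $H^3(\td X)$ via $\jmath_*\pi^*$. Both of your ways of telling these blocks apart fail.
\begin{itemize}
\item The grading argument fails because the decomposition already absorbs the degree shift.
\item The eigenvalue argument fails because eigenvalues are not invariants. The change of variables $\varsigma_j$ moves along the identity class (its leading term contains $-(r-1)\lambda_j$), and for a curve of positive genus $E\star_\sigma$ has the single eigenvalue $\sigma^0$, which is arbitrary. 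Besides, you claimed your block has eigenvalue $0$, the same as for curves.
\item The Hodge-theoretic input that makes KKPY's atoms well defined is unavailable symplectically. Your ``minimal spectral summands up to equivalence'' are never pinned down, so $\mathcal A(\td X)=\mathcal A(X)\cup\mathcal A(Z)^{r-1}$ and deformation invariance are not automatic.
\end{itemize}

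The missing idea is to discard odd cohomology. In even degrees a surface of any genus contributes only a $2$-dimensional block, a point a $1$-dimensional one, and $\cp^3$ has simple eigenvalues. Theorem~\ref{thm:blowup} then bounds all multiplicities by $2$ on rational $6$-manifolds (Theorem~\ref{thm:rat}), which contradicts the persistent triple eigenvalue.

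Separately, Iritani's proof of your Step 1 uses no crepant or toric mirror theorems. It uses the master space $W$, virtual localization, quantum Riemann--Roch and shift operators, which is why it transfers to the symplectic setting via global Kuranishi charts.
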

In particular, this implies that smooth quartic threefolds are algebraically irrational, thereby recovering the classical result of Iskovskikh--Manin \cite{IM71}. In fact, they prove that every birational automorphism of a smooth quartic threefold is biregular, from which irrationality follows. Their argument is inherently algebro-geometric and does not appear to have a direct analogue in the symplectic setting.

We briefly recall the argument of \cite{HLR08}. By \cite{Kol98,Rua99}, uniruledness can be characterized in terms of Gromov--Witten invariants. The proof that uniruledness is a symplectic birational invariant proceeds by regarding a symplectic blow-up as one side of a symplectic cut and applying the degeneration formula. To our knowledge, non-uniruledness is the only previously known obstruction to symplectic rationality available for projective hypersurfaces. Since the quartic threefold is uniruled, new obstructions are needed.

Recall that big quantum cohomology carries a distinguished element, the Euler vector field. We study the operator given by quantum multiplication by this element. For the quartic threefold, we show that this operator has an eigenvalue of multiplicity $3$. On the other hand, a decomposition theorem for big quantum cohomology under symplectic blow-ups, stated below as Theorem \ref{thm:blowup}, implies that for any symplectically rational 6-manifold, all such eigenvalues have multiplicity at most $2$. Indeed, the relevant eigenvalues are simple for $\cp^3$, and the only possible blow-up centers in dimension 6 are points and symplectic surfaces, whose contributions have multiplicity at most $2$. This contradiction proves Theorem \ref{thm:main}. Throughout this argument, we restrict attention to even cohomology and ignore odd classes.

We now explain the relation of this work to that of Katzarkov--Kontsevich--Pantev--Yu \cite{KKPY25}. Their key idea is to combine Gromov--Witten theory with Hodge theory in order to construct a birational invariant which obstructs rationality for a dense subset of cubic fourfolds in the deformation family. In the symplectic setting, Hodge theory is no longer available. Moreover, symplectic rationality is a property of the deformation class. Smooth cubic fourfolds are symplectically rational, since they form a single deformation family containing rational examples. We therefore construct a symplectic birational invariant using only Gromov--Witten theory. This forces us to consider eigenvalues arising from big quantum cohomology, whereas in \cite{KKPY25}, because of the Hodge-theoretic input, it is enough to use information from small quantum cohomology.

Let us elaborate on this point. The fact that quantum multiplication by the Euler vector field for the quartic threefold has an eigenvalue of multiplicity 3 is quite remarkable. It is well known that small quantum multiplication by the first Chern class has such an eigenvalue. A priori, however, one might expect this eigenvalue to split under the big quantum cohomology deformation into several eigenvalues of smaller multiplicities. We prove that this does not occur for the quartic threefold (Theorem \ref{thm:eigen}). The key point is that the eigenvalues are determined by their initial values on small quantum cohomology: using the Cayley--Hamilton theorem and commutator identities for powers of the Euler vector field, we derive a system of ODEs governing the eigenvalues. This argument applies whenever the even cohomology ring is generated by the first Chern class.


The decomposition theorem was proved by Iritani for K\"ahler blow-ups of smooth projective varieties \cite{Iri25}. Once the virtual localization formula \cite{Kon95} and the quantum Riemann--Roch theorem \cite{CG07} are established in the symplectic setting, Iritani's proof generalizes directly. For the necessary symplectic virtual techniques, we use global Kuranishi charts, introduced by Abouzaid--McLean--Smith \cite{AMS21}.

\medskip

\noindent {\bf Plan of the paper.} In Section \ref{sec:blowup}, we recall the construction of symplectic blow-ups. In Section \ref{sec:gw}, we state the decomposition theorem and discuss the various aspects of Gromov--Witten theory used in its proof. In Section \ref{sec:decomp}, we give the proof of the decomposition theorm. In Section \ref{sec:irr}, we prove our result on symplectic irrationality. In Appendix \ref{sec:app}, we discuss symplectic virtual technqiues and prove the quantum Riemann--Roch theorem.

\medskip

\noindent {\bf Acknowledgements.} I am grateful to Mark McLean for his constant guidance, for reading through this paper and making important suggestions. I am indebted to Ivan Smith for pointing out \cite[Remark 3.14]{KKPY25}, which led to the proof of Theorem \ref{thm:eigen}. I also thank Hiroshi Iritani for kind correspondences in regard to my earlier attempts at proving this theorem. I thank John Pardon for suggestions on the exposition of the paper. I thank Mohammed Abouzaid and Shuhao Li for conversations which eventually led me to consider this problem in the beginning. The author was partially supported by NSF award DMS-2203308 and Simons Foundation
International, LTD.

\begin{convention}
	All manifolds are assumed to be smooth, connected, compact, and without boundary, unless there is clear indication of the contrary. We usually refer to real dimensions, although we adopt standard terminology like ``threefold'', ``line bundle'', etc.
	
	{\bf Important convention}: We will only work with cohomology groups in even degrees, so by abuse of notation we let $H^*(X)$ be the direct sum (or union, depending on context) of even degree cohomology groups of $X$. The coefficient ring will by default be $\C$, unless explicitly stated otherwise.
	
	We always use first Chern class of the tangent bundle $c_1(X)=c_1(TX)$. 
	
	We let $\N=\Z_{\ge0}$ denote the set of non-negative integers.
\end{convention}

\section{Symplectic blow-ups}\label{sec:blowup}

We recall the definition of a symplectic blow-up along a symplectic submanifold following \cite{GS89,MS98}. 

Let $(X,\omega)$ be a symplectic manifold, $Z\subset X$ a symplectic submanifold of codimension $2r$. The normal bundle of $Z$ has a compatible complex structure, and thus can be identified with $P\times_{\U(r)}\C^r\to Z$, where $P\to Z$ is a principal $\U(r)$-bundle. The group $\U(r)$ acts in a Hamiltonian way on $(\C^r,\omega_0)$, and on the blow-up $(\td\C^r_\epsilon,\td\omega_\epsilon)$ of the origin. Let $\mu,\td\mu$ be the respective moment maps.

We wish to construct a closed 2-form $\td\alpha_\epsilon$ on $P\times_{\U(r)}\td\C^r_\epsilon$ whose restriction to each fiber is $\td\omega_\epsilon$. To do this, we choose a connection $A\in\Omega^1(P,\mathfrak{u}(r))$. Consider the 2-form on $P\times\td\C^r_\epsilon$ given by $\td\omega_\epsilon-d\langle\td\mu,A\rangle$ (where pull-backs are omitted in the notation). It is obviously closed and $\U(r)$-invariant; more importantly, its contraction with any vector field $X_\xi$ generated by $\xi\in\mathfrak{u}(r)$ is zero. This means that it descends to a closed 2-from on the quotient $P\times_{\U(r)}\td\C^r_\epsilon$, which we denote by $\td\alpha_\epsilon$. 

An analogous construction defines a closed 2-form $\alpha$ on $P\times_{\U(r)}\C^r$ which restricts to $\omega_0$ on each fiber. Then $\alpha+\omega_Z$ is a symplectic form on a neighborhood of the zero section, as it is non-degenerate on each fiber and equals $\omega_Z$ on the zero section.
By the symplectic neighborhood theorem, $Z\subset M$ has a tubular neighborhood $N_{\epsilon_0}(Z)$ which is symplectomorphic to the disc bundle $P\times_{\U(r)}B(\epsilon_0)$ with symplectic form $\alpha+\omega_Z$. If $\epsilon<\epsilon_0$, then the $\epsilon_0$-neighborhood $\td B(\epsilon_0)$ of the exceptional $\cp^{r-1}$ in $\td\C^r_\epsilon$ has boundary symplectomorphic to $(\partial B(\epsilon_0),\omega_0)$. This implies that the boundary of $N_{\epsilon_0}(Z)$ is symplectomorphic to the boundary of $P\times_{\U(r)}\td B(\epsilon_0)$, where the latter is equipped with symplectic form $\td\alpha_\epsilon+\omega_Z$.
\begin{definition}
	The \emph{symplectic blow-up} $\td X$ is obtained from $X$ by cutting out $N_{\epsilon_0}(Z)$ and gluing in $P\times_{\U(r)}\td B(\epsilon_0)$ equipped with the symplectic form $\td\alpha_\epsilon+\omega_Z$.
\end{definition}
Once $\epsilon_0,\epsilon$ are sufficiently small, the resulting symplectic structure is independent of the various choices up to isotopy.

Next, we recall the additive structure of cohomology (with coefficients in $\C$) of $\td X$. Let $\varphi:\td X\to X$ be the blow-down map, $D=\varphi^{-1}(Z)$ the exceptional divisor, $\iota:Z\to X$ and $\jmath:D\to\td X$ the inclusion maps, $\pi=\varphi|_D:D\to Z$ the projection map, and $p\in H^2(D)$ the unique cohomology class which restricts to the generator of cohomology on each fiber, and restricts to 0 on $D-Z$.

\begin{proposition}[{\cite[Proposition 2.4]{McD84}}]\label{prop:addiso}
	There is an additive isomorphism \begin{equation*}
	H^*(X)\oplus\bigoplus_{k=0}^{r-2}H^*(Z)\xrightarrow[\textrm{as modules}]{\simeq} H^*(\td X)
\end{equation*} where $\alpha\in H^*(X)$ is sent to $\varphi^*\alpha$, and $\beta$ from the $k$th copy of $H^*(Z)$ is sent to $\jmath_*(p^k\pi^*\beta)$. Here $\jmath_*$ is the Gysin push-forward $\mathrm{PD}_{\td X}^{-1}\circ\jmath_*\circ\mathrm{PD}_D.$
\end{proposition}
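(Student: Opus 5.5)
The plan is to compare $\td X$ with $X$ through the decomposition $\td X = (X - N_{\epsilon_0}(Z)) \cup \overline{N}$. Here $\overline{N} = P\times_{\U(r)}\td B(\epsilon_0)$ is the blown-up tubular neighborhood. The two pieces meet along the sphere bundle $S = \partial N_{\epsilon_0}(Z)$. Write $U = X - N_{\epsilon_0}(Z)$ and $N = N_{\epsilon_0}(Z)$.

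First apply the Mayer--Vietoris sequence to both $X = U\cup N$ and $\td X = U\cup\overline{N}$. Use the homotopy equivalences $N\simeq Z$ and $\overline{N}\simeq D$, where $D = \mathbb{P}(N_Z)$ is the projectivized normal bundle. The blow-down map $\varphi$ is the identity on $U$ and collapses $\overline{N}\to N$. It therefore induces a map between the two long exact sequences, and this map is the identity on the $U$ and $S$ terms.

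Next, compute $H^*(D)$ by the Leray--Hirsch theorem. The class $p$ restricts to a generator on each fiber $\cp^{r-1}$, so $H^*(D)$ is a free $H^*(Z)$-module with basis $1,p,\dots,p^{r-1}$. Comparing the sequences then shows two things. The pullback $\varphi^*$ is injective, since $\varphi_*\varphi^* = \id$ because $\varphi$ has degree one. The cokernel of $\varphi^*$ is isomorphic to the cokernel of $\pi^*:H^*(Z)\to H^*(D)$. By a five-lemma/diagram chase this cokernel is $\bigoplus_{k=1}^{r-1}p^k\pi^*H^*(Z)$. A rank count gives $\dim H^*(\td X) = \dim H^*(X) + (r-1)\dim H^*(Z)$.

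It remains to identify the complement concretely with the images $\jmath_*(p^k\pi^*\beta)$ for $0\le k\le r-2$. The key formula is $\jmath^*\jmath_*\gamma = \gamma\cdot e(N_D)$. Here the normal bundle of $D$ is the tautological line bundle, so $e(N_D) = -p$ up to sign convention, and hence $\jmath^*\jmath_*(p^k\pi^*\beta) = -p^{k+1}\pi^*\beta$. Now suppose $\varphi^*\alpha + \sum_k \jmath_*(p^k\pi^*\beta_k) = 0$ and restrict to $D$. This gives $\pi^*\iota^*\alpha - \sum_k p^{k+1}\pi^*\beta_k = 0$ in $H^*(D)$. Since $k+1\le r-1$, the Leray--Hirsch freeness forces all $\beta_k = 0$, and then $\alpha = 0$ by injectivity of $\varphi^*$. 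So the map in the statement is injective, and the dimension count shows it is an isomorphism.

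The main obstacle is the normal bundle and Euler class identification in the symplectic (non-algebraic) setting. One must check that $D\subset\td X$ has normal bundle the tautological bundle $\mathcal{O}(-1)$ on $P\times_{\U(r)}\cp^{r-1}$, and that the class $p$ described in the statement is, up to sign, its Euler class (equivalently $\mathrm{PD}[D]$ restricted to $D$). I would verify this fiberwise from the local model $\td\C^r_\epsilon$, where the exceptional divisor's normal bundle is $\mathcal{O}(-1)$. The identification then globalizes via the associated bundle construction, which is $\U(r)$-equivariant.
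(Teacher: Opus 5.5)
Your proof is correct. The paper does not prove this proposition. It quotes McDuff and only remarks that the splitting restricts to even degrees, so your argument is a self-contained replacement for a citation rather than a variant of something in the text.

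What your route buys is that it identifies the complement of $\varphi^*H^*(X)$ directly in the form the paper uses later. That form is the Gysin images $\jmath_*(p^k\pi^*\beta)$ with $0\le k\le r-2$, which are the classes $\kappa_{\td X}(c_{l,m})$ used as part of a basis in Section~\ref{sec:decomp}. You get this via the self-intersection formula and Leray--Hirsch. The only cost is that ``injective plus equal dimension'' uses field coefficients, which is harmless under the paper's $\C$-coefficient convention. Your ingredients in fact also give the integral statement: the composite $\bigoplus_{k=0}^{r-2}H^*(Z)\to H^*(\td X)\to\operatorname{coker}(\pi^*)$ is an isomorphism, so it splits $0\to H^*(X)\to H^*(\td X)\to\operatorname{coker}(\pi^*)\to 0$.

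Two refinements.

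First, the cokernel identification is not literally a five-lemma application, since $\id\oplus\pi^*$ is not an isomorphism. The chase needs injectivity of $\varphi^*$ one degree up. Naturality of the Mayer--Vietoris coboundaries gives $\delta'=\varphi^*\circ\delta$ on $H^n(S)$, so $\ker\delta'=\ker\delta$. This is exactly what makes $H^n(\td X)\to\operatorname{coker}(\pi^*)$ surjective. That its kernel is $\operatorname{im}\varphi^*$ is then a routine chase.

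Second, the obstacle you flag is milder than you fear. You only need that $\nm_{D/\td X}$ restricts to $\mathcal{O}(-1)$ on each fiber $\cp^{r-1}$, which is immediate from the local model $\td\C^r_\epsilon$. Leray--Hirsch in degree $2$ then gives $e(\nm_{D/\td X})=-p+\pi^*c$ for some $c\in H^2(Z)$. Hence $\jmath^*\jmath_*(p^k\pi^*\beta)=-p^{k+1}\pi^*\beta+p^k\pi^*(c\beta)$, and the restricted relation is triangular in powers of $p$. The $p^{r-1}$-coefficient forces $\beta_{r-2}=0$, and descending induction kills the remaining $\beta_k$ and then $\iota^*\alpha$. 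So no global normalization of $p$ is required. The paper's later convention $\hat p=-\hat\jmath^*[\wht D]$ does indicate that $p=-\jmath^*\mathrm{PD}[D]$, i.e.\ $c=0$.
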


Clearly, the isomorphism of \cite{McD84} restricts to an isomorphism on even cohomology.

%
%

\section{Gromov--Witten theory}\label{sec:gw}
In this section we give the precise statement of the decomposition theorem, and review the various aspects of Gromov--Witten theory that will be used in its proof. In particular, we will recall the virtual localization formula \cite{Kon95} and the quantum Riemann--Roch theorem \cite{CG07}. We will also define Iritani's extended shift operators \cite{Iri25} in the symplectic setting. Symplectic virtual techniques are discussed in Appendix \ref{sec:app}.

\subsection{Gromov--Witten invariants}

Let $(X,\omega)$ be a symplectic manifold. 

\begin{definition}
	Let $X_{0,n,d}$ denote the moduli space of stable genus zero pseudo-holomorphic maps to $X$ with $n$ marked points, representing the homology class $d\in H_2(X,\Z)$, with respect to some tame almost complex structure. Let $[X_{0,n,d}]^\textrm{vir}$ denote its virtual fundamental class. 
	
	Let $\ev_i:X_{0,n,d}\to X$ denote the evaluation map at the $i$th marked point. Let $\mathcal L_i$ denote the $i$th cotangent line bundle on $X_{0,n,d}$. Its fiber at $[C,x_1,\dots,x_n,u]\in X_{0,n,d}$ is the cotangent space $T_{x_i}^*C$. Let $\psi_i:=c_1(\mathcal L_i).$
	
	For $\alpha_1,\dots,\alpha_n\in H^*(X,\Q)$ and $ k_1,\dots,k_n\in\N$, the \emph{(genus zero) descendant Gromov--Witten invariant} is \[\langle\alpha_1\psi^{k_1},\dots,\alpha_n\psi^{k_n}\rangle_{0,n,d}^X:=\int_{[X_{0,n,d}]^\textrm{vir}}\prod_{i=1}^n\ev_i^*(\alpha_i)\psi_i^{k_i}\in\Q.\] When $k_1=\dots=k_n=0$, we call these the \emph{primary Gromov--Witten invariants} \[\langle\alpha_1,\dots,\alpha_n\rangle_{0,n,d}^X\in\Q.\] 
	
	For the details of this definition, see Appendix \ref{sec:app}.
\end{definition}

We now fix our conventions for completions. Let $M=\bigoplus_{n\in\Z}M_n$ be a $\Z$-graded module. Let $M^k=\bigoplus_{n\in\Z}M^k_n\subseteq M$ be a descending chain of graded submodules. Then the \emph{(graded) completion} of $M$ is defined to be \[\widehat M:=\bigoplus_{n\in\Z}\widehat M_n,\quad \widehat M_n:=\varprojlim_kM_n/M_n^k.\]
That is to say, if we write an element of $\widehat M$ as an infinite sum, then it is not allowed to have elements of infinitely many different degrees. In this paper, completions are always graded completions.

For a symplectic manifold $(X,\omega)$, we let $\eff(X,\omega)$ be the monoid generated by classes $d\in H_2(X,\Z)$ with the property that some Gromov--Witten invariant $\langle\alpha_1,\dots,\alpha_n\rangle_{0,n,d}^X$ is nonzero. Let $\C[Q]$ denote the group ring of $\eff(X,\omega)$ over $\C$. The element corresponding to $d\in H_2(X,\Z)$ is denoted by $Q^d$, and defined to have grading $\deg Q^d:=2c_1(X)\cdot d.$ We define $\nov{Q}$ to be the graded completion of $\C[Q]$ with respect to the descending chain of submodules $\left( Q^ d:[\omega]\cdot d\ge k\right)$ for $k\in\N$. The ring $\nov Q$ is called the \emph{Novikov ring}, and $Q$ is called the \emph{Novikov variable.} A standard argument involving Gromov compactness shows that the Novikov ring only depends on the deformation class of $\omega$.

Let $\phi_0=1,\phi_1,\dots,\phi_s$ be homogeneous elements forming a $\C$-basis of $H^*(X)$. An element in $H^*(X)$ is of the form $\tau=\sum_i\tau^i\phi_i.$ We set $\deg\tau^i:=2-\deg\phi_i$, making the polynomial ring $\nov{Q}[\tau^0,\dots,\tau^s]$ a graded ring. Then we define $\nov{Q,\tau}:=\nov{Q}\fps{\tau^0,\dots,\tau^s}$ to be the graded completion with respect to the descending chain of submodules $\left((\tau^0)^k,\dots,(\tau^s)^k\right)$ for $k\in\N$. The variables $\tau^i$ are called \emph{bulk variables.}

\begin{definition}
	Let $(\cdot,\cdot)_X:H^*(X)\otimes H^*(X)\to \C$ denote the Poincar\'e pairing, which we extend linearly over $\nov{Q,\tau}$. The \emph{(big) quantum product} $\star$ is a $\nov{Q,\tau}$-bilinear commutative associative product on $H^*(X)\fps{Q,\tau}$ given by \[(\phi_i\star\phi_j,\phi_k)_X=\sum_{\substack{ d\in H_2(X,\Z)\\ n\in\N}}\langle\phi_i,\phi_j,\phi_k,\tau,\dots,\tau\rangle^X_{0,n+3, d}\frac{Q^{d}}{n!}.\] For the right hand side, one should expand the correlators using $\tau=\sum_i\tau^i\phi_i$ to get an element in $\nov{Q,\tau}$.
\end{definition}

The space $H^*(X)\fps{Q,\tau}$ is equipped with a \emph{Frobenius structure}, which can be captured in terms of the \emph{quantum connection}. To define this, we introduce an auxiliary variable $z$ of degree 2, called the \emph{loop variable}. It can be viewed as the equivariant parameter of the $S^1$-action rotating the domain of maps $\cp^1\to X$.

\begin{definition}
	The \emph{quantum connection} $\nabla$ is given by the operators $\nabla_{\tau^i},\nabla_{z\partial_z},\nabla_{\xi Q\partial_Q}:H^*(X)[z]\fps{Q,\tau}\to z^{-1}H^*(X)[z]\fps{Q,\tau}$ (for $\xi\in H^2(X)$) \begin{align*}
		&\nabla_{\tau^i}=\partial_{\tau^i}+z^{-1}\phi_i\star,\\
		&\nabla_{z\partial_z}=z\partial_z-{z^{-1}E_X\star}+\mu_X,\\
		&\nabla_{\xi Q\partial_Q}=\xi Q\partial_Q+z^{-1}{\xi\star},
	\end{align*} where $E_X\in H^*(X)\fps{Q,\tau}$ is the \emph{Euler vector field} \[E_X=c_1(X)+\sum_i\left(1-\frac{\deg\phi_i}{2}\right)\tau^i\phi_i,\] $\mu_X\in\End(H^*(X))$ is the \emph{grading operator} \[\mu_X(\phi_i)=\left(\frac{\deg\phi_i}{2}-\frac{\dim X}{4}\right)\phi_i,\] and $\xi Q\partial_Q$ is the derivation on $\nov Q$ given by \[(\xi Q\partial_Q)Q^d=(\xi\cdot d)Q^d.\]
	
	Define the $z$-sesquilinear pairing $P_X$ by \[P_X(f,g):=(f(-z),g(z))_X\in\C[z]\fps{Q,\tau}.\] 
	
	The module $H^*(X)[z]\fps{Q,\tau}$ equipped with the {quantum connection} $\nabla$ and the pairing $P_X$ is called the \emph{quantum D-module} of $X$, and denoted by $\qdm(X)$.
\end{definition} 

\begin{remark}
	One may view the quantum connection as a meromorphic connection on the trivial $H^*(X)$-bundle over $\spf\nov{z,Q,\tau}$. The latter can be thought of as the infinitesimal neighborhood of 0 in the affine space with coordinates $z,Q,\tau^0,\dots,\tau^s$, on which the ring of functions is $\nov{z,Q,\tau}$. The $\tau$ directions parametrize bulk deformations, and the $Q$ direction parametrizes deformations of the symplectic form. The variable $z$ is closely related to Givental's formalism.
\end{remark}

These satisfy the following two properties: \begin{enumerate}[(1)]
	\item The quantum connection $\nabla$ is flat. This follows from the associativity, commutativity, and potentiality of the quantum product.
	\item The pairing $P_X$ is compatible with connection, i.e. $dP_X(f,g)=P_X(\nabla f,g)+P_X(f,\nabla g).$ This follows from the symmetry of Gromov--Witten invariants.
\end{enumerate}

The quantum connection has a \emph{fundamental solution} $M_X\in\End(H^*(X))[z^{-1}]\fps{Q,\tau}$ which trivializes the connection in the following sense: \begin{align*}
	&M_X\circ\nabla_{\tau^i}=\partial_{\tau^i}\circ M_X\\
	&M_X\circ\nabla_{z\partial_z}=\left(z\partial_z-z^{-1}{c_1(X)\smile}+\mu_X\right)\circ M_X\\
	&M_X\circ\nabla_{\xi Q\partial_Q}=\left(\xi Q\partial_Q+z^{-1}{\xi\smile}+\mu_X\right)\circ M_X
\end{align*} where $\smile$ denotes the classical cup product. The fundamental solution $M_X$ can be explicitly expressed in terms of descendant Gromov--Witten invariants (reason for introducing descendant invariants in the first place) \[(M_X\phi_i,\phi_j)_X=(\phi_i,\phi_j)_X+\sum_{\substack{d\in H_2(X,\Z),n\in\Z\\(n,d)\ne(0,0)}}\left\langle\phi_i,\tau,\dots,\tau,\frac{\phi_j}{z-\psi}\right\rangle_{0,n+2,d}^X\frac{Q^d}{n!}\] where $\frac{\phi_j}{z-\psi}$ should be expanded as $\sum_{a=0}^\infty\phi_j\psi^az^{-a-1}$. It also preserves the pairing \[P_X(M_Xf,M_Xg)=P_X(f,g).\] The \emph{J-function} of $X$ is defined to be $M_X\phi_0$, and can be explicitly written as \[J_X=1+\frac{\tau}{z}+\sum_{j=0}^s\sum_{\substack{d\in H_2(X,\Z),n\in\Z\\(n,d)\ne(0,0),(1,0)}}\phi^j\left\langle\tau,\dots,\tau,\frac{\phi_j}{z(z-\psi)}\right\rangle_{0,n+2,d}^X\frac{Q^d}{n!}\] where $\{\phi^i\}$ is the basis dual to $\{\phi_i\}$, i.e., $(\phi_i,\phi^j)_X=\delta_i^j$.

\subsection{Decomposition theorem}

We can now state the decomposition theorem for quantum cohomology under symplectic blow-ups. Let $\td X$ be the symplectic blow-up of $X$ along a symplectic submanifold $Z$. Roughly, the theorem says that there is a ring isomorphism between $H^*(\td X)$ and a direct sum of $H^*(X)$ and copies of $H^*(Z)$ with respect to the big quantum products, after extending to a bigger coefficient ring.

Let $\varphi:\td X\to X$ be the blow-down map, $D=\varphi^{-1}(Z)$ the exceptional divisor, $\iota:Z\to X$ the inclusion map. 
Recall that $\td X$ is equipped with a symplectic form $\td\omega_\epsilon$ for sufficiently small $\epsilon$. This is unique up to deformation, so we can unambiguously talk about its Novikov ring $\nov{\td Q}$.
Suppose $Z$ is of codimension $2r$. We introduce a new variable $\q$ of degree $2(r-1)$, which represents the class of a line in $D$ mapping to a point in $Z$. We introduce a new ring $\C\laur{\q^{-\frac{1}{r-1}}}\fps{Q}$, where $Q$ is the Novikov variable of $X$. The Novikov rings of $X,\td X,Z$ can be extended to this new ring in the following way: \begin{equation}\label{eq:novext}
	\begin{split}
	\nov{Q} \hookrightarrow \C\laur{\q^{-\frac{1}{r-1}}}\fps{Q}&, \quad \textrm{the obvious inclusion},\\
	\nov{\td Q} \hookrightarrow \C\laur{\q^{-\frac{1}{r-1}}}\fps{Q}&, \quad \td Q^{\td d}\mapsto Q^{\varphi_*\td d}\q^{-[D]\cdot\td d},\\
	\nov{Q_Z} \to \C\laur{\q^{-\frac{1}{r-1}}}\fps{Q}&, \quad Q_Z^{d}\mapsto Q^{\iota_* d}\q^{-\frac{\rho_Z\cdot d}{r-1}},
	\end{split}
\end{equation} where $\rho_Z$ is the first Chern class of the normal bundle $\nm_{Z/X}$ of $Z$ in $X$. Thus we extend the quantum products of $X,\td X,Z$ to be defined over $\C\laur{\q^{-\frac{1}{r-1}}}\fps{Q}$. Let $\qdm(\cdot)^\textnormal{la}$ denote the base change of $\qdm(\cdot)$ via \eqref{eq:novext}; in the case when $r$ is odd, we also adjoin $\q^{\pm1/2(r-1)}$.

\begin{theorem}\label{thm:blowup}
	Let us temporarily denote $R=\C\laur{\q^{-\frac{1}{r-1}}}\fps{Q,\td\tau}$, where $\td\tau$ are the bulk variables of $\td X$. There is an isomorphism of F-manifolds with Euler vector fields\begin{align*}
		H^*(\td X)\extnov{\q^{-\frac{1}{r-1}}}{Q} &\xrightarrow[\textrm{as F-manifolds}]{\simeq}H^*(X)\extnov{\q^{-\frac{1}{r-1}}}{Q}\oplus H^*(Z)\extnov{\q^{-\frac{1}{r-1}}}{Q}^{\oplus(r-1)},\\
		\td\tau &\mapsto \left(\tau(\td\tau),\varsigma_0(\td\tau),\dots,\varsigma_{r-2}(\td\tau)\right).
	\end{align*} Equivalently, its differential is an isomorphism of algebras \begin{equation*}
		\left(H^*(\td X,R),\star_{\td\tau}\right)\xrightarrow[\textrm{as algebras}]{\simeq} \left(H^*(X,R),\star_{\tau(\td\tau)}\right)\oplus\bigoplus_{j=0}^{r-2} \left(H^*(Z,R),\star_{\varsigma_j(\td\tau)}\right)
	\end{equation*} which sends $E_{\td X}$ to $(E_X,E_Z,\dots,E_Z).$
	
	Moreover, the isomorphism of F-manifolds is covered by an isomorphism of quantum D-modules \[\Psi:\qdm(\td X)^\textnormal{la}\xrightarrow{\simeq}\tau^*\qdm(X)^\textnormal{la}\oplus\bigoplus_{j=0}^{r-2}\varsigma_j^*\qdm(Z)^\textnormal{la}.\] In particular, $\Psi$ intertwines $P_{\td X}$ with $P_X\oplus P_Z^{\oplus(r-1)}.$
\end{theorem}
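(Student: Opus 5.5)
We follow Iritani's strategy from \cite{Iri25}, transported to the symplectic category using global Kuranishi charts. The central object is the degeneration of $\td X$ via a symplectic cut of $X\times\C$ along $Z\times\{0\}$. Equivalently, we use the $\C^\times$-action on the master space $W=\mathrm{Bl}_{Z\times\{0\}}(X\times\cp^1)$, which acts by scaling on the $\cp^1$ factor.

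The fixed loci of the $\C^\times$-action are $X\times\{\infty\}$, the proper transform $\td X\times\{0\}$, and a copy of $Z$ sitting as a section of the projective bundle $\cp(\nm_{Z/X}\oplus\shf)\to Z$. The symplectic version of $W$ is built exactly as in Section~\ref{sec:blowup}, by fiberwise blowing up a Hamiltonian $\U(r)\times S^1$-equivariant model and gluing.

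The key steps are the following.
\begin{enumerate}[(1)]
	\item Construct equivariant global Kuranishi charts for the moduli spaces of $W$ and prove the virtual localization formula \cite{Kon95}. Relative to the fixed loci, this expresses the equivariant $J$-function of $W$ in terms of twisted $J$-functions of $X$, $\td X$ and $Z$. The twists are by the equivariant Euler classes of the normal bundles $\nm_{X\times\infty}$, $\nm_{\td X\times 0}=\shf(-D)$, and $\nm_{Z}\cong \nm_{Z/X}\oplus\C$ with suitable weights.
	\item Prove quantum Riemann--Roch \cite{CG07} in this setting; this is done in Appendix~\ref{sec:app}. We then use it to replace the twisted theories by untwisted ones through explicit hypergeometric modifications.
	\item Define Iritani's extended shift operator $\sft$ for the $\C^\times$-action on $W$. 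Via the Seidel-type element, it relates the fixed-locus contributions. Using the equivariant $J$-function of $W$, we then produce, for each fixed component $F$, a map of D-modules
	\[\Phi_F:\qdm(\td X)^\textnormal{la}\to(\text{pullback of }\qdm(F)^\textnormal{la}).\]
	This map is realized as a discrete Fourier transform in the equivariant parameter $\lambda$, with $\q$ playing the role of the Fourier dual variable.
	\item Take the asymptotic (stationary phase) expansion as $\q\to\infty$. This is where $\C\laur{\q^{-1/(r-1)}}$ enters. The $Z$-contribution splits into $r-1$ branches, corresponding to the $(r-1)$-st roots arising from critical points of $\lambda\mapsto$ the weight factor $\prod\Gamma$ of $\nm_{Z/X}$. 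This yields the maps $\varsigma_0,\dots,\varsigma_{r-2}$ and the components of $\Psi$.
\end{enumerate}

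From the construction, compatibility of $\Psi$ with $\nabla$ and $P$ follows: the Fourier transform intertwines the connections and pairings, and the pairing statement comes from the unitarity of the localization pairing. The F-manifold isomorphism is then read off from the leading $z\to0$ term of $\Psi$. Concretely, the mirror maps $\tau(\td\tau)$ and $\varsigma_j(\td\tau)$ come from $\Psi(1)$. Because $\Psi$ commutes with $\nabla_{z\partial_z}$, it sends $E_{\td X}$ to $(E_X,E_Z,\dots,E_Z)$.

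To see that $\Psi$ is an isomorphism, it suffices to check this modulo $Q$ and at leading order in $\q^{-1/(r-1)}$. There it reduces to the additive decomposition of Proposition~\ref{prop:addiso}, with classes $\varphi^*\alpha$ and $\jmath_*(p^k\pi^*\beta)$, after a Vandermonde-type change of basis among the $r-1$ branches.

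The main obstacle is the analytic and virtual foundations of step (1): equivariant global Kuranishi charts on the noncompact-looking symplectic master space, and a localization formula valid for them. A secondary difficulty is showing that the formal Fourier transform converges in the graded completion $\extnov{\q^{-1/(r-1)}}{Q}$. For this we would first try to transcribe Iritani's argument verbatim once localization and quantum Riemann--Roch are available. His algebraic input enters only through these two results and the divisor and string equations, all of which hold for global Kuranishi charts.
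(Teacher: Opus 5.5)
Your outline uses the same route as the paper: the master space $W$, virtual localization, quantum Riemann--Roch, shift operators, discrete Fourier transforms for $X$ and $\td X$, a Mellin--Barnes/stationary-phase transform with $r-1$ branches for $Z$, and a Vandermonde-type leading-order check. However, it skips the step that makes the construction work.

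The Fourier transforms act on the $T$-equivariant Givental space of $W$. Composed with $M_W(\theta)$, they give D-module maps $\ft_{\td X}$, $\ft_X$, $\ft_{Z,j}$ whose common \emph{source} is $\qdm_T(W)$, not $\qdm(\td X)$. So your maps $\Phi_F:\qdm(\td X)^{\textnormal{la}}\to\cdots$ have no construction as stated. To get a map out of $\qdm(\td X)$ you must invert the $\td X$-transform, and this is the heart of the paper's argument (Theorem~\ref{thm:qdmiso}):
\begin{enumerate}[(1)]
\item Pass to $\qdm_T(W)_{\td X}=\C[yS^{-1}]\cdot\qdm_T(W)$ inside the $\sq$-localization, where $yS^{-1}$ acts by shift operators.
\item Complete with respect to $\idl=(S,yS^{-1})$.
\item Prove that $\ft_{\td X}\sphat$ is an isomorphism onto $\td\tau^*\qdm(\td X)^{\textnormal{ext}}$. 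This is done by showing that lifts $s_1,\dots,s_N$ of a basis of $H^*(\td X)$ under $\kappa_{\td X}$ generate $\qdm_T(W)_{\td X}\sphat$.
\end{enumerate}
That generation statement rests on Lemma~\ref{lem:tech}: every element of $\ker\kappa_{\td X}$ is the leading term of $\sei c_1+x\sei^{-1}c_2+y\sei^{-1}c_3$. This in turn uses Lemmas~\ref{lem:wclass}, \ref{lem:abloc}, \ref{lem:kerkir} and \eqref{eq:shft}. Only then is $\Psi:=\Phi\circ(\ft_{\td X}\sphat)^{-1}$ defined. Your leading-order check is really the check that $\Phi$ sends the basis $\{c_i,c_{l,m}\}$ of this completed module (whose Kirwan images are the basis of Proposition~\ref{prop:addiso}) to a basis, so it presupposes that result.

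Second, the mirror maps are an input to $\Psi$, not something read off from $\Psi(1)$, since the targets are pullbacks along them. In the paper they come from $\dft_{\td X}(J_W(\theta))=M_{\td X}(\td\tau(\theta))\td v(\theta)$, $\dft_X(J_W(\theta))=M_X(\tau(\theta))v(\theta)$, and the analogue for $\cft_{Z,j}$. These identities require showing that each transform carries $\gcn_W$ into the Givental cone of $F$, using invariance of $\gcn_F$ under the divisor and string flows and under $e^{z(\partial_u)^2/2}$.

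The resulting maps depend on the infinitely many equivariant variables $\bth$. To obtain $\tau(\td\tau)$ and $\varsigma_j(\td\tau)$, one must:
\begin{enumerate}[(1)]
\item restrict to the slice $\fds$;
\item invert $\vartheta\mapsto\td\tau(\vartheta)$ via Lemma~\ref{lem:jacinv};
\item check independence of $x,y$;
\item shift $\varsigma_j=\sigma_j-(r-1)\lambda_j$ so that the Euler field matches $E_Z$ exactly.
\end{enumerate}

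A smaller point: the $r-1$ branches come from $\nm_{Z/W}=\nm_{Z/X}\oplus\C$ with weights $(-1,\dots,-1,1)$, so $|c_Z|=r-1$. The Gamma factor of $\nm_{Z/X}$ alone would give $r$ branches.
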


Concretely, the change of variables can be expressed as a collection of formal functions \[\tau^i=\tau^i(\td\tau^0,\dots,\td\tau^s),\ \varsigma_k^j=\varsigma_k^j(\td\tau^0,\dots,\td\tau^s)\in \C\laur{\q^{-\frac{1}{r-1}}}\fps{Q,\td\tau},\] and the quantum products $\star_{\tau(\td\tau)},\star_{\varsigma_k(\td\tau)}$ are obtained by substituting in the above functions.

\begin{remark}
	For the definition of an F-manifold with Euler vector field, see \cite{Man99}. For the proof of Theorem \ref{thm:main}, we shall only need the isomorphism of algebras; the isomorphism of quantum D-modules is a byproduct of the proof and may be useful for other applications. Note that the isomorphism of algebras need not respect the pairings.
\end{remark}

\subsection{Equivariant Gromov--Witten invariants}

Suppose $(X,\omega)$ has a Hamiltonian action by a torus $T=(S^1)^{\rank T}$. We choose the almost complex structrue to be $T$-invariant, so the moduli space $X_{0,n,d}$ has an induced $T$-action. 

\begin{definition}
	Let $\lambda_1,\dots,\lambda_{\rank T}$ be the equivariant parameters, i.e., $H_T^*(\pt,\Q)=\Q[\lambda]:=\Q[\lambda_1,\dots,\lambda_{\rank T}]$. For $\alpha_1,\dots,\alpha_n\in H_T^*(X,\Q)$ and $ k_1,\dots,k_n\in\N$, the \emph{equivariant descendant Gromov--Witten invariant} is \[\langle\alpha_1\psi^{k_1},\dots,\alpha_n\psi^{k_n}\rangle_{0,n,d}^{X,T}:=\int_{[X_{0,n,d}]^\textrm{vir}}\prod_{i=1}^n\ev_i^*(\alpha_i)\psi_i^{k_i}\in\Q[\lambda].\] 
	
	For the details of this definition, see Appendix \ref{sec:app}.
\end{definition}

We state the \emph{virtual localization formula}, which will be used several times in this paper.
\begin{theorem}\label{thm:loc}
	Let $\{X_{0,n,d}(k)\}_{k}$ be the $T$-fixed components of $X_{0,n,d}$. Then \[[X_{0,n,d}]^\textnormal{vir}=\sum_k\frac{[X_{0,n,d}(k)]^\textnormal{vir}}{e_T(\vnm
	(k))}\] where $\vnm(k)$ is the virtual normal bundle of $X_{0,n,d}(k)$ in $X_{0,n,d}$.
\end{theorem}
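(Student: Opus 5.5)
We will prove the virtual localization formula (Theorem \ref{thm:loc}) in the global Kuranishi chart framework of Appendix \ref{sec:app}, reducing it to the classical Atiyah--Bott localization theorem for equivariant cohomology of a finite-dimensional orbifold.

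\textbf{Global chart.} Following \cite{AMS21}, we choose a $T$-equivariant global Kuranishi chart $(G,\thck,\obs,s)$ for $X_{0,n,d}$. Here $G$ is a compact Lie group, $\thck$ is a smooth $G\times T$-manifold on which $G$ acts with finite stabilizers, $\obs\to\thck$ is a $G\times T$-equivariant vector bundle, and $s$ is an equivariant section with $s^{-1}(0)/G\cong X_{0,n,d}$. The $T$-equivariance is arranged by using a $T$-invariant almost complex structure, and by making the auxiliary data $T$-invariant: the framing line bundle and the finite-dimensional perturbation spaces. Averaging over the compact group $T$ preserves the transversality needed to make $\thck$ smooth near $s^{-1}(0)$. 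The virtual class is then defined as $e_{G\times T}(\obs)\cap[\thck]$, pushed to the quotient, in $H^{G\times T}_*$. All equivariant integrals are therefore integrals over the orbifold $\thck/G$ of forms multiplied by the equivariant Euler class of $\obs$.

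\textbf{Fixed locus and normal data.} The $T$-fixed locus of $\thck/G$ is the union of components $F_k$, where a point is $T$-fixed up to $G$, that is, $T$ acts through a homomorphism into the stabilizer in $G$. These components are suborbifolds. Restricted to $F_k$, the pair $(\thck,\obs)$ splits into $T$-fixed and $T$-moving parts, $T\thck|_{F_k}=T^{f}\oplus N_k$ and $\obs|_{F_k}=\obs^f\oplus\obs^m$. The data $(F_k,\obs^f,s|^{f})$ forms a global chart for the fixed component $X_{0,n,d}(k)$. This identification should follow because the fixed part of the linearized deformation complex is the deformation complex of the fixed moduli problem. We then define the virtual normal bundle as the K-theory class $\vnm(k)=N_k-\obs^m$. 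Its equivariant Euler class $e_T(N_k)/e_T(\obs^m)$ is invertible after localizing $\Q[\lambda]$, since the moving weights are nonzero.

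\textbf{Localization.} We apply Atiyah--Bott localization on the orbifold $\thck/G$, equivalently in $G\times T$-equivariant cohomology, to the class $e(\obs)\cap[\thck]$. Using $e(\obs)|_{F_k}=e(\obs^f)e(\obs^m)$, this gives
\[e(\obs)\cap[\thck]=\sum_k \iota_{k*}\frac{e(\obs^f)e(\obs^m)\cap[F_k]}{e_T(N_k)},\]
which is exactly the sum of $[X_{0,n,d}(k)]^{\rm vir}/e_T(\vnm(k))$.

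\textbf{Main obstacle.} The hardest step is the comparison between charts. We must show that the fixed-part chart $(F_k,\obs^f)$ computes the intrinsic virtual class of $X_{0,n,d}(k)$, and that the class of $\vnm(k)$ is independent of choices. For this we would apply the equivalence moves of global Kuranishi charts from \cite{AMS21}, namely stabilization, group enlargement and germ equivalence, in their $T$-equivariant form, and check that each move is compatible with the fixed/moving splitting. A further point is that localization only holds after inverting $\lambda$ and with support away from $\partial$-issues. We handle this by taking $\thck$ to be a neighborhood of the zero set, cohomology with compact support along $s^{-1}(0)$, and a Thom form of $\obs$ supported near the zero section.
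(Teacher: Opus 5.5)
Your outline follows the same route the paper relies on: the paper gives no argument of its own and defers to \cite{Hir25}, which proves the formula with global Kuranishi charts, i.e.\ by reducing to finite-dimensional equivariant localization on a $T$-equivariant chart, with the fixed-part chart $(F_k,\obs^f,s|^f)$ giving $[X_{0,n,d}(k)]^{\textnormal{vir}}$ and the moving parts $N_k-\obs^m$ giving $\vnm(k)$, just as you set it up. One correction: $T$-equivariance together with transversality is obtained by choosing the approximation scheme $(V_i,\lambda_i)$ to consist of $G\times T$-representations and $G\times T$-equivariant maps, and smoothness of $\thck$ then comes from equivariant smoothing after stabilization; averaging does neither, and can destroy the surjectivity you need.
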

In the symplectic setting, this is proven in \cite{Hir25}, using global Kuranishi charts as well.

Let $\phi_0=1,\phi_1,\dots,\phi_s$ be homogeneous elements forming a basis of $H_T^*(X)$ over $H_T^*(\pt)=\C[\lambda]$. An element of $H_T^*(X)$ is of the form $\tau=\sum_i\tau^i\phi_i$ where $\tau^i$ are $\C[\lambda]$-valued variables.

\begin{definition}
	Let $(\cdot,\cdot)_X^T:H_T^*(X)\otimes_{\C[\lambda]}H_T^*(X)\to\C[\lambda]$ denote the equivariant Poincar\'e pairing, which we extend linearly over $\C[\lambda]\fps{Q,\tau}.$ The \emph{equivariant quantum product} $\star$ is the $\C[\lambda]\fps{Q,\tau}$-bilinear commutative associative product on $H_T^*(X)\fps{Q,\tau}$ given by \[(\phi_i\star\phi_j,\phi_k)_X^T=\sum_{\substack{ d\in H_2(X,\Z)\\ n\in\N}}\langle\phi_i,\phi_j,\phi_k,\tau,\dots,\tau\rangle^{X,T}_{0,n+3, d}\frac{Q^{d}}{n!}.\]
\end{definition}

To define the \emph{equivariant quantum connection}, we further introduce $\C$-valued variables $\tau^{i,k}$ for $0\le i\le s,\ k\in\N^{\rank T}$, dual to the $\C$-basis $\{\phi_i\lambda^k\}$ of $H_T^*(X)$. Thus we have $\tau^i=\sum_k\tau^{i,k}\lambda^k.$ We set $\deg\lambda=2$ and $\deg\tau^{i,k}=2-\deg\phi_i-2|k|$ where $|k|=\sum_{a=1}^{\rank T}k_a$. We use the boldface letter $\btau$ to denote the infinite set of variables $\{\tau^{i,k}\}$ (recall that $\tau$ was shorthand for $\{\tau^i\}$). Let $\C[\btau]$ denote the ring of polynomials in the variables $\tau^{i,k}$. We define $\nov\btau$ to be the graded completion of $\C[\btau]$ with respect to the descending chain of submodules, where the $m$th one is generated by $(\tau^{i,k})^m$ for $0\le i\le s,\ 0\le|k|\le m$ and $\tau^{i,k}$ for $0\le i\le s,\ |k|> m$.

We also introduce the loop variable $z$ as before.

\begin{definition}
	The \emph{equivariant quantum connection} $\nabla$ is given by operators $\nabla_{\tau^{i,k}},\nabla_{z\partial_z},\nabla_{\xi Q\partial_Q}:H_T^*(X)[z]\fps{Q,\btau}\to z^{-1}H_T^*(X)[z]\fps{Q,\btau}$ (for $\xi\in H_T^2(X)$) \begin{align*}
		&\nabla_{\tau^{i,k}}=\partial_{\tau^{i,k}}+z^{-1}\phi_i\lambda^k\star,\\
		&\nabla_{z\partial_z}=z\partial_z-{z^{-1}E_X\star}+\mu_X,\\
		&\nabla_{\xi Q\partial_Q}=\xi Q\partial_Q+z^{-1}{\xi\star},
	\end{align*} where $E_X\in H_T^*(X)\fps{Q,\btau}$ is the \emph{Euler vector field} \[E_X=c_1^T(X)+\sum_{i,k}\left(1-\frac{\deg\phi_i}{2}-|k|\right)\tau^{i,k}\phi_i\lambda^k,\] and $\mu_X\in\End_\C(H_T^*(X))$ is the \emph{grading operator} \[\mu_X(\phi_i\lambda^k)=\left(\frac{\deg\phi_i}{2}+|k|-\frac{\dim X}{4}\right)\phi_i\lambda^k.\]
	
	Define the $z$-sesquilinear pairing $P_X$ by \[P_X(f,g):=(f(-z),g(z))_X^T\in\C[z,\lambda]\fps{Q,\btau}.\]
	
	The module $H_T^*(X)[z]\fps{Q,\btau}$ equipped with the {equivariant quantum connection} $\nabla$ and the pairing $P_X$ is called the \emph{equivariant quantum D-module} of $X$, denoted by $\qdm_T(X)$. 
\end{definition} 

As before, the equivariant quantum connection $\nabla$ is flat, and compatible with $P_X$. Its \emph{fundamental solution} $M_X$ satisfies \begin{equation}\label{eq:fundsol}
	\begin{split}
		&M_X\circ\nabla_{\tau^{i,k}}=\partial_{\tau^{i,k}}\circ M_X\\
	&M_X\circ\nabla_{z\partial_z}=\left(z\partial_z-z^{-1}{c_1^T(X)\smile}+\mu_X\right)\circ M_X\\
	&M_X\circ\nabla_{\xi Q\partial_Q}=\left(\xi Q\partial_Q+z^{-1}{\xi\smile}+\mu_X\right)\circ M_X
	\end{split}
\end{equation} and can be expressed in terms of equivariant descendant invariants \begin{equation}\label{eq:fund}
	(M_X\phi_i,\phi_j)_X^T=(\phi_i,\phi_j)_X^T+\sum_{\substack{d\in H_2(X,\Z),n\in\Z\\(n,d)\ne(0,0)}}\left\langle\phi_i,\tau,\dots,\tau,\frac{\phi_j}{z-\psi}\right\rangle_{0,n+2,d}^{X,T}\frac{Q^d}{n!}.
\end{equation} By Theorem \ref{thm:loc}, $M_X$ is a formal power series in $Q,\tau$ with coefficients being rational functions in $\lambda,z$; more precisely, \[M_X\in\End_{\C[\lambda]}(H_T^*(X))\otimes_{\C[\lambda]}\C(\lambda,z)_{\hom}\fps{Q,\tau}\] where $\C(\lambda,z)_{\hom}:=\C(\lambda_1/z,\dots,\lambda_{\rank T}/z)[z,z^{-1}]$ (so inhomogeneous terms do not appear in denominators). The \emph{equivariant J-function} is $J_X=M_X\phi_0$, and has the explicit formula \begin{equation}\label{eq:j} J_X=1+\frac{\tau}{z}+\sum_{j=0}^s\sum_{\substack{d\in H_2(X,\Z),n\in\Z\\(n,d)\ne(0,0),(1,0)}}\phi^j\left\langle\tau,\dots,\tau,\frac{\phi_j}{z(z-\psi)}\right\rangle_{0,n+2,d}^{X,T}\frac{Q^d}{n!}\end{equation} where $(\phi_i,\phi^j)_X^T=\delta_i^j.$

We explain the Givental formalism in $T$-equivariant theory; the non-equivariant version can be recovered by setting $T=\{1\}.$
The \emph{Givental space} of $X$ is the infinite dimensional $\nov Q$-module \[\gsp_X:=H^*_T(X)\laur{z^{-1}}\fps{Q}.\] Following our earlier conventions, $H^*_T(X)\laur{z^{-1}}$ is the graded completion of $H^*_T(X)[z,z^{-1}]$ where $\deg z=2$. The Givental space is equipped with the symplectic form \[\Omega(\ff,\mathbf g):=-\res_{z=\infty}(\ff(-z),\mathbf g(z))_X^Tdz\] which takes values in $\C[\lambda]\fps{Q}.$ Note that $\res_{z=\infty}$ simply extracts the coefficient in front of the $z^1$ term. Consider the decomposition $\gsp_X=\gsp_+\oplus\gsp_-$ into maximally isotropic subspaces \[\gsp_+:=H^*_T(X)[z]\fps Q,\quad \gsp_-:=z^{-1}H^*_T(X)\fps{z^{-1}}\fps Q.\] The symplectic form $\Omega$ identifies $\gsp_-$ with the dual of $\gsp_+$ and $\gsp_X$ with the cotangent bundle $T^*\gsp_+$. The \emph{genus zero descendant Gromov--Witten potential} is given by \[\pot_X(z+\bt(z))=\sum_{\substack{n\in\N,d\in H_2(X,\Z)\\(n,d)\ne(0,0),(1,0),(2,0)}}\langle\bt(-\psi),\dots,\bt(-\psi)\rangle_{0,n,d}^{X,T}\frac{Q^d}{n!}\] where $\bt(z)=\sum_{n=0}^\infty t_nz^n$ and $t_n=\sum_{i=0}^st_n^i\phi_i\in H^*_T(X)$. Thus $\pot_X$ can be thought of as a function defined on the formal neighborhood of the point $z$ in $\gsp_+$.
The \emph{Givental cone} $\gcn_X\subset T^*\gsp_+\cong\gsp_X$ is the graph of the differential of $\pot_X$. Concretely, it consists of points \[\begin{split}
	\bigj_X(\bt)&=z+\bt(z)+\sum_{i=0}^s\sum_{k=0}^\infty(-1)^k\frac{\phi^i}{z^{k+1}}\frac{\partial\pot_X}{\partial t^i_k}\\
	&=z+\bt(z)+\sum_{i=0}^s\sum_{\substack{n\in\N,d\in H_2(X,\Z)\\(n,d)\ne(0,0),(1,0)}}\phi^i\left\langle\bt(-\psi),\dots,\bt(-\psi),\frac{\phi_i}{z-\psi}\right\rangle_{0,n+1,d}^{X,T}\frac{Q^d}{n!}
\end{split}\] where $\bt(z)\in\gsp_+$. By Theorem \ref{thm:loc}, $\bigj_X(\bt)$ lives in the \emph{rational Givental space} \[\gsp_X^\text{rat}:=H_{\wht T}^*(X)_\textrm{loc}\fps Q:=H_T^*(X)\otimes_{\C[\lambda]}\C(\lambda,z)_{\hom}\fps{Q}.\] The J-function \eqref{eq:j} induces a finite-dimensional family of points $zJ_X(\tau)$ in $\gcn_X$, which equals $\bigj_X(\bt)$ with $\bt=\tau.$ For this reason, $\bigj_X(\bt)$ is called the \emph{big J-function}.

We state the important geometric properties of the Givental cone (see \cite[Appendix B]{CCIT09} and \cite[Appendix A]{LSR25} for details). Let $T_\tau$ be the tangent space of $\gcn_X$ at $zJ_X(\tau)$. It is closed under multiplication by $z$ and has the structure of a $\C[\lambda,z]\fps Q$-module. In fact, $T_\tau$ is freely generated by the derivatives $z\partial_{\tau^i}J_X(\tau)=M_X(\tau)\phi_i$ over $\C[\lambda,z]\fps Q$, i.e., \[T_\tau=M_X(\tau)\gsp_+.\] The Givental cone is \emph{overruled}, namely, it can be written as the union of semi-infinite subspaces \[\gcn_X=\bigcup_{\tau\in H^*_T(X)\fps Q}zT_\tau,\] and $T_\tau$ is tangent to $\gcn_X$ exactly along $zT_\tau\subset\gcn_X.$ The above geometric properties precisely encode the dilaton equation, string equation, and topological recursion relations.

\subsection{Extended shift operators}

We introduce Iritani's \emph{extended shift operators} on the equivariant quantum D-module, in the symplectic context. These operators encode the Seidel representation \cite{Sei97} and the shift operators of Okounkov--Pandharipande \cite{OP10}.

Let $X$ be a symplectic manifold with a Hamiltonian $T$-action. Each cocharacter $k\in\Hom(S^1,T)\cong\Z^{\rank T}$ determines a loop of Hamiltonian symplectomorphisms. We consider the Hamiltonian fibration \[E_k \to \cp^1\] with fiber $X$, whose transition map between the two charts $(x,re^{iu})\in X\times \C^*\mapsto(y,se^{iv})\in X\times \C^*$ is given by \[y=\left(e^{iu}\right)^{-k}\cdot x,\quad se^{iv}=r^{-1}e^{-iu}.\] Let $H_2^\text{sec}(E_k,\Z)$ be the subgroup of section classes, i.e., homology classes in $H_2(E_k,\Z)$ mapping to the fundamental class $[\cp^1]$. For each $T$-fixed point $x$, there is an associated section class $\sigma_x(k)=x\times[\cp^1]$; obviously, the class only depends on the connected component.

Let $f_k:\cp^1\subset\cp^\infty=BS^1\to BT$ be the map induced by the homomorphism $k:S^1\to T$. It fits in the fiber square

\[\begin{tikzcd}
	{E_k} & {X\times_T ET} \\
	{\cp^1} & BT
	\arrow["{\td f_k}", from=1-1, to=1-2]
	\arrow[from=1-1, to=2-1]
	\arrow[from=1-2, to=2-2]
	\arrow["{f_k}", from=2-1, to=2-2]
\end{tikzcd}.\] Thus section classes of $E_k$ can be treated as equivariant homology classes of $X$: \[\td f_{k*}:H_2^\text{sec}(E_k,\Z)\hookrightarrow H_2^T(X,\Z).\]The image of $\td f_{k*}$ precisely consists of classes in $H_2^T(X,\Z)$ mapping to $f_{k*}[\cp^1]=k\in H_2^T(\pt,\Z)\cong\Hom(S^1,T).$ Thus we can view $H_2^T(X,\Z)$ as the disjoint union of $H_2^\text{sec}(E_k,\Z)$ for all $k\in\Hom(S^1,T)$.

We introduce an action of $\widehat{T}=T\times S^1$ on $E_k$ by \[(\lambda,z)\cdot(x,re^{iu})=(\lambda\cdot x,zre^{iu}),\quad (\lambda,z)\cdot(y,se^{iv})=\left(\lambda z^{-k}\cdot y,z^{-1}se^{iv}\right).\] The coordinates $\lambda,z$ will also be used to denote the corresponding equivariant parameters. Let $X_0,X_\infty$ denote the fibers of $E_k$ at $0,\infty\in\cp^1$ respectively. Then $\widehat{T}$ acts on $X_0$ by \[(\lambda,z)\cdot x=\lambda\cdot x,\ \textrm{ for } x\in X_0,\] and acts on $X_\infty$ by \[(\lambda,z)\cdot x=\lambda z^{-k}\cdot x,\ \textrm{ for } x\in X_\infty.\] The identity map $X_0\to X_\infty$ is equivariant with respect to the automorphism $\widehat T\to \widehat T,(\lambda,z)\mapsto(\lambda z^{k},z)$, hence it induces an isomorphism $\Phi_k:H^*_{\widehat T}(X_0)\to H^*_{\widehat T}(X_\infty)$ satisfying \[\Phi_k(f(\lambda,z)\alpha)=f(\lambda-kz,z)\Phi_k(\alpha)\] for $f(\lambda,z)\in H^*_{\widehat T}(\pt)\cong\C[\lambda,z]$ and $\alpha\in H_{\wht T}^*(X_0)$. Note that $H^*_{\widehat T}(X_0)=H^*_T(X)[z]$. For $\tau\in H^*_T(X)$, let $\hat\tau\in H^*_{\widehat T}(E_k)$ denote the unique class satisfying $\hat\tau|_{X_0}=\tau$ and $\hat\tau|_{X_\infty}=\Phi_k(\tau)$ (see \cite[Lemma 3.7]{Iri17}).

We choose an almost complex structure $\wht J$ on $E_k$ with the properties: \begin{itemize}
	\item The restriction of $\wht J$ to each fiber is $\omega$-tame.
	\item The projection map $\pi:E_k\to\cp^1$ is pseudo-holomorphic, i.e., $d\pi\circ \wht J=J_{\cp^1}\circ d\pi$, where $J_{\cp^1}$ is the standard complex structure.
	\item $\wht J$ is $\wht T$-invariant.
\end{itemize}
\begin{lemma}
	An almost complex structure satisfying those properties exists.
\end{lemma}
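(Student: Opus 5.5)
The plan is to build $\wht J$ from a $T$-invariant fiberwise structure and a $\wht T$-invariant horizontal distribution, then check the three properties. First choose an $\omega$-tame (indeed compatible) almost complex structure $J$ on $X$ that is $T$-invariant. Since $T$ is compact, such a $J$ exists. Average a Riemannian metric over $T$, then apply the standard polar-decomposition construction to $\omega$ and this metric. That construction is canonical, hence equivariant, so the resulting $J$ is $T$-invariant.

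Next, work chart by chart on $E_k$. Over the chart $X\times\C$ with coordinates $(x,re^{iu})$, let $\wht J_0 = J\oplus J_{\cp^1}$ with respect to the product splitting. Over the other chart $X\times\C$ with coordinates $(y,se^{iv})$, let $\wht J_\infty = J\oplus J_{\cp^1}$. On the overlap $X\times\C^*$ these two product structures do not agree. The transition map $y=(e^{iu})^{-k}\cdot x$ preserves the fibers and acts on them by elements of $T$, which preserve $J$; so the two structures agree on the vertical tangent spaces. However, the transition map does not send the horizontal distribution $0\oplus T\C^*$ to itself: the $u$-direction picks up the generating vector field $-X_k$ of the cocharacter.

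To fix this, use a horizontal distribution $\mathcal H$ on all of $E_k$ defined by a $\wht T$-invariant connection. Concretely, let $\mathcal H$ be the horizontal distribution $0\oplus T\C$ near $0$ and the corresponding one near $\infty$. On the annulus in between, interpolate by replacing the lift of $\partial_u$ with $\partial_u-\beta(r)X_k$, where $\beta$ is a cutoff function of $r$ equal to $0$ near $r=0$ and $1$ near $r=\infty$. This defines a Hamiltonian connection. Now set $\wht J=J$ on vertical vectors, and on $\mathcal H$ let $\wht J$ be the horizontal lift of $J_{\cp^1}$. By construction $\pi$ is pseudo-holomorphic, and $\wht J$ restricts to $J$ on each fiber, which is tame.

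It remains to check $\wht T$-invariance. The $T$-factor acts fiberwise and commutes with the transition maps, because $T$ is abelian. It also preserves $J$ and the vector field $X_k$. The $S^1$-factor rotates the base, $re^{iu}\mapsto zre^{iu}$, which preserves $r$ and therefore $\beta$. It also preserves $J_{\cp^1}$. Near $\infty$ it acts by $z^{-k}$ on fibers together with rotation; this preserves $J$ and commutes with the $T$-vector fields. So the rotation maps $\mathcal H$ to itself and preserves the fiberwise structure, and hence $\wht J$ is $\wht T$-invariant. (Alternatively, one could average an arbitrary $\wht J$ of the right form over $\wht T$ directly; but averaging does not preserve almost complex structures, so the explicit construction is preferable.)

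The main obstacle is the bookkeeping on the overlap of the two charts, namely checking that the interpolated horizontal distribution is well defined and equivariant. Everything else is standard.
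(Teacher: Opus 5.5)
Your proposal is correct and follows the same route as the paper. Both fix a $T$-invariant $\omega$-tame $J$ on $X$, split $TE_k$ into vertical and horizontal parts via a connection, and take $J$ on the vertical part and the lift of $J_{\cp^1}$ on the horizontal part. The one difference is that you explicitly build the interpolating $\wht T$-invariant connection and check its invariance, whereas the paper leaves this implicit by appealing to ``the symplectic connection.''
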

\begin{proof}
	We fix an almost complex structure $J$ on $X$ which is $\omega$-tame and $T$-invariant. The Hamiltonian fibration $E_k\to\cp^1$ has a splitting $H\oplus V$ into horizontal and vertical parts, determined by the symplectic connection. Let $J^v$ be the complex structure on $TV$ induced by $J$. Let $J^h$ be the complex structure on $TH$ pulled back from $J_{\cp^1}$. Then the almost complex structure $J^v\oplus J^h$ satisfies all the desired properties.
\end{proof}
Thus we may define the $\wht T$-equivariant Gromov--Witten invariants of $E_k$ associated to section classes. We may also define the monoid $\eff(E_k)$ of section classes with nonzero Gromov--Witten invariants. 
The Hamiltonian $S^1$-action on $X$ determined by $k$ has a unique fixed component $F_\textrm{max}(k)$ where the moment map attains its maximum. Let $\sigma_\textrm{max}(k)$ be the section class of $E_k$ associated to a fixed point in $F_\textrm{max}(k)$. 

\begin{lemma}[{\cite[Lemma 2.2]{GI12}, \cite[Lemma 3.6]{Iri17}}]\label{prop:efsec}
	$\eff(E_k)\subseteq\sigma_\textnormal{max}(k)+\eff(X).$
\end{lemma}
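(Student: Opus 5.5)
Let $\beta\in\eff(E_k)$ be a section class represented by a (virtually) nonempty moduli space of genus-zero stable $\wht J$-holomorphic maps in $E_k$. It suffices to treat $\beta$ itself rather than the monoid, since $\sigma_\textnormal{max}(k)+\eff(X)$ is closed under adding fibre classes from $\eff(X)$, and any sum of two section classes is not a section class. By Gromov compactness and the properness of the moduli spaces, a class carrying a nonzero invariant has a nonempty moduli space of $\wht J$-holomorphic stable maps for every admissible $\wht J$. Since $\wht J$ is $\wht T$-invariant, this moduli space carries a $\wht T$-action, and so does its virtual class. By Theorem \ref{thm:loc}, a nonzero invariant forces some $\wht T$-fixed stable map in class $\beta$; equivalently, the fixed locus is nonempty, because the localized class is otherwise zero.

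I will then describe the $\wht T$-fixed stable maps. The projection $\pi$ is $\wht J$-holomorphic, so a stable map in a section class has exactly one component $C_0$ mapping isomorphically onto $\cp^1$; all other components lie in fibres. Invariance under the $S^1\subset\wht T$ rotating the base forces $C_0$ to be an orbit-closure of the $z$-action fixed pointwise by $T$ up to the twist. Concretely, $C_0$ is the section $\{x\}\times\cp^1$ for a point $x$ fixed by the subgroup acting trivially on the base in both charts. Using the description of the action on $X_0$ and $X_\infty$, this means $x$ is fixed by $T$, giving the class $\sigma_x(k)$. The vertical components are maps into the fibres $X_0$ and $X_\infty$ (the only $z$-fixed fibres), so they represent classes in $\eff(X)$ after identifying these fibres with $X$. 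Thus $\beta=\sigma_x(k)+d$ with $d\in\eff(X)$, where $x$ ranges over a $T$-fixed component $F$.

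The remaining step, which I expect to be the main point, is to show $\sigma_F(k)-\sigma_\textnormal{max}(k)\in\eff(X)$ for every fixed component $F$. Here I would use the $S^1$-action on $X$ generated by $k$ itself, with moment map $H$. Its gradient-flow lines, which are $J$-holomorphic spheres in $X$ if $J$ is chosen compatibly, connect $F$ upward toward $F_\textnormal{max}$. In $E_k$, a flow sphere from $x\in F$ to $y\in F'$ with $H(F')>H(F)$ gives the relation $\sigma_F(k)=\sigma_{F'}(k)+[\text{sphere}]$. The sign here is fixed by the energy identity: the symplectic area of a section $\sigma_x(k)$ for the coupling form equals $-H(x)$ up to a constant, so sections over higher critical values have smaller area.

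Iterating along a chain of flow lines from $F$ to $F_\textnormal{max}$ (which exists by Morse--Bott theory, since $F_\textnormal{max}$ is the unique local maximum) gives $\sigma_F(k)\in\sigma_\textnormal{max}(k)+\eff(X)$. One caveat is that the flow spheres must have nonzero Gromov--Witten invariants in $X$. To avoid this, I would instead follow \cite{Iri17}: apply localization to the fibre-class moduli directly, or define $\eff$ via classes of $J$-holomorphic curves, which Gromov compactness makes deformation invariant. Combining the three steps proves the lemma.
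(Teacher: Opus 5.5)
Your proposal follows the paper's proof: localize to $\wht T$-fixed stable maps to write the class as $\sigma_x(k)+d$ with $d$ a fibre class, then move $x$ up to $F_\textnormal{max}(k)$ along a chain of gradient spheres of the moment map of the circle $k$, comparing adjacent section classes inside the Hirzebruch surface $E_k|_S$; the paper makes that Hirzebruch surface explicit, and it is what justifies your relation, whereas the area identity only fixes its sign and size. Two corrections: the two sections of $E_k|_S$ differ by $w[S]$, where $w$ is the weight of the circle on $S$, not by $[S]$ (harmless, since $\eff(X)$ is a monoid); and your effectiveness caveat applies equally to the paper, which simply treats classes of $J$-holomorphic spheres as effective, though your proposed remedy does not work, because Gromov compactness does not make the set of classes carrying $J$-holomorphic curves deformation invariant.
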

\begin{proof}
	By Theorem \ref{thm:loc}, it suffices to consider the classes of $\wht T$-invariant $J$-holomorphic curves. We also only consider section classes, so the only candidates are of the form $\sigma_x + d$, where $x$ is an $S^1$-fixed point of $X$, and $d$ is an effective class in $X_0\sqcup X_\infty.$ It remains to compare $\sigma_\textrm{max}$ and $\sigma_x$. Observe that $x$ can be joined to a point $x_\textrm{max}\in F_\textrm{max}(k)$ by a chain of gradient trajectories of the moment map $\mu$. We assume without loss of generality that $\mu(x)$ and $\mu(x_\textrm{max})$ are adjacent critical values, so the union of gradient trajectories joining $x$ and $x_\textrm{max}$ is symplectomorphic to $\cp^1$. The fibration of such $\cp^1$'s over the base $\cp^1$ is precisely the $k$th Hirzebruch surface. Its maximal and minimal section classes differ by a multiple of the fiber class, and the fiber class is effective. This concludes the proof.
\end{proof}

\begin{definition}\label{def:sei}
	For $\beta\in H_2^T(X,\Z)$, let $\overline\beta\in\Hom(S^1,T)$ be the image of $\beta$ under $H_2^T(X,\Z)\to H_2^T(\pt,\Z)$, and consider the fibration $E_k$ associated with $k:=-\overline\beta$. We define the $H^*_{\widehat T}(\pt)$-linear operator $\td{\mathbb{S}}^\beta:H^*_{\widehat T}(X_0)\fps{Q,\btau}\to Q^{\beta+\sigma_\textrm{max}(k)}H^*_{\widehat T}(X_\infty)\fps{Q,\btau}$ using the $\wht T$-equivariant Gromov--Witten invariants of $E_k$ as follows: \[(\td{\mathbb{S}}^\beta\alpha_0,\alpha_\infty)_{X_\infty}^{\wht T}=\sum_{n=0}^\infty\sum_{d\in H_2(X,\Z)}\langle i_{0*}\alpha_0,\hat\tau,\dots,\hat\tau,i_{\infty*}\alpha_\infty\rangle_{0,n+2,-\beta+d}^{E_k,\widehat T}\frac{Q^d}{n!}\] where $\alpha_0\in H_{\wht T}^*(X_0),\alpha_\infty\in H_{\wht T}^*(X_\infty)$, $i_0,i_\infty$ are the inclusion maps, and $-\beta+d$ is a section class in $E_k$. The \emph{(extended) shift operator} is defined to be \[\seiht^\beta:=\Phi_k^{-1}\circ\td{\mathbb{S}}^\beta:H_T^*(X)[z]\fps{Q,\btau}\to Q^{\beta+\sigma_\textrm{max}(k)}H_T^*(X)[z]\fps{Q,\btau}.\] 
\end{definition}
Shift operators satisfy the relation \begin{equation}\label{eq:shift}
	\seiht^\beta(f(\lambda,z)\alpha)=f(\lambda-\overline\beta z,z)\seiht^\beta\alpha
\end{equation} for $f(\lambda,z)\in H^*_{\widehat T}(\pt)=\C[\lambda,z].$ By dimension counting, we see that $\seiht^\beta$ is homogeneous of degree $2c_1^T(X)\cdot\beta.$ If $\beta\in H_2(X,\Z)\subset H_2^T(X,\Z)$, then $\seiht^\beta$ is simply multiplication by the Novikov variable $Q^\beta$.

We also define shift operators on the {rational Givental space} $\gsp^\text{rat}_X$.

\begin{definition}
	For a fixed component $F$, let $\nm_F$ be its normal bundle in $X$, with weight decomposition $\nm_F=\bigoplus_{\alpha}\nm_{F,\alpha}$ so that $T$ acts on $\nm_{F,\alpha}$ by the character $\alpha\in\Hom(T,S^1).$ Let $\{\rho_{F,\alpha,j}\}_{1\le j\le\rank \nm_{F,\alpha}}$ be the Chern roots of $\nm_{F,\alpha}$. For $\beta\in H_2^T(X,\Z)$, we define the operator $\sftht^\beta:\gsp_X^\text{rat}\to Q^{\beta+\sigma_\textrm{max}(-\overline\beta)}\gsp_X^\text{rat}$ using the localization isomorphism $H^*_{\widehat T}(X)_\text{loc}\cong\bigoplus_F H^*_{\widehat T}(F)_\text{loc}$ as follows: \[
\sftht^\beta \mathbf{f}|_{F}
=
Q^{\beta + \sigma_F(-\overline\beta)}
\left(
\prod_{\alpha}
\prod_{j=1}^{\mathrm{rank}\nm_{F,\alpha}}
\frac{
\prod_{m=-\infty}^{0} \left( \rho_{F,\alpha,j} + \alpha + m z \right)
}{
\prod_{m=-\infty}^{-\alpha\cdot \overline\beta} 
\left( \rho_{F,\alpha,j} + \alpha + m z \right)
}
\right)
e^{-z\overline\beta \partial_{\lambda}}
(\mathbf{f}|_{F})
\] where $\sigma_F(-\overline\beta)\in H_2^\text{sec}(E_{-\overline\beta},\Z)$ is the section class associated with a fixed point in $F$, the character $\alpha$ is identified with an element of $H_T^2(\pt)$, and $e^{-z\overline\beta \partial_{\lambda}}$ is the shift $f(\lambda,z)\mapsto f(\lambda-\overline\beta z,z)$ of equivariant parameters acting on $\C(\lambda,z)_\text{hom}$.
\end{definition}

The fundamental solution $M_X$ also trivializes shift operators, in the following sense:

\begin{theorem}[{\cite[Theorem 3.14]{Iri17}}]
	Let $M_X$ be the fundamental solution in $T$-equivariant theory. Then \begin{equation}\label{eq:fsolshft}
		\sftht^\beta\circ M_X=M_X\circ\seiht^\beta.
	\end{equation}
\end{theorem}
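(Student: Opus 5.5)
The identity \eqref{eq:fsolshft} is proved by working fixed component by fixed component after localization, and comparing two descriptions of one generating function of Gromov--Witten invariants of the fibration $E_k$, $k=-\overline\beta$. Both sides are $\C(\lambda,z)_{\hom}$-semilinear with the same twisting rule \eqref{eq:shift}. So it suffices to check the identity after pairing with a basis and restricting to each fixed component $F$ via $H^*_{\wht T}(X)_\textrm{loc}\cong\bigoplus_F H^*_{\wht T}(F)_\textrm{loc}$.

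The first step is to express the right-hand side $M_X\circ\seiht^\beta$ through a generating function on $E_k$. Pairing $\td{\mathbb S}^\beta\alpha_0$ against $M_X(\tau)$ at the $\infty$-fiber, and using \eqref{eq:fund}, gives an expression with $\wht T$-equivariant invariants of $E_k$. These have two insertions, one supported on $X_0$ and one descendant insertion $\phi_j/(z-\psi)$ on $X_\infty$. I would justify this gluing with the splitting (divisor) axiom for the fiber $X_\infty\subset E_k$, run through virtual localization (Theorem \ref{thm:loc}) with respect to the extra $S^1$ rotating $\cp^1$. The $\wht T$-fixed stable maps in section classes split into three parts:
\begin{itemize}
\item a component in $X_0$;
\item a $T$-fixed section $\sigma_x\times\cp^1$;
\item a component in $X_\infty$.
\end{itemize}
The $X_\infty$ contribution recovers $M_X$, twisted by $\Phi_k$. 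The $X_0$ contribution recovers $M_X$ as well, at the $\lambda$-unshifted parameter, now evaluated on the left.

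The second step is to compute the contribution of the fixed section over a component $F$, and to check that it equals the explicit hypergeometric factor in $\sftht^\beta$. This factor is the ratio of $e_{\wht T}$ of the moving parts of $H^0-H^1$ of the pulled-back normal bundle $\nm_F$ along the section $\sigma_F(-\overline\beta)$. On that section, $\nm_{F,\alpha}$ becomes a sum of $\shf(-\alpha\cdot\overline\beta)$-twisted line bundles with weights $\rho+\alpha+mz$. The rotation of the section, together with $\Phi_k$, produces the shift operator $e^{-z\overline\beta\partial_\lambda}$, and the Novikov prefactor $Q^{\beta+\sigma_F(-\overline\beta)}$ comes from the section class.

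Assembling the pieces, the localization sum reads $\sum_F(\text{left vertex})\cdot(\text{edge factor})\cdot(\text{right vertex})$. The left vertex is exactly $M_X$ restricted to $F$, so the whole sum equals $\sftht^\beta M_X$. This gives the identity as elements of $\gsp^\textrm{rat}_X$.

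The main obstacle is the second step: getting the edge factor to match the stated product exactly. One has to track signs, and the convention that $X_\infty$ carries the action $\lambda z^{-k}$. One must also handle the unstable edge cases where a vertex is a bare point, where the standard conventions for $M_X$ at $(n,d)=(0,0)$ supply the identity term. I would first verify the formula on the simplest case, $X=\cp^1$ with $k=\pm1$, to fix conventions, and then do the general weight-by-weight computation of $H^\bullet(\cp^1,\shf(a))$.

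An alternative that avoids the full graph sum is to show that both sides satisfy the same first-order equations from \eqref{eq:fundsol}, namely that $\sftht^\beta$ commutes with $\nabla$ in the appropriate shifted sense. One then needs uniqueness of solutions with the correct $z\to\infty$ asymptotics, and that asymptotic check again reduces to the degree-zero edge computation.
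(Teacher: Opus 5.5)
Your set-up agrees with the paper's proof: you localize on $E_k$, split the $\wht T$-fixed maps into three pieces, and match the section contribution with the hypergeometric factor of $\sftht^\beta$. But the step that turns the localization output into the operator identity is missing.

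When you localize the defining correlator of $\td{\mathbb S}^\beta$, the node at the $\infty$-end of the section has smoothing weight $-z-\psi$. So the $X_\infty$-vertex is not $M_X(\tau,z)$ but $M_X'(\tau',-z)$, where $\tau'=\Phi_k(\tau)$ and $M_X'=\Phi_k\circ M_X\circ\Phi_k^{-1}$. What localization actually proves is the bilinear identity
\[
(\td{\mathbb S}^\beta\alpha_0,\alpha_\infty)=\bigl(\td{\sft}^\beta M_X(\tau,z)\alpha_0,\,M_X'(\tau',-z)\alpha_\infty\bigr),\qquad \td{\sft}^\beta=\Phi_k\circ\sftht^\beta .
\]
The right vertex does not disappear by itself. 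Your assembly first says the $X_\infty$-contribution is again an $M_X$, then concludes the sum is $\sftht^\beta M_X$, silently dropping that factor. What removes it is the unitarity $M_X(\tau,-z)^*=M_X(\tau,z)^{-1}$, which follows from the quantum differential equation and the Frobenius property. It moves $M_X'(\tau',-z)$ across the pairing as $M_X'(\tau',z)^{-1}$, giving $M_X'\circ\td{\mathbb S}^\beta=\td{\sft}^\beta\circ M_X$; conjugating by $\Phi_k$ then yields the claimed identity. Your proposal never invokes this.

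The mechanism you propose in its place does not work, for two reasons.
\begin{enumerate}
\item There is no splitting axiom gluing an $E_k$-map to an $X_\infty$-map along the fiber; splitting is along the diagonal of the target $E_k$.
\item The single $E_k$-correlator you want, with insertion $i_{\infty*}\phi_j/(z-\psi)$ and $z$ the rotation parameter, is not well defined. On fixed loci where that marked point sits at the $\infty$-end of a section component, the cotangent line there has weight $z$. So $\psi$ restricts to $z$ and $1/(z-\psi)$ has a pole. Equivalently, the $X_\infty$-vertex would be the two-point descendant function at $(z_1,z_2)=(-z,z)$, whose unstable term $(\phi_b,\phi_j)/(z_1+z_2)$ blows up.
\end{enumerate}

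Your fallback ODE argument also has a gap. It needs $[\nabla,\seiht^\beta]=0$, which the paper obtains in Corollary \ref{cor:shftcomm} as a consequence of this very theorem, so you would have to prove it independently on $E_k$. Moreover, flatness in $\tau$ only reduces the claim to $\tau=0$, where all $Q$-corrections remain. A genuine uniqueness argument in the $Q$- and $z$-directions would still be required.
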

\begin{proof}
	We compute $\seiht^\beta$ using Theorem \ref{thm:loc}. Write $k=-\overline\beta$. A $\wht T$-fixed stable map $f:(C,x_1,\dots,x_{n+2})\to E_k$ must have the following form: \begin{itemize}
		\item $C=C_0\cup C_\textrm{sec}\cup C_\infty$ where $C_\textrm{sec}\cong\cp^1$,
		\item $f_0=f|_{C_0}$ is a $T$-fixed stable map to $X_0$,
		\item $f_\infty=f|_{C_\infty}$ is a $T$-fixed stable map to $X_\infty$,
		\item $f_\textrm{sec}=f|_{C_\textrm{sec}}$ is a section of $E_k$ associated to a $T$-fixed point in $X$.
	\end{itemize} In other words, the $\wht T$-fixed locus in the moduli space $(E_k)_{0,n+2,d}$ is given by \[\bigsqcup_i\bigsqcup_{I_1\sqcup I_2=\{1,\dots,n+2\}}\bigsqcup_{d_0+d_\infty+\sigma_i=d}((X_0)_{0,I_1\cup p,d_0})^T\times_{F_i}((X_\infty)_{0,I_2\cup q,d_\infty})^T\] where $\{F_i\}$ are the $T$-fixed components of $X$, and $\{\sigma_i\}$ are the corresponding section classes. The virtual normal bundle of $((X_0)_{0,I_1\cup p,d_0})^T\times_{F_i}((X_\infty)_{0,I_2\cup q,d_\infty})^T$ is \[\vnm_0+\vnm_\infty+\vnm_{\textrm{sec},i}-N_{F_i/X_0}-N_{F_i/X_\infty}+\mathcal{L}_p^{-1}\otimes\xi+\mathcal{L}_q^{-1}\otimes\xi^{-1}\] where $\vnm_0$ is the virtual normal bundle of $((X_0)_{0,I_1\cup p,d_0})^T$ in $(X_0)_{0,I_1\cup p,d_0}$, $\vnm_\infty$ is the virtual normal bundle of $((X_\infty)_{0,I_2\cup q,d_\infty})^T$ in $(X_\infty)_{0,I_2\cup q,d_\infty}$, $\vnm_{\textrm{sec},i}$ is the virtual normal bundle given by deforming $f_{\textrm{sec},i}$, $\mathcal L_p$ (resp. $\mathcal L_q$) is the universal cotangent line bundle at $p$ (resp. $q$), and $\xi$ is the trivial line bundle with weight 1.  
	
	By Theorem \ref{thm:loc}, we have \begin{multline*}
		(\td\sei^\beta(\tau)\alpha_0,\alpha_\infty)=\sum_{i,l,m,a,b}\sum_{d_0+d_\infty+\sigma_i=d}\left\langle z\alpha_0,\tau,\dots,\tau,\frac{(\iota_{i,0})_*\phi_{i,a}}{z-\psi}\right\rangle_{0,l+2,d_0}^{X_0,\wht T}\frac{Q^{d_0}}{l!}\\
		\left(\int_{F_i}\frac{Q^{\beta+\sigma_i}}{e_{\wht T}(\vnm_{\textrm{sec},i})}\phi_i^a\phi_i^b\right)\left\langle\frac{(\iota_{i,\infty})_*\phi_{i,b}}{-z-\psi},\tau',\dots,\tau',-z\alpha_\infty\right\rangle_{0,m+2,d_\infty}^{X_\infty,\wht T}\frac{Q^{d_\infty}}{m!}
	\end{multline*} where $\iota_{i,0}:F_i\to X_0$ and $\iota_{i,\infty}:F_i\to X_\infty$ are the inclusions, $\tau'=\Phi_k(\tau)$, $\{\phi_{i,a}\}_a$ is a basis of $H^*(F_i)$, and $\{\phi_i^a\}_a$ is the dual basis. Indeed, recall that $z$ is the equivariant parameter of the $S^1$-action, so $z\alpha_0, -z\alpha_\infty$ are the push-forwards; the $N_{F_i/X_0},N_{F_i/X_\infty}$ contributions are absorbed in $(\iota_{i,0})_*,(\iota_{i,\infty})_*$; the $\vnm_0,\vnm_\infty$ contributions are absorbed in the equivariant Gromov--Witten invariants of $X_0,X_\infty$ by Theorem \ref{thm:loc}; the $\mathcal L_p^{-1}\otimes\xi,\mathcal L_q^{-1}\otimes\xi^{-1}$ contributions of the smoothing of nodes gives the factors $1/(z-\psi)$ and $1/(-z-\psi).$
	
	Let $N_{F_i/X}=\bigoplus_\alpha N_{i,\alpha}$ be the weight decomposition. Then the normal bundle of $F_i\times\cp^1$ in $E_k$ is \[\bigoplus_\alpha N_{i,\alpha}\boxtimes\mathcal O_{\cp^1}(\alpha\cdot k).\] Therefore \begin{equation*}
		\vnm_{\textrm{sec},i}=\xi\oplus\xi^{-1}\oplus\bigoplus_\alpha N_{i,\alpha}\otimes\left(\bigoplus_{j\le0}\xi^j-\bigoplus_{j<-\alpha\cdot k}\xi^j\right),
	\end{equation*} where $\xi\oplus\xi^{-1}$ comes from the base $\cp^1$ and $\bigoplus_{j\le0}\xi^j-\bigoplus_{j<-\alpha\cdot k}\xi^j$ comes from $\mathcal O_{\cp^1}(\alpha\cdot k)$. Then we find that \[\frac{Q^{\beta+\sigma_i}}{e_{\wht T}(\vnm_{\textrm{sec},i})}=\frac{1}{z(-z)}\frac{e^{-kz\partial_\lambda}\Delta_i(k)}{e_{\wht T}(N_{F_i/X_\infty})}\] where $\Delta_i(k)$ is precisely the expression such that $\sftht^\beta|_{F_i}=\Delta_i(k)e^{k z\partial_\lambda}$. We conclude that \[(\td\sei^\beta(\tau)\alpha_0,\alpha_\infty)=(\td\sft^\beta(\tau)M_X(\tau,z)\alpha_0,M_X'(\tau',-z)\alpha_\infty),\] where $\td\sft^\beta(\tau)$ is $\sftht^\beta(\tau)$ with $\Delta_i(k)e^{k z\partial_\lambda}$ replaced by $e^{-kz\partial_\lambda}\Delta_i(k)$, and $M_X'(\tau',z)$ is $M_X(\tau,z)$ with $T$-equivariant Gromov--Witten invariants of $X_0$ replaced by $\wht T$-equivariant Gromov--Witten invariants of $X_\infty$. Note that the factor $1/e_{\wht T}(N_{F_i/X_\infty})$ gets absorbed when switching from the pairing on $F_i$ to the pairing on $X_\infty$. Equation \eqref{eq:fsolshft} follows by noting that $M_X'(\tau',z)=\Phi_k\circ M_X(\tau,z)\circ\Phi_k^{-1},\ \td\sft^\beta=\Phi_k\circ\sftht^\beta$, and the unitarity property $M_X(\tau,-z)^*=M_X(\tau,z)^{-1}.$ Recall that the unitarity of $M_X$ follows from the quantum differential equation and the Frobenius property of the quantum product; in some literature this is part of the definition of $M_X$ (as an element of the twisted loop group).
\end{proof}
It follows that the tangent space $T_\tau$ of the Givental cone $\gcn_X$ is also invariant under shift operators: \[\sftht^\beta T_\tau\subset Q^{\beta+\sigma_\textrm{max}(-\overline\beta)}T_\tau,\quad \textrm{ for }\beta\in H_2^T(X,\Z).\]
Another important consequence is the following:

\begin{corollary}[{\cite[Corollary 2.11]{Iri25}}]\label{cor:shftcomm}
	Shift operators define a representation of $H_2^T(X,\Z)$, i.e., \[\seiht^0=\id,\quad \seiht^{\beta_1+\beta_2}=\seiht^{\beta_1}\circ\seiht^{\beta_2}.\] Also, shift operators are compatible with the quantum connection and the pairing: \begin{align*}
		&[\nabla_{\tau^{i,k}},\seiht^\beta]=0,\quad [\nabla_{z\partial_z},\seiht^\beta]=0,\quad [\nabla_{\xi Q\partial_Q},\seiht^\beta]=(\xi\cdot\beta)\seiht^\beta,\\
		&P_X(\seiht^{-\beta}f,\seiht^\beta g)=e^{-z\overline\beta\partial_\lambda}P_X(f,g).
	\end{align*}
\end{corollary}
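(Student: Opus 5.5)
The whole argument runs through the intertwining relation \eqref{eq:fsolshft}, $\sftht^\beta\circ M_X=M_X\circ\seiht^\beta$. Since $M_X$ is invertible after localization, every property of $\seiht^\beta$ can be read off from the explicit operators $\sftht^\beta$ on $\gsp^\text{rat}_X$. These operators are given fixed-component by fixed-component as a Novikov monomial, times a product of Gamma-type ratios, times the shift $e^{-z\overline\beta\partial_\lambda}$. Such explicit operators are easy to compose.

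\emph{Step 1: the representation property.} First, $\sftht^0=\id$ holds trivially: the product of ratios is empty, the shift is trivial, and $Q^{0+\sigma_F(0)}=1$ once the section classes for $k=0$ are identified with $0$ in $H_2^T$. For $\seiht^{\beta_1+\beta_2}=\seiht^{\beta_1}\circ\seiht^{\beta_2}$, I would verify the identity $\sftht^{\beta_1}\sftht^{\beta_2}=\sftht^{\beta_1+\beta_2}$ on each $H^*_{\wht T}(F)_\text{loc}$ and then conjugate by $M_X$.

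\begin{itemize}
\item The exponentials compose: $e^{-z\overline\beta_1\partial_\lambda}e^{-z\overline\beta_2\partial_\lambda}=e^{-z(\overline\beta_1+\overline\beta_2)\partial_\lambda}$.
\item Moving the shift for $\beta_1$ past the Gamma factor for $\beta_2$ replaces $\alpha$ by $\alpha-(\alpha\cdot\overline\beta_1)z$ inside that factor. The resulting product telescopes:
\[
\frac{\prod_{m\le0}(\rho+\alpha+mz)}{\prod_{m\le-\alpha\cdot\overline\beta_1}(\rho+\alpha+mz)}\cdot\frac{\prod_{m\le-\alpha\cdot\overline\beta_1}(\rho+\alpha+mz)}{\prod_{m\le-\alpha\cdot(\overline\beta_1+\overline\beta_2)}(\rho+\alpha+mz)}.
\]
\item The Novikov exponents add correctly, because $\sigma_F(-\overline\beta)$ is additive in $\overline\beta$ as an element of $H_2^T(X,\Z)$.
\end{itemize}

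\emph{Step 2: compatibility with the connection.} I would compare each piece of \eqref{eq:fundsol} with $\sftht^\beta$.
\begin{itemize}
\item $\sftht^\beta$ is independent of $\btau$, so it commutes with $\partial_{\tau^{i,k}}$. Hence $[\nabla_{\tau^{i,k}},\seiht^\beta]=0$.
\item $\sftht^\beta$ is homogeneous of degree $2c_1^T(X)\cdot\beta$. Its commutator with $z\partial_z+\mu_X$, combined with the way the shift in $\lambda$ moves $c_1^T(X)$, should exactly cancel the term $z^{-1}c_1^T(X)\smile$. This gives $[\nabla_{z\partial_z},\seiht^\beta]=0$.
\item For $\xi Q\partial_Q+z^{-1}\xi\smile$, the operator $\xi Q\partial_Q$ contributes $(\xi\cdot(\beta+\sigma_F))$ from the Novikov monomial. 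The shift changes $\xi|_F$ by $-z(\xi\cdot\sigma_F(-\overline\beta))$, giving a $z^{-1}\xi$ commutator of $-(\xi\cdot\sigma_F)$. The two $\sigma_F$ contributions cancel, leaving $(\xi\cdot\beta)\seiht^\beta$.
\end{itemize}

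\emph{Step 3: the pairing.} Using $P_X(M_Xf,M_Xg)=P_X(f,g)$, it suffices to prove the identity for $\sftht$ in localized form, $(\ff,\mathbf g)=\sum_F\int_F\ff|_F\,\mathbf g|_F/e_T(\nm_F)$. There are three cancellations to check:
\begin{itemize}
\item the Gamma factors of $\sftht^{-\beta}$ at $-z$ against those of $\sftht^\beta$ at $z$, after shifting;
\item the two Novikov monomials, since $\sigma_F(k)+\sigma_F(-k)=0$;
\item the ratio $e_T(\nm_F)$ versus its shift $e^{-z\overline\beta\partial_\lambda}e_T(\nm_F)$.
\end{itemize}
The expected outcome is exactly the common shift $e^{-z\overline\beta\partial_\lambda}$ of the pairing.

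\emph{Main obstacle.} I expect this pairing computation to be the hardest step, specifically getting the sign conventions right: $z\mapsto-z$, and whether the product runs over $m\le0$ or over $m\le-\alpha\cdot\overline\beta$. The bookkeeping of $\sigma_F$ across $E_k$ and $E_{-k}$ is the other delicate point. I would first check the case $X=\C$ with weight $1$ by hand, then extend multiplicatively over weights and Chern roots.
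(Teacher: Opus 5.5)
Your proposal is correct and follows the route the paper intends. The corollary is read off from the intertwining relation $\sftht^\beta\circ M_X=M_X\circ\seiht^\beta$ together with the unitarity $P_X(M_Xf,M_Xg)=P_X(f,g)$, and your three checks on $\sftht^\beta$ are exactly what is needed: the telescoping of the Gamma factors, the additivity of $\sigma_F(k)$, and the fixed-point cancellation in the pairing.

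One correction to the $z\partial_z$ step: $z\partial_z+\mu_X$ does not grade the Novikov variables, so on $F$ its commutator with $\sftht^\beta$ is not $c_1^T(X)\cdot\beta$. It is $\sum_\alpha \rank\nm_{F,\alpha}\,(\alpha\cdot\overline\beta)$, the degree of the Gamma factor alone, and it is this quantity that cancels $[-z^{-1}c_1^T(X)\smile,\sftht^\beta]$.
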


\subsection{Twisted Gromov--Witten invariants}
Let $V\to X$ be a Hermitian vector bundle. Let $\cc$ be an invertible multiplicative characteristic class. 

\begin{definition}
	Let $X_{0,n,d}$ be the moduli space as before. Consider the diagram 
\[\begin{tikzcd}
	{X_{0,n+1,d}} & X \\
	{X_{0,n,d}}
	\arrow["\ev_{n+1}", from=1-1, to=1-2]
	\arrow["f"', from=1-1, to=2-1]
\end{tikzcd}\] where $f$ is the forgetful map of the $(n+1)$th marked point. We define the virtual bundle \[V_{0,n,d}:=f_*\ev_{n+1}^*V\in K(X_{0,n,d})\] where $f_*$ is the K-theoretic push-forward. Its fiber at the stable map $u:C\to X$ is \[H^0(C,u^*V)\ominus H^1(C,u^*V).\] The \emph{$(V,\cc)$-twisted Gromov--Witten invariants} are defined to be \[\langle\alpha_1\psi^{k_1},\dots,\alpha_n\psi^{k_n}\rangle_{0,n,d}^{X,(V,\cc)}:=\int_{[X_{0,n,d}]^\textrm{vir}}\prod_{i=1}^n\ev_i^*(\alpha_i)\psi_i^{k_i}\cup\cc(V_{0,n,d}).\] 

For the details of this definition, see Appendix \ref{sec:app}.
\end{definition}

We introduce the \emph{twisted Poincar\'e pairing} \[(\alpha_1,\alpha_2)_X^{(V,\cc)}:=\int_X\alpha_1\cup\alpha_2\cup\cc(V).\] Using the twisted Poincar\'e pairing in place of the untwisted one, we define twisted versions of the \emph{fundamental solution} and \emph{J-function} in the same way as before.

We also have Givental cones for twisted Gromov--Witten invariants. We specialize to the following two cases: $\cc=e_\lambda^{-1}$ and $\cc=\td e_\lambda^{-1}$, where \[e_\lambda(V)=\sum_{i=0}^{\rank V}c_i(V)\lambda^{\rank V-i},\quad \td e_\lambda(V)=\sum_{i=0}^{\rank V}c_i(V)\lambda^{-i}.\] Here $\lambda$ can be thought of as the equivariant parameter of the fiberwise $\C^*$-action on $V$. The degree of the Novikov variable $Q^d$ is $2c_1(X)\cdot d$ in the $(V,\td e_\lambda^{-1})$-twisted theory and $2(c_1(X)+c_1(V))\cdot d$ in the $(V, e_\lambda^{-1})$-twisted theory. We introduce coefficient rings $R=\C[\lambda,\lambda^{-1}]$ for $\cc=e_\lambda^{-1}$ and $R=\C[\lambda^{-1}]$ for $\cc=\td e_\lambda^{-1}$. The Givental space for the twisted theory is \[\gsp_X^\textrm{tw}:=H^*(X)\otimes R\laur z\fps Q.\] Note that here we allow power series of $z$ infinite in the positive direction, as opposed to the untwisted version; this is for the quantum Riemann--Roch operator \eqref{eq:qrrop} to be defined. The Givental space $\gsp_X^\textrm{tw}$ is equipped with the symplectic form \[\Omega_\textrm{tw}(\ff,\mathbf g):=\res_{z=0}(\ff(-z),\mathbf g(z))_X^{(V,\cc)}dz.\] It admits a decomposition $\gsp_X^\textrm{tw}=\gsp_+^\textrm{tw}\oplus\gsp_-^\textrm{tw}$ where \[\gsp_+^\textrm{tw}=H^*(X)\otimes R\fps{z}\fps{Q},\quad \gsp_-^\textrm{tw}=z^{-1}H^*(X)\otimes R[z^{-1}]\fps{Q},\] and we identify $\gsp_X^\textrm{tw}$ with $T^*\gsp_+^\textrm{tw}$. The \emph{twisted Givental cone} $\gcn_X^\textrm{tw}$ is defined to be the graph of the differential of the \emph{twisted descendant Gromov--Witten potential}. It consists of points \[
	\bigj_X^\textrm{tw}(\bt)
	=z+\bt(z)+\sum_{i=0}^s\sum_{\substack{n\in\N,d\in H_2(X,\Z)\\(n,d)\ne(1,0)}}\frac{\phi^i}{\cc(V)}\left\langle\bt(-\psi),\dots,\bt(-\psi),\frac{\phi_i}{z-\psi}\right\rangle_{0,n+1,d}^{X,(V,\cc)}\frac{Q^d}{n!}\] where $\bt(z)\in\gsp_+^\textrm{tw}$. This is contained in $\gsp_X^\textrm{tw}$ since the $\psi$-class is nilpotent.

Coates--Givental proved that twisted Gromov--Witten invariants are determined by untwisted Gromov--Witten invariants \cite{CG07}. Let $\mathbf s=(s_0,s_1,s_2,\dots)$ be variables such that $\cc(\cdot)=\exp\left(\sum_{k=0}^\infty s_k\mathrm{ch}_k(\cdot)\right)$, where $\mathrm{ch}_k$ is the $k$th component of the Chern character. We set $\deg s_k=-2k$. In this case the Givental space can be written as \[\gsp_X^\textrm{tw}=H^*(X)[z,z^{-1}]\fps{Q,\mathbf s}.\] The \emph{quantum Riemann--Roch} operator $\Delta_{(V,\cc)}:\gsp_X^\textrm{tw}\to\gsp_X^\textrm{tw}$ is defined to be \begin{equation}\label{eq:qrrop}
	\Delta_{(V,\cc)}=\exp\left(\sum_{l,m\ge0,l+m\ge1}s_{l+m-1}\frac{B_m}{m!}\mathrm{ch}_l(V)(-z)^{m-1}\right)
\end{equation} where $B_m$ are the Bernoulli numbers given by $\sum_{m=0}^\infty\frac{B_m}{m!}x^m=\frac{x}{e^x-1}.$ It is the asymptotic expansion of the infinite product $\prod_m\cc(V\otimes L^m)$ where $L$ is a line bundle whose first chern class is $z$.
\begin{theorem}\label{thm:qrr}
	The $(V,\cc)$-twisted Givental cone is $\Delta_{(V,\cc)}\gcn_X$, where $\gcn_X$ is the untwisted Givental cone defined within $\gsp_X^\textnormal{tw}$.
	
	More generally, if $V$ has a fiberwise $T$-action, let $V_\alpha$ be the $T$-eigenbundles, with classes $\cc_\alpha$. Then the $(V,\cc_T)$-twisted Givental cone is $\left(\prod_\alpha\Delta_{(V_\alpha,\cc_\alpha)}\right)\gcn_X.$
\end{theorem}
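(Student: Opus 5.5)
Theorem \ref{thm:qrr} is the quantum Riemann--Roch theorem of Coates--Givental, here in the symplectic setting. I would follow the original strategy of \cite{CG07}: derive a differential equation in the parameters $\mathbf s$ for the twisted descendant potential, and check that the cone $\Delta_{(V,\cc)}\gcn_X$ satisfies the same equation. Since both cones agree at $\mathbf s=0$, where $\Delta=\id$ and the twisted theory reduces to the untwisted one, they then coincide. The virtual input needed is already available. With global Kuranishi charts (Appendix \ref{sec:app}), the twisted invariants are integrals over an honest orbifold chart $\thck/G$ with an obstruction bundle $\obs$. The class $\cc(V_{0,n,d})$ is pulled back from a $G$-equivariant K-theory class on $\thck$, obtained by choosing a $G$-equivariant stabilization of $f_*\ev^*V$ over the universal curve on the chart. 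This lets me apply Grothendieck--Riemann--Roch to the universal curve $\uvf\to\thck$ in equivariant cohomology, and then cap with the Euler class of $\obs$.

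\textbf{First step.} Apply GRR to the family of nodal curves to express $\ch_k(V_{0,n,d})$ through pushforwards of $\ev^*\ch(V)$, $\psi$-classes, and boundary classes supported on the nodal locus. This is Mumford's formula as used in \cite{CG07}:
\[
\ch_k(f_*\ev^*V)=\pi_*\bigl(\ch(\ev^*V)\Td^\vee(L_{n+1})\bigr)_k-\sum_{i}\frac{B_{k+1}}{(k+1)!}\ev_i^*\ch_k(V)\psi_i^k+\frac12\sum\frac{B_{2m}}{(2m)!}\iota_*(\cdots)_{\text{nodes}}.
\]
The GRR computation only needs the chart to be a smooth equivariant family of prestable curves over $\thck$, which the global chart provides. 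The boundary term needs the gluing compatibility of the chart along node divisors. This is the step I expect to be the main obstacle: one must show that the restriction of $\vfc{\cdot}$ to the nodal stratum factors as the fibre product of two lower-degree virtual classes over the diagonal. I would deduce it from the splitting axiom in the global Kuranishi framework, comparing charts by the stabilization/germ-equivalence moves of \cite{AMS21}.

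\textbf{Second step.} Differentiate the twisted potential $\pot^{\textrm{tw}}$ with respect to $s_k$ and substitute the GRR formula. The three kinds of terms give three contributions:
\begin{itemize}
\item the $\pi_*$ term gives a dilaton/string-type first-order operator;
\item the marked-point term gives multiplication by $\ch_k(V)z^{k}$-type linear terms;
\item the node term gives a second-order operator, via the splitting axiom.
\end{itemize}
Together they yield $\partial_{s_k}\pot^{\textrm{tw}}=P_k\pot^{\textrm{tw}}$, where $P_k$ is the quantization of the quadratic Hamiltonian of the infinitesimal symplectic transformation
\[
A_k=\sum_m \frac{B_m}{m!}\ch_{k+1-m}(V)z^{m-1}.
\]
In genus zero this becomes the statement that the cone $\gcn^{\textrm{tw}}_{\mathbf s}$ moves by the linear flow $\partial_{s_k}\gcn=A_k\gcn$. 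Here one uses that a quadratic Hamiltonian flow moves the graph of $d\pot$ by the corresponding linear map, a formal statement independent of geometry.

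\textbf{Third step.} Integrate the flow. The $A_k$ commute, so the solution is $\exp\bigl(\sum s_kA_k\bigr)\gcn_X=\Delta_{(V,\cc)}\gcn_X$, which proves the first statement; uniqueness of formal ODE solutions in $\nov{\mathbf s}$ justifies this. For the $T$-equivariant version, split $V=\bigoplus_\alpha V_\alpha$. The class $\cc_T(V)$ is multiplicative, so the twisting data decouple into independent parameters $\mathbf s^{(\alpha)}$, one per eigenbundle. Running the same argument in each set of variables $\mathbf s^{(\alpha)}$ separately, with equivariant GRR, gives the product $\prod_\alpha\Delta_{(V_\alpha,\cc_\alpha)}$. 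Finally, one specializes to $\cc=e_\lambda^{-1},\td e_\lambda^{-1}$, whose $s_k$ have the standard form $s_0=-\log\lambda$ and $s_k=(-1)^{k}(k-1)!\lambda^{-k}$ (up to sign), and checks convergence in $R\laur z\fps Q$.
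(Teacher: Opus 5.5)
Your proposal follows essentially the same route as the paper. Both use topological Grothendieck--Riemann--Roch on the global Kuranishi chart; the paper implements this via Borel finite-dimensional approximations of the forgetful map $\ofd_{0,n+1,d}\to\ofd_{0,n,d}$ and Karoubi's topological GRR. Both then use the Mumford-type codimension $0/1/2$ decomposition with the nodal term handled by the splitting axiom, run Coates's computation of $\partial_{s_k}$ for the genus-zero potential only, and treat the equivariant case through commuting operators for the separate eigenbundles.

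One correction: your displayed marked-point term has the wrong degree. It should read $-\sum_i\sum_{m\ge1}\frac{B_m}{m!}\ev_i^*\ch_{k+1-m}(V)\psi_i^{m-1}$, and this is the term that actually produces your operator $A_k$.
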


The first statement is \cite[Corollary 4]{CG07} (see also \cite[Remark 2.15]{Iri25}). The second, slightly more general statement, is \cite[Theorem 1.1]{Ton14}. We will prove this in the symplectic setting in Appendix \ref{sec:app}, based on ideas of Coates \cite{Coa03}.

\section{Decomposition theorem}\label{sec:decomp}
In this section we give the proof of Theorem \ref{thm:blowup}. It is essentially the same as \cite{Iri25}. Therefore we will only sketch the proof, only expanding in detail when we feel it is necessary to do so. For further details the reader should refer to \cite{Iri25}.

Let us explain the rough idea of the proof. Let $\td X$ be the symplectic blow-up of $X$ along $Z$. Both $X$ and $\td X$ can be realized as the symplectic reductions of a symplectic manifold $W$ by a Hamiltonian $S^1$-action (see Proposition \ref{prop:master}). The space $W$ also contains $Z$ as one of its fixed components. Shift operators act on $\qdm_{S^1}(W)$, and the key idea is to consider the adic completion of $\qdm_{S^1}(W)$ by a certain ideal of shift operators. Denote this completion by $\qdm_{S^1}(W)\sphat_{\td X}$. We then construct the following isomorphisms: 
\[\begin{tikzcd}[sep=small]
	& {\qdm_{S^1}(W)\sphat_{\td X}} & \\
	{\qdm(\td X)} && {\qdm(X)\oplus\qdm(Z)^{\oplus(r-1)}}
	\arrow["\cong"', from=1-2, to=2-1]
	\arrow["\cong", from=1-2, to=2-3]
\end{tikzcd}\textrm{modulo pullbacks and extensions.}\]
The maps are constructed using the virtual localization formula (Theorem \ref{thm:loc}) and quantum Riemann--Roch theorem (Theorem \ref{thm:qrr}). For the maps to be defined over the completion $\qdm_{S^1}(W)\sphat_{\td X}$, we must ensure compatibility with shift operators. For this reason, the maps are in fact defined by ``Fourier transforms''. To prove that they are indeed isomorphisms, we first show that $\qdm_{S^1}(W)\sphat_{\td X}$ is a free finite module of the same rank as $\qdm(\td X)$, using the fact that the equivariant fundamental solution is a solution of the shift operators, and properties of the $S^1$-equivariant cohomology of $W$ and the Kirwan maps. Then we explicitly compute the leading order terms.

\subsection{Symplectic reductions}
We describe the relation between symplectic blow-ups and symplectic reductions. This has been done in \cite{GS89}.

\begin{proposition}[{\cite[Section 12]{GS89}}]\label{prop:master}
	One can construct a symplectic manifold $W$ with a Hamiltonian $S^1$-action, with moment map $\mu:W\to\R$, such that the following holds: \begin{enumerate}[(1)]
		\item The image $\mu(W)$ is a closed interval centered at $0$.
		\item For $t>0$, the symplectic reduction $\mu^{-1}(t)/S^1$ is symplectomorphic to $(X,\omega)$.
		\item $0$ is a critical value of $\mu$.
		\item For $-t<0$, the symplectic reduction $\mu^{-1}(-t)/S^1$ is symplectomorphic to the symplectic blow-up $(\td X,\td\omega_t)$ of $X$ along $Z$ with blow-up parameter $t$.
	\end{enumerate}
\end{proposition}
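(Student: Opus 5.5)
The plan is the Guillemin--Sternberg ``master space'' construction: a symplectic cut of $X\times\C$ (or of a neighborhood of it) adapted to the normal bundle of $Z$.

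First, consider $X\times\C$ with the product form $\omega\oplus\omega_0$ and the circle action rotating only the $\C$ factor, with moment map $|w|^2/2$. This alone does not see $Z$. So instead, on the tubular neighborhood $N_{\epsilon_0}(Z)\cong P\times_{\U(r)}B(\epsilon_0)$ I would use the diagonal $S^1$-action on $\nm_{Z/X}\times\C$ given by
\[
e^{i\theta}\cdot(v,w)=(e^{i\theta}v,\,e^{-i\theta}w),
\]
with moment map $\mu(v,w)=\tfrac12|v|^2-\tfrac12|w|^2$, computed fiberwise using the coupling form $\alpha$ from Section \ref{sec:blowup}. Away from $Z\times\{0\}$, I would modify the action to the standard rotation of the $\C$ factor, so that globally $\mu=-\tfrac12|w|^2+\chi$ with $\chi$ a cutoff function equal to $\tfrac12|v|^2$ near $Z$. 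Concretely, $W$ is the region $\{|w|^2\le c\}\subset X\times\C$ with its boundary collapsed (cut) to make it closed; the collapse happens at the extreme values of $\mu$, so that $\mu(W)$ is a closed interval. Shifting by a constant and choosing parameters symmetrically then centers this interval at $0$, which gives (1).

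Second, I compute the reductions. Fix $t>0$. Near $Z$, the reduction of $\{\tfrac12|v|^2-\tfrac12|w|^2=t\}$ forces $v\ne0$, and we can use $w$ to fix the phase, which identifies the reduction with $\nm_{Z/X}$ locally. Away from $Z$, the reduction is $X$ itself. Gluing these gives $X$, and the reduced form agrees with $\omega$ after a deformation that is small in $t$; this is (2). At $t=0$ the fixed component $Z\times\{0\}$ has $\mu=0$, so $0$ is a critical value, which is (3). For $-t<0$, the level set is $\{|w|^2=|v|^2+2t\}$, so $w\ne0$; dividing by $S^1$ near $Z$ gives the fiberwise reduction of $\C^r\times\C$ at level $-t$. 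This is exactly Lerman's cut, and it yields $\td\C^r_t$ with form $\td\omega_t$ fiberwise; globally it is $P\times_{\U(r)}\td B$, glued to $X\setminus N(Z)$, which is (4). To match the form $\td\alpha_t+\omega_Z$, I would check that the connection term $-d\langle\td\mu,A\rangle$ is exactly what reduction produces from $\alpha+\omega_Z$, since the $\U(r)$-action commutes with the $S^1$-action.

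The main obstacle is making $W$ a genuine closed symplectic manifold with a global Hamiltonian action, rather than just a local model near $Z$. Two points need care: interpolating between the action twisted along the normal bundle near $Z$ and the trivial action on $X$ away from $Z$, and verifying that the reduced forms at levels $\pm t$ are the stated ones only up to isotopy. For the first, I would instead use the standard rotation on the $\C$ factor of $X\times\C$ and take two symplectic cuts. One is at $|w|^2=c$, which caps off the top and gives the $X$-side for $t>0$. The other is a cut along $Z\times\{0\}$ via the weighted action above, which gives the blow-up side. The fact that symplectic cutting produces manifolds (the actions being free on the cut loci) handles smoothness, and the statements about levels then follow from the reduction-in-stages compatibility of cuts.
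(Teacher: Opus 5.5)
\textbf{The gap is the construction of $W$ with a global Hamiltonian circle action,} which is the whole content of the proposition.

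Your first version does not define an action. The flow of $-\tfrac12|w|^2+\chi$ rotates the normal fibres of $Z$ with angular speed $\chi'$, which is non-integral where $\chi$ interpolates, so the flow is not $2\pi$-periodic.

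Your fallback does not repair this, because the second cut uses the wrong action. The weighted action $(v,w)\mapsto(e^{i\theta}v,e^{-i\theta}w)$ exists only on $N_{\epsilon_0}(Z)\times\C$. Its level set $\{|v|^2=|w|^2+2\epsilon\}$ is a tube around all of $Z\times\C$, of radius $\sqrt{|w|^2+2\epsilon}$, which exceeds $\epsilon_0$ once $|w|$ is large. So it is not a hypersurface enclosing $Z\times\{0\}$, and it defines neither a cut there nor a blow-up.

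What is needed is to blow up the fixed submanifold $Z\times\{0\}\subset X\times\cp^1$ using the \emph{scalar} action $(e^{i\theta}v,e^{i\theta}w)$ on $\nm_{Z/X}\oplus\C$. This action has moment map $\tfrac12(|v|^2+|w|^2)$ and compact level sets. It commutes with the rotation of the $w$-factor, so the rotation descends to $W=\mathrm{Bl}_{Z\times\{0\}}(X\times\cp^1)$. This is exactly the paper's construction.

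Your weighted action is then an output rather than an input. It is the isotropy representation at the new fixed component $\cp(0\oplus\C)\cong Z$ in the exceptional divisor, cf.\ \eqref{eq:nm}. With the descended moment map $\mu=\tfrac12|w|^2-\epsilon$, the three claims follow:
\begin{itemize}
\item[(a)] A level $t>0$ never meets the cut region $\{\tfrac12(|v|^2+|w|^2)\le\epsilon\}$, so its reduction is exactly $(X,\omega)$.
\item[(b)] A level $-t\in(-\epsilon,0)$ reduces to $X$ with $\{\tfrac12|v|^2<t\}$ removed and its boundary collapsed, i.e.\ the blow-up with parameter $t$.
\item[(c)] Capping at $\tfrac12|w|^2=2\epsilon$ makes the image $[-\epsilon,\epsilon]$.
\end{itemize}

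\textbf{Independently, your local reductions interchange (2) and (4).} With $\mu=\tfrac12|v|^2-\tfrac12|w|^2$:
\begin{itemize}
\item[(a)] A positive level forces $v\neq0$ but allows $w=0$, so you cannot normalize the phase using $w$. The locus $w=0$ contributes $\{|v|^2=2t\}/S^1=\cp(\nm_{Z/X})$, and the reduction is the blow-up; this is precisely Lerman's cut.
\item[(b)] A negative level forces $w\neq0$. Normalizing $w>0$ identifies the reduction with $\nm_{Z/X}$, i.e.\ with $X$ near $Z$, with no exceptional divisor.
\end{itemize}
The local model consistent with $X=\mu^{-1}(\max)$ is the opposite one, $\mu=-\tfrac12|v|^2+\tfrac12|w|^2$, with weights $-1$ on $\nm_{Z/X}$ and $+1$ on the remaining line. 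This is what the equivariant blow-up of $X\times\cp^1$ produces near $Z$.
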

The symplectic manifold $W$ can be constructed as the symplectic blow-up of $X\times \cp^1$ along $Z\times\{0\}$. Write $T=S^1$. The Hamiltonian $T$-action on $W$ is induced by the $T$-action on $X\times\cp^1$ by rotating the $\cp^1$ component, where $0,\infty\in\cp^1$ are the two fixed points, and the action has weight 1 near 0 and weight $-1$ near $\infty$. The fixed components of $W$ are \[\mu^{-1}(\max)\cong X,\ \mu^{-1}(\min)\cong\td X,\ \mu^{-1}(0)\cong Z.\]
We have \[H_2^{T}(W,\Z)\cong H_2(W,\Z)\oplus\Z\cong H_2(X,\Z)\oplus\Z^3.\] An element $(d,k,l,m)$ in the right hand side corresponds to the unique class $\beta\in H_2^{T}(W,\Z)$ such that \[\pr_{1*}\hat\varphi_*\beta=(d,m)\in H_2^T(X)\cong H_2(X)\oplus\Z,\quad [X]\cdot\beta=k,\quad -[\wht D]\cdot\beta=l.\] Here $\hat\varphi:W\to X\times\cp^1$ is the blow-down map, $\wht D=\hat\varphi^{-1}(Z\times\{0\})$ the exceptional divisor, $[X],[\wht D]$ the Poincar\'e duals of $X\times\{\infty\}$, $\wht D$ in $W$ respectively. The subgroup $H_2(W,\Z)$ constists of elements of the form $(d,k,l,0)$.

We choose an almost complex structure $\td J$ on $W$ with the properties: \begin{itemize}
	\item $\td J$ is tamed by the blow-up symplectic form.
	\item $\hat\varphi$ is pseudo-holomorphic, i.e., $d\hat\varphi\circ \td J=(J,J_{\cp^1})\circ d\hat\varphi$ for some $J$ tamed by $\omega$ and $J_{\cp^1}$ is the standard complex structure on $\cp^1$.
	\item $\td J$ is $T$-invariant.
\end{itemize} 
\begin{lemma}
	An almost complex structure satisfying those properties exists.
\end{lemma}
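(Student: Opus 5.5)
We build $\td J$ in the same way as the almost complex structure on $E_k$: start on $X\times\cp^1$, and make a local modification near $Z\times\{0\}$ that is compatible with the blow-up model of Section \ref{sec:blowup}. First choose an $\omega$-tame $J$ on $X$ that preserves $TZ$ and is \emph{integrable-like} near $Z$. Concretely, use the symplectic neighborhood theorem to identify $N_{\epsilon_0}(Z)$ with the disc bundle $P\times_{\U(r)}B(\epsilon_0)$ carrying $\alpha+\omega_Z$. On it, take $J$ to be the direct sum of the fiberwise standard complex structure on $\C^r$ and the lift of a tame $J_Z$ on $Z$, using the horizontal distribution of the connection $A$. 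This $J$ is tame near the zero section because $\alpha+\omega_Z$ restricts to $\omega_0$ on fibers and to $\omega_Z$ horizontally up to small error. Extend it to all of $X$ by the contractibility of the space of tame structures, via a cutoff.

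Next consider $X\times\cp^1$ with the product $(J,J_{\cp^1})$. This is $T$-invariant, since $T$ only rotates the $\cp^1$ factor and preserves $J_{\cp^1}$. Near $Z\times\{0\}$ it is modeled on the associated bundle $P\times_{\U(r)}(\C^r\times\C)$, with standard structure on the $\C^{r+1}$ fibers and a horizontal lift of $J_Z$. The blow-up $W$ replaces $B^{2r+2}(\epsilon_0)$ fiberwise by $\td B(\epsilon_0)\subset\td\C^{r+1}_\epsilon$. On $\td\C^{r+1}_\epsilon$ the standard complex structure is $\U(r+1)$-invariant and hence $\U(r)\times T$-invariant. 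It tames $\td\omega_\epsilon$, and the blow-down map $\td\C^{r+1}\to\C^{r+1}$ is holomorphic. Therefore the structure $J^v\oplus J^h$ on $P\times_{\U(r)}\td B(\epsilon_0)$, built from the fiberwise standard structure and the horizontal lift of $J_Z$ via $A$ (which is $T$-invariant since $T$ commutes with $\U(r)$), glues to $(J,J_{\cp^1})$ away from the exceptional locus. In the glued region the blow-down map is the identity, so it matches exactly. The resulting $\td J$ is $T$-invariant, and $\hat\varphi$ is pseudo-holomorphic: fiberwise because the blow-down is holomorphic, and horizontally because both sides use the same horizontal lift of $J_Z$.

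The main obstacle is tameness of $\td J$ with respect to the blow-up form $\td\alpha_\epsilon+\omega_Z+\omega_{\cp^1}$ near the exceptional divisor. The term $-d\langle\td\mu,A\rangle$ contributes mixed horizontal–vertical and curvature components, and these are not obviously dominated. I would handle this by a scaling argument. On vectors $v=v^h+v^v$ the form evaluates as $\td\omega_\epsilon(v^v,J v^v)+\omega_Z(v^h,J_Zv^h)$ plus cross terms bounded by $C|\td\mu|\,|v^h|^2$ and $C|d\td\mu|\,|v^h||v^v|$. Since $\td\mu$ is $O(\epsilon_0^2)$ on $\td B(\epsilon_0)$, shrinking $\epsilon_0$ (and $\epsilon<\epsilon_0$) makes these errors small relative to the positive terms. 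The same smallness argument already shows that $\alpha+\omega_Z$ is symplectic near the zero section. If direct tameness fails, the fallback is to use compatible (rather than tame) structures, chosen via the $T$-equivariant polar decomposition on the complement of a small neighborhood, and to note that the conditions ``$T$-invariant, tame, and $\hat\varphi$-holomorphic'' define a nonempty convex-type set in each local model. These local choices then patch by $T$-invariant partitions of unity on the base $X\times\cp^1$. The patching preserves pseudo-holomorphicity of $\hat\varphi$ because the condition is linear in $\td J$ away from the exceptional divisor.
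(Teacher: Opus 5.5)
Your main argument is correct and is essentially the paper's: you use the same local model $J^h\oplus J^v$ glued to the pullback of $J\oplus J_{\cp^1}$, and get tameness by shrinking $\epsilon_0$. The paper makes this step simpler by observing that the minimal-coupling form makes $\td H$ and $\td V$ exactly $\td\omega$-orthogonal, so your $|d\td\mu|$ cross term is identically zero and only the horizontal curvature term $\langle\td\mu,F_A\rangle$ needs to be small.

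Discard the fallback paragraph, however. Tame structures are not convex (an average of almost complex structures need not square to $-1$), and away from the exceptional divisor $\td J$ is forced to equal $\hat\varphi^*(J\oplus J_{\cp^1})$, so partition-of-unity patching is not a valid argument---fortunately it is never needed.
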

\begin{proof}
	We shall freely use notation from Section \ref{sec:blowup}. Write $G=\U(r).$ The normal bundle of $Z$ in $X$ is identified with $P\times_G\C^{r}$, where $P$ is the associated principal $G$-bundle. Let $\td U$ be the product neighborhood of $P\times_G\td B(\epsilon_0)$ with a small disc centered at $0\in\cp^1$. The connection $A$ on $P$ determines a splitting $T\td U=\td H\oplus\td V$ into horizontal and vertical subbundles. 
	
	We fix a tame almost complex structure $J$ on $X$ which preserves the tangent bundle $TZ$ and the symplectic normal bundle $NZ$. Let $J^h$ be the complex structure on $\td H$ pulled back from $TZ$. Let $J^v$ be the complex structure on $\td V$ induced by the standard holomorphic structure on $\td B(\epsilon_0)$ and the small disc in $\cp^1$. We form the almost complex structure $J^h\oplus J^v$ on $\td U$. Outside the $\epsilon$-neighborhood of $Z\times\{0\}$, we take the pull-back of $J\oplus J_{\cp^1}$. It agrees with $J^h\oplus J^v$ on the common boundary, since the same property holds for the blow-up $\td\C^{r}_\epsilon$. Thus we have defined an almost complex structure $\td J$ on $W$. 
	
	The only nontrivial property to verify is that $\td J$ is tamed by the blow-up symplectic form. By construction, $\td H$ and $\td V$ are orthogonal, so it suffices to show that $J^h$ and $J^v$ are tame, individually. On each fiber the symplectic form $\td\omega_\epsilon$ tames $J^v$. On the horizontal subbundle the symplectic form is $\omega_Z+\td\alpha$, where $\td\alpha$ is determined by the moment map on $\td\C^{r}_\epsilon$ and the connection $A$. If $\epsilon$ is sufficiently small, then $\td\alpha$ is very small compared to $\omega_Z$. Since $\omega_Z$ tames $J^h$, so does $\omega_Z+\td\alpha$.
\end{proof}
Thus we may define the $T$-equivariant Gromov--Witten invariants of $W$, and the monoid of effective classes $\eff(W)$. By Theorem \ref{thm:loc}, the classes in $\eff(W)$ are no more than the classes of $T$-invariant curves, so we obtain the following:

\begin{lemma}[{\cite[Lemma 3.1]{Iri25}}]\label{lem:wclass}
	The monoid $\eff(W)$ is generated by $(d,0,0,0)$ for $d\in\eff(X)$, $(\varphi_*\td d,0,-[D]\cdot\td d,0)$ for $\td d\in\eff(\td X)$, and $(0,1,-1,0)$. Here $\varphi:\td X\to X$ is the blow-down map and $D$ is its exceptional divisor.
\end{lemma}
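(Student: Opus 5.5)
The plan is to use the virtual localization formula (Theorem \ref{thm:loc}) to reduce $\eff(W)$ to classes of $T$-invariant curves. Then I will classify those curves using the geometry of $W$ as the blow-up of $X\times\cp^1$ along $Z\times\{0\}$, with $\td J$ chosen as in the preceding lemma. A class with a nonzero $T$-equivariant invariant has a nonempty $T$-fixed locus in the moduli space, so it is a sum of classes of $T$-invariant stable maps. Each such map decomposes into irreducible components of two kinds: components mapping into a fixed component ($X=\mu^{-1}(\max)$, $\td X=\mu^{-1}(\min)$, or $Z=\mu^{-1}(0)$), and non-constant $T$-invariant spheres joining two fixed points, which are closures of $T$-orbits along gradient-like flow lines. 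It therefore suffices to compute the classes $(d,k,l,m)$ of all such pieces and check that they lie in the stated monoid. Conversely, each generator is realized: curves in $X$ and $\td X$ have nonzero invariants by definition of $\eff$, and $(0,1,-1,0)$ is realized by the fiber class, discussed below.

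First I treat curves inside fixed components. A $\td J$-holomorphic curve in $\mu^{-1}(\max)\cong X\times\{\infty\}$ of class $d$ pushes forward under $\hat\varphi$ to $(d,\infty)$. It has $m=0$ because it is $T$-fixed, so it carries no equivariant degree. It has $[X]\cdot\beta=0$ because the normal bundle of $X\times\{\infty\}$ is trivial, and $[\wht D]\cdot\beta=0$ because the curve is disjoint from $\wht D$. This gives $(d,0,0,0)$ with $d\in\eff(X)$; curves in $X$ with nonzero GW invariants of $W$ in the fixed locus contribute through $\eff(X)$ after localization. For $\td X$, embedded as the proper transform of $X\times\{0\}$, a curve of class $\td d$ maps under $\hat\varphi$ to $(\varphi_*\td d,0)$. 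It misses $X\times\{\infty\}$, and its intersection with $\wht D$ equals $[D]\cdot\td d$, because $\wht D|_{\td X}=D$. This gives $(\varphi_*\td d,0,-[D]\cdot\td d,0)$. Curves in $Z$ are also curves in $\td X$ (via a section-like lift inside $D$) and in $X$, so they are covered as well. The key point is that the fixed copy of $Z$ in $W$ lies in $\wht D$ as the locus of the $\cp^1$-direction. Its curves of class $d$ have $\hat\varphi$-image $(\iota_*d,0)$, and the $\wht D$-intersection is computed from $\rho_Z$ and $\mathcal O(-1)$. I will check that this equals a combination of the second and third generators.

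Next I treat $T$-orbit closures, which are the main case. Such a sphere is a fiber $\{x\}\times\cp^1$ for $x\notin Z$, which has class $(0,1,0,m)$ with $m$ determined by the weight. One should note that $m$ records the equivariant lift; for non-equivariant classes in $H_2(W)$ only $m=0$ pieces matter, and generators are stated with $m=0$. The proper transform of $\{z\}\times\cp^1$ for $z\in Z$ has class $(0,1,-1,0)$, since it meets $\wht D$ once transversally. The remaining case is an orbit closure inside $\wht D$, the projectivization of $\nm_{Z}\oplus\C$, joining the fixed $Z$ to $D\subset\td X$. I expect this to be the main bookkeeping obstacle. Such a curve is a line in a fiber $\cp^r$ of $\wht D$ through the point of $Z$. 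It has $\hat\varphi$-image a point, so $d=0$, and $k=0$. Its $\wht D$-degree is $-1$, giving $l=1$, so its class is $(0,0,1,0)$. This class is not a generator, so I must recheck it. The orbit from $Z$ toward $D$ lies in $\wht D$ and has class equal to that of the proper transform of $\{z\}\times\cp^1$ minus the orbit from $Z$ toward $X$. I will verify via the $T$-equivariant geometry (the $\cp^r$ fiber is the weighted blow-up picture) that all invariant spheres decompose consistently. The positive combinations then reduce to the three generators together with the relation that a line in $D$ has class $(0,0,1,0)$, which is itself $(\varphi_*\ell,0,-[D]\cdot\ell,0)$ for the line class $\ell\in\eff(\td X)$, with $[D]\cdot\ell=-1$. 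This closes the classification. Summing components then gives the claimed generation.
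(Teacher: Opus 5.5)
Your overall strategy is the same as the paper's, which simply invokes Theorem \ref{thm:loc} to reduce to classes of $T$-invariant curves. Your computations for curves in $X\times\{\infty\}$, curves in $\td X$, the proper transform of $\{z\}\times\cp^1$, and lines in the fibers of $\wht D$ are correct.

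The case of curves in the fixed component $Z$ is mishandled, however, and the check you announce would fail. The fixed $Z$ is the section $\mathbb{P}(0\oplus\C)$ of $\wht D=\mathbb{P}(\nm_{Z/X}\oplus\C)$. It is disjoint from $D=\mathbb{P}(\nm_{Z/X}\oplus 0)$, so its curves are not curves in $\td X$. On this section, $\mathcal{O}_W(\wht D)|_{\wht D}=\mathcal{O}(-1)$ restricts to the trivial line $T_0\cp^1$. Equivalently, $\wht D$ meets the proper transform of $Z\times\cp^1$ (which is again $Z\times\cp^1$) exactly in this section, and a curve $C$ in it can be moved inside $Z\times\cp^1$ to $C\times\{\infty\}\subset X\times\{\infty\}$. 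Hence $\rho_Z$ plays no role, $[\wht D]\cdot\beta=0$, and the class is $(\iota_*d,0,0,0)$. This is a class of the \emph{first} type, given $\iota_*d\in\eff(X)$, which is automatic when $\eff$ means effective curve classes as in Iritani's setting. It cannot be a combination involving $(0,1,-1,0)$: every generator has $k\ge 0$ and only that one has $k=1$, whereas here $k=0$.

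Two further points. First, your sentence that the orbit from $Z$ to $D$ has the class of ``the proper transform of $\{z\}\times\cp^1$ minus the orbit from $Z$ toward $X$'' is garbled, since those two curves coincide. The relation you want is $[\{x\}\times\cp^1]=(0,1,0,0)=(0,1,-1,0)+(0,0,1,0)$ for $x\notin Z$; here $m=0$, not a weight-dependent number. This relation is exactly what places the generic fiber class, itself not a listed generator, in the monoid once you note $(0,0,1,0)=(\varphi_*\ell,0,-[D]\cdot\ell,0)$.

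Second, your ``conversely'' step does not prove what it claims. A nonzero invariant of $X$ or $\td X$ in class $d$ is not an invariant of $W$. Likewise, the existence of a $\td J$-sphere in class $(0,1,-1,0)$ does not make any invariant of $W$ nonzero. The paper's one-line argument only establishes that $\eff(W)$ is contained in the monoid spanned by the listed classes.
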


Let $Q^d,x,y,S$ denote the elements in the group ring $\C[H_2^T(W,\Z)]$ corresponding respectively to $(d,0,0,0),(0,1,0,0),(0,0,1,0),(0,0,0,1).$ The Novikov variables are collectively denoted by $\sq=(Q,x,y)$, and we denote $\wht S=(\sq,S).$ We have $\deg Q^d=2c_1(X)\cdot d$, $\deg x=4$, $\deg y=2r$, $\deg S=2$. The Novikov ring of $W$ is $\nov\sq$.
We also introduce the monoids \begin{align*}
	&C_X=\eff(W)\oplus\N(0,1,0,-1)\oplus\N(0,0,-1,1),\\
	&C_{\td X}=\eff(W)\oplus\N(0,0,1,-1)\oplus\N(0,0,0,1),\\
	&C^T(W)=C_X\cap C_{\td X}=\eff(W)\oplus\N(0,1,0,-1)\oplus\N(0,0,0,1),
\end{align*} which will be useful later.

We recall the additive structure of $H_T^*(W)$. The arguments of \cite[Proposition 2.4]{McD84} clearly generalize to $T$-equivariant cohomology, so we obtain the following: Let $\hat\jmath:\wht D\to W$ be the inclusion, $\hat\pi:\wht D\to Z$ the projection, and $\hat p=-\hat\jmath^*[\wht D]$. Then \begin{equation*}
	H_T^*(W)=\hat\varphi^*H_T^*(X\times\cp^1)\oplus\bigoplus_{k=0}^{r-1}\hat\jmath_*(\hat{p}^k\hat\pi^*H_T^*(Z)),
\end{equation*} where $H_T^*(X\times\cp^1)\cong H^*(X)[[0],[\infty]]/([0]\cdot[\infty])$ with $[0]-[\infty]=\lambda$, and $H_T^*(Z)=H^*(Z)[\lambda].$
The restrictions of $f=\hat\varphi^*\alpha+\sum_{k=0}^{r-1}\hat\jmath_*(\hat{p}^k\hat\pi^*\gamma_k)\in H_T^*(W)$ to the fixed loci $X,Z,\td X$ are \begin{equation}\label{eq:abloc}
	\begin{split}
	f_X&=\alpha|_{X\times\{\infty\}},\\
	f_Z&=\alpha|_{Z\times\{0\}}-\sum_{k=0}^{r-1}(-\lambda)^{k+1}\gamma_k,\\
	f_{\td X}&=\varphi^*(\alpha|_{X\times\{0\}})+\sum_{k=0}^{r-1}\jmath_*({p}^k\pi^*\gamma_k)\\
	&=\varphi^*(\alpha|_{X\times\{0\}}+\iota_*\gamma_{r-1})+\sum_{k=0}^{r-2}\jmath_*({p}^k\pi^*(\gamma_k-c_{r-1-k}(\nm_{Z/X})\gamma_{r-1}))
\end{split}
\end{equation} where $\varphi,\jmath,\pi,p,\iota$ are the same as in Section \ref{sec:blowup}. The formula for $f_Z$ uses $[\wht D]|_Z=\lambda$ and $\hat p|_Z=-\lambda.$ The second formula for $f_{\td X}$ uses $\pi_*(p^{r-1})=1$ and the relation \begin{equation}\label{eq:projbdrel}
	p^r+p^{r-1}\pi^*c_1(\nm_{Z/X})+\dots+\pi^*c_r(\nm_{Z/X})=0
\end{equation} in $H^*(D)$; the relation \eqref{eq:projbdrel} appears in the proof of \cite[Proposition 2.5]{McD84}.

The normal bundles of the fixed components, with $T$-action, are: \begin{equation}\label{eq:nm}
	\begin{split}
		&\nm_{X/W}\cong\underline\C\quad \textrm{ with weight } {-1},\\
		&\nm_{\td X/W}\cong\underline\C\otimes\mathcal{O}_{\td X}(-D)\quad \textrm{ with weight } 1+0=1,\\
		&\nm_{Z/W}\cong (\nm_{Z/X}\otimes\underline\C)\oplus\underline\C\quad \textrm{ with weight } (0-1=-1,1).
	\end{split}
\end{equation}

Using \eqref{eq:abloc}, \eqref{eq:projbdrel}, \eqref{eq:nm}, and the localization formula \cite{AB84}, we obtain:
\begin{lemma}[{\cite[Lemma 3.7]{Iri25}}]\label{lem:abloc}
	Let $f\in H_T^*(W).$ \begin{enumerate}[(1)]
		\item If $f_X=0$, then $f_Z$ is divisible by $\lambda$, and $f=\hat\jmath_*(\hat\pi^*f_Z/\lambda)+i_{\td X*}g$ for some $g\in H_T^*(\td X)$.
		\item If $f_{\td X}=0$, then $f_Z$ is divisible by $e_{-\lambda}(\nm_{Z/X})$, and $f=i_{Z\times\cp^1*}(\pr_1^*f_Z/e_{-\lambda}(\nm_{Z/X}))+i_{X*}g$ for some $g\in H_T^*(X)$. Here $e_{-\lambda}$ is the equivariant Euler class $e_\lambda$ with $\lambda$ replaced by $-\lambda$. 
	\end{enumerate}
\end{lemma}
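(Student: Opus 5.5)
For each part the plan is the same three steps. First, use the additive decomposition of $H_T^*(W)$ and the restriction formulas \eqref{eq:abloc} to turn the vanishing hypothesis into explicit constraints on $(\alpha,\gamma_0,\dots,\gamma_{r-1})$. Second, read off the divisibility of $f_Z$ from these constraints. Third, realize the correction term as a push-forward from a $T$-invariant submanifold through $Z$, and show that what remains is a push-forward from the remaining fixed component. Throughout I will use that $H_T^*(X\times\cp^1)=H^*(X)[[0],[\infty]]/([0][\infty])$ with $[0]-[\infty]=\lambda$. Since the weights are $1$ near $0$ and $-1$ near $\infty$, we have $[0]|_{0}=\lambda$, $[0]|_\infty=0$, $[\infty]|_0=0$ and $[\infty]|_\infty=-\lambda$.

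\emph{Part (1).} Write $\alpha=a+[0]b(\,[0]\,)+[\infty]c(\,[\infty]\,)$. Then $f_X=\alpha|_{X\times\{\infty\}}=0$ forces $a+[\infty]c=0$ after restriction, i.e. $a=0$ and $c=0$. Hence $\alpha=[0]\beta$, and $\alpha$ is the push-forward of some $\beta$ from the divisor $X\times\{0\}$. Consequently
\[f_Z=\lambda\,\beta|_Z-\sum_k(-\lambda)^{k+1}\gamma_k,\]
which is visibly divisible by $\lambda$.

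For the decomposition, the total transform of the divisor $X\times\{0\}$ is the proper transform $\td X$ plus $\wht D$. This gives
\[\hat\varphi^*([0]\beta)=i_{\td X*}\varphi^*\beta+\hat\jmath_*(\hat\pi^*\beta|_Z).\]
Next I view $\wht D=\mathbb P(\nm_{Z/X}\oplus\C)$, whose fixed loci are the section $Z$ and $D=\wht D\cap\td X=\mathbb P(\nm_{Z/X})$. The class $\hat p+\lambda$ vanishes on $Z$ and is, up to sign, the class of the divisor $D\subset\wht D$. Hence I write $\hat p^k=(-\lambda)^k+(\hat p+\lambda)q_k(\hat p,\lambda)$, which yields
\[\hat\jmath_*(\hat p^k\hat\pi^*\gamma_k)=(-\lambda)^k\hat\jmath_*\hat\pi^*\gamma_k+i_{\td X*}(\cdots).\]
Summing these identities gives $f=\hat\jmath_*(\hat\pi^*f_Z/\lambda)+i_{\td X*}g$. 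As a consistency check, $\hat\jmath_*(\hat\pi^*u)|_Z=[\wht D]|_Z\,u=\lambda u$.

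\emph{Part (2).} I use the second expression in \eqref{eq:abloc}. By the additive isomorphism (Proposition \ref{prop:addiso}, applied to $\td X$), $f_{\td X}=0$ gives
\[\alpha|_{X\times\{0\}}=-\iota_*\gamma_{r-1},\qquad \gamma_k=c_{r-1-k}(\nm_{Z/X})\gamma_{r-1}\ \text{ for } k\le r-2.\]
Substituting into $f_Z$ gives
\[f_Z=-\gamma_{r-1}\Big(c_r(\nm_{Z/X})+\sum_{k}(-\lambda)^{k+1}c_{r-1-k}(\nm_{Z/X})\Big)=\pm\gamma_{r-1}\,e_{-\lambda}(\nm_{Z/X}),\]
so $f_Z$ is divisible by $e_{-\lambda}(\nm_{Z/X})$.

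The correcting submanifold is the proper transform of $Z\times\cp^1$, which is isomorphic to $Z\times\cp^1$ since the centre $Z\times\{0\}$ is a divisor in it. It is disjoint from $\td X$ and meets the fixed loci only in $Z$ (at $0$) and $Z\subset X$ (at $\infty$). By \eqref{eq:nm}, its normal bundle at the $0$-end is $\nm_{Z/X}$ with weight $-1$. Therefore $i_{Z\times\cp^1*}(\pr_1^*u)$ restricts to $u\,e_{-\lambda}(\nm_{Z/X})$ on $Z$ and to $0$ on $\td X$. Subtracting this with $u=f_Z/e_{-\lambda}(\nm_{Z/X})$ leaves a class $h$ with $h_{\td X}=0$ and $h_Z=0$.

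\emph{Main obstacle.} The step I expect to be hardest is the last one in part (2): showing integrally, not just after localization, that a class $h$ vanishing on $\td X$ and $Z$ lies in $i_{X*}H_T^*(X)$. I would redo the part (1) analysis with the roles of $0$ and $\infty$ swapped. Vanishing of $h$ on $\td X$ and $Z$ should force the $\gamma$-components of $h$ to vanish by the computation above, since they are multiples of $\gamma_{r-1}$, which is now $0$. It should also force $\alpha$ to be divisible by $[\infty]$, i.e. $\alpha=[\infty]g'$. Then $h=\hat\varphi^*([\infty]g')=i_{X*}g'$, because $X\times\{\infty\}$ is disjoint from the blow-up centre. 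The step that needs care is confirming that $\alpha|_{X\times\{0\}}=0$ genuinely forces divisibility by $[\infty]$ in the quotient ring $H^*(X)[[0],[\infty]]/([0][\infty])$, and similarly for $[0]$ in part (1). This should follow from the presentation of that ring, in which the pair $[0],[\infty]$ plays the same role as $\lambda$ and $0$ do in the localization picture.
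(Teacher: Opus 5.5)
Your proof is correct; it differs from the route the paper indicates only in how the push-forward statements are obtained. The checks are as follows. In (1), the identities $\hat\varphi^*[X\times\{0\}]=[\td X]+[\wht D]$ and $[D\subset\wht D]=\pm(\hat p+\lambda)$ give exactly $f=\hat\jmath_*(\hat\pi^*f_Z/\lambda)+i_{\td X*}g$, with $f_Z/\lambda=b(\lambda)|_Z+\sum_k(-\lambda)^k\gamma_k$. In (2), $f_Z=-\gamma_{r-1}e_{-\lambda}(\nm_{Z/X})$, and you restrict $i_{Z\times\cp^1*}\pr_1^*u$ to $Z$ correctly using \eqref{eq:nm}.

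The step you flag as the main obstacle is already settled by your part (1) computation. Write the remainder as $h=\hat\varphi^*\alpha'+\sum_k\hat\jmath_*(\hat p^k\hat\pi^*\gamma'_k)$.
\begin{itemize}
\item Vanishing on $\td X$ gives $\gamma'_k=c_{r-1-k}(\nm_{Z/X})\gamma'_{r-1}$ and $\alpha'|_{X\times\{0\}}=-\iota_*\gamma'_{r-1}$.
\item Vanishing on $Z$ gives $\gamma'_{r-1}e_{-\lambda}(\nm_{Z/X})=0$. Hence $\gamma'_{r-1}=0$, because $e_{-\lambda}(\nm_{Z/X})$ has leading coefficient $(-1)^r$ in $\lambda$ and so is a non-zero-divisor in $H^*(Z)[\lambda]$. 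State this explicitly.
\item Writing $\alpha'=a+[0]b([0])+[\infty]c([\infty])$, the condition $a+\lambda b(\lambda)=0$ forces $a=b=0$.
\item Since the centre is disjoint from $X\times\{\infty\}$, this gives $h=\hat\varphi^*([\infty]c([\infty]))=i_{X*}(c(-\lambda))$.
\end{itemize}

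The paper gives no argument of its own. It cites Iritani and lists the inputs \eqref{eq:abloc}, \eqref{eq:projbdrel}, \eqref{eq:nm} and the Atiyah--Bott localization formula. You use the same restriction formulas; the projective bundle relation enters through the second form of \eqref{eq:abloc}. Where localization would be used, you instead do explicit bookkeeping in McDuff's decomposition and in the presentation $H^*(X)[[0],[\infty]]/([0][\infty])$. This writes the terms $i_{\td X*}g$ and $i_{X*}g$ down directly, so $g$ is automatically a genuine, non-localized class.

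A localization argument would subtract the correction term, observe that the difference vanishes on two of the three fixed components, and conclude that it is pushed forward from the third. That is quicker to state, but it needs an extra justification that this holds without inverting $\lambda$. One option is injectivity of restriction to fixed loci on the complement plus the Gysin sequence; another is a divisibility check by the equivariant Euler class of the normal bundle. Your explicit computation avoids exactly this issue.
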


We next describe the \emph{Kirwan maps} $\kappa_X:H^*_{T}(W)\to H^*(X)$ and $\kappa_{\td X}:H^*_{T}(W)\to H^*(\td X)$. For $\alpha\in H^*_{T}(W)$, $\kappa_X(\alpha)$ is obtained by restricting $\alpha$ to $X$ and setting $\lambda=0$; and $\kappa_{\td X}(\alpha)$ is obtained by restricting $\alpha$ to $\td X$ and setting $\lambda=[D]$. Kirwan maps are surjective \cite{Kir84}. 
By Lemma \ref{lem:abloc}, we deduce:
\begin{lemma}[{\cite[Lemma 3.8]{Iri25}}]\label{lem:kerkir}
	$\ker\kappa_{\td X}=i_{\td X*}H_T^*(\td X)+i_{X*}H_T^*(X)+i_{Z\times\cp^1*}(\pr_1^*H_T^*(Z)).$
\end{lemma}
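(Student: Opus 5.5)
We must show $\ker\kappa_{\td X}$ equals the sum $K:=i_{\td X*}H_T^*(\td X)+i_{X*}H_T^*(X)+i_{Z\times\cp^1*}(\pr_1^*H_T^*(Z))$. The plan has three steps: prove $K\subseteq\ker\kappa_{\td X}$ term by term, reduce the reverse inclusion to Lemma \ref{lem:abloc}(2), and handle the case $f_{\td X}\neq0$ by an explicit correction.

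\textbf{Step 1: $K\subseteq\ker\kappa_{\td X}$.} I check each summand. For $i_{X*}g$, the restriction to $\td X$ vanishes because $X$ and $\td X$ are disjoint. For $i_{\td X*}g$, the restriction to $\td X$ is $g\cdot e_T(\nm_{\td X/W})$. By \eqref{eq:nm} this Euler class is $\lambda-[D]$, which vanishes after setting $\lambda=[D]$. For $i_{Z\times\cp^1*}(\pr_1^*h)$, I view $Z\times\cp^1$ as a submanifold of $W$ via the proper transform. It meets $\td X=\mu^{-1}(\min)$ in a copy of $Z$, namely the fiber of $\hat\varphi^{-1}(Z\times\{0\})$ in the $\cp^1$ direction. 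The normal bundle of that copy in $\td X$ is $\nm_{Z/X}$, and $Z\times\cp^1$ is transverse to $\td X$ there. Hence the restriction to $\td X$ is $\iota'_*(h)$, where $\iota'$ embeds $Z$ in $\td X$ near $D$. I expect this to be the main obstacle, because it is not obviously zero after the substitution $\lambda=[D]$. I would compute it via the formula for $f_{\td X}$ in \eqref{eq:abloc}. Since $\iota'_*h$ is pushed forward from $D$, and its $\lambda$-dependence enters through $h\in H^*(Z)[\lambda]$, it should cancel: the restriction of the normal weight to $Z$ satisfies $[D]|_{\iota'(Z)}=\lambda|_Z$ via the correspondence given by $\hat p$. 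If the direct computation is messy, I would instead write the element $i_{Z\times\cp^1*}\pr_1^*h$ in the McDuff basis $\hat\varphi^*\alpha+\sum\hat\jmath_*(\hat p^k\hat\pi^*\gamma_k)$, apply \eqref{eq:abloc} for $f_{\td X}$, substitute $\lambda=[D]$, and use \eqref{eq:projbdrel} to get zero.

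\textbf{Step 2: reducing the reverse inclusion.} Let $\kappa_{\td X}(f)=0$. Then $f_{\td X}\in H^*(\td X)[\lambda]$ vanishes at $\lambda=[D]$. By division in the polynomial variable, $f_{\td X}=(\lambda-[D])g=e_T(\nm_{\td X/W})\,g$ for some $g\in H_T^*(\td X)$. Then $f':=f-i_{\td X*}g$ satisfies $f'_{\td X}=0$. Now Lemma \ref{lem:abloc}(2) applies and gives $f'=i_{Z\times\cp^1*}(\pr_1^*(f'_Z/e_{-\lambda}(\nm_{Z/X})))+i_{X*}g'$. This shows $f\in K$. The division step in Step 2 is routine, since $\lambda-[D]$ is monic in $\lambda$ and $[D]$ is nilpotent.
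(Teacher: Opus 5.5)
Your Step 2 is correct and is exactly how the paper obtains the nontrivial inclusion. Vanishing of $f_{\td X}$ at $\lambda=[D]$ gives $f_{\td X}=(\lambda-[D])g=e_T(\nm_{\td X/W})\,g$, so $f-i_{\td X*}g$ vanishes on $\td X$, and Lemma \ref{lem:abloc}(2) finishes. Your treatment of the first two summands in Step 1 is also fine.

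The gap is the third summand in Step 1: the geometry is wrong. The submanifold $Z\times\cp^1\subset W$ (the proper transform, isomorphic to $Z\times\cp^1$ because $Z\times\{0\}$ is a divisor in it) is \emph{disjoint} from $\td X$. To see this, note that $\wht D=\mathbb{P}(\nm_{Z/X}\oplus T_0\cp^1)$, with $T$ acting with weight $0$ on $\nm_{Z/X}$ and weight $1$ on $T_0\cp^1$. The proper transform of $X\times\{0\}$, which is $\td X$, meets $\wht D$ in $\mathbb{P}(\nm_{Z/X}\oplus 0)=D$. The proper transform of $Z\times\cp^1$ meets $\wht D$ in $\mathbb{P}(0\oplus T_0\cp^1)$. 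This latter locus is the ``copy of $Z$ in the $\cp^1$ direction'' you identified, but it is the middle fixed component $Z=\mu^{-1}(0)$, not a subset of $\td X$. Its normal bundle in $W$ is $(\nm_{Z/X}\otimes\underline\C)\oplus\underline\C$ with weights $(-1,1)$, as in \eqref{eq:nm}, not something containing $\nm_{Z/X}$ as a normal bundle inside $\td X$. Away from $\wht D$ the two submanifolds are disjoint, since $(X\times\{0\})\cap(Z\times\cp^1)=Z\times\{0\}$ is the blow-up center. Hence $i_{Z\times\cp^1*}(\pr_1^*h)$ restricts to $0$ on $\td X$ identically, before any substitution $\lambda\mapsto[D]$. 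The summand therefore lies in $\ker\kappa_{\td X}$ trivially.

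Your picture is not merely an unnecessary detour; taken literally, it would make the claim false. If $Z\times\cp^1$ met $\td X$ transversally in a copy $\iota'(Z)$, then for $h=1$ the restriction would be $\iota'_*1$. That class is nonzero (for instance the point class when $Z$ is a point) and has no $\lambda$-dependence, so it survives $\lambda\mapsto[D]$. The hoped-for cancellation via ``$[D]|_{\iota'(Z)}=\lambda|_Z$'' therefore cannot occur. Your McDuff-basis fallback, if carried out correctly, would have to return $f_{\td X}=0$ exactly, i.e.\ rediscover the disjointness. With Step 1 replaced by the disjointness observation, your proof is complete and follows the paper's route.
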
 

The \emph{dual Kirwan map} $\kappa_X^*:H_2(X,\Z)\to H_2^T(W,\Z)$ sends $d$ to $(d,0,0,0)$, and $\kappa_{\td X}^*:H_2(\td X,\Z)\to H_2^T(W,\Z)$ sends $\td d$ to $(\varphi_*\td d,0,-[D]\cdot\td d,[D]\cdot\td d)$. It follows from Lemma \ref{lem:wclass} that $\kappa_X^*$ embeds $\eff(X)$ into $C_X$ and $\kappa_{\td X}^*$ embeds $\eff(\td X)$ into $C_{\td X}$. Hence the Novikov rings $\nov Q,\nov{\td Q}$ of $X,\td X$ can be identified with subrings of $\nov{C_X},\nov{C_{\td X}}$ respectively. Explicitly, $\td Q^{\td d}$ is identified with $Q^{\varphi_*\td d}(y^{-1}S)^{[D]\cdot\td d}=\sq^{i_*\td d}S^{[D]\cdot\td d}$, where $i:\td X\to W$ is the inclusion. These identifications are compatible with the grading.

We write $\theta\in H^*_{T}(W)$, and let $\bth=\{\theta^{i,k}\}$ denote the infinite set of variables dual to a $\C$-basis $\{\phi_i\lambda^k\}$ of $H^*_{T}(W)$. Then the equivariant quantum D-module of $W$ is $\qdm_T(W)=H_T^*(W)[z]\fps{\sq,\bth}$ equipped with the equivariant quantum connection $\nabla$ and pairing $P_W$. It is also equipped with the action of shift operators $\seiht^\beta$ for $\beta\in H_2^T(W,\Z).$ We write $\mathbb S:=\seiht^{(0,0,0,1)}.$ It follows from the definition of shift operators that for general $\beta=(d,k,l,m)\in H_2^T(W,\Z)$, we have $\seiht^\beta=Q^dx^ky^l\mathbb S^m.$
Using $[X]|_X=-\lambda$ and $[\wht D]|_Z=\lambda$, we find that the section class $\sigma_F(k)\in H_2^\textrm{sec}(E_k(W),\Z)$ associated to each fixed component $F=X,Z,\td X$ is \[\sigma_X(k)=(0,-k,0,k),\ \sigma_Z(k)=(0,0,-k,k),\ \sigma_{\td X}(k)=(0,0,0,k).\] The maximal component $F_\textrm{max}(k)$ is $X$ if $k>0$, $W$ if $k=0$, and $\td X$ if $k<0$. Thus the corresponding section class $\sigma_\textrm{max}(k)=(0,\min(-k,0),0,k)$, and hence the shift operator defines a map $\sei^k:\qdm_T(W)\to x^{\min(k,0)}\qdm_T(W)$.

\begin{proposition}[{\cite[Proposition 3.9]{Iri25}}]\label{prop:shftact}
	The equivariant quantum D-module $\qdm_T(W)$ has the structure of a $\C[z]\fps{C^T(W)}$-module, where $\C[z]\fps{C^T(W)}$, as a $\C[z]$-algebra, is topologically generated by $\sq^\delta$ for $\delta\in\eff(W)$, $\mathbb S$, and $x\mathbb S^{-1}$.
\end{proposition}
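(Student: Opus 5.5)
The plan is to use the shift operator relations of Corollary \ref{cor:shftcomm} together with the effective-class bounds of Lemma \ref{prop:efsec} and Lemma \ref{lem:wclass}. The goal is to show that the only negative powers of Novikov variables that can arise in $\qdm_T(W)$ are compensated, so that the monoid algebra $\C[z]\fps{C^T(W)}$ acts. The generators will act as follows:
\begin{itemize}
	\item $\sq^\delta$ for $\delta\in\eff(W)$ acts by multiplication by the Novikov variable; this is the case $\beta\in H_2(W,\Z)$ of the shift operator.
	\item $\mathbb S$ acts as $\seiht^{(0,0,0,1)}$.
	\item $x\mathbb S^{-1}$ acts as $\seiht^{(0,1,0,-1)}$.
\end{itemize}
Corollary \ref{cor:shftcomm} gives the representation property $\seiht^{\beta_1+\beta_2}=\seiht^{\beta_1}\seiht^{\beta_2}$ and $\seiht^0=\id$. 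It also shows that shift operators commute with Novikov multiplication, since for classes in $H_2(W,\Z)$ they are exactly Novikov multiplication. So the assignment above defines an action of the group ring $\C[H_2^T(W,\Z)]$, but a priori only as maps into a module with poles, namely $Q^{\beta+\sigma_\mathrm{max}(-\overline\beta)}\qdm_T(W)$. We must also check that the action is $\C[z]$-linear. Relation \eqref{eq:shift} shows that shift operators commute with $z$, though not with $\lambda$; this is why the statement is phrased over $\C[z]$ only.

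The key step is a target estimate for each monoid generator, which shows that the operators actually preserve $\qdm_T(W)=H^*_T(W)[z]\fps{\sq,\bth}$. From the computation preceding the proposition, $\sigma_\mathrm{max}(k)=(0,\min(-k,0),0,k)$. For $\beta=(0,0,0,1)$ we have $k=-\overline\beta=-1$, so $\beta+\sigma_\mathrm{max}(-1)=(0,0,0,0)$. Hence $\mathbb S$ maps $\qdm_T(W)$ into itself, using Lemma \ref{prop:efsec}: $\eff(E_k)\subseteq\sigma_\mathrm{max}(k)+\eff(W)$ ensures only nonnegative powers of $\sq$ appear. For $\beta=(0,1,0,-1)$ we have $k=1$, so $\beta+\sigma_\mathrm{max}(1)=(0,1,0,-1)+(0,-1,0,1)=0$, and $x\mathbb S^{-1}$ also preserves $\qdm_T(W)$. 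More generally, for $\beta\in C^T(W)$ the quantity $\beta+\sigma_\mathrm{max}(-\overline\beta)$ lies in $\eff(W)$. Therefore every monomial of $\C[z][C^T(W)]$ acts on $\qdm_T(W)$, and the action respects the $\sq$-adic filtration.

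The step I expect to be the main obstacle is convergence: extending from the monoid ring to the completion $\C[z]\fps{C^T(W)}$. An infinite sum $\sum_\beta c_\beta\beta$ with $\beta\in C^T(W)$ must act by a convergent series in $\qdm_T(W)$. Infinitely many monomials have bounded $\sq$-degree, for example powers of $\mathbb S$ alone, so the $\sq$-adic topology by itself is not enough. My first approach would be a grading argument. Each $\seiht^\beta$ is homogeneous of degree $2c_1^T(W)\cdot\beta$, and we work with graded completions. In the graded completion $\C[z]\fps{C^T(W)}$, infinite sums are restricted to finitely many degrees. I would then use the fact that $\mathbb S$ has degree $2$ while $x\mathbb S^{-1}$ has degree $2$ as well (since $\deg x=4$). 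Consequently, within a fixed degree, large powers of $\mathbb S$ and $x\mathbb S^{-1}$ must be balanced by positive-degree factors. In the graded piece of $\qdm_T(W)$, which is bounded below in $\lambda,z$-degree for fixed $\sq$-order, such terms must have large $\sq$- or $\bth$-order, since $\bth$ has negative-degree components. This forces convergence in the $(\sq,\bth)$-adic topology. If the grading argument is insufficient, I would fall back on Iritani's explicit topology from \cite[Proposition 3.9]{Iri25} and verify that the symplectic inputs used there, namely the effective cone description and homogeneity, are exactly Lemma \ref{lem:wclass} and Lemma \ref{prop:efsec}.
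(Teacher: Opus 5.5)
Your main step is exactly what the paper relies on. The paper gives no separate proof, only the remark before the proposition that $\sigma_{\mathrm{max}}(k)=(0,\min(-k,0),0,k)$, so that $\mathbb S^k$ maps $\qdm_T(W)$ into $x^{\min(k,0)}\qdm_T(W)$. Combined with $\seiht^\beta=Q^dx^ky^l\mathbb S^m$ and Corollary \ref{cor:shftcomm}, this is precisely your computation that $\mathbb S=\seiht^{(0,0,0,1)}$ and $x\mathbb S^{-1}=\seiht^{(0,1,0,-1)}$ preserve $\qdm_T(W)$. Your $\C[z]$-linearity remark via \eqref{eq:shift} is also right.

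The extension to the completion, which the paper leaves implicit, is where your sketch goes wrong, in two ways.

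First, at fixed total degree, large exponents of $\mathbb S$ and $x\mathbb S^{-1}$ (each of degree $2$) must be offset by \emph{negative}-degree factors, namely $\sq^\delta$ with $c_1(W)\cdot\delta\ll0$, not by positive-degree ones.

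Second, no smallness can be read off from degrees on the module side. A fixed-degree piece of $\qdm_T(W)$ is infinite-dimensional, since the $\bth$-variables (and possibly $\sq$) have negative degree, so lying in it forces no $(\sq,\bth)$-adic decay.

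The decay should instead come from your own target estimate. Write $\beta=\delta+a(0,1,0,-1)+b(0,0,0,1)$ with $\delta\in\eff(W)$ and $a,b\in\N$. Then $\seiht^\beta\qdm_T(W)\subset\sq^{\gamma(\beta)}\qdm_T(W)$, where
$\gamma(\beta)=\beta+\sigma_{\mathrm{max}}(-\overline\beta)=\delta+\min(a,b)(0,1,0,0)$.

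Now fix a degree $2c_1(W)\cdot\delta+2a+2b+2m$ for monomials $z^m\beta$. Only finitely many such $\beta$ have $\omega(\gamma(\beta))$ bounded:
\begin{enumerate}
\item a bound on $\omega(\gamma(\beta))$ bounds $\min(a,b)$, since $\omega(0,1,0,0)>0$;
\item it also leaves only finitely many $\delta$, by Gromov compactness;
\item once $\delta$ is fixed, the degree constraint with $m\ge0$ bounds $a+b$.
\end{enumerate}
Hence $\omega(\gamma(\beta))\to\infty$ along any infinite sum in $\C[z]\fps{C^T(W)}$. The action therefore converges $\sq$-adically in $\qdm_T(W)$, and the $\bth$-variables play no role.
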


We also write $\sft:=\sftht^{(0,0,0,1)}:\gsp_W^\text{rat}\to \gsp_W^\text{rat}$ for the shift operator on the rational Givental space. 
Unpacking the definitions we see that \begin{equation}\label{eq:shft} \begin{split}
	&(\sft^k\ff)_X=x^k\frac{\prod_{m=-\infty}^0-\lambda+mz}{\prod_{m=-\infty}^k-\lambda+mz}e^{-kz\partial_\lambda}\ff_X,\\
	&(\sft^k\ff)_Z=y^k\frac{\prod_{m=-\infty}^0e_{-\lambda+mz}(\nm_{Z/X})}{\prod_{m=-\infty}^ke_{-\lambda+mz}(\nm_{Z/X})}\frac{\prod_{m=-\infty}^0\lambda+mz}{\prod_{m=-\infty}^{-k}\lambda+mz}e^{-kz\partial_\lambda}\ff_Z,\\
	&(\sft^k\ff)_{\td X}=\frac{\prod_{m=-\infty}^0-[D]+\lambda+mz}{\prod_{m=-\infty}^{-k}-[D]+\lambda+mz}e^{-kz\partial_\lambda}\ff_{\td X}, 
	\end{split}
\end{equation} where $\ff_F$ is the restriction of the $H_{\wht T}^*(W)_\textrm{loc}$-valued function $\ff$ to the fixed component $F$.

\subsection{Fourier transforms}

We begin by introducing Iritani's \emph{continuous Fourier transform.} The key geometric input is the following theorem, which interprets the virtual localization formula in terms of Givental cones. Similar results can be found in \cite{Bro14} and \cite{FL20}; both authors attribute the idea to Givental. \begin{theorem}[\cite{Bro14,FL20}]\label{thm:givloc}
	Let $F$ be a $T$-fixed component of $W$. Let $\ff $ be a point in the $T$-equivariant Givental cone $\gcn_W$. Let $\ff_F$ be the restriction of $\ff$ to $F$. Then the Laurent expansion at $z=0$ of $\ff_F$ lies in the Givental cone of $F$ twisted by the normal bundle $\nm_{F/W}$ and the inverse equivariant Euler class $e_T^{-1}$.
\end{theorem}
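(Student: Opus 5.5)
We prove Theorem \ref{thm:givloc} by applying the virtual localization formula (Theorem \ref{thm:loc}) to the big $J$-function $\bigj_W(\bt)$ and reorganizing the result as an application of the quantum Riemann--Roch theorem (Theorem \ref{thm:qrr}) to $F$.

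Write $\bigj_W(\bt)$ as $z+\bt(z)$ plus the sum of correlators $\langle\bt(-\psi),\dots,\bt(-\psi),\phi_i/(z-\psi)\rangle^{W,T}_{0,n+1,d}$, and restrict to $F$. Apply Theorem \ref{thm:loc} to each correlator. The $T$-fixed loci are indexed by decorated graphs. Vertices map to fixed components and carry stable maps into those components. Edges are multiple covers of $T$-invariant spheres joining fixed components. Since we restrict to $F$, the special leg carrying $\phi_i/(z-\psi)$ lies on a vertex over $F$.

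Organize the graph sum around the special marked point. There are two cases:
\begin{enumerate}[(a)]
\item The special point lies on a nontrivial vertex moduli space $F_{0,m+1,d'}$.
\item The special point lies on an edge, or on a degenerate (unstable) vertex at the end of an edge.
\end{enumerate}

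In case (a), cut the graph at that vertex. Each branch leaving the vertex through a node on an edge contributes a factor of the form $\bt$-input plus a term $\frac{1}{-z'-\psi}$ from node smoothing, with $z'$ the edge weight. In this way the vertex becomes a correlator on $F$ whose inputs are $\bt(-\psi)|_F$ plus the branch contributions. The vertex integrand also contains the part of the virtual normal bundle coming from deforming the vertex map in the normal directions of $F$, namely the $K$-class $(\nm_{F/W})_{0,m+1,d'}$ with inverse equivariant Euler class. This is exactly the $(\nm_{F/W},e_T^{-1})$-twisted invariant of $F$.

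The branch contributions are elements $\mathbf{g}(z)$ that are regular at $z=0$: their poles lie only at $z=-(\text{weight})/k\neq0$. So after Laurent expansion at $z=0$ they are legitimate inputs in $\gsp_+^{\textrm{tw}}$. Summing case (a) over graphs therefore produces
\[
\bigj^{\textrm{tw}}_F\bigl(\bt|_F+\mathbf g\bigr)-\bigl(z+\bt|_F+\mathbf g\bigr),
\]
where the normal-bundle factor $1/e_T(\nm_{F/W})$ at the special point is absorbed into the twisted pairing $\phi^i/\cc(V)$.

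In case (b), the special point sits on a degenerate vertex, so $\frac{1}{z-\psi}$ becomes $\frac{1}{z-(\text{edge weight})/k}$. These terms have poles only away from $z=0$, so they are exactly the $\mathbf g$ term. This shows that the Laurent expansion of $\ff_F$ at $z=0$ is a point $\bigj^{\textrm{tw}}_F(\bt|_F+\mathbf g)$ of the twisted cone $\gcn^{\textrm{tw}}_F$, which proves the statement.

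The main obstacle is the bookkeeping of the virtual normal bundle. We must check that the contributions from the nodes joining vertices to edges, the automorphisms of edge covers, and the flag terms $N_{F/W}$ at the nodes combine correctly, so that the vertex integrand is precisely $e_T^{-1}((\nm_{F/W})_{0,m+1,d'})$ with no leftover factors and the branch factors stay regular at $z=0$. We would first carry out this check on a single-edge graph and then extend it by induction on the number of edges, using the splitting of $H^0-H^1$ over nodal curves. Theorem \ref{thm:loc} is available in the global Kuranishi chart setting, so only this combinatorics needs verification. It mirrors the algebraic arguments of \cite{Bro14,FL20}.
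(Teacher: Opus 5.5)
Your proposal is correct and is essentially the paper's argument: your case (b) is the paper's type-A contribution and case (a) its type-B contribution, your branch term $\mathbf g$ is exactly what the paper folds into $\tau_F(z)=\bt_F(z)+\mathbf g(z)$ (regular at $z=0$ because $\psi$ carries nonzero weight there), and the vertex over $F$ carrying the special point is read as an $(\nm_{F/W},e_T^{-1})$-twisted invariant of $F$ with inputs $\tau_F(-\psi)$, with node-smoothing factors $1/(-\psi_k-\psi)$. The only slip is your opening remark: Theorem~\ref{thm:qrr} plays no role in proving this statement, and the paper invokes it only afterwards, to pass from the twisted cone to $\gcn_F$.
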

\begin{proof}
	The restriction $\ff_F$ is of the form \[\ff_F=z+\bt_F+\sum_i\sum_{n,\delta}\phi_F^i\left\langle\bt(-\psi),\dots,\bt(-\psi),\frac{i_{F*}\phi_{F,i}}{z-\psi}\right\rangle_{0,n+1,\delta}^{W,T}\frac{\sq^\delta}{n!}\] where $\bt_F(z)=\bt(z)|_F$, $\{\phi_{F,i}\}$ is a basis of $H^*(F)$, $\{\phi_{F}^i\}$ is the dual basis so $(\phi_{F,i},\phi_{F}^j)_F=\delta_i^j$, and $i_F:F\to W$ is the inclusion map. By Theorem \ref{thm:loc}, contributions come from the following two types of $T$-invariant curves: \begin{itemize}
		\item Type-A: the $\psi$-class at the last marked point carries nontrivial $T$-weight; this happens when the last marked point lies in a component covering the base $\cp^1$.
		\item Type-B: the $\psi$-class at the last marked point carries trivial $T$-weight; this happens when the last marked point lies in a component contained in $F$.
	\end{itemize}
	We single out the type-A contributions and set \[\tau_F(z):=\bt_F(z)+\sum_i\sum_{n,\delta}\phi_F^i\left\langle\bt(-\psi),\dots,\bt(-\psi),\frac{i_{F*}\phi_{F,i}}{z-\psi}\right\rangle_{0,n+1,\delta}^{\textrm{type-A}}\frac{\sq^\delta}{n!}.\] Since $\psi$ is invertible on type-A fixed loci, we may instead write the denominator as $\frac{z}{\psi}-1$ and take the expansion of $\tau_F(z)$ at $z=0$, which gives an element in $\gsp^\textrm{tw}_+\subset\gsp^\textrm{tw}_F.$ We claim that the type-B contributions can be expressed as the $(\nm_{F/W},e_T^{-1})$-twisted Gromov--Witten invariants of $F$ with insertions $\tau_F(-\psi)$. More explicitly, we have \[\ff_F=z+\tau_F(z)+\sum_i\sum_{\substack{n\ge0\\d\in H_2(F,\Z)}}e_T(\nm_{F/W})\phi_F^i\left\langle\tau_F(-\psi),\dots,\tau_F(-\psi),\frac{\phi_{F,i}}{z-\psi}\right\rangle_{0,n+1,d}^{F,(\nm_{F/W},e_T^{-1})}\frac{\sq^{i_{F*}d}}{n!}.\] Then we see that the expansion at $z=0$ of $\ff_F$ lies in the twisted Givental cone of $F$. 
	
	It remains to prove the claim. In a type-B curve $\Sigma$, the components $C$ contained in $F$, viewed as a standalone curve, contributes to a Gromov--Witten invariant of $F$ twisted by $(\nm_{F/W},e_T^{-1})$. The other marked points of $C$ are either marked points of $\Sigma$ that happened to lie in $C$, or are originally nodes of $\Sigma$ joining $C$ to a component covering the base $\cp^1$. The first group of marked points have insertions of $\bt_F(-\psi)$. The second group of marked points have insertions of $\phi_F^i\left\langle\bt(-\psi),\dots,\bt(-\psi),\frac{i_{F*}\phi_{F,i}}{-\psi_k-\psi}\right\rangle$, where the factor $-\psi_k-\psi$ in the denominator is the virtual normal bundle contribution corresponding to smoothing the node. In total, the insertion is $\tau_F(-\psi)$. The factor $e_T(N_{F/W})$ comes from changing between the pairing on $W$ and the pairing on $F$. This proves the claim.
\end{proof}
Consider the weight decomposition of the normal bundle $\nm_{F/W}=\bigoplus_\alpha \nm_\alpha$. Let $\Delta_\alpha$ denote the quantum Riemann--Roch operator associated with the $(\nm_\alpha,e_\alpha^{-1})$-twist. Since $\ff_F$ lies in the $e_T^{-1}$-twisted Givental cone, in view of Theorem \ref{thm:qrr}, $\left(\prod_\alpha\Delta_\alpha^{-1}\right)\ff_F$ should lie in the untwisted Givental cone of $F$. Iritani writes down an operator $G_F$ with the property that its asymptotic expansion for $z\to0$ gives rise to $\prod_\alpha\Delta_\alpha$. Concretely, \[G_F=\prod_{\varrho}\frac{1}{\sqrt{-2\pi z}}(-z)^{-\varrho/z}\Gamma\left(-\frac{\varrho}{z}\right)\] where the product is taken over the $T$-equivariant Chern roots $\varrho$ of $\nm_{F/W}$.
Then Iritani computes the $z\to0$ asymptotics of an oscillatory integral involving $G_F$ (a Mellin-Barnes integral), using the stationary phase method. The result is that \[\int e^{\lambda\log S_F/z}G_F\ff_Fd\lambda\sim\sqrt{2\pi z}e^{c_F\lambda_j/z}\cft_{F,j}(\ff)\qquad \textrm{ as } z\to0\] for $j=0,1,\dots,|c_F|-1$, where
\[\cft_{F,j}(\ff):=\sqrt{\frac{\lambda_j}{c_F}}\left(\prod_\alpha(w_\alpha \lambda_j)^{-\rho_\alpha/z-r_\alpha/2}\right)\left(e^{z(\partial_u)^2/2}\Phi_\ff(u)\right)_{u=0}\] and \begin{equation*}\label{eq:bigphi}
	\Phi_\ff(u):=e^{-\frac{g(u,\lambda_j)}{z}+\frac{u}{\sqrt{c_F\lambda_j}}\left(1-\rho_F/z-r_F/2\right)}\left(\prod_\alpha\td\Delta_\alpha^{-1}\right)\ff_F.
\end{equation*} The relevant notations are listed below: \begin{itemize}
	\item $w_\alpha\in H_T^2(\pt,\Z)=\Z$ corresponds to $\alpha\in\Hom(T,S^1)$,
	\item $r_\alpha=\rank \nm_\alpha$, $r_F=\sum_\alpha r_\alpha$,
	\item $\rho_\alpha=c_1(\nm_\alpha)$, $\rho_F=\sum_\alpha \rho_\alpha$,
	\item $c_F=\sum_\alpha r_\alpha w_\alpha$,
	\item $S_F=\wht S^{\sigma_F(1)}$,
	\item $\lambda_j=e^{2\sqrt{-1}\pi j/c_F}S_F^{1/c_F}\prod_\alpha w_\alpha^{-r_\alpha w_\alpha/c_F}$,
	\item $g(u,\lambda_j)=\sum_{n=3}^\infty\frac{n-1}{n}\frac{u^n}{(c_F\lambda_j)^{n/2-1}}$,
	\item $\td\Delta_\alpha$ is the quantum Riemann--Roch operator associated with the $(\nm_\alpha,\td e_\alpha^{-1})$-twist.
\end{itemize} 
Extending linearly over $\nov\sq$, we obtain maps \[\cft_{F,j}:\gsp_W^\textrm{rat}\to S_F^{-{\rho_F}/{(c_Fz)}-{(r_F-1)}/{(2c_F)}}H^*(F)[z,z^{-1}]\laur{S_F^{-1/c_F}}\fps{\sq}.\] We also write \begin{align*}
	&q_{F,j}:=\sqrt{\lambda_j/c_F}\prod_\alpha(w_\alpha\lambda_j)^{-r_\alpha/2},\\
	&h_{F,j}:=-\frac{2\sqrt{-1}\pi j}{c_F}\rho_F+\sum_\alpha\left(\frac{r_\alpha w_\alpha}{c_F}\rho_F-\rho_\alpha\right)\log w_\alpha.
\end{align*}
For $\ff_F=\lambda^n\phi+O(\lambda^{n-1})$, the leading order term of $\cft_{F,j}(\ff)$ as a Laurent series of $S_F^{-1/c_F}$ is given by \begin{equation}\label{eq:cftlot}
	\cft_{F,j}(\ff)=q_{F,j}e^{h_{F,j}/z}S_F^{-\rho_F/(c_Fz)}\lambda_j^n\left(\phi+O(S_F^{-1/c_F})\right).
\end{equation}

\begin{proposition}[{\cite[Proposition 4.7]{Iri25}}]\label{prop:cftprop}
	The maps $\cft_{F,j}$ satisfy the properties: \begin{enumerate}[(1)]
		\item $\cft_{F,j}(\sftht^\beta\ff)=\wht S^\beta\cft_{F,j}(\ff)$,
		\item $\cft_{F,j}(\lambda\ff)=(zS\frac{\partial}{\partial S}+\lambda_j)\cft_{F,j}(\ff),$
		\item $\cft_{F,j}((z\partial_z-z^{-1}c_1^T(W)+\mu_W+\frac{1}{2})\ff)=(z\partial_z-z^{-1}(c_1(F)+c_F\lambda_j)+\mu_F)\cft_{F,j}(\ff).$
	\end{enumerate}
\end{proposition}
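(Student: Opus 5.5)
We would prove the three properties by working one fixed component $F$ at a time and tracking how each operation on $\ff$ transforms under the Mellin--Barnes integral $I(\ff):=\int e^{\lambda\log S_F/z}G_F\ff_F\,d\lambda$. Each $\cft_{F,j}(\ff)$ is, by construction, the coefficient of the stationary-phase asymptotics of $I(\ff)$ at the critical point $\lambda_j$. So it suffices to prove the corresponding identities for $I(\ff)$ itself, read at the level of formal asymptotic expansions, and then note that the identities pass to each saddle-point contribution. Passing to saddle points is legitimate because the prefactor $\sqrt{2\pi z}\,e^{c_F\lambda_j/z}$ is determined by the phase alone, and the expansions at distinct $\lambda_j$ are linearly independent over the relevant coefficient ring. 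We expect the author's proof to follow \cite{Iri25} by verifying the identities on the integral.

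For (2), we write $\lambda e^{\lambda\log S_F/z}=zS\partial_S\,e^{\lambda\log S_F/z}$. This uses $S_F=\wht S^{\sigma_F(1)}$, whose $S$-exponent is $1$ for each $F$, so $S\partial_S\log S_F=1$. This gives $I(\lambda\ff)=zS\partial_S I(\ff)$. Differentiating the asymptotic form $\sqrt{2\pi z}\,e^{c_F\lambda_j/z}\cft_{F,j}$ in $S$ produces the extra term $\lambda_j$, since $zS\partial_S(c_F\lambda_j/z)=\lambda_j$ from $\lambda_j\propto S_F^{1/c_F}$.

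For (1), we use the explicit formula for $\sftht^\beta$ on $F$. It consists of a Novikov factor $\wht S^{\beta+\sigma_F(-\overline\beta)}$, a ratio of products of $(\rho+\alpha+mz)$, and the shift $e^{-z\overline\beta\partial_\lambda}$. The key identity is the functional equation $\Gamma(x+1)=x\Gamma(x)$. With it, $G_F(\lambda+kz)$ differs from $G_F(\lambda)$ by exactly the inverse of that product ratio, up to the powers $(-z)^{-\varrho/z}$, which contribute a factor absorbed by $S_F^{k}$. Changing variables $\lambda\mapsto\lambda+\overline\beta z$ in the integral then turns $G_F\cdot\sftht^\beta\ff_F$ into $\wht S^{\beta}\,G_F\ff_F$ times $e^{\lambda\log S_F/z}$. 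The shift of $e^{\lambda\log S_F/z}$ supplies $S_F^{-\overline\beta}$, which combines with $\wht S^{\sigma_F(-\overline\beta)}$ to give exactly $\wht S^\beta$. This bookkeeping of Novikov exponents and signs is the main obstacle. In particular, we must check that the contour shift is harmless, which it is at the level of formal asymptotics since no poles are crossed in the stationary-phase regime.

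For (3), we use homogeneity. The integrand $e^{\lambda\log S_F/z}G_F\ff_F\,d\lambda$ is homogeneous once $\deg\lambda=\deg z=2$ and $\deg S$ are taken into account. The Euler operator $z\partial_z-z^{-1}c_1^T(W)+\mu_W$ on $W$ restricts on $F$ to $z\partial_z-z^{-1}(c_1(F)+\rho_F)-z^{-1}\sum w_\alpha r_\alpha\lambda$ plus the grading, because $c_1^T(W)|_F=c_1(F)+\sum_\alpha(\rho_\alpha+r_\alpha w_\alpha\lambda)$. The Stirling-type factor $(-z)^{-\varrho/z}\Gamma(-\varrho/z)/\sqrt{-2\pi z}$ is designed so that $z\partial_z$ applied to $G_F$ absorbs the $\rho_F$ term and the half-rank shifts. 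The term $c_F\lambda$ then appears at the critical point as $c_F\lambda_j$, and $\mu_W$ versus $\mu_F$ differ by $r_F/2$. The extra $\tfrac12$ on the left is exactly the contribution of $d\lambda$ together with the $\sqrt{2\pi z}$ normalization. Concretely, we would verify that the operator commutes with $I$ up to these shifts, and then read off the claimed identity at each $\lambda_j$.
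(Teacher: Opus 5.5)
Your proposal follows the paper's proof step for step. In (1), the Gamma functional equation gives $G_F(\sftht^\beta\ff)_F=\wht S^{\beta+\sigma_F(-\overline\beta)}e^{-z\overline\beta\partial_\lambda}(G_F\ff_F)$, and a shift of the integration variable finishes; (2) comes from $\lambda e^{\lambda\log S_F/z}=zS\partial_S e^{\lambda\log S_F/z}$ together with $zS\partial_S e^{c_F\lambda_j/z}=\lambda_j e^{c_F\lambda_j/z}$; and (3) is a degree argument.

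There are two bookkeeping slips in (1), neither affecting the argument. First, the change of variables produces $S_F^{+\overline\beta}$, which is what cancels $\wht S^{\sigma_F(-\overline\beta)}=S_F^{-\overline\beta}$. Second, the shifted factors $(-z)^{-\varrho/z}$ are not absorbed into $S_F$; they supply exactly the powers of $-z$ that convert the Gamma-function ratios into the products of linear factors $\varrho+mz$ appearing in $\sftht^\beta$.
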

\begin{proof}
	From the definitions of $
	\sftht^\beta$ and $G_F$, we see that \[G_F(\sftht^\beta\ff)_F=Q^{\beta+\sigma_F(-\overline\beta)}e^{-z\overline\beta\partial_\lambda}(G_F\ff_F).\] Then \[\begin{split}
		\int e^{\lambda\log S_F/z}G_F(\sftht^\beta\ff)_Fd\lambda&=Q^{\beta+\sigma_F(-\overline\beta)}\int e^{\lambda\log S_F/z}e^{-z\overline\beta\partial_\lambda}G_F\ff_Fd\lambda\\
		&=Q^{\beta+\sigma_F(-\overline\beta)}\int e^{(\lambda+\overline\beta z)\log S_F/z}G_F\ff_Fd\lambda\\
		&=\wht S^\beta\int e^{\lambda\log S_F/z}G_F\ff_Fd\lambda.
	\end{split}\] Since $\cft_{F,j}$ is given by the asymptotics of the integral, this proves item (1). Item (2) follows from \[\int e^{\lambda\log S_F/z}G_F(\lambda\ff)_Fd\lambda=zS\partial_S\int e^{\lambda\log S_F/z}G_F\ff_Fd\lambda\] and the fact that \[zS\partial_S(e^{c_F\lambda_j/z})=\lambda_je^{c_F\lambda_j/z}.\] Item (3) follows from a degree argument; see \cite[Proposition 4.7]{Iri25} for details.
\end{proof}

The Fourier transform $\cft_{F,j}$ maps the equivariant Givental cone of $W$ to the Givental cone of $F$. More precisely:
\begin{proposition}[{\cite[Corollary 4.9]{Iri25}}]\label{prop:cft}
	The continuous Fourier transform of the equivariant J-function $J_W(\theta)$ of $W$ is of the form \[\cft_{F,j}(J_W(\theta))=S_F^{-\rho_F/(c_Fz)}M_F(\tau;Q_FS_F^{-\rho_F/c_F})v|_{Q_F\to\sq}\] for some $\tau=\tau(\bth)\in H^*(F)[S_F^{1/c_F},S_F^{-1/c_F}]\fps{\sq,\bth}$ and $v=v(\bth)\in q_{F,j}H^*(F)[z]\laur{S_F^{-1/c_F}}\fps{\sq}$, whose leading order terms are $h_{F,j}$ and $q_{F,j}$ respectively; $\tau$ is homogeneous of degree $2$ and $v/q_{F,j}$ is homogeneous of degree $0$; the subscript $Q_F\to\sq$ means to replace each $Q_F^d$ with $\sq^{i_{F*}d}$; and $\bth_FS_F^{\bullet/c_F}$ denotes the infinite set of variables $\{\theta_F^{i,k}S_F^{k/c_F}\}_{i,k}$.
\end{proposition}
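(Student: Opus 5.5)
We plan to combine Theorem \ref{thm:givloc}, the quantum Riemann--Roch theorem (Theorem \ref{thm:qrr}), and the stationary phase asymptotics defining $\cft_{F,j}$. The overall strategy is to show that $\cft_{F,j}$ carries points of the equivariant Givental cone $\gcn_W$ to points of the (suitably base-changed) untwisted Givental cone $\gcn_F$. We then show that, for the J-function specifically, the image is a point of the form $M_F(\tau)v$, i.e.\ lies in a tangent space $zT_\tau$ of $\gcn_F$.

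\textbf{Step 1: from $W$ to the twisted cone of $F$.} Take $\ff=zJ_W(\theta)\in\gcn_W$. By Theorem \ref{thm:givloc}, the $z\to0$ expansion of $\ff_F$ lies in the $(\nm_{F/W},e_T^{-1})$-twisted cone of $F$. By Theorem \ref{thm:qrr}, applied eigenbundle by eigenbundle, we get $\ff_F\in\prod_\alpha\Delta_\alpha\,\gcn_F$.

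\textbf{Step 2: Fourier transform lands in the untwisted cone.} Next we compare $\Delta_\alpha$ with $\td\Delta_\alpha$. They differ by a factor depending only on $\lambda$, $z$ and $\rho_\alpha$, coming from $e_\alpha=\lambda^{r_\alpha}\td e_\alpha$ up to weights. This factor is exactly what the Gaussian-type kernel $e^{-g(u,\lambda_j)/z+\dots}$ and the Stirling asymptotics of $G_F$ absorb. Consequently $\Phi_\ff(u)$ is an element of $\gcn_F$, depending on the auxiliary parameter $u$, after substituting $\lambda=\lambda_j+\sqrt{c_F\lambda_j}\,u$ and reparametrizing $Q_F\mapsto Q_FS_F^{-\rho_F/c_F}$; the prefactor $S_F^{-\rho_F/(c_Fz)}$ accounts for the twist by $e^{-\rho_F\log S_F/(c_Fz)}$, i.e.\ the divisor equation. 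We then use the overruled geometry of $\gcn_F$: its tangent spaces are $\C[z]$-modules, and $e^{z\partial_u^2/2}$ is a differential operator in $u$ with $z$-polynomial coefficients. Hence $(e^{z\partial_u^2/2}\Phi_\ff)|_{u=0}$ remains in $zT_\tau$ for the $\tau$ at which the family $\Phi_\ff(u)$ is tangent. Derivatives in $u$ of a family in the cone lie in $T_{\tau(u)}$, and one expands around $u=0$ using $zT\subset\gcn_F$. This step is the main obstacle. It requires making rigorous that the formal stationary phase expansion commutes with the cone structure, and keeping track of the $S_F^{-1/c_F}$-adic convergence.

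\textbf{Step 3: identify $\tau$, $v$ and their leading terms.} Since $zT_\tau=zM_F(\tau)\gsp_+$, we may write $\cft_{F,j}(J_W)=S_F^{-\rho_F/(c_Fz)}M_F(\tau)v$ with $v$ polynomial in $z$. Homogeneity follows from Proposition \ref{prop:cftprop}(3): $J_W$ is homogeneous up to the $\tfrac12$ shift in $\mu$, and $M_F$ intertwines $\nabla_{z\partial_z}$ with the grading. This forces $\deg\tau=2$ and $\deg(v/q_{F,j})=0$. For the leading terms we set $\ff_F=1+O(\theta,\sq)$ in \eqref{eq:cftlot}, which gives $q_{F,j}e^{h_{F,j}/z}S_F^{-\rho_F/(c_Fz)}(1+O(S_F^{-1/c_F}))$. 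Comparing with $M_F(\tau)v=e^{\tau/z}(\cdots)$ reads off $\tau=h_{F,j}+\cdots$ and $v=q_{F,j}+\cdots$. The $\bth$-dependence enters through $\lambda^k\mapsto\lambda_j^k\propto S_F^{k/c_F}$, explaining the variables $\bth_FS_F^{\bullet/c_F}$.
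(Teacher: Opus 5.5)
\textbf{Verdict: genuine gap in Step 2.} Your outline matches the paper's: localize to the twisted cone of $F$, apply quantum Riemann--Roch, use invariance properties of $\gcn_F$, then read off leading terms. You correctly flag the $e^{z\partial_u^2/2}$ step as the main obstacle, but the mechanism you propose for it fails.

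\textbf{Why the Step 2 mechanism fails.} Write $\Phi(u)=zM_F(\tau(u))w(u)$ with $w(u)\in\gsp_+$. The fundamental-solution equation gives $z\partial_u\bigl(M_F(\tau(u))w\bigr)=M_F(\tau(u))\bigl(z\partial_u w+\partial_u\tau\star w\bigr)$. So it is $z\partial_u$, not a bare $\partial_u$, that preserves the family $T_{\tau(u)}$; each bare $\partial_u$ costs a factor $z^{-1}$. Since $e^{z\partial_u^2/2}=\exp\bigl((z\partial_u)^2/(2z)\bigr)$, its $k$th term is $z^{-k}(z\partial_u)^{2k}\Phi$, which lies only in $z^{1-k}T_{\tau(u)}$. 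Already $\tfrac z2\Phi''(0)$ lies in $T_{\tau(0)}$ but not in $zT_{\tau(0)}$. Hence $(e^{z\partial_u^2/2}\Phi)|_{u=0}$ does not stay in $zT_{\tau(0)}$. It moves along the cone to a genuinely new point of some $zT_{\td\tau}$, and the $\tau$ in the statement is this new point, not the one at which $\Phi_\ff(u)$ is tangent.

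\textbf{The missing ingredient.} What you need is the invariance of $\gcn_F$ under $e^{z\partial_u^2/2}$ applied to $u$-families on the cone, which the paper takes from \cite[Lemma 2.7]{IK23}. One way to prove it is a flow argument. Set $\ff_t=e^{tz\partial_u^2/2}\Phi$, so $\partial_t\ff_t=\frac1{2z}(z\partial_u)^2\ff_t$. If $\ff_t(u)$ is a family on the cone, then $z\partial_u\ff_t\in zT_{\ff_t(u)}$, and $z\partial_u$ preserves this family, so the velocity lies in $T_{\ff_t(u)}$. The formal flow therefore never leaves $\gcn_F$. Only after this do you know the result has the form $zM_F(\td\tau)\td v$, and your Step 3 (reading off $h_{F,j}$ and $q_{F,j}$ via \eqref{eq:cftlot}) then goes through.

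\textbf{Getting $\Phi_\ff(u)\in\gcn_F$ for each $u$.} Saying the discrepancy is ``absorbed by the kernel'' is not an argument; the paper uses three separate invariances of the cone:
\begin{enumerate}[(1)]
\item The divisor flow $\ff(Q_F)\mapsto e^{sh/z}\ff(Q_Fe^{sh})$, which follows from $M_F(\tau+sh;Q)=e^{sh/z}M_F(\tau;Qe^{sh})$. This handles $e^{-s\rho_F/z}$ together with the Novikov rescaling relating the $e_T^{-1}$- and $\td e_T^{-1}$-twisted J-functions.
\item The string flow $\ff\mapsto e^{c/z}\ff$, for the factor $e^{-g(u,\lambda_j)/z}$.
\item The cone property, for the scalar $e^{s(1-r_F/2)}$.
\end{enumerate}

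\textbf{Substitution.} The correct substitution is $\lambda=\lambda_je^{s}$ with $s=u/\sqrt{c_F\lambda_j}$, not $\lambda=\lambda_j+\sqrt{c_F\lambda_j}\,u$. The function $g(u,\lambda_j)$ is tailored to the exponential substitution.
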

\begin{proof}
	Set $\ff=zJ_W(\theta)$, and consider its restriction $\ff_F$. By Theorem \ref{thm:givloc}, $\ff_F$ lies in the $(\nm_{F/W},e_T^{-1})$-twisted Givental cone of $F$. Hence it is of the form $\bigj_F^\textrm{tw}(\bt)$, where $\bt(z)$ is the non-negative part of the Laurent expansion of $\ff_F-z$ at $z=0$, which we denote by $[\ff_F-z]_+$. In other words, we have \[\cft_{F,j}(zJ_W)=\wht\bigj_{F,j}^\textrm{tw}(\bt)|_{Q_F\to\sq,\bt=[\ff_F-z]_+}\] where $\wht\bigj_{F,j}^\textrm{tw}$ is the Fourier transform of the big J-function $\bigj_{F}^\textrm{tw}$. The problem can then be reduced to showing that \[\wht\bigj_{F,j}^\textrm{tw}(\bt)=S_F^{-\rho_F/(c_Fz)}zM_F(\tau;Q_FS_F^{-\rho_F/c_F})v\] for some $\tau=\tau(\bt)$ and $v=v(\bt)$ with the specified leading order terms; see \cite[Corollary 4.9]{Iri25} for details.
	
	Consider the big J-function $\td{\bigj}_F^\textrm{tw}$ of the $(\nm_{F/W},\td{e}_T^{-1})$-twisted theory. Note that \[\td{\bigj}_F^\textrm{tw}(\bt)=\bigj_F^\textrm{tw}(\bt)|_{Q_F\to Q_F\prod_\alpha (w_\alpha\lambda)^{\rho_\alpha}}.\] By Theorem \ref{thm:qrr}, $(\prod_\alpha\td\Delta_\alpha^{-1})\td{\bigj}_F^\textrm{tw}$ lies in the Givental cone $\gcn_F$. We claim that $\gcn_F$ is invariant under $\ff(Q_F)\mapsto e^{sh/z}\ff(Q_Fe^{sh})$ for $h\in H^2(F)$ and a formal parameter $s$. To see this, consider the shift of coordinates $\tau\mapsto\tau+sh$ for the fundamental solution $M_F(\tau)$. The derivative of $M_F(\tau+sh)\phi_i$ with respect to $s$ is \[\frac{\partial}{\partial s}M_F(\tau+sh)\phi_i=\sum_j\sum_{(n,d)\ne(0,0)}\phi^j\left\langle\phi_i, h,\tau+sh,\dots,\tau+sh,\frac{\phi_j}{z-\psi}\right\rangle_{0,n+2,d}\frac{Q^d}{(n-1)!}.\] By the divisor equation, for $(n,d)\ne(0,0),(1,0)$ we have \[\begin{split}
		\left\langle \phi_i, h,\tau+sh,\dots,\tau+sh,\frac{\phi_j}{z-\psi}\right\rangle_{0,n+2,d}=
		\left\langle\phi_i,\tau+sh,\dots,\tau+sh,\frac{h\phi_j}{z(z-\psi)}\right\rangle_{0,n+1,d}\\
		+(h\cdot d)\left\langle\phi_i,\tau+sh,\dots,\tau+sh,\frac{\phi_j}{z-\psi}\right\rangle_{0,n+1,d}.
	\end{split}\] We also have \[\sum_j\phi^j\left\langle\phi_i, h,\frac{\phi_j}{z-\psi}\right\rangle_{0,3,0}=\frac{h}{z}\phi_i.\] We conclude that \[\frac{\partial}{\partial s}M_F(\tau+sh)=\left(\frac{h}{z}+hQ\partial_Q\right)M_F(\tau+sh).\] Integrating over $s$, we obtain \begin{equation}\label{eq:divflow}
		M_F(\tau+sh;Q)=e^{sh/z}M_F(\tau;Qe^{sh}).
	\end{equation} Since $\gcn_F$ is the union of $zM_F(\tau)\gsp_+$ for all $\tau\in H^*(F)\fps{Q}$, this proves the claim. Similarly, $\gcn_F$ is invariant under $\ff\mapsto e^{s/z}\ff$. To show this we use the string equation; the proof is otherwise identical and left to the reader. Finally, by \cite[Lemma 2.7]{IK23}, $\gcn_F$ is invariant under the operator $e^{z(\partial_u)^2/2}=e^{(z\partial_u)^2/(2z)}$. (We remark that all the invariance properties of the Givental cone mentioned above are essentially contained in \cite{CG07}; see also \cite{CCIT09}.)
	
	We set $s=u/\sqrt{c_F\lambda_j}$. Then we deduce that \[e^{z(\partial_u)^2/2}|_{u=0}e^{-g(u,\lambda_j)/z}e^s\left(\prod_\alpha e^{-s(\rho_\alpha/z+r_\alpha/2)}\td\Delta_\alpha^{-1}\right)\td{\bigj}_F^\textrm{tw}(\bt)\Bigr|_{\substack{Q_F\to Q_Fe^{-s\rho_F}\\ \lambda=\lambda_je^s\phantom{aaaaaa}}}\] lies in $\gcn_F$. Thus it equals $zM_F(\td\tau)\td v$ for some $\td\tau\in H^*(F)\fps{\lambda_j^{-1},Q_F,\bt\lambda_j^\bullet}$ with leading order term 0 and $\td v\in H^*(F)[z]\fps{\lambda_j^{-1},Q_F,\bt\lambda_j^\bullet}$ with leading order term 1. Hence \[\wht\bigj_{F,j}^\textrm{tw}(\bt)=q_{F,j}\left(\prod_\alpha(w_\alpha\lambda_j)^{-\rho_\alpha/z}\right)zM_F(\td\tau)\td v\Bigr|_{Q_F\to Q_F\prod_\alpha(w_\alpha\lambda_j)^{-\rho_\alpha}}.\] We set $\tau=h_{F,j}+\td\tau(Q_F\prod_\alpha(w_\alpha\lambda_j)^{-\rho_\alpha})$ and $v=q_{F,j}\td v(Q_F\prod_\alpha(w_\alpha\lambda_j)^{-\rho_\alpha})$. Then applying \eqref{eq:divflow} again, we obtain \[\wht\bigj_{F,j}^\textrm{tw}(\bt)=S_F^{-\rho_F/(c_Fz)}zM_F(\tau;Q_FS_F^{-\rho_F/c_F})v,\] which concludes the proof.
\end{proof}

Let $Y=X$ or $\td X$ (so we exclude the $F=Z$ case). In these two cases, Iritani introduces a simpler map called the \emph{discrete Fourier transform}: \begin{equation*}
	\dft_Y(\ff):=\sum_{k\in\Z}S^k\kappa_Y(\sft^{-k}\ff)
\end{equation*} where $\kappa_Y$ is the Kirwan map. Define the extended Givental spaces \[\gsp_Y^\textnormal{ext}:=H^*(Y)[z,z^{-1}]\fps{C_Y}.\] Note that the Givental space $\gsp_Y$ of $Y$ embeds in $\gsp_Y^\textnormal{ext}$ via the dual Kirwan map $\kappa_Y^*$.

\begin{proposition}[{\cite[Proposition 4.1]{Iri25}}]\label{prop:dftdef}
	If $\ff$ lies in a tangent space of the equivariant Givental cone $\gcn_W$, then $\dft_Y(\ff)$ is a well-defined element in $\gsp_Y^\textnormal{ext}$.
\end{proposition}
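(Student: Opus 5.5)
Write $\ff=M_W(\theta)g$ with $g\in\gsp_+=H_T^*(W)[z]\fps{\sq,\bth}$; every point of a tangent space $T_\theta$ has this form. The plan is to show two things separately: first, that for each fixed $k$ the term $S^k\kappa_Y(\sft^{-k}\ff)$ has no negative powers of $z$ beyond what is allowed; second, that the infinite sum over $k$ converges in the $C_Y$-adic topology. The main tool is \eqref{eq:fsolshft}, which gives
\[
\sft^{-k}\ff=\sft^{-k}M_W g=M_W\,\mathbb S^{-k}g .
\]
Here $\mathbb S^{-k}g$ lies in $x^{\min(-k,0)}\gsp_+$ by Proposition \ref{prop:shftact} and the formula $\sigma_{\max}(k)=(0,\min(-k,0),0,k)$. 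So each term is again $M_W$ applied to a polynomial-in-$z$ vector, with Novikov coefficients controlled by the section classes.

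For the first point, I would apply $\kappa_Y$, which is restriction to $Y$ followed by setting $\lambda=0$ (for $Y=X$) or $\lambda=[D]$ (for $Y=\td X$). The coefficients of $M_W$ are rational in $\lambda,z$, so the issue is to make sense of this specialization. I would avoid evaluating rational functions at special values. Instead I would note that the restriction $(M_Wh)_Y$ has poles only at $\lambda=-mz$ coming from type-A curves. After taking the coefficient of a fixed Novikov monomial, Theorem \ref{thm:givloc} describes it as a twisted cone element for the normal bundle $\nm_{Y/W}$, a line bundle of weight $\mp1$. Alternatively, and more cleanly, I would use the nonequivariant limit. Since $M_Wh\in H_T^*(W)[z,z^{-1}]$ coefficientwise (non-equivariantly well defined) for the total space, $\kappa_Y$ is just the composite of the ring maps $H_T^*(W)\to H_T^*(Y)\to H^*(Y)$. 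So $\kappa_Y(\sft^{-k}\ff)\in H^*(Y)[z,z^{-1}]\fps{\sq}$ termwise.

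For convergence, I would track Novikov degrees. The term $S^k\kappa_Y(\sft^{-k}\ff)$ carries the monomial $S^k\wht S^{-k\cdot(0,0,0,1)+\sigma}$. Using the formulas \eqref{eq:shft} restricted to $Y$, the factor is $x^{-k}$ on $X$ and trivial on $\td X$, times Novikov monomials from $\eff(W)$. Hence for $Y=X$ the combined monomial involves $(xS^{-1})^{-k}$ on the relevant side, and for $Y=\td X$ it involves $S^k$. The grading argument then comes in: both $\kappa_Y$ and the shift operators preserve degree, and elements of $\gsp^{\rm ext}_Y$ are graded completions. So for a fixed total degree, the powers of $z$ are bounded. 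Moreover, only finitely many $k$ contribute below any given level of the filtration on $\C\fps{C_Y}$, because $C_Y$ contains $\N(0,1,0,-1)$ and $\N(0,0,-1,1)$ in the right directions (for $X$), respectively $\N(0,0,0,1)$ and $\N(0,0,1,-1)$ (for $\td X$).

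The hard step is the case where $k$ has the "wrong" sign. For $Y=\td X$ with $k\to-\infty$, the monomial $S^k$ goes the wrong way, and I must show that $\kappa_{\td X}(\sft^{-k}\ff)$ vanishes to high order. To handle this, I would use \eqref{eq:shft}: the factor $\prod(-[D]+\lambda+mz)$ that appears contains $(-[D]+\lambda)$ for $-k\ge 1$. After setting $\lambda=[D]$ this factor is $0$, unless it is compensated by $y$-powers. The resulting terms are then absorbed by the allowed generator $y^{-1}S$. For $Y=X$ the analogous factor is $-\lambda$, which vanishes at $\lambda=0$.

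Making this rigorous requires knowing that $M_Wg$ restricted to $Y$ has no pole at $\lambda=[D]$ (resp. $0$) that could cancel the zero. This is where Lemma \ref{lem:abloc} and the integrality of $\ff$ come in: $\ff$ is a genuine equivariant class with polynomial dependence on $\lambda$. So the vanishing factor really kills all terms with $k$ of the wrong sign, up to contributions carrying positive powers of $x$ or $y^{-1}S$. That yields convergence in $\gsp_Y^{\rm ext}$.
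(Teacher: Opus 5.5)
Your reduction $\sft^{-k}\ff=M_W\,\sei^{-k}g$ and your argument that each $\kappa_Y(\sft^{-k}\ff)$ is well defined agree with the paper. One small correction: the $\sq$-coefficients of $M_Wh$ lie in the graded completion $H_T^*(W)\laur{z^{-1}}$, not in $H_T^*(W)[z,z^{-1}]$. Only after restricting and specializing $\lambda$ does a degree count leave finitely many powers of $z$. You also correctly find the vanishing factor $-\lambda$ (resp.\ $\lambda-[D]$) in \eqref{eq:shft} for the wrong sign of $k$.

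The gap is the next step. You claim that the restriction to $Y$ has no pole at $\lambda=0$ (resp.\ $\lambda=[D]$) that could cancel this zero, because $\ff$ depends polynomially on $\lambda$. But the zero multiplies $e^{kz\partial_\lambda}\ff_Y$, not $\ff_Y$. Polynomiality of the $z^{-1}$-coefficients does not survive the substitution $\lambda\mapsto\lambda+kz$. As a rational function, $\ff_Y$ has poles along $\lambda=mz$ (resp.\ $\lambda-[D]=mz$) with $m\neq 0$, coming from type-A localization contributions, and the shift moves them to $\lambda=0$ (resp.\ $\lambda=[D]$). For instance, $\sum_{n\ge0}\lambda^nz^{-n-1}=(z-\lambda)^{-1}$ becomes $-\lambda^{-1}$ under $\lambda\mapsto\lambda+z$. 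These poles do cancel the zero, and the surviving terms are nonzero: they are exactly the $\q^{-k}$ and $\q^{k}$ sums in \eqref{eq:dftlot}. So the vanishing factor does not kill the wrong-sign terms. Your clause ``up to contributions carrying positive powers of $x$ or $y^{-1}S$'' is precisely the content of the proposition, and it is left unproved; Lemma \ref{lem:abloc} does not bear on it.

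The missing idea is to tie the pole to the Novikov degree via localization. Take $Y=X$ and $k>0$, so the term $S^k\kappa_X(\sft^{-k}\ff)$ carries $x^{-k}$, and write $\ff=\sum_\delta\ff_\delta\sq^\delta$.
\begin{itemize}
\item A summand survives setting $\lambda=0$ only if $\ff_{\delta,X}$ has a pole along $\lambda=kz$.
\item Since $\ff=M_Wv$, \eqref{eq:fund} and Theorem \ref{thm:loc} show that such a pole arises only from $T$-fixed maps whose last marked point lies on a $k$-fold cover of a $T$-invariant sphere through $X$.
\item By Lemma \ref{lem:wclass}, that sphere has class $(0,1,0,0)$ or $(0,1,-1,0)$. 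Hence $\delta\in k(0,1,-1,0)+\eff(W)$.
\item Therefore $S^kx^{-k}\sq^\delta\in (y^{-1}S)^k\,\nov{\sq}$. This gives both membership in $\gsp_X^{\textnormal{ext}}$ and convergence of the sum over $k$.
\end{itemize}
For $Y=\td X$ and $k<0$, the spheres through $\td X$ have class $(0,0,1,0)$ or $(0,1,0,0)$, and $(0,1,0,0)\in(0,0,1,0)+\eff(W)$. This gives divisibility by $y^{-k}$, so the terms are absorbed by $yS^{-1}$, the generator of $C_{\td X}$. The generator $y^{-1}S$ that you invoke for $\td X$ belongs to the $X$ case.
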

\begin{proof}
	Recall that a tangent space of $\gcn_W$ is of the form $M_W\gsp_+$, and is invariant under shift operators. Hence $\sft^{-k}\ff$ is also of the form $M_Wv$, and upon examining \eqref{eq:fund}, we see that $\kappa_Y(\sft^{-k}\ff)$ is well-defined. Thus it remains to check that $\dft_Y(\ff)$ lies in $\gsp_Y^\textnormal{ext}$. We only explain the $Y=X$ case since the $Y=\td X$ case is spelled out in \cite[Proposition 4.1]{Iri25}. We must verify that $\kappa_X(\sft^k\ff)$ lies in $x^kH^*(X)[z,z^{-1}]\fps\sq$ if $k>0$ and lies in $y^kH^*(X)[z,z^{-1}]\fps\sq$ if $k<0$. The first half is obvious upon inspecting the first equation in \eqref{eq:shft}.
	
	It remains verify the $k<0$ case. Consider the expansion $\ff=\sum_{\delta\in\eff(W)}\ff_\delta\sq^\delta.$ For $\ff_\delta$ to contribute to $\kappa_X(\sft^k\ff)$, it must satisfy the property that $e^{-kz\partial_\lambda}\ff_{\delta,X}$ has a factor of $\lambda$ in the denominator; otherwise, a factor of $\lambda$ in the numerator of \[\frac{\prod_{m=-\infty}^0-\lambda+mz}{\prod_{m=-\infty}^k-\lambda+mz}\] would survive, and then the Kirwan map $\kappa_X$ sets $\lambda=0$. Hence $\ff_{\delta,X}$ must have a factor of $\lambda+kz$ in the denominator, in order to contribute. Since $\ff$ is of the form $M_Xv$, this factor must come from a correlator of the form \[\left\langle\phi_i,\theta,\dots,\theta,\frac{\phi_j}{z-\psi}\right\rangle_{0,n+2,\delta'}\] for some $\delta'$ with $\delta\in\delta'+\eff(W)$. By Theorem \ref{thm:loc}, this must come from a $T$-invariant curve whose last marked point maps to $X$ and lies in a component which is a $(-k)$-fold cover of the base $\cp^1$, and thus has class $-k$ times $(0,1,0,0)$ or $(0,1,-1,0)$ (cf. proof of Lemma \ref{lem:wclass}). Hence $\delta'\in(0,-k,k,0)+\eff(W)\subseteq(0,0,k,0)+\eff(W).$
\end{proof}
  
\begin{proposition}[{\cite[Section 4.1]{Iri25}}]\label{prop:dftshft}
	The map $\dft_Y$, when defined, satisfies \begin{enumerate}[(1)]
	\item $\dft_Y(\sft^l\ff)=S^l\dft_Y(\ff)$,
	\item $\dft_Y((z\xi\sq\partial_\sq+\xi)\ff)=(z\xi\wht S\partial_{\wht S}+\kappa_Y(\xi))\dft_Y(\ff)$,
	\item $\dft_Y((z\partial_z-z^{-1}c_1^T(W)+\mu_W+\frac{1}{2})\ff)=(z\partial_z-z^{-1}c_1(Y)+\mu_Y)\dft_Y(\ff).$
\end{enumerate} Here we use notation from the definition of the (equivariant) quantum connection.
\end{proposition}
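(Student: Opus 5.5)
Each property is a direct computation from the definition $\dft_Y(\ff)=\sum_{k\in\Z}S^k\kappa_Y(\sft^{-k}\ff)$, using three inputs: the group law for shift operators (Corollary \ref{cor:shftcomm}, transported to $\sft$ via \eqref{eq:fsolshft}), their commutation relations with $\lambda$, $\nabla$ and the Novikov derivations, and the fact that the Kirwan maps are ring homomorphisms that are compatible with degrees and with the dual Kirwan maps $\kappa_Y^*$. Convergence of every series that appears is supplied by Proposition \ref{prop:dftdef}, because the operations on the left preserve tangent spaces of $\gcn_W$.

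For (1), reindex the sum. Since $\sft^{-k}\sft^l=\sft^{-(k-l)}$, we get $\dft_Y(\sft^l\ff)=\sum_k S^k\kappa_Y(\sft^{-(k-l)}\ff)=S^l\sum_{k'}S^{k'}\kappa_Y(\sft^{-k'}\ff)$.

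For (2), commute $z\xi\sq\partial_\sq+\xi$ past $\sft^{-k}$. The relation $[\nabla_{\xi Q\partial_Q},\seiht^\beta]=(\xi\cdot\beta)\seiht^\beta$ from Corollary \ref{cor:shftcomm}, conjugated by $M_W$ and multiplied by $z$, gives on the Givental space
\[(z\xi\sq\partial_\sq+\xi)\sft^{-k}=\sft^{-k}\bigl(z\xi\sq\partial_\sq+\xi\bigr)+kz(\xi\cdot(0,0,0,1))\sft^{-k}\]
up to sign conventions. For $\xi\in H^2_T(W)$, multiplying by $\xi$ in the localized picture is multiplying by $\xi_F$; passing through $e^{-kz\partial_\lambda}$ shifts $\lambda$ by $kz$, and this shift produces exactly the correction term. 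Now apply $\kappa_Y$. The classical part becomes $\kappa_Y(\xi)$. The $\sq$-derivative in the direction $\xi$ corresponds, through $\kappa_Y^*$ and the pairing of $\xi$ with $(0,0,0,1)$, to $z\xi\wht S\partial_{\wht S}$, where the $S$-component of the derivative absorbs the factor $k$ attached to $S^k$. Comparing coefficients of $S^k$ finishes (2).

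For (3), use $[\nabla_{z\partial_z},\seiht^\beta]=0$ and the homogeneity of $\sft$. The operator $\sft$ has degree $2c_1^T(W)\cdot(0,0,0,1)$, which matches $\deg S=2$. Hence $z\partial_z-z^{-1}c_1^T(W)+\mu_W$ commutes with $\sft^{-k}$, except that the degree of $S^k$ must be accounted for on the right. The Kirwan map respects degrees, so it intertwines $\mu_W+\tfrac12$ with $\mu_Y$; the $\tfrac12$ corrects for $\dim W=\dim Y+2$. It also sends $c_1^T(W)$ to $c_1(Y)$ plus the normal weight term, because $TW|_Y=TY\oplus \nm_{Y/W}$. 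For $Y=\td X$ we have $\lambda\mapsto[D]$ and $\nm=\mathcal O(-D)$ with weight $1$, so the $\lambda$-contributions cancel. For $Y=X$ the normal weight is $-\lambda\mapsto0$. Any leftover term is the Euler derivation in $S$, which the grading of $\gsp_Y^{\mathrm{ext}}$ already absorbs through $z\partial_z$ together with homogeneity.

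The main obstacle is this bookkeeping in (3): checking that $\kappa_Y(c_1^T(W))=c_1(Y)$ exactly, and that the degree shift of $S^k$ cancels the $\lambda$-shift. I would first verify it on the leading terms from \eqref{eq:shft}, and then argue by homogeneity.
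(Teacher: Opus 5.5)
Your proposal is correct. It is the direct verification from the definition $\dft_Y(\ff)=\sum_k S^k\kappa_Y(\sft^{-k}\ff)$ that the paper relies on; the paper gives no proof of its own and cites \cite[Section 4.1]{Iri25}. The ingredients are reindexing for (1), the commutation of $z\xi\sq\partial_\sq+\xi$ with $\sft^{-k}$ plus multiplicativity of $\kappa_Y$ for (2), and for (3) the commutation of $z\partial_z-z^{-1}c_1^T(W)+\mu_W$ with $\sft^{-k}$ together with degree preservation of $\kappa_Y$ and $\kappa_Y(c_1^T(W))=c_1(Y)$.

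Two small corrections. First, the sign is $[z\xi\sq\partial_\sq+\xi,\sft^{-k}]=-kz\,(\xi\cdot(0,0,0,1))\,\sft^{-k}$, and this is exactly what matches the term $kz\,(\xi\cdot(0,0,0,1))S^k$ coming from $z\xi\wht S\partial_{\wht S}$. Second, in (3) there is no leftover Euler derivation in $S$: the commutation with $\sft^{-k}$ is exact, and $z\partial_z-z^{-1}c_1(Y)+\mu_Y$ does not act on $S^k$.
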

The discrete Fourier transform $\dft_Y$ also maps the equivariant Givental cone of $W$ to the Givental cone of  $Y$. More precisely:
\begin{proposition}[{\cite[Corollary 4.11]{Iri25}}]\label{prop:dft}
	Let $Y=X$ or $\td X$. Then \[\dft_Y(zJ_W(\theta))=zM_Y(\tau)v\] for some $\tau\in H^*(Y)\fps{C_Y,\bth}$ and $v\in H^*(Y)[z]\fps{C_Y,\bth}$, such that $\tau|_{\bth=0}\equiv S_Y^{1/c_Y}$ and $v|_{\bth=0}\equiv 1$ modulo the closed ideal of $\C[z]\fps{C_Y}$ generated by $\sq^\delta$ for $\delta\in\eff(W)\setminus\{0\}$ and $(yS^{-1})^{c_Y}$.
\end{proposition}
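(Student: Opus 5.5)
We follow the pattern of the proof of Proposition \ref{prop:cft}, replacing the continuous Fourier transform by $\dft_Y$, and use Proposition \ref{prop:dftshft} to transfer the structure of the Givental cone of $W$ to that of $Y$.

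\textbf{Step 1: the image lies in the Givental cone.} Set $\ff=zJ_W(\theta)$, which lies in $\gcn_W$. By Proposition \ref{prop:dftdef} and its proof, every $\sft^{-k}\ff$ lies in the same tangent space $T_\theta=M_W(\theta)\gsp_+$ of $\gcn_W$, so $\dft_Y(\ff)$ is a well-defined element of $\gsp_Y^\textnormal{ext}$. To see that $\dft_Y(\ff)$ lies in (the base change of) $\gcn_Y$, restrict each $\sft^{-k}\ff$ to the fixed component $Y$. Theorem \ref{thm:givloc} then places each restriction in the $(\nm_{Y/W},e_T^{-1})$-twisted cone of $Y$. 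Since $\nm_{Y/W}$ is a line bundle of weight $\mp1$, applying $\kappa_Y$ amounts to specializing $\lambda$ to $0$ or to $[D]$. Summing over $k$ with the weights $S^k$ telescopes the quantum Riemann--Roch factors: the products in \eqref{eq:shft} are exactly the ratios of $\Gamma$-type factors $\prod_m(\rho+\lambda+mz)$. Via Theorem \ref{thm:qrr}, this sum recovers an untwisted cone point.

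This is Iritani's argument in \cite[Section 4.1]{Iri25}. Its only geometric inputs are Theorem \ref{thm:loc}, Theorem \ref{thm:qrr} and Theorem \ref{thm:givloc}, all of which are now available symplectically. I would therefore cite it after checking that each step uses only these inputs.

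\textbf{Step 2: the tangent-space structure.} Proposition \ref{prop:dftshft}(2) shows that $\dft_Y$ intertwines $z\partial$ with divisor derivatives and multiplication by $\kappa_Y(\xi)$. Kirwan surjectivity then implies that the $\C[z]\fps{C_Y}$-span of the derivatives $z\partial_{\theta^{i,k}}\dft_Y(zJ_W)$ is all of $H^*(Y)[z]\fps{C_Y,\bth}$. Combining this with Step 1 and the overruled description of the cone, $\gcn_Y=\bigcup zT_\tau$ with $T_\tau=M_Y(\tau)\gsp_+$, we can write $\dft_Y(zJ_W)=zM_Y(\tau)v$ with $v\in H^*(Y)[z]\fps{C_Y,\bth}$. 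Here $\tau$ is recovered as the $z^0$-coefficient of the expansion, by Birkhoff factorization.

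\textbf{Step 3: the leading terms.} We compute modulo the ideal generated by $\sq^\delta$ for $\delta\ne0$ and by $(yS^{-1})^{c_Y}$, and at $\bth=0$. In this regime $J_W\equiv1$, and $\dft_Y(z)=z\sum_kS^k\kappa_Y(\sft^{-k}1)$. Formula \eqref{eq:shft} makes these terms explicit. For $Y=X$ only $k=0$ and the terms of order $x^{-k}S^k$ survive, and these sum to $ze^{S^{1/c_Y}/z}$ up to the ideal. Since $M_Y(\tau)=e^{\tau/z}(1+\dots)$ at leading order, we get $\tau\equiv S_Y^{1/c_Y}$ and $v\equiv1$. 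The case $Y=\td X$ is analogous, with $[D]$ in place of $0$.

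I expect the main obstacle to be Step 1, namely verifying rigorously that the discrete sum lands in the Givental cone of $Y$. The convergence bookkeeping in $C_Y$ needed for this is exactly the content of Proposition \ref{prop:dftdef}.
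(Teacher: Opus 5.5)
The gap is in Step 1, which carries the whole proposition, and the mechanism you sketch there does not work. Steps 2 and 3 are fine once Step 1 is known. The ruling $\gcn_Y=\bigcup_\tau zT_\tau$ gives the form $zM_Y(\tau)v$ directly, so Kirwan surjectivity is not needed. Your computations $\dft_X(zJ_W(0))\equiv ze^{xS^{-1}/z}$ and $\dft_{\td X}(zJ_W(0))\equiv ze^{S/z}$ modulo the ideal do pin down the leading terms.

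Theorems~\ref{thm:givloc} and~\ref{thm:qrr} describe the restriction of $\sft^{-k}\ff$ to $Y$ as a point of $\Delta\gcn_Y$. Here $\Delta$ is the quantum Riemann--Roch operator of the $(\nm_{Y/W},e_T^{-1})$-twist. Its parameters are $s_0=-\log\lambda$ and $s_k\propto\lambda^{-k}$, so this description is an expansion around $\lambda=\infty$. The Kirwan map $\kappa_Y$ evaluates at $\lambda=0$ or $\lambda=[D]$, exactly where $\Delta$ is singular. So nothing can be specialized term by term, and ``summing over $k$ telescopes the quantum Riemann--Roch factors'' restates what has to be proved rather than proving it. Nor is the difficulty convergence bookkeeping: Proposition~\ref{prop:dftdef} only shows that $\dft_Y(\ff)$ is a well-defined element of $\gsp_Y^\textnormal{ext}$. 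In \cite{Iri25}, and in the paper, the cone statement for $\dft_Y$ is not obtained from the material of Section~4.1 at all.

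The missing ingredient is the continuous Fourier transform, which is where large $\lambda$ is under control. In $\cft_{Y,0}$, the operator $\Delta$ appears as the asymptotic expansion of the $\Gamma$-factor $G_Y$. The stationary point $\lambda_0$ is of size $S_Y^{1/c_Y}$, so $\lambda^{-1}$-expansions become $S_Y^{-1/c_Y}$-expansions. Together with the invariance of $\gcn_Y$ under $e^{sh/z}$, $e^{s/z}$ and $e^{z(\partial_u)^2/2}$, this yields Proposition~\ref{prop:cft}: $S_Y^{\rho_Y/(c_Yz)}\cft_{Y,0}(J_W(\theta))=M_Y(\tau)v$, with leading terms $h_{Y,0}=0$ and $q_{Y,0}=c_Y^{-1}$.

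The paper then uses the explicit identity of \cite[Proposition 4.10]{Iri25}. It is available for $Y=X,\td X$, where $\nm_{Y/W}$ is a line bundle and $c_Y=\mp1$:
\[e^{S_Y^{1/c_Y}/z}\cft_{Y,0}(\ff)=c_Y^{-1}S_Y^{-\rho_Y/(c_Yz)}\dft_Y(\ff).\]
Roughly, $\dft_Y$ is the sum of residues of the same Mellin--Barnes integral at the poles of $G_Y$; this is the precise version of your ``telescoping''. The string flow $e^{c/z}M_Y(\tau)=M_Y(\tau+c)$ then gives $\dft_Y(J_W(\theta))=M_Y(\tau+S_Y^{1/c_Y})\,c_Yv$. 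From this, $\tau|_{\bth=0}\equiv S_Y^{1/c_Y}$ and $v|_{\bth=0}\equiv1$ follow at once.
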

\begin{proof}
	By Proposition \ref{prop:cft}, there exist $\tau\in H^*(Y)[S_Y^{1/c_Y},S_Y^{-1/c_Y}]\fps{\sq,\bth}$ with leading order term $h_{Y,0}=0$ and $v\in H^*(Y)[z]\laur{S_Y^{1/c_Y}}\fps{\sq,\bth}$ with leading order term $q_{Y,0}=c_Y^{-1}$ such that \[S^{\rho_Y/(c_Yz)}\cft_{Y,0}(J_W(\theta))=M_Y(\tau;Q_YS_Y^{-\rho_Y/c_Y})v|_{Q_Y\to\sq}.\] Note that $M_Y(\tau;Q_YS_Y^{-\rho_Y/c_Y})$ is precisely the image of $M_Y(\tau)$ induced by the dual Kirwan map $\kappa_Y^*:\nov{Q_Y}\to\nov{C_Y}$, so we henceforth write it as $M_Y(\tau)$. After an explicit computation (see \cite[Proposition 4.10]{Iri25} for details), we find that $\dft_Y$ and $\cft_{Y,0}$ are related by the following simple formula: \[\exp({S_Y^{1/c_Y}/z})\cft_{Y,0}(\ff)=c_Y^{-1}S_Y^{-\rho_Y/(c_Yz)}\dft_Y(\ff).\] Then \[\dft_{Y}(J_W)=\exp(S_Y^{1/c_Y}/z)M_Y(\tau)c_Yv=M_Y(\tau+S_Y^{1/c_Y})c_Yv,\] and the conclusion follows.
\end{proof}

\subsection{Quantum D-module isomorphisms}

Let $\qdm_T(W)$ be the $T$-equivariant quantum D-module of $W$. Let $\qdm_T(W)[\sq^{-1}]$ be its localization with respect to the multiplicative set $\{\sq^\delta\}_{\delta\in\eff(W)}.$ It has the structure of a $\C[H_2^T(W,\Z)]$-module via the action of shift operators. We consider the subring $\C[C_{\td X}]\subset\C[H_2^T(W,\Z)]$, and the $\C[C_{\td X}]$-submodule of $\qdm_T(W)[\sq^{-1}]$ generated by $\qdm_T(W)$: \[\qdm_T(W)_{\td X}:=\C[C_{\td X}]\cdot\qdm_T(W)\subset\qdm_T(W)[\sq^{-1}].\] By Proposition \ref{prop:shftact}, \[\qdm_T(W)_{\td X}=\C[yS^{-1}]\cdot\qdm_T(W),\] so it is a module over $\C[z]\fps{\sq,\bth}[S,yS^{-1}]$. Let $\idl$ be the ideal generated by $S$ and $yS^{-1}$. We consider the graded $\idl$-adic completion \[\qdm_T(W)_{\td X}\sphat:=\varprojlim_k\qdm_T(W)_{\td X}/\idl^k\qdm_T(W)_{\td X}.\] It is a module over $\C[z]\fps{C_{\td X},\bth}$. By \eqref{eq:shift}, the action of $\lambda$ on $\qdm_T(W)$ induces an action of $\lambda$ on $\qdm_T(W)_{\td X}\sphat$; by Corollary \ref{cor:shftcomm}, the quantum connection on $\qdm_T(W)$ induces operators $z\nabla_{\theta^{i,k}}$, $z\nabla_{z\partial_z}$, $z\nabla_{\xi\sq\partial_\sq}$ on $\qdm_T(W)_{\td X}\sphat$. 

Next, we introduce a suitable extension of the Novikov ring for $\qdm(\td X)$. Recall the extension $\nov{\td Q}\hookrightarrow\nov{C_{\td X}}$ given by the dual Kirwan map. We define \[\qdm(\td X)^\textrm{ext}:=\qdm(\td X)\otimes_{\C[z]\fps{\td Q,\td\tau}}\C[z]\fps{C_{\td X},\td\tau}.\]
The quantum connection on $\qdm(\td X)$ can be naturally extended to $\qdm(\td X)^\textrm{ext}$: the operators $\nabla_{\td\tau^i},\nabla_{z\partial_z},\nabla_{\xi\wht S\partial_{\wht S}}:\qdm(\td X)^\textrm{ext}\to z^{-1}\qdm(\td X)^\textrm{ext}$ (where $\xi\in H^2(\td X)$) are given by \begin{align*}
	\nabla_{\td\tau^i}&=\nabla_{\td\tau^i}\otimes\id+\id\otimes\partial_{\td\tau^i},\\
	\nabla_{z\partial_z}&=\nabla_{z\partial_z}\otimes\id+\id\otimes z\partial_z,\\
	\nabla_{\xi\wht S\partial_{\wht S}}&=\nabla_{\kappa_{\td X}(\xi)\td Q\partial_{\td Q}}\otimes\id+\id\otimes \xi\wht S\partial_{\wht S}.
\end{align*}
By Proposition \ref{prop:dft}, \[\dft_{\td X}(J_W(\theta))=M_{\td X}(\td \tau(\theta))\td v(\theta)\] for $\td \tau(\theta)\in H^*(X)\fps{C_{\td X},\bth}$ and $\td v(\theta)\in H^*(X)[z]\fps{C_{\td X},\bth}$ such that $\td\tau(0)\equiv S$ and $\td v(0)\equiv1$ modulo the closed ideal generated by $\sq$ and $yS^{-1}$. We consider the pull-back \[\td\tau^*\qdm(\td X)^\textrm{ext}:=H^*(\td X)[z]\fps{C_{\td X},\bth}\] equipped with the pull-back connection \begin{align*}
	\nabla_{\theta^{i,k}}&=\partial_{\theta^{i,k}}+z^{-1}(\partial_{\theta^{i,k}}\td\tau(\theta))\star_{\td\tau(\theta)},\\
	\nabla_{z\partial_z}&=z\partial_z-z^{-1}{E_{\td X}\star_{\td\tau(\theta)}}+\mu_{\td X},\\
	\nabla_{\xi\wht S\partial_{\wht S}}&=\xi\wht S\partial_{\wht S}+z^{-1}{\kappa_{\td X}(\xi)\star_{\td\tau(\theta)}}+z^{-1}(\xi\wht S\partial_{\wht S}\td\tau(\theta))\star_{\td\tau(\theta)}.
\end{align*}  

\begin{theorem}[{\cite[Theorem 5.2]{Iri25}}]\label{thm:qdmiso}
	There is an isomorphism \[\ft_{\td X}\sphat:\qdm_T(W)_{\td X}\sphat\to\td\tau^*\qdm(\td X)^\textnormal{ext}\] of $\C[z]\fps{C_{\td X},\bth}$-modules which \begin{enumerate}[(1)]
	\item intertwines $\seiht^\beta(\theta)$ with $\wht S^\beta$ for $\beta\in H_2^T(W,\Z)$,
	\item intertwines $z\nabla_{\xi\sq\partial_\sq}$ with $z\nabla_{\xi\wht S\partial_{\wht S}}$ for $\xi\in H_T^2(W)$,
	\item commutes with $\nabla_{\theta^{i,k}}$,
	\item intertwines $\nabla_{z\partial_z}+\frac{1}{2}$ with $\nabla_{z\partial_z}$,
	\item is homogeneous of degree $0$.
\end{enumerate} If $s_1,\dots,s_N\in H_T^*(W)$ are homogeneous elements such that $\kappa_{\td X}(s_1),\dots,\kappa_{\td X}(s_N)$ form a basis of $H^*(\td X)$, then $s_1,\dots,s_N$ form a basis $\qdm_T(W)_{\td X}\sphat$ over $\C[z]\fps{C_{\td X},\bth}$.
\end{theorem}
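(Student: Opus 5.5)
The map will be defined through the fundamental solution, using the discrete Fourier transform $\dft_{\td X}$. Every element of $\qdm_T(W)$ is of the form $f\in H_T^*(W)[z]\fps{\sq,\bth}$. Since $M_W f$ lies in the tangent space $T_\theta=M_W\gsp_+$ of $\gcn_W$ (after multiplying by $z$), Proposition \ref{prop:dftdef} gives a well-defined element $\dft_{\td X}(M_W f)\in\gsp_{\td X}^\textnormal{ext}$. We then set
\[\ft_{\td X}(f):=M_{\td X}(\td\tau(\theta))^{-1}\dft_{\td X}(M_W(\theta)f).\]

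\textbf{Step 1: the image lies in $H^*(\td X)[z]\fps{C_{\td X},\bth}$.} The image a priori lies in $H^*(\td X)[z,z^{-1}]\fps{C_{\td X},\bth}$. To show that no negative powers of $z$ occur, we use Proposition \ref{prop:dft}: $\dft_{\td X}(zJ_W)=zM_{\td X}(\td\tau)\td v$. Differentiating in $\theta^{i,k}$ gives $\dft_{\td X}(M_W\phi_i\lambda^k)=M_{\td X}(\td\tau)\cdot(\text{element of }\gsp_+)$. Here we use that $\dft_{\td X}$ commutes with $z\partial_{\theta}$, and that derivatives of $M_{\td X}(\td\tau)\td v$ stay in the tangent space $M_{\td X}(\td\tau)\gsp_+$ of $\gcn_{\td X}$. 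Since the $M_W\phi_i\lambda^k$, together with $z$-multiples, generate $T_\theta$, the claim follows for all $f$.

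\textbf{Step 2: the stated properties.} We extend $\ft_{\td X}$ to $\qdm_T(W)_{\td X}$ by
\[\ft_{\td X}(yS^{-1}\cdot f)=yS^{-1}\ft_{\td X}(f),\]
which is possible because of Proposition \ref{prop:dftshft}(1) and $\sft M_W=M_W\seiht$ (equation \eqref{eq:fsolshft}). It then extends to the $\idl$-adic completion by continuity, since $\dft_{\td X}$ lands in $\fps{C_{\td X}}$.

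The properties (1)--(5) then follow formally:
\begin{itemize}
\item (1) from equation \eqref{eq:fsolshft} and Proposition \ref{prop:dftshft}(1);
\item (2) and (4) from Proposition \ref{prop:dftshft}(2),(3), combined with the intertwining relations \eqref{eq:fundsol} for $M_W$ and the analogous relations for $M_{\td X}$ pulled back by $\td\tau$;
\item (3) from $M_W\nabla_{\theta}=\partial_\theta M_W$ and the chain rule for $M_{\td X}(\td\tau(\theta))$;
\item (5) from degree counting.
\end{itemize}

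\textbf{Step 3: bijectivity and the basis statement.} Both sides are $\idl$- and $(\sq,\bth)$-adically complete, so it suffices to argue modulo the ideal generated by $\sq$, $\bth$, $z$ and $\idl$, that is, to compute leading terms.

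At $\bth=0$ and modulo $\sq$, the solution $M_W$ is the identity plus terms from $T$-invariant curves. Moreover $\dft_{\td X}(s)$ has leading term $\kappa_{\td X}(s)$ from the $k=0$ summand, with the $k\neq0$ summands lying in $\idl$. Also $M_{\td X}(\td\tau)\equiv e^{S/z}$ in leading order.

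\emph{Surjectivity.} The reduction of $\ft_{\td X}(s_i)$ is $\kappa_{\td X}(s_i)$ modulo $\idl$, $\sq$ and $z$. These form a basis of $H^*(\td X)$ by hypothesis. Nakayama's lemma in the complete graded setting then gives that $\ft_{\td X}\sphat$ is surjective and that the images of $s_1,\dots,s_N$ form a basis of the target.

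\emph{Generation.} To conclude that $s_1,\dots,s_N$ form a basis of $\qdm_T(W)_{\td X}\sphat$, we must also show that they generate it. We show that $\qdm_T(W)_{\td X}\sphat/(z,\sq,\bth,\idl)$ is spanned by the $s_i$. This needs $H_T^*(W)$ modulo the relevant ideal to be identified with $H^*(\td X)$. The relevant ideal is generated by the kernel of $\kappa_{\td X}$ (Lemma \ref{lem:kerkir}) together with the images of $\sei$ and $yS^{-1}$.

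Concretely, we show that each of the three pieces of $\ker\kappa_{\td X}$ becomes divisible by $S$ or by $yS^{-1}$ after applying shift operators. This uses Lemma \ref{lem:abloc} and the explicit localized formulas \eqref{eq:shft}:
\begin{itemize}
\item elements $i_{X*}g$ are killed by $\sei^{-1}$ up to factors of $y$;
\item elements $i_{\td X*}g$ lie in $\lambda\cdot(\ldots)$ and are related to $\sei$-images;
\item elements $i_{Z\times\cp^1*}(\ldots)$ factor through multiplication by $y$.
\end{itemize}
I expect this step, showing that $\ker\kappa_{\td X}$ is absorbed into $\idl\cdot\qdm_T(W)_{\td X}$ modulo $z$, to be the main obstacle. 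I would first verify it classically, at $\sq=0$ and $\bth=0$ with $\sei$ reduced to its $z=0$ leading term, using the localized formulas \eqref{eq:shft}, and then lift by completeness.

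Finally, a surjection between free modules of equal rank $N$ over the complete local ring $\C[z]\fps{C_{\td X},\bth}$ is an isomorphism.
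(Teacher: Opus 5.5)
Your proposal follows the paper's proof essentially step for step. $\ft_{\td X}$ is $M_{\td X}(\td\tau)^{-1}\circ\dft_{\td X}\circ M_W(\theta)$ (equivalently $\phi_i\lambda^k\mapsto z\nabla_{\theta^{i,k}}\td v$), and properties (1)--(5) come from Proposition~\ref{prop:dftshft} and \eqref{eq:fsolshft}. The $\ft_{\td X}(s_i)$ form a basis of the target because their leading terms are $\kappa_{\td X}(s_i)$. Generation of the source reduces to writing every element of $\ker\kappa_{\td X}$ as $\lt(\sei c_1+x\sei^{-1}c_2+y\sei^{-1}c_3)$, which is exactly Lemma~\ref{lem:tech}; the paper only cites this from Iritani and does not prove it.

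Your bullet sketch of that lemma is loose but heads the right way. More precisely, $i_{\td X*}g=\lt(\sei c)$ and $i_{X*}g=\lt(x\sei^{-1}c)$, rather than being ``killed by'' $\sei^{-1}$. The element $x\sei^{-1}c$ lies in $yS^{-1}\mdl_0$ only because $xy^{-1}$ is effective. The $i_{Z\times\cp^1*}$ piece comes from $y\sei^{-1}c$ with $c|_X=0$, so that it stays in $\mdl_0$, modulo $i_{X*}$-terms via Lemma~\ref{lem:abloc}(2).
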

\begin{proof}
	Differentiating the equation \[\dft_{\td X}(J_W(\theta))=M_{\td X}(\td\tau(\theta))\td v(\theta)\] by $\theta^{i,k}$, we obtain by \eqref{eq:fundsol} that \[\dft_{\td X}(M_W(\theta)z^{-1}\phi_i\lambda^k)=M_{\td X}(\td\tau(\theta))\nabla_{\theta^{i,k}}\td v(\theta)\] where $\{\phi_i\lambda^k\}$ is the $\C$-basis of $H_T^*(W)$ dual to $\{\theta^{i,k}\}$. We define $\ft_{\td X}$ to be the $\C[z]\fps{\sq,\bth}$-module map $\qdm_T(W)\to\td\tau^*\qdm({\td X})^\textrm{ext}$ sending $\phi_i\lambda^k$ to $z\nabla_{\theta^{i,k}}\td v(\theta)$, and extend it to the localization by $\sq$. Thus we have the commutative diagram \begin{equation}\label{eq:diagm}
		\begin{tikzcd}
	{\qdm_T(W)[\sq^{-1}]} & {\td\tau^*\qdm(\td X)^\textnormal{ext}[\sq^{-1}]} \\
	{\gsp^\textnormal{rat}_W[\sq^{-1}]} & {\gsp_{\td X}^\textnormal{ext}[\sq^{-1}]}
	\arrow["{\ft_{\td X}}", from=1-1, to=1-2]
	\arrow["{M_W(\theta)}"', from=1-1, to=2-1]
	\arrow["{M_{\td X}(\td\tau(\theta))}", from=1-2, to=2-2]
	\arrow["{\dft_{\td X}}"', dashed, from=2-1, to=2-2]
\end{tikzcd}
	\end{equation} where a dashed arrow is used for $\dft_{\td X}$ as it is not defined throughout $\gsp^\textnormal{rat}_W[\sq^{-1}]$; however it is well-defined on the image of $M_W(\theta)$ by Proposition \ref{prop:dftdef}. The map $\ft_{\td X}$ satisfies properties (1)-(5) by Proposition \ref{prop:dftshft}, \eqref{eq:fundsol}, \eqref{eq:fsolshft}, and the fact that $\dft_{\td X}\circ\partial_{\theta^{i,k}}=\partial_{\theta^{i,k}}\circ\dft_{\td X}.$ By (1), \[\ft_{\td X}(\qdm_T(W)_{\td X})\subset\td\tau^*\qdm(\td X)^\textrm{ext},\] and $\ft_{\td X}$ extends to a map \[\ft_{\td X}\sphat:\qdm_T(W)_{\td X}\sphat\to\td\tau^*\qdm(\td X)^\textrm{ext}.\] This still satisfies (1)-(5).

It remains to prove that $\ft_{\td X}\sphat$ is an isomorphism. Write $R:=\C[z]\fps{\sq,\bth}[S,yS^{-1}],\mdl_0:=\qdm_T(W),\mdl:=\qdm_T(W)_{\td X}$. Let $\idl\subset R$ be the ideal generated by $S,yS^{-1}.$ Let $s_1,\dots,s_N\in H_T^*(W)$ be homogeneous elements such that $\kappa_{\td X}(s_1),\dots,\kappa_{\td X}(s_N)$ form a basis of $H^*(\td X)$. We will show that $\mdl/\idl\mdl$ is generated by $s_1,\dots,s_N$ as a module over $R/\idl=\C[z]\fps{\sq,\bth}/(y)$. Since $S$ preserves $\mdl_0$, we have \[\mdl/\idl\mdl=\frac{\mdl_0+yS^{-1}\C[yS^{-1}]\mdl_0}{S\mdl_0+yS^{-1}\C[yS^{-1}]\mdl_0}\cong\frac{\mdl_0}{S\mdl_0+(\mdl_0\cap yS^{-1}\C[yS^{-1}]\mdl_0)}.\] Let $\lt(f)$ denote the leading order term $f|_{\sq=\bth=0}$ of $f\in\mdl_0$. By Lemma \ref{lem:tech}, for any $f\in\mdl_0$, we can find $a_1,\dots,a_N\in\C[z]$ and $c_1,c_2,c_3\in H_T^*(W)[z]$ such that \[\lt(f)-\sum_{i=1}^Na_is_i=\lt(\sei c_1+x\sei^{-1}c_2+y\sei^{-1}c_3).\] If $f$ is homogeneous, then clearly $a_i,c_i$ can be chosen to be homogeneous. Applying the same argument term by term in the $(\sq,\bth)$-power series $f$, we can construct $A_i\in\C[z]\fps{\sq,\bth}$, $C_i\in H_T^*(W)[z]\fps{\sq,\bth}$ such that \[f-\sum_{i=1}^NA_is_i=\sei C_1+x\sei^{-1}C_2+y\sei^{-1}C_3.\] The right hand side lies in ${S\mdl_0+(\mdl_0\cap yS^{-1}\C[yS^{-1}]\mdl_0)}$, since $xS^{-1}\mdl_0=yS^{-1}(xy^{-1})\mdl_0\subset yS^{-1}\mdl_0$ by Proposition \ref{prop:shftact}. We conclude that $\mdl/\idl\mdl$ is generated by $s_1,\dots,s_N$. Hence the completion $\qdm_T(W)_{\td X}\sphat=\varprojlim_k\mdl/\idl^k\mdl$ is generated by $s_1,\dots,s_N$ as a module over $\C[z]\fps{C_{\td X},\bth}=\varprojlim_kR/\idl^k.$ Since $\td\tau|_{\sq=S=yS^{-1}=\bth=0}=0$, by diagram \eqref{eq:diagm} we have \[\ft_{\td X}(s_i)|_{\sq=S=yS^{-1}=\bth=0}=\dft_{\td X}(s_i)|_{\sq=S=yS^{-1}=\bth=0}=\kappa_{\td X}(s_i).\] This implies that $\ft_{\td X}(s_i)$ is a $\C[z]\fps{C_{\td X},\bth}$-basis of $\td\tau^*\qdm(\td X)^\textrm{ext}$. Consequently, $s_1,\dots,s_N\in\qdm_T(W)_{\td X}\sphat$ are linearly independent over $\C[z]\fps{C_{\td X},\bth}$, and $\ft_{\td X}\sphat$ is an isomorphism.
\end{proof}

\begin{lemma}[{\cite[Lemma 5.3]{Iri25}}]\label{lem:tech}
	Any element in $\ker(\kappa_{\td X})\subset H_T^*(W)[z]$ is of the form $\lt(\sei c_1+x\sei^{-1}c_2+y\sei^{-1}c_3)$ for some $c_1,c_2,c_3\in H_T^*(W)[z]$.
\end{lemma}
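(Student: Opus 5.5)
The plan is to combine the description of $\ker\kappa_{\td X}$ in Lemma \ref{lem:kerkir} with an explicit computation of the leading order terms of $\sei$ and $\sei^{-1}$, treating the three summands $i_{\td X*}H_T^*(\td X)$, $i_{X*}H_T^*(X)$ and $i_{Z\times\cp^1*}(\pr_1^*H_T^*(Z))$ one at a time.

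\textbf{Reduction to the rational Givental space.} At $\sq=\bth=0$ only constant maps contribute to \eqref{eq:fund}, so $M_W\equiv\id$ modulo $(\sq,\bth)$. By \eqref{eq:fsolshft},
\[\lt(\sei^k c)=\left(\sft^k c\right)\big|_{\sq=0}\]
for $c\in H_T^*(W)[z]$, after multiplying by the appropriate monomial $x^{\bullet}$ or $y^{\bullet}$. The right-hand side can then be read off componentwise from \eqref{eq:shft}. Since $H_T^*(W)$ injects into the localized cohomology $\bigoplus_F H_T^*(F)_{\rm loc}$, a class is determined by its restrictions to $X$, $Z$ and $\td X$.

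\textbf{The $\td X$ and $X$ summands.} Take $k=1$ in \eqref{eq:shft}.
\begin{itemize}
\item The $X$- and $Z$-components of $\sft c$ carry the factors $x$ and $y$, so they vanish at $\sq=0$.
\item The $\td X$-component is $(\lambda-[D])\,e^{-z\partial_\lambda}c_{\td X}$.
\end{itemize}
Since $\lambda-[D]=e_T(\nm_{\td X/W})$ by \eqref{eq:nm}, this shows $\lt(\sei c)=i_{\td X*}\bigl(e^{-z\partial_\lambda}c|_{\td X}\bigr)$. Because $e^{-z\partial_\lambda}$ is invertible on $H_T^*(\td X)[z]$ and restriction to $\td X$ is surjective, every element of $i_{\td X*}H_T^*(\td X)[z]$ arises in this way.

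Similarly, take $k=-1$ and multiply by $x$.
\begin{itemize}
\item The $X$-component becomes $-\lambda\, e^{z\partial_\lambda}c_X = e_T(\nm_{X/W})\,e^{z\partial_\lambda}c_X$.
\item The $Z$-component carries $xy^{-1}=\sq^{(0,1,-1,0)}$, a nontrivial effective monomial, so it vanishes at $\sq=0$.
\item The $\td X$-component carries $x$, so it also vanishes at $\sq=0$.
\end{itemize}
Hence $\lt(x\sei^{-1}c)$ fills out $i_{X*}H_T^*(X)[z]$.

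\textbf{The $Z\times\cp^1$ summand.} Here we use $y\sei^{-1}c_3$. By \eqref{eq:shft} with $k=-1$, its $Z$-component is
\[\frac{e_{-\lambda}(\nm_{Z/X})}{\lambda+z}\,e^{z\partial_\lambda}(c_3)_Z.\]
Its $\td X$-component carries $y$, so it vanishes at $\sq=0$. Its $X$-component carries $yx^{-1}$ times $(-\lambda)$, which is not an effective Novikov monomial.

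I expect this summand to be the main obstacle, for two reasons. First, the denominator $\lambda+z$ must be cleared. Second, the $X$-component has to be handled, since it is not killed by setting $\sq=0$.

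The plan is to choose $c_3$ so that $(c_3)_X=0$, which disposes of the $X$-component. For this, take
\[c_3=e^{-z\partial_\lambda}\Bigl(\hat\jmath_*\bigl((\lambda+z)\hat\pi^*\gamma\bigr)\Bigr)\]
built from classes supported near $\wht D$. This $c_3$ has no $X$-component, and its $Z$-component contains the factor $\lambda+z$ once the shift $e^{z\partial_\lambda}$ is undone; the restriction formula \eqref{eq:abloc} gives this explicitly. Then $\lt(y\sei^{-1}c_3)$ has $Z$-component exactly $e_{-\lambda}(\nm_{Z/X})\,\gamma$ and vanishing $X$-component. Its $\td X$-component also vanishes, since it carries $y$. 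So $\lt(y\sei^{-1}c_3)$ is the class with $X$- and $\td X$-components zero and $Z$-component $e_{-\lambda}(\nm_{Z/X})\gamma$.

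Finally, invoke Lemma \ref{lem:abloc}(2). A class with vanishing $\td X$-component and $Z$-component $e_{-\lambda}(\nm_{Z/X})\gamma$ equals $i_{Z\times\cp^1*}(\pr_1^*\gamma)$ modulo $i_{X*}H_T^*(X)$. Since the $i_{X*}$ error is already covered by the $x\sei^{-1}$ step, summing the three constructions covers all of $\ker\kappa_{\td X}$.

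If the chosen $c_3$ turns out to have a nonzero $X$-restriction, I would correct it by subtracting an appropriate $i_{X*}$ term from $c_3$ before applying $y\sei^{-1}$, rather than trying to absorb the non-effective $yx^{-1}$ contribution afterwards.
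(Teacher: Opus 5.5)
Your overall route is the paper's (following Iritani). You split $\ker\kappa_{\td X}$ by Lemma~\ref{lem:kerkir}, get $i_{\td X*}$ from $\lt(\sei c_1)$ and $i_{X*}$ from $\lt(x\sei^{-1}c_2)$, and get $i_{Z\times\cp^1*}$ from $\lt(y\sei^{-1}c_3)$ with $c_3|_X=0$, modulo $i_{X*}$, by Lemma~\ref{lem:abloc}(2). The first two cases are correct. There, every monomial that $\sft^{\pm1}$ puts on a fixed component ($1$, $x$, $y$, $xy^{-1}$) is effective, so $\lt$ really can be read off from $\sft^{\pm1}c$ at $\sq=0$.

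\textbf{The gap is in the third case: the $X$-component.} On $X$, $y\sft^{-1}$ multiplies by the non-effective monomial $yx^{-1}$. So every coefficient of $\sq^\delta$ in $(M_Wc_3)|_X$ matters, and $c_3|_X=0$ only kills $\delta=0$. You need two facts you have not supplied.
\begin{enumerate}
\item[(i)] You must show $y\sei^{-1}c_3\in\qdm_T(W)$; otherwise $\lt$ is not defined. This is where Theorem~\ref{thm:loc} and Lemma~\ref{lem:wclass} enter.
\begin{enumerate}
\item[$\bullet$] If $[X]\cdot\delta=0$, a $T$-fixed stable map contributing to $(M_Wc_3)|_X$ at order $\sq^\delta$ cannot leave $X$. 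A component covering an invariant curve that leaves $X$ meets $X$ positively, and every other component meets it trivially. Such a map therefore carries $\ev_1^*(c_3|_X)=0$.
\item[$\bullet$] If $[X]\cdot\delta\ge1$, then $\delta$ contains the generator $(0,1,-1,0)$, so $yx^{-1}\sq^\delta$ is effective.
\end{enumerate}
\item[(ii)] The term $\delta=(0,1,-1,0)$, coming from the invariant lines joining $Z$ to $X$, survives at $\sq=0$. Its pole at $\lambda=z$ (from $1/(z-\psi)$ at the $X$-end) is moved by $e^{z\partial_\lambda}$ to $\lambda=0$, where it cancels the factor $-\lambda$. At $\lambda=0$ it equals the $\q^{-1}$-term of \eqref{eq:dftlot}, namely $\iota_*\bigl(z^{-1}[c_3|_Z]_{\lambda=z}\bigr)$. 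For the corrected $c_3$ below with $\gamma=1$, this is $[Z]\neq0$.
\end{enumerate}
So $\lt(y\sei^{-1}c_3)$ does not vanish on $X$, contrary to your claim. Your conclusion survives only because Lemma~\ref{lem:abloc}(2) uses just $f_{\td X}=0$ and $f_Z$. The proof should drop the vanishing claim and include (i).

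\textbf{Your choice of $c_3$ also carries a spurious factor.} By \eqref{eq:abloc}, $\hat\jmath_*(\hat\pi^*\gamma)|_Z=\lambda\gamma$. So $\hat\jmath_*$ already supplies the factor that becomes $\lambda+z$ after $e^{z\partial_\lambda}$. The extra $(\lambda+z)$ you inserted leaves an additional degree-two factor in the $Z$-component. Since $e_{-\lambda}(\nm_{Z/X})$ is a non-zero-divisor in $H^*(Z)[\lambda,z]$, classes such as $i_{Z\times\cp^1*}(\pr_1^*1)$ are then out of reach.

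Take instead $c_3=\hat\jmath_*\hat\pi^*\gamma'$ with $\gamma'(\lambda,z)=\gamma(\lambda-z,z)$. Here you shift only $\gamma$, where $e^{-z\partial_\lambda}$ is canonical. Then $c_3|_X=0$ and $c_3|_Z=\lambda\,\gamma(\lambda-z,z)$. The $Z$-component of $\lt(y\sei^{-1}c_3)$ becomes $\frac{e_{-\lambda}(\nm_{Z/X})}{\lambda+z}(\lambda+z)\gamma=e_{-\lambda}(\nm_{Z/X})\gamma$, which is what you wanted.
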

The lemma is obtained using \eqref{eq:fsolshft}, \eqref{eq:shft}, Theorem \ref{thm:loc}, Lemma \ref{lem:wclass}, \ref{lem:abloc}, \ref{lem:kerkir}.

We next turn to $X$. 
Using the extension $\nov Q\hookrightarrow\nov{C_X}$ given by the dual Kirwan map, we define \[\qdm(X)^\textrm{ext}:=\qdm(X)\otimes_{\C[z]\fps{Q,\tau}}\C[z]\fps{C_X,\tau}.\]
The quantum connection on $\qdm(X)$ extends to $\qdm(X)^\textrm{ext}$ in the same way as before.
By Proposition \ref{prop:dft}, \[\dft_X(J_W(\theta))=M_X(\tau(\theta))v(\theta)\] for $\tau(\theta)\in H^*(X)\fps{C_X,\bth}$ and $v(\theta)\in H^*(X)[z]\fps{C_X,\bth}$ such that $\tau(0)\equiv xS^{-1}$ and $v(0)\equiv1$ modulo the closed ideal generated by $\sq$ and $y^{-1}S$. We consider the pull-back \[\tau^*\qdm(X)^\textrm{ext}:=H^*(X)[z]\fps{C_X,\bth}\] equipped with the pull-back connection.
Denote $\q:=yS^{-1}$. Let $s=r-1$ if $r$ is even and $2(r-1)$ if $r$ is odd. We consider the extension \[\C[z]\fps{C_{X}}\subset \C[z]\laur{\q^{-1/s}}\fps\sq.\] Then as before, the quantum connection extends to \[\tau^*\qdm(X)^\textrm{La}:=H^*(X)[z]\laur{\q^{-1/s}}\fps{\sq,\bth}.\] Note that $\C[z]\laur{\q^{-1/s}}\fps\sq$ also contains $\C[z]\fps{C_{\td X}}$.
\begin{proposition}[{\cite[Proposition 5.4]{Iri25}}]
	There is a homomorphism \[\ft_X\sphat:\qdm_T(W)_{\td X}\sphat\to\tau^*\qdm(X)^\textnormal{La}\] of $\C[z]\fps{C_{\td X},\bth}$-modules satisfying the properties (1)-(5) of Theorem \ref{thm:qdmiso}.
\end{proposition}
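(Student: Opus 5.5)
The plan is to mirror the proof of Theorem \ref{thm:qdmiso}, with the discrete Fourier transform $\dft_X$ in place of $\dft_{\td X}$. The difference is that $\dft_X$ naturally lands in an extension adapted to $C_X$ rather than $C_{\td X}$, and this forces the passage to the Laurent ring in $\q^{-1/s}$.

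\textbf{Construction on $\qdm_T(W)$.} By Proposition \ref{prop:dft} with $Y=X$, we have $\dft_X(zJ_W(\theta))=zM_X(\tau(\theta))v(\theta)$. Differentiating in $\theta^{i,k}$ and using \eqref{eq:fundsol} gives
\[\dft_X(M_W(\theta)\phi_i\lambda^k)=M_X(\tau(\theta))\,z\nabla_{\theta^{i,k}}v(\theta).\]
We therefore define $\ft_X$ on $\qdm_T(W)$ as the $\C[z]\fps{\sq,\bth}$-linear map sending $\phi_i\lambda^k\mapsto z\nabla_{\theta^{i,k}}v(\theta)\in\tau^*\qdm(X)^\textnormal{ext}$. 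This gives the analogue of diagram \eqref{eq:diagm}, with the bottom arrow well defined on the image of $M_W$ by Proposition \ref{prop:dftdef}. Properties (1)--(5) then follow exactly as before:
\begin{itemize}
\item property (1) uses Proposition \ref{prop:dftshft}(1) together with \eqref{eq:fsolshft};
\item property (2) uses Proposition \ref{prop:dftshft}(2);
\item property (3) uses the fact that $\dft_X$ commutes with $\partial_{\theta^{i,k}}$;
\item property (4) uses Proposition \ref{prop:dftshft}(3);
\item property (5) is the degree count of Proposition \ref{prop:dft}.
\end{itemize}
Each of these requires only that $M_X(\tau(\theta))$ is injective.

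\textbf{Extension to $\qdm_T(W)_{\td X}$ and its completion.} This is the key step and the expected obstacle. The problem is that $\qdm_T(W)_{\td X}=\C[yS^{-1}]\cdot\qdm_T(W)$, while $\ft_X$ intertwines $\seiht^\beta$ with $\wht S^\beta$, so we must make sense of $\q=yS^{-1}$ acting on the target. The monomial $\q$ is not in $C_X$. Indeed, $C_X$ contains $\N(0,0,-1,1)$, which is $\q^{-1}$, so $\q^{-1}$ is a legitimate element of $\C[z]\fps{C_X}$. Passing to $\C[z]\laur{\q^{-1/s}}\fps{\sq}$ makes $\q$ invertible, so $\ft_X$ extends uniquely to $\qdm_T(W)_{\td X}$ by $\ft_X(\q^n f)=\q^n\ft_X(f)$.

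Next we check continuity for the $\idl$-adic topology, with $\idl=(S,\q)$. Multiplication by $\q$ raises the $\q$-degree in the Laurent ring $\laur{\q^{-1/s}}$, which is the wrong direction in the Laurent ring. Convergence instead has to come from the grading. The graded completion only allows finitely many degrees. Since $\deg\q=2(r-1)>0$ and $\deg S=2$, a homogeneous element of $\idl^k\mdl$ of fixed degree forces compensating negative powers of $\q$ or powers of $\sq$ after applying $\ft_X$. I would make this precise by writing $S=\q^{-1}y$, so $S\in y\,\q^{-1}\C[\q^{-1}]$, and checking that in each fixed degree only finitely many terms of an $\idl$-adic Cauchy sequence survive modulo a given power of $\sq$. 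This is where the fractional roots $\q^{1/s}$ enter: the substitution coming from the leading term $\tau(0)\equiv xS^{-1}$ involves $c_X$-th roots in Proposition \ref{prop:cft}, and for odd $r$ square roots appear via $q_{F,j}$.

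\textbf{Completion and conclusion.} Once continuity is established, $\ft_X$ extends to $\ft_X\sphat$ on $\qdm_T(W)_{\td X}\sphat$. Properties (1)--(5) are preserved by continuity and linearity, which completes the proof. Unlike Theorem \ref{thm:qdmiso}, no isomorphism claim is needed here.
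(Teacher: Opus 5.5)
Your proposal is correct and follows the paper's proof. You define $\ft_X$ on $\qdm_T(W)$ via $\dft_X(zJ_W(\theta))=zM_X(\tau(\theta))v(\theta)$ exactly as for $\ft_{\td X}$, use property (1) to pass to $\qdm_T(W)_{\td X}=\C[\q]\cdot\qdm_T(W)$, and extend to the $\idl$-adic completion because $\q$ has positive degree in a graded completion.

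Two small corrections. First, no fractional roots arise on the $X$ side: $c_X=-1$, so the map lands in $\tau^*\qdm(X)^\textnormal{ext}\laur{\q^{-1}}$, and $\q^{-1/s}$ and $q_{F,j}$ matter only for the $Z$-components. Second, convergence is best seen on the source side. In a fixed degree, large powers of $S$ and $\q$ force the $\qdm_T(W)$-coefficient deep into the $(\sq,\bth)$-adic filtration, since coefficients in $H_T^*(W)[z]$ have nonnegative degree, and $\ft_X$ preserves this depth by $\C[z]\fps{\sq,\bth}$-linearity. By contrast, ``compensating negative powers of $\q$'' in the image would not by themselves give convergence.
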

\begin{proof}
	Define $\ft_X:\qdm_T(W)\to\tau^*\qdm(X)^\textrm{ext}$ in the same way as $\ft_{\td X}$. It satisfies properties (1)-(5). By property (1), this induces a map $\ft_X:\qdm_T(W)_{\td X}\to\tau^*\qdm(X)^\text{ext}[\q]$. It extends to a map $\ft_X\sphat:\qdm_T(W)_{\td X}\sphat\to\tau^*\qdm(X)^\text{ext}\laur{\q^{-1}}$ since $\q$ has positive degree; see \cite[Proposition 5.4]{Iri25} for details.
\end{proof}
Finally, we treat $Z$. Let $\sigma$ be the bulk variable for $Z$. We consider the extension \[\C[z]\fps{Q_Z,\sigma}\to\C[z]\laur{\q^{-1/s}}\fps{\sq,\sigma},\ Q_Z^d\mapsto Q^{\iota_*d}\q^{-\rho_Z\cdot d/(r-1)}=\sq^{i_{Z*}d}S_Z^{-\rho_Z\cdot d/c_Z}\] where $\iota:Z\to X,\ i_Z:Z\to W$ are the inclusions, and $\rho_Z=c_1(\nm_{Z/W})=c_1(\nm_{Z/X}).$ It is degree preserving, but not necessarily injective. We introduce \[\qdm(Z)^\textrm{La}:=\qdm(Z)\otimes_{\C[z]\fps{Q_Z,\sigma}}\C[z]\laur{\q^{-1/s}}\fps{\sq,\sigma}\] with the extended connection \[\begin{split}
	\nabla_{\sigma^i}&=\partial_{\sigma^i}+z^{-1}\phi_{Z,i}\star_\sigma,\\
	\nabla_{z\partial_z}&=z\partial_z-z^{-1}{E_Z\star_\sigma}+\mu_Z,\\
	\nabla_{\xi\wht S\partial_{\wht S}}&=\xi\wht S\partial_{\wht S}+z^{-1}{\kappa_Z(\xi)\star_\sigma},
\end{split}\] where $\kappa_Z:H_T^2(W)\to H^2(Z)$ sends $\xi$ to $i_Z^*\xi|_{\lambda=\rho_Z/(r-1)}$; this is not a Kirwan map.

Let $j\in\{0,\dots,r-2\}.$ We have \[\lambda_{j}=e^{-\frac{2\sqrt{-1}\pi}{r-1}\left(j+\frac{r}{2}\right)}\q^{\frac{1}{r-1}}.\] By Proposition \ref{prop:cft}, \[\q^{\rho_z/((r-1)z)}\cft_{Z,j}(J_W(\theta))=M_Z(\sigma_j(\theta))u_j(\theta)\] for some $\sigma_j(\theta)\in H^*(Z)\laur{\q^{-1/(r-1)}}\fps{\sq,\bth}$ with leading order term \[h_{Z,j}=\frac{2\sqrt{-1}\pi}{r-1}\left(j+\frac{1}{2}\right)\rho_Z\] and $u_j(\theta)\in q_{Z,j}H^*(Z)[z]\laur{\q^{-1/(r-1)}}\fps{\sq,\bth}$ with leading order term \[q_{Z,j}=\frac{1}{\sqrt{r-1}}e^{\frac{\sqrt{-1}\pi}{r-1}(jr+\frac{1}{2})}\q^{-\frac{r}{2(r-1)}}.\] We introduce \[\sigma_j^*\qdm(Z)^\textrm{La}:=H^*(Z)[z]\laur{\q^{-1/s}}\fps{\sq,\bth}\] equipped with the pull-back connection \[\begin{split}
	\nabla_{\theta^{i,k}}&=\partial_{\theta^{i,k}}+z^{-1}(\partial_{\theta^{i,k}}\sigma_j(\theta))\star_{\sigma_j(\theta)},\\
	\nabla_{z\partial_z}&=z\partial_z-z^{-1}{E_Z\star_{\sigma_j(\theta)}}+\mu_Z,\\
	\nabla_{\xi\wht S\partial_{\wht S}}&=\xi\wht S\partial_{\wht S}+z^{-1}{\kappa_Z(\xi)\star_{\sigma_j(\theta)}}+z^{-1}\xi\wht S\partial_{\wht S}{\sigma_j(\theta)\star_{\sigma_j(\theta)}}.
\end{split}\]
It is convenient to shift the map $\sigma_j(\theta)$ in the identity direction: \[\varsigma_j(\theta):=\sigma_j(\theta)-(r-1)\lambda_j,\] and introduce $\varsigma_j^*\qdm(Z)^\textrm{La}$ with the same underlying module $H^*(Z)[z]\laur{\q^{-1/s}}\fps{\sq,\bth}$ equipped with the connection \[\varsigma_j^*\nabla=\sigma_j^*\nabla+(r-1)\lambda_j\frac{dz}{z^2}-\frac{\lambda_j}{z}\frac{d\q}{\q}.\] Henceforth we shall write $\nabla$ for $\varsigma_j^*\nabla$.
\begin{proposition}[{\cite[Proposition 5.7]{Iri25}}]
	For $j\in\{0,\dots,r-2\}$, there is a homomorphism \[\ft_{Z,j}\sphat:\qdm_T(W)_{\td X}\sphat\to\varsigma_j^*\qdm(Z)^\textnormal{La}\] of $\C[z]\fps{C_{\td X},\bth}$-modules which \begin{enumerate}[(1)]
		\item intertwines $\seiht^\beta$ with $\sftht^\beta$ for $\beta\in H_2^T(W)$,
		\item intertwines $z\nabla_{\xi\sq\partial_\sq}$ with $z\nabla_{\xi\wht S\partial_{\wht S}}$ for $\xi\in H_T^2(W)$,
		\item commutes with $\nabla_{\theta^{i,k}}$,
		\item intertwines $\nabla_{z\partial_z}+\frac{1}{2}$ with $\nabla_{z\partial_z}$,
		\item is homogeneous of degree $-r$.
	\end{enumerate}
\end{proposition}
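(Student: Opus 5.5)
We will construct $\ft_{Z,j}\sphat$ in the same way as $\ft_{\td X}$ and $\ft_X$, with the continuous Fourier transform $\cft_{Z,j}$ taking the place of the discrete one. By Proposition \ref{prop:cft} and the displayed formula above, after multiplying by $\q^{\rho_Z/((r-1)z)}$ we have $\cft_{Z,j}(J_W(\theta))=M_Z(\sigma_j(\theta))u_j(\theta)$. We then pass from $\sigma_j$ to $\varsigma_j=\sigma_j-(r-1)\lambda_j$. By the string equation (the identity-direction analogue of \eqref{eq:divflow}), $M_Z(\sigma_j)=e^{(r-1)\lambda_j/z}M_Z(\varsigma_j)$. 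This exponential prefactor is exactly what the gauge term $(r-1)\lambda_j\,dz/z^2-(\lambda_j/z)\,d\q/\q$ in $\varsigma_j^*\nabla$ absorbs. Once these normalizations are set, we define $\ft_{Z,j}$ on $\qdm_T(W)$ by
\[
\phi_i\lambda^k\mapsto z\nabla_{\theta^{i,k}}u_j(\theta),
\]
so that we get the analogue of diagram \eqref{eq:diagm}:
\[
M_Z(\varsigma_j(\theta))\circ\ft_{Z,j}=(\text{normalized }\cft_{Z,j})\circ M_W(\theta).
\]
Differentiating the $J$-function identity with respect to $\theta^{i,k}$ and using \eqref{eq:fundsol} then shows that $\ft_{Z,j}(M_W$-images$)$ is computed by $\cft_{Z,j}$.

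The properties then follow from Proposition \ref{prop:cftprop}. Property (1) comes from \eqref{eq:fsolshft} together with item (1) of that proposition, namely $\cft_{Z,j}\circ\sftht^\beta=\wht S^\beta\cft_{Z,j}$; on the $Z$ side, $\wht S^\beta$ acts as multiplication by the corresponding monomial in $\C\laur{\q^{-1/s}}\fps{\sq}$. Property (3) holds because $\cft_{Z,j}$ commutes with $\partial_{\theta^{i,k}}$. Property (2) combines item (2) of that proposition, which turns $\lambda$ into $zS\partial_S+\lambda_j$, with the divisor-type relation $\xi\mapsto\kappa_Z(\xi)$, where $\lambda\mapsto\rho_Z/(r-1)$ arises from differentiating the prefactor $\q^{-\rho_Z/((r-1)z)}$. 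The constant $\lambda_j$ is cancelled by the $d\q/\q$ gauge term. Property (4) is item (3) of Proposition \ref{prop:cftprop}. There the extra term $z^{-1}c_Z\lambda_j=z^{-1}(r-1)\lambda_j$ is cancelled by the $dz/z^2$ gauge term, and $E_Z$ of $\varsigma_j$ replaces $c_1(Z)$. Property (5) follows from homogeneity of $\tau$ and $v/q_{Z,j}$ in Proposition \ref{prop:cft}. The degree of $q_{Z,j}\sim\q^{-r/(2(r-1))}$ is $-r$, since $\deg\q=2(r-1)$.

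Finally, we extend $\ft_{Z,j}$ to $\qdm_T(W)_{\td X}\sphat$. By (1), multiplication by $yS^{-1}=\q$ goes to multiplication by $\q$, so $\ft_{Z,j}$ extends to the localization generated by $\C[yS^{-1}]$. Convergence in the $\idl$-adic completion then reduces to two points. First, $S$ maps to $S_Z$-powers that lie in $\q^{-1}$ times $\sq$-monomials, by the identification $S_Z=\wht S^{\sigma_Z(1)}=y^{-1}S$ up to sign conventions. Second, large powers of $\q$ are controlled because $\cft_{Z,j}$ takes values in Laurent series in $\q^{-1/(r-1)}$. This is the same graded-completion argument as for $\ft_X\sphat$: positivity of $\deg\q$ bounds the number of negative $\q$-powers in each fixed degree.

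We expect this last step to be the main obstacle. One must check carefully that the image of $\qdm_T(W)$ under $\cft_{Z,j}$ has $\q$-exponents bounded in each degree, uniformly in the $\sq$-expansion, so that the $\idl$-adic limit lands in $\C[z]\laur{\q^{-1/s}}\fps{\sq,\bth}$. We would first try to verify this using the leading-order formula \eqref{eq:cftlot} and homogeneity, as in \cite[Proposition 5.7]{Iri25}.
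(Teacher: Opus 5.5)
Your proposal is correct and is essentially the paper's argument. You define $\ft_{Z,j}$ through the $\theta$-derivatives of $u_j(\theta)$ so that $M_Z(\sigma_j(\theta))\circ\ft_{Z,j}=\q^{\rho_Z/((r-1)z)}\cft_{Z,j}\circ M_W(\theta)$, read off (1)--(5) from Proposition~\ref{prop:cftprop} after absorbing the shift $\sigma_j\mapsto\varsigma_j=\sigma_j-(r-1)\lambda_j$ into the gauge terms, and extend to the completion exactly as for $\ft_X\sphat$.

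There are two harmless slips, neither affecting the argument:
\begin{enumerate}
\item By \eqref{eq:nm} one has $c_Z=1-r$, not $r-1$. The extra term in Proposition~\ref{prop:cftprop}(3) is therefore $-z^{-1}c_Z\lambda_j=z^{-1}(r-1)\lambda_j$.
\item In the completion step, positivity of $\deg\q$ bounds the \emph{positive} powers of $\q$ in each degree, not the negative ones. The real mechanism is that, in a fixed degree, the $\qdm_T(W)$-cofactors of elements of $\idl^k\qdm_T(W)_{\td X}$ have degree tending to $-\infty$, and hence lie deep in the $(\sq,\bth)$-adic filtration.
\end{enumerate}
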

\begin{proof}
	Define $\ft_{Z,j}:\qdm_T(W)\to\sigma_j^*\qdm(Z)^\textrm{La}$ by sending $\phi_i\lambda^k$ to $\nabla_{\theta^{i,k}}u_j(\theta)$, and extend it to the localization over $\sq$. It fits in the diagram \begin{equation}\label{eq:zdiagm}
		\begin{tikzcd}
	{\qdm_T(W)[\sq^{-1}]} & {\sigma_j^*\qdm(Z)^\textnormal{La}[\sq^{-1}]} \\
	{\gsp^\textnormal{rat}_W[\sq^{-1}]} & {\gsp_{Z}^\textnormal{La}[\sq^{-1}]}
	\arrow["{\ft_{Z,j}}", from=1-1, to=1-2]
	\arrow["{M_W(\theta)}"', from=1-1, to=2-1]
	\arrow["{M_{Z}(\sigma_j(\theta))}", from=1-2, to=2-2]
	\arrow["{\q^{\rho_Z/((r-1)z)}\cft_{Z,j}}"', from=2-1, to=2-2]
\end{tikzcd}
	\end{equation} where $\gsp_{Z}^\textnormal{La}:=H^*(Z)[z,z^{-1}]\laur{\q^{-1/s}}\fps{\sq}$. Properties (1)-(5) follow from Proposition \ref{prop:cftprop} as with $\ft_{\td X}$, and after accounting for the shift from $\sigma_j^*\nabla$ to $\varsigma_j^*\nabla$. The map extends to $\qdm_T(W)_{\td X}\sphat$ for the same reason as $\ft_X$.
\end{proof}
\begin{theorem}[{\cite[Theorem 5.9]{Iri25}}]
	The map \[\Phi:=\ft_X\sphat\oplus\bigoplus_{j=0}^{r-2}\ft_{Z,j}\sphat:\qdm_T(W)_{\td X}\sphat\to\tau^*\qdm(X)^\textnormal{La}\oplus\bigoplus_{j=0}^{r-2}\varsigma_j^*\qdm(Z)^\textnormal{La}\] induces an isomorphism after base change to $\C[z]\laur{\q^{-1/s}}\fps{\sq,\bth}$. Consequently, the map \[\Psi:=\Phi\circ(\ft_{\td X}\sphat)^{-1}:\td\tau^*\qdm(\td X)^\textnormal{ext}\to\tau^*\qdm(X)^\textnormal{La}\oplus\bigoplus_{j=0}^{r-2}\varsigma_j^*\qdm(Z)^\textnormal{La}\] induces an isomorphism after base change to $\C[z]\laur{\q^{-1/s}}\fps{\sq,\bth}$. Moreover, $\Psi$ intertwines the quantum connections, and intertwines the pairing $P_{\td X}$ with $P_X\oplus\bigoplus_{j=0}^{r-2}P_Z.$
\end{theorem}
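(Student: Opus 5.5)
Our approach mirrors the proof of Theorem \ref{thm:qdmiso}. First, the target of $\Phi$ is a free module of rank $N=\dim H^*(X)+(r-1)\dim H^*(Z)$ over $\C[z]\laur{\q^{-1/s}}\fps{\sq,\bth}$. By Proposition \ref{prop:addiso}, this equals $\dim H^*(\td X)$, which by Theorem \ref{thm:qdmiso} is the rank of $\qdm_T(W)_{\td X}\sphat$ over $\C[z]\fps{C_{\td X},\bth}$. Hence, after base change, $\Phi$ is a map between free modules of the same rank. We will show it is an isomorphism by proving that its determinant, in a suitable basis, is a unit. The determinant is homogeneous, so it suffices to compute the leading order term with respect to $\q^{-1/s}$ and $(\sq,\bth)$. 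We therefore set $\sq=\bth=0$, except that we keep the Novikov variable $\q=yS^{-1}$, and examine the leading power of $\q^{-1/(r-1)}$.

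For the basis we take $s_1,\dots,s_N\in H_T^*(W)$ with $\kappa_{\td X}(s_i)$ a basis of $H^*(\td X)$. Using \eqref{eq:abloc}, we choose them as follows:
\begin{itemize}
\item classes $\hat\varphi^*\alpha$ with $\alpha$ pulled back from $X$, whose restrictions to $X$ give a basis of $H^*(X)$;
\item classes $\hat\jmath_*(\hat p^k\hat\pi^*\gamma)$ for $0\le k\le r-2$, with restrictions $f_X=0$ and $f_Z=-(-\lambda)^{k+1}\gamma$.
\end{itemize}
The images under $\ft_X$ and $\ft_{Z,j}$ are computed from the diagrams \eqref{eq:diagm} and \eqref{eq:zdiagm}. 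At leading order, $\dft_X(M_W s)\approx\kappa_X(s)$ plus corrections. By \eqref{eq:cftlot}, $\cft_{Z,j}(M_Ws)$ has leading term $q_{Z,j}e^{h_{Z,j}/z}\q^{\bullet}\lambda_j^{n}\phi$ when $s_Z=\lambda^n\phi+\dots$. So on the second group of basis elements the $Z$-components are $-(-\lambda_j)^{k+1}\gamma$ times a common prefactor, and the $X$-component vanishes at leading order.

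This gives a block triangular leading matrix. The $X\to X$ block is the identity, since $\kappa_X$ restricted to the first group is a basis. The $Z$-block is $\bigl((-\lambda_j)^{k+1}\bigr)_{j,k}\otimes\id_{H^*(Z)}$, up to invertible scalars $q_{Z,j}$. This is a Vandermonde matrix in the distinct values $\lambda_j=e^{-2\pi\sqrt{-1}(j+r/2)/(r-1)}\q^{1/(r-1)}$, so its determinant is a nonzero multiple of a power of $\q^{1/(r-1)}$. That is a unit in $\C\laur{\q^{-1/s}}$. The remaining block, the $Z$-components of the first group, is only needed to confirm triangularity. We may instead order the $\q$-degrees so that those entries are lower order, or absorb them by row operations, since the $X$-block is already invertible.

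\textbf{Main obstacle.} The delicate part is making "leading order" precise. The Laurent series in $\q^{-1/s}$ must be combined with the $(\sq,\bth)$-adic filtration, and the contributions of $\dft_X$ on the second group must be shown to be genuinely of lower order. Concretely, via Proposition \ref{prop:dftdef}, we must show that the terms with $\lambda$-poles at $X$ carry positive powers of $\q^{-1}$. I would mimic Iritani's degree/valuation argument: assign each entry a valuation in $\q^{-1/s}$ and show that the diagonal blocks achieve the minimal total valuation.

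Given the isomorphism $\Phi$, $\Psi$ is an isomorphism because $\ft_{\td X}\sphat$ is invertible by Theorem \ref{thm:qdmiso}. Intertwining of the connections follows from properties (2)–(4) of the three Fourier transforms; in particular the common $\tfrac12$ shift cancels, and pulling $\theta$ back via $\td\tau$ makes the connections match. For the pairing, we use unitarity of the fundamental solutions together with the stationary phase asymptotics. The Fourier transforms are asymptotics of the Mellin–Barnes integral, and the pairing $P_W$, after summing or integrating over $\lambda$, splits into its contributions from the critical points. This gives $P_X\oplus\bigoplus_j P_Z$, the normalizations $q_{Z,j}$ being exactly what makes the $Z$-pieces unimodular. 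I would verify this last compatibility by comparing the leading terms and then using flatness to propagate it.
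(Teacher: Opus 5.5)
Your argument is essentially the paper's: the same basis $\{c_i,c_{l,m}\}$, reduction to $\sq=\bth=0$, and a leading-order computation yielding a block-triangular matrix whose $Z$-block $\bigl((-1)^l\lambda_j^{l+1}\bigr)_{j,l}$ is invertible. In that computation, \eqref{eq:dftlot} supplies the $O(\q^{-1})$ bound on the $X$-components of the $c_{l,m}$ (exactly the ``main obstacle'' you identify), and \eqref{eq:cftlot} gives the $Z$-components; the paper phrases the invertibility as that of $(\zeta^{-j(l+1)})$ after rescaling $c_{l,m}$ by $\q^{-(l+1)/(r-1)}$. The pairing compatibility is not proved in the paper either, being deferred to \cite[Section 5.6.2]{Iri25}, so your sketch there is at the same level of detail as the source.
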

\begin{proof}
	Let $\{\phi_{X,i}\},\{\phi_{Z,m}\}$ be bases of $H^*(X),H^*(Z)$ respectively. We introduce the following elements of $H_T^*(W)$: \[c_i:=\hat\varphi^*\pr_1^*\phi_{X,i},\quad c_{l,m}:=\hat\jmath_*(\hat{p}^l\hat{\pi}^*\phi_{Z,m}), \textrm{ for } 0\le l\le r-2.\] Their images under the Kirwan map $\kappa_{\td X}(c_i)=\varphi^*\phi_{X,i},\ \kappa_{\td X}(c_{l,m})=\jmath_*(p^l\pi^*\phi_{Z,m})$ form a basis of $H^*(\td X)$. Thus $\{c_i,c_{l,m}\}$ is a $\C[z]\fps{C_{\td X},\bth}$-basis of $\qdm_T(W)_{\td X}\sphat$ by Theorem \ref{thm:qdmiso}. 
Our aim is to show that $\{\Phi(c_i),\Phi(c_{l,m})\}$ is a $\C[z]\laur{\q^{-1/s}}\fps{\sq,\bth}$-basis. It suffices to consider their restrictions to $\sq=\bth=0$. By \eqref{eq:dftlot} and the fact that $\tau(\theta)|_{\sq=\bth=0}=O(\q^{-1})$, $M_X(\tau(\theta))|_{\sq=\bth=0}=\id+O(\q^{-1})$, we have \[\ft_X\sphat(c)|_{\sq=\bth=0}=\kappa_X(c)+O(\q^{-1})\] for $c\in H_T^*(W).$ By \eqref{eq:cftlot}, \[\q^{\rho_Z/((r-1)z)}\cft_{Z,j}(M_W(\theta)c)|_{\sq=\bth=0}=q_{Z,j}e^{h_{Z,j}/z}\lambda_j^n(b+O(\q^{-\frac{1}{r-1}}))\] where $n\in\Z$ and $b\in H^*(Z)$ are given by $i_Z^*c=\lambda^nb+O(\lambda^{n-1}).$ By \eqref{eq:zdiagm} and the fact that $\sigma_j(\theta)|_{\sq=\bth=0}=h_{Z,j}+O(\q^{-\frac{1}{r-1}})$, $M_Z(\sigma_j(\theta))|_{\sq=\bth=0}=e^{h_{Z,j}/z}(\id+O(\q^{-\frac{1}{r-1}}))$, we have \begin{equation}\label{eq:ftlot}
		\ft_{Z,j}\sphat(c)|_{\sq=\bth=0}=q_{Z,j}\lambda_j^n(b+O(\q^{-\frac{1}{r-1}})).
	\end{equation} Since $\kappa_X(c_i)=\phi_{X,i},\ i_Z^*(c_i)=\phi_{X,i}|_Z,\ \kappa_X(c_{l,m})=0,\ i_Z^*c_{l,m}=(-1)^l\lambda^{l+1}\phi_{Z,m}$, we deduce that \[\begin{split}
		\Phi(c_i)|_{\sq=\bth=0}&=\left(\phi_{X,i}+O(\q^{-1}),\left\{q_{Z,j}(\phi_{X,i}|_Z+O(\q^{-\frac{1}{r-1}}))\right\}_{0\le j\le r-2}\right),\\
		(-1)^le^{\frac{\sqrt{-1}\pi r}{r-1}(l+1)}\q^{-\frac{l+1}{r-1}}\Phi(c_{l,m})|_{\sq=\bth=0}&=\left(O(\q^{-\frac{r+l}{r-1}}),\left\{q_{Z,j}(\zeta^{-j(l+1)}\phi_{Z,m}+O(\q^{-\frac{1}{r-1}}))\right\}_{0\le j\le r-2}\right)
	\end{split}\] where $\zeta=e^\frac{2\sqrt{-1}\pi}{r-1}$. Since $(\zeta^{-j(l+1)})_{0\le j,l\le r-2}$ is an invertible matrix, this forms a basis over $\C[z]\laur{\q^{-1/s}}\fps{\sq,\bth}$, and hence $\Phi$ is an isomorphism.
	
	We omit the proof that $\Psi$ respects the pairing, and refer the reader to \cite[Section 5.6.2]{Iri25}.
\end{proof}
\begin{lemma}[{\cite[Lemma 5.11]{Iri25}}]
	For $c\in H_T^*(W)$, we have \begin{equation}\label{eq:dftlot}\begin{split}
		\dft_X(M_W(\theta)c)|_{\sq=0}&=\kappa_X(e^{\theta/z}c)+\sum_{k>0}\q^{-k}\iota_*\left(\frac{\prod_{\nu=1}^{k-1}e_{-\nu z}(\nm_{Z/X})}{k!z^k}[i_Z^*(e^{\theta/z}c)]_{\lambda=kz}\right),\\
		\dft_{\td X}(M_W(\theta)c)|_{\sq=0}&=\kappa_{\td X}(e^{\theta/z}c)+\sum_{k>0}\q^{k}\jmath_*\left(\frac{\prod_{\nu=1}^{k-1}([D]-\nu z)}{\prod_{\nu=1}^{k}e_{-[D]+\nu z}(\nm_{Z/X})}[i_Z^*(e^{\theta/z}c)]_{\lambda=[D]-kz}\right).
	\end{split}
	\end{equation}
\end{lemma}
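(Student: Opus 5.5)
The plan is to compute both sides directly from the definition of the discrete Fourier transform, $\dft_Y(\ff)=\sum_k S^k\kappa_Y(\sft^{-k}\ff)$, with $\ff=M_W(\theta)c$ and $\sq$ set to $0$. I treat $Y=X$; the $Y=\td X$ case is parallel. Two ingredients feed the computation: the localization formula for $\sft^k$ in \eqref{eq:shft}, and a description of $M_W(\theta)c|_{\sq=0}$ restricted to each fixed component.

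\textbf{Step 1: $M_W|_{\sq=0}$ is classical.} At $\sq=0$ only constant maps contribute to $M_W$. By the string equation and the explicit formula \eqref{eq:fund}, this gives $M_W(\theta)c|_{\sq=0}=e^{\theta/z}c$. The reason is that the descendant correlators with $n$ insertions of $\theta$ and degree $0$ collapse to $\int_W \theta^n c\,\phi_j/(n!z^{n+1})$, and restricting to fixed components commutes with this. So the problem reduces to computing $\dft_X(\ff)$ for the classical class $\ff=e^{\theta/z}c\in H_T^*(W)[z^{-1}]\fps{\bth}$.

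\textbf{Step 2: apply \eqref{eq:shft} and the Kirwan map.} The Kirwan map $\kappa_X$ restricts to $X$ and sets $\lambda=0$.

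For $k=0$, this gives the term $\kappa_X(e^{\theta/z}c)$.

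For $k<0$ (terms $S^k\kappa_X(\sft^{-k}\ff)$ with $-k>0$), the factor from \eqref{eq:shft} is a polynomial containing $(-\lambda)$. Since $\ff_X$ is polynomial in $\lambda$, shifting it produces no pole, so these terms vanish after setting $\lambda=0$.

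For $k>0$, the term $\kappa_X(\sft^{-k}\ff)$ has the factor
\[
x^{-k}\prod_{m=-k+1}^{0}(-\lambda+mz)^{-1}
\]
applied to $\ff_X(\lambda+kz)$. Evaluated at $\lambda=0$, this is a nonzero rational expression. The issue is that this is not the claimed answer, which lives on $\iota_*H^*(Z)$. So Step 2 alone is not enough, and the restriction $f_X$ must be rewritten.

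\textbf{Step 3: the key point.} The $\sq=0$ statement should use the algebraic identity from Lemma \ref{lem:abloc}. Write $\ff$ via \eqref{eq:abloc} as $\hat\varphi^*\alpha+\sum_l\hat\jmath_*(\hat p^l\hat\pi^*\gamma_l)$. The $\hat\varphi^*\alpha$ piece behaves like a class pulled back from $X\times\cp^1$. On that piece, $\sft$ acts compatibly with the known $\cp^1$ shift operator, and the $x^{-k}$ contributions are exactly compensated once the Novikov variables are identified via $x=\sq^{(0,1,0,0)}$. Consequently these terms drop out modulo $\sq$: indeed $x$ is a Novikov variable, so $S^kx^{-k}$ terms have no place in $\gsp_X^{\mathrm{ext}}$ at $\sq=0$ unless they pair with $y$. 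Proposition \ref{prop:dftdef}'s proof shows this mechanism: surviving terms for $k$ positive carry $y^{-k}$, giving $S^{k}y^{-k}=\q^{-k}$. These come from the fixed component $Z$, through $T$-invariant curves that are $k$-fold covers of the fibre of $\nm_{Z/W}$ of weight $1$ joining $Z$ to $X$. I will therefore compute the $\q^{-k}$ coefficient by localization on $W$ at degree $(0,k,-k,0)$. This is a $k$-fold cover of the line joining $Z$ to $X$, whose contribution is pushed forward by $\iota_*$. Its edge term is the standard one: the Euler class of $H^0-H^1$ of $\mathcal O(k)\oplus \nm_{Z/X}\otimes\mathcal O(-k)$-type bundles. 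This produces $\frac{1}{k!z^k}\prod_{\nu=1}^{k-1}e_{-\nu z}(\nm_{Z/X})$, and evaluating the insertion at the $Z$-end, where $\lambda$ is shifted by $kz$, gives $[i_Z^*(e^{\theta/z}c)]_{\lambda=kz}$.

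\textbf{Step 4: the $\td X$ case.} For $\td X$ the same edge computation applies, using the class $(0,0,-1,0)$-direction curves from $Z$ to $\td X$ whose normal data involve $[D]$. This gives the $\q^{k}$ terms with $\jmath_*$ and the stated products.

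The main obstacle is the edge-weight bookkeeping in Step 3: getting the signs and weights of $\nm_{Z/W}$ from \eqref{eq:nm} and the resulting products exactly right.
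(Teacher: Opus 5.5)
Your Step 1 reduction is invalid, and it discards exactly the terms the lemma is about.

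\textbf{Step 1 and Step 2.} The transform $\dft_X$ does not commute with setting $\sq=0$. The summand $S^k\kappa_X(\sft^{-k}\ff)$ carries $x^{-k}$, so it sees the coefficient of $\sq^{k(0,1,-1,0)}=x^ky^{-k}$ in $\ff=M_W(\theta)c$, and $S^kx^{-k}\cdot x^ky^{-k}=\q^{-k}$. If you really feed in the classical $\ff=e^{\theta/z}c$, which is polynomial in $\lambda$, then:
\begin{itemize}
\item every summand with $k>0$ vanishes;
\item every summand with $k<0$ is a multiple of $(xS^{-1})^{-k}=(\sq^{(0,1,-1,0)}\q)^{-k}$.
\end{itemize}
So you would recover only $\kappa_X(e^{\theta/z}c)$.

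Step 2 also inverts the factors in \eqref{eq:shft}. For $k>0$ one has $(\sft^{-k}\ff)_X=x^{-k}\prod_{m=-k+1}^{0}(-\lambda+mz)\,\ff_X(\lambda+kz)$, with the polynomial containing $-\lambda$ in the numerator. By contrast, $(\sft^{n}\ff)_X$ for $n>0$ carries $1/\prod_{m=1}^{n}(-\lambda+mz)$, which is regular at $\lambda=0$. Consequently, the $k<0$ summands die at $\sq=0$ only because of the $\sq^{(0,1,-1,0)}$ hidden in $xS^{-1}$. The $k>0$ summand survives exactly when the $\sq^{k(0,1,-1,0)}$-coefficient of $\ff_X$ has a simple pole at $\lambda=kz$ cancelling the zero of $-\lambda$. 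That pole analysis is the real content of the lemma. It is the mechanism in the proof of Proposition \ref{prop:dftdef}, which the paper combines with \eqref{eq:shft} and Lemma \ref{lem:abloc}.

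\textbf{Step 3.} Step 3 gestures at this but does not carry it out. The appeal to \eqref{eq:abloc} and to a ``compatible $\cp^1$ shift operator'' on the $\hat\varphi^*\alpha$ piece does not apply: $M_W(\theta)c$ is not a class in $H_T^*(W)$, and its poles in $\lambda$ are the whole point. What is missing:
\begin{itemize}
\item[(i)] an energy argument that $T$-fixed maps of class $k(0,1,-1,0)$ consist of covers of the curves $C_z$ (proper transforms of $\{z\}\times\cp^1$, $z\in Z$) plus contracted components;
\item[(ii)] the observation that contracted vertices and edge factors only produce poles at $\lambda=0$ or $z=0$. Hence a pole at $\lambda=kz$ arises only when the last marked point sits alone at the $X$-end of a single edge of full degree $k$ (where $\psi=\lambda/k$), with $c$ and all the $\theta$'s on the $Z$-vertex;
\item[(iii)] the $Z$-vertex sum, whose node-smoothing weight $\lambda/k$ becomes $z$ at $\lambda=kz$, yielding $[i_Z^*(e^{\theta/z}c)]_{\lambda=kz}$;
\item[(iv)] the edge factor, coming from $T\cp^1$ and from $\nm_{Z/X}\otimes\mathcal O(-1)$ pulled back to $\mathcal O(-k)$ (weights $0$ at $X$, $-\lambda$ at $Z$), whose $H^1$ gives $\prod_{\nu=1}^{k-1}e_{-\nu z}(\nm_{Z/X})$. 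All of this must combine with the leading coefficient of $\prod_{m=-k+1}^{0}(-\lambda+mz)$ at $\lambda=0$ to produce $\frac{1}{k!z^k}\prod_{\nu=1}^{k-1}e_{-\nu z}(\nm_{Z/X})$, with $\iota_*$ arising because the $X$-end lies on $Z$.
\end{itemize}
You only assert the outcome and label this ``the main obstacle,'' so the formula is not proved.

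\textbf{Step 4.} Step 4 has the same gap, plus an error in the curve class. The curves joining $Z$ to $\td X$ are the fibre lines of $\wht D$, of class $(0,0,1,0)$, not $(0,0,-1,0)$. The $\q^{k}$ terms come from $S^{-k}\kappa_{\td X}(\sft^{k}\ff)$, whose factor $-[D]+\lambda$ vanishes under $\kappa_{\td X}$. It must therefore be cancelled by a pole of $\ff_{\td X}$ at $\lambda=[D]-kz$.
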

The lemma is obtained using Proposition \ref{prop:dftdef}, \eqref{eq:shft}, and Lemma \ref{lem:abloc}.
%

To obtain the desired isomorphism of Theorem \ref{thm:blowup}, we restrict the parameter $\theta\in H_T^*(W)$ to the finite dimensional slice $\fds\subset H_T^*(W)$ spanned by the classes $c_i,c_{l,m}$. Let $\{\vartheta^\alpha\}$ be the coordinates dual to $\{c_\alpha\}=\{c_i,c_{l,m}\}.$ Write $\vartheta=\sum_{\alpha}\vartheta^\alpha c_\alpha.$

Using \eqref{eq:dftlot}, \eqref{eq:ftlot}, we obtain:

\begin{lemma}[{\cite[Lemma 5.15]{Iri25}}]\label{lem:jacinv}
	We have \[\begin{split}
		\td\tau(\vartheta)|_{\sq=0}&\in\kappa_{\td X}(\vartheta)+(\vartheta)^2H^*(\td X)[\q]\fps\vartheta,\\
		\tau(\vartheta)|_{\sq=0}&\in\kappa_X(\vartheta)+\q^{-1}[Z]+(\q^{-1}\vartheta,\q^{-2})H^*(X)[\q^{-1}]\fps\vartheta,\\
		\partial_{\vartheta^\alpha}\varsigma_j(\vartheta)|_{\sq=0}&\in\begin{cases}
			\iota^*\phi_{X,i}+\q^{-\frac{1}{r-1}}H^*(Z)[\q^{-\frac{1}{r-1}}] & \textrm{ if } c_\alpha=c_i,\\
			(-1)^l\lambda_j^{l+1}\left(\phi_{Z,m}+\q^{-\frac{1}{r-1}}H^*(Z)[\q^{-\frac{1}{r-1}}]\right) & \textrm{ if } c_\alpha=c_{l,m},
		\end{cases}
	\end{split}\] where $(\vartheta)\subset\C[\q]\fps{\vartheta}$ is the ideal generated by $\{\vartheta^\alpha\}$, and $(\q^{-1}\vartheta,\q^{-2})\subset\C[\q^{-1}]\fps{\vartheta}$ is the ideal generated by $\{\q^{-1}\vartheta^\alpha,\q^{-2}\}$.
\end{lemma}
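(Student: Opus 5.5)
The statement is read off from the defining relations of the three families of coordinates. Recall that $\td\tau$, $\tau$ and $\varsigma_j$ are defined by restricting $\theta$ to the slice $\fds$ in the relations
\[\dft_{\td X}(J_W(\vartheta))=M_{\td X}(\td\tau)\td v,\qquad \dft_X(J_W(\vartheta))=M_X(\tau)v,\qquad \q^{\rho_Z/((r-1)z)}\cft_{Z,j}(J_W(\vartheta))=M_Z(\sigma_j)u_j.\]
We then set $\sq=0$ on both sides of each relation and extract the coefficient of $z^{-1}$ (after multiplying by $z$ where needed). On the right-hand side this coefficient is $\tau$ plus a correction that is a function of $v$, since $M(\tau)=\id+\tau\star/z+O(z^{-2})$ and $v$ is a polynomial in $z$. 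On the left-hand side it is computed from \eqref{eq:dftlot} with $c=1$, using $J_W=M_W\phi_0$. To handle the $v$-part, we note that $v|_{\sq=0}$ has leading term $1$ and is homogeneous of degree $0$. Comparing the $z^0$ coefficients first will show that the $z^1,z^2,\dots$ parts of $v$ only contribute at order $(\vartheta)^2$ (respectively at orders $\q^{-1}\vartheta$ and $\q^{-2}$), because of the Birkhoff-type normalization: $M$ is of the form $\id+O(z^{-1})$ and $J=1+O(z^{-1})$.

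For $\td X$: expanding $\kappa_{\td X}(e^{\vartheta/z})=1+\kappa_{\td X}(\vartheta)/z+O(\vartheta^2)$ gives the linear term $\kappa_{\td X}(\vartheta)$. The $\q^k$ terms of \eqref{eq:dftlot} are polynomial in $\q$. Their $\vartheta$-linear part at order $z^{-1}$ cancels for degree reasons, or is absorbed into $\td v$. This gives the first line, with $\q$-polynomial coefficients.

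For $X$: the $k=1$ term of \eqref{eq:dftlot} with $c=1$ at $\vartheta=0$ is $\q^{-1}\iota_*(1/z)=\q^{-1}[Z]/z$, which produces the constant $\q^{-1}[Z]$. Every other contribution is either divisible by $\q^{-1}$ and at least linear in $\vartheta$, or has $k\ge2$ and is therefore divisible by $\q^{-2}$. Homogeneity ensures that only finitely many powers of $\q^{-1}$ appear in each degree.

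For $Z$: we differentiate in $\vartheta^\alpha$ and use \eqref{eq:ftlot}. By Theorem \ref{thm:qdmiso}(3) and the diagram \eqref{eq:zdiagm}, $\ft_{Z,j}\sphat(c_\alpha)=z\nabla_{\vartheta^\alpha}u_j$. Its leading part in $z$ is $(\partial_{\vartheta^\alpha}\varsigma_j)\star u_j$, and $u_j$ equals $q_{Z,j}(1+O(\q^{-1/(r-1)}))$. Dividing by $q_{Z,j}$ and inserting the restrictions $i_Z^*c_i=\phi_{X,i}|_Z$ and $i_Z^*c_{l,m}=(-1)^l\lambda^{l+1}\phi_{Z,m}$ into \eqref{eq:ftlot} yields the third line. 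The main obstacle I expect is separating the $z^0$ part $\partial\varsigma_j\star u_j$ from the $z\partial u_j$ contribution, and showing that all corrections are genuinely of order $\q^{-1/(r-1)}$ uniformly in $\vartheta$ and not merely at $\vartheta=0$. I would handle this by a grading argument: the degrees of $\q$ and $\lambda_j$ are positive, so after dividing out the leading term every correction term must carry a negative power of $\q^{1/(r-1)}$.
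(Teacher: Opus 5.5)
Your overall route is the paper's: the lemma is read off from \eqref{eq:dftlot} with $c=1$, from \eqref{eq:ftlot}, and from uniqueness of the factorization $M(\tau)v$. However, the step you flag as the main obstacle aims at a false statement, and the grading argument you propose for it fails.

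\textbf{The third line.} Its right-hand side is $\vartheta$-independent, so it can only hold at $\sq=\vartheta=0$. That is also the only way it is used, via the Jacobian at $\sq=\vartheta=0$. A version uniform in $\vartheta$ cannot be obtained by homogeneity. The coordinate dual to $c_{l,m}$ has degree $-2l-\deg\phi_{Z,m}\le 0$, so homogeneity allows $\vartheta$-dependent terms that carry no negative power of $\q$. Such terms do occur. Take $c_\alpha=\hat\jmath_*1=[\wht D]$, whose coordinate has degree $0$ and which satisfies $[\wht D]|_Z=\lambda$. The divisor equation on $W$ together with Proposition~\ref{prop:cftprop}(2) shows that translating $\vartheta$ by $s\,c_\alpha$ acts at $\sq=0$ as $\q\mapsto e^{-s}\q$, up to an additive shift by $s\rho_Z/(r-1)$. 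Hence $\partial_{\vartheta^\alpha}\varsigma_j$ behaves like $\lambda_j e^{-\vartheta^\alpha/(r-1)}$ to leading order, which is not independent of $\vartheta$.

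At $\vartheta=0$ your argument needs no extra input:
\begin{itemize}
\item \eqref{eq:ftlot} is already a statement at $\sq=\bth=0$;
\item the term $z\partial_{\vartheta^\alpha}u_j$ has no $z^0$-part, so the separation you worried about is automatic;
\item at $\sq=0$ the product on $Z$ is the cup product;
\item $u_j|_{z=0}=q_{Z,j}(1+O(\q^{-1/(r-1)}))$ is invertible;
\item homogeneity at $\vartheta=0$ then gives polynomiality in $\q^{-1/(r-1)}$.
\end{itemize}

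\textbf{The $\td X$ line.} The phrase ``cancels for degree reasons, or is absorbed into $\td v$'' should be replaced by the actual $z$-order count in \eqref{eq:dftlot}, which is the heart of this line.
\begin{itemize}
\item The prefactor of the $\q^k$-term has $z$-order $-1-k(r-1)\le -r$.
\item The factor $[i_Z^*\vartheta]_{\lambda=[D]-kz}/z$ has $z$-order at most $r-2$, because $i_Z^*c_{l,m}=(-1)^l\lambda^{l+1}\phi_{Z,m}$ with $l\le r-2$.
\end{itemize}
So through first order in $\vartheta$ the $\q^k$-terms are $O(z^{-2})$. This includes the $\vartheta$-independent part, which you did not address but which is needed for $\td\tau(0)|_{\sq=0}=0$. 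The left side is therefore $1+\kappa_{\td X}(\vartheta)/z+O(z^{-2})$ with no positive powers of $z$. Uniqueness of the factorization then gives $\td v\equiv 1$ and $\td\tau\equiv\kappa_{\td X}(\vartheta)$ modulo $(\vartheta)^2$, so there is nothing to absorb into $\td v$.

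\textbf{The $X$ line.} You should state explicitly that the $\q^0$-part is exact to all orders in $\vartheta$. That part is $e^{\kappa_X(\vartheta)/z}$, and $M_X|_{\sq=0}=e^{\tau/z}$ is classical. This forces $\tau=\kappa_X(\vartheta)$ and $v=1$ at order $\q^0$. At order $\q^{-1}$ only the $\vartheta$-independent term $[Z]/z$ matters.
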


The change of variables $\fds\to H^*(\td X),\ \vartheta\mapsto\td\tau(\vartheta)$ is invertible over $\C\laur{\q^{-1}}\fps{\sq}$, since its Jacobian matrix at $\sq=\vartheta=0$ is invertible by Lemma \ref{lem:jacinv}. The change of variables $\fds\to H^*(X)\oplus\bigoplus_{j=0}^{r-2} H^*(Z),\ \vartheta\mapsto(\tau(\vartheta),\varsigma_0(\vartheta),\dots,\varsigma_{r-2}(\vartheta))$ is invertible over $\C\laur{\q^{-\frac{1}{r-1}}}\fps{\sq}$ by the same reason. Write $\td\tau\mapsto\vartheta=\vartheta(\td\tau)$ for the inverse map of $\vartheta\mapsto\td\tau(\vartheta).$ Consider the compositions \[\tau(\td\tau):=\tau(\vartheta(\td\tau)),\quad \varsigma_j(\td\tau):=\varsigma_j(\vartheta(\td\tau)).\] Then $\tau(\td\tau)\in H^*(X)\laur{\q^{-1}}\fps{\sq,\td\tau},\ \varsigma_j(\td\tau)\in H^*(Z)\laur{\q^{-\frac{1}{r-1}}}\fps{\sq,\td\tau}$. By Lemma \ref{lem:jacinv}, \[\tau(\td\tau)|_{\sq=\td\tau=0}=\q^{-1}[Z]+O(\q^{-2}).\] Also, we have \[\varsigma_j(\td\tau)|_{\sq=\td\tau=0}=-(r-1)\lambda_j+h_{Z,j}+O(\q^{-\frac{1}{r-1}}).\] Using $\{\kappa_{\td X}(c_\alpha)\}$ as a basis of $H^*(\td X)$ and writing $\td\tau^\alpha$ for the dual coordinates, Lemma \ref{lem:jacinv} implies that \[\begin{split}
	\frac{\partial\tau}{\partial\td\tau^\alpha}(\td\tau)|_{\sq=\td\tau=0}&=\begin{cases}
		\phi_{X,i}+O(\q^{-1}) & \textrm{ if } c_\alpha=c_i,\\
		O(\q^{-1}) & \textrm{ if } c_\alpha=c_{l,m},
	\end{cases}\\
	\frac{\partial\varsigma_j}{\partial\td\tau^\alpha}(\td\tau)|_{\sq=\td\tau=0}&=\begin{cases}
		\iota^*\phi_{X,i}+O(\q^{-\frac{1}{r-1}}) & \textrm{ if } c_\alpha=c_i,\\
		(-1)^l\lambda_j^{l+1}(\phi_{Z,m}+O(\q^{-\frac{1}{r-1}})) & \textrm{ if } c_\alpha=c_{l,m}.
	\end{cases}
\end{split}\]

Restricting the isomorphism $\Psi$ to the finite dimensional slice $\fds$, we obtain an isomorphism over $\C[z]\laur{\q^{-1/s}}\fps{\sq,\td\tau}$ \[\Psi:\qdm(\td X)^\textrm{La}\to\tau^*\qdm(X)^\textrm{La}\oplus\bigoplus_{j=0}^{r-2}\varsigma_j^*\qdm(Z)^\textrm{La}.\] In fact, the maps $\tau(\td\tau)$, $\varsigma_j(\td\tau)$ and the isomorphism $\Psi$ do not involve $x,y$; see \cite[Proposition 5.16]{Iri25} for details. Hence $\tau(\td\tau)\in H^*(X)\laur{\q^{-1}}\fps{Q,\td\tau},\ \varsigma_j(\td\tau)\in H^*(Z)\laur{\q^{-\frac{1}{r-1}}}\fps{Q,\td\tau}$. We introduce \[\qdm(\td X)^\textrm{la}:=H^*(\td X)[z]\laur{\q^{-1/s}}\fps{Q,\td\tau}\] equipped with the pull-back connection \begin{equation*}
\begin{aligned}
\nabla_{\tilde{\tau}^\alpha} & =\partial_{\tilde{\tau}^\alpha}+z^{-1}\phi_{\widetilde{X}, \alpha}{\star_ {\tilde{\tau}}}, \\
\nabla_{z \partial_z} & =z \partial_z-z^{-1}E_{\widetilde{X}}{\star}_{\tilde{\tau}}+\mu_{\widetilde{X}}, \\
\nabla_{\xi Q \partial_Q} & =\xi Q \partial_Q+z^{-1}(\varphi^* \xi) {\star_{\tilde{\tau}}}, \\
\nabla_{\mathfrak{q} \partial_{\mathfrak{q}}} & =\mathfrak{q} \partial_{\mathfrak{q}}-z^{-1}[D]{\star_{\tilde{\tau}}}.
\end{aligned}
\end{equation*} Introduce \[\tau^*\qdm(X)^\textrm{la}:=H^*(X)[z]\laur{\q^{-1/s}}\fps{Q,\td\tau}\] equipped with the pull-back connection \begin{equation*}
\begin{aligned}
\nabla_{\tilde{\tau}^\alpha} & =\partial_{\tilde{\tau}^\alpha}+z^{-1}(\partial_{\tilde{\tau}^\alpha}\tau(\tilde{\tau})){\star_{\tau(\tilde{\tau})}}, \\
\nabla_{z \partial_z} & =z \partial_z-z^{-1}E_{X}{\star}_{\tau(\tilde{\tau})}+\mu_{{X}}, \\
\nabla_{\xi Q \partial_Q} & =\xi Q \partial_Q+z^{-1}\xi {\star_{\tau(\tilde{\tau})}}+z^{-1}(\xi Q \partial_Q\tau(\tilde{\tau})){\star_{\tau(\tilde{\tau})}}, \\
\nabla_{\mathfrak{q} \partial_{\mathfrak{q}}} & =\mathfrak{q} \partial_{\mathfrak{q}}+z^{-1}(\mathfrak{q} \partial_{\mathfrak{q}}\tau(\tilde{\tau})){\star_{\tau(\tilde{\tau})}}.
\end{aligned}
\end{equation*} Introduce \[\varsigma_j^*\qdm(Z)^\textrm{la}:=H^*(Z)[z]\laur{\q^{-1/s}}\fps{Q,\td\tau}\] equipped with the pull-back connection \begin{equation*}
\begin{aligned}
\nabla_{\tilde{\tau}^\alpha} & =\partial_{\tilde{\tau}^\alpha}+z^{-1}(\partial_{\tilde{\tau}^\alpha}\varsigma_j(\tilde{\tau})){\star_{\varsigma_j(\tilde{\tau})}}, \\
\nabla_{z \partial_z} & =z \partial_z-z^{-1}E_{Z}{\star}_{\varsigma_j(\tilde{\tau})}+\mu_{{Z}}, \\
\nabla_{\xi Q \partial_Q} & =\xi Q \partial_Q+z^{-1}(\iota^*\xi) {\star_{\varsigma_j(\tilde{\tau})}}+z^{-1}(\xi Q \partial_Q\varsigma_j(\tilde{\tau})){\star_{\varsigma_j(\tilde{\tau})}}, \\
\nabla_{\mathfrak{q} \partial_{\mathfrak{q}}} & =\mathfrak{q} \partial_{\mathfrak{q}}-z^{-1}(r-1)^{-1}\rho_Z{\star_{\varsigma_j(\tilde{\tau})}}+z^{-1}(\mathfrak{q} \partial_{\mathfrak{q}}\varsigma_j(\tilde{\tau})){\star_{\varsigma_j(\tilde{\tau})}}.
\end{aligned}
\end{equation*} Summarizing the above:
\begin{theorem}[{\cite[Theorem 5.18]{Iri25}}]
	There is an isomorphism \[\Psi:\qdm(\td X)^\textnormal{la}\to\tau^*\qdm(X)^\textnormal{la}\oplus\bigoplus_{j=0}^{r-2}\varsigma_j^*\qdm(Z)^\textnormal{la}\] of $\C[z]\laur{\q^{-1/s}}\fps{Q,\td\tau}$-modules satisfying the properties: \begin{enumerate}[(1)]
		\item $\Psi$ intertwines the quantum connections.
		\item $\Psi$ intertwines the pairings.
		\item The first component of $\Psi$ is homogeneous of degree $0$, and the second component of $\Psi$ is homogeneous of degree $-r$.
		\item $\Psi$ has asymptotics $$
\begin{aligned}
\Psi(\varphi^* \phi_{X, i})|_{Q=\tilde{\tau}=0} & =\left(\phi_{X, i}+O(\mathfrak{q}^{-1}),\left\{q_{Z, j}(\phi_{X, i}|_Z+O(\mathfrak{q}^{-\frac{1}{r-1}}))\right\}_{0 \leq j \leq r-2}\right), \\
\Psi(\jmath_*(p^l \pi^* \phi_{Z, m}))|_{Q=\tilde{\tau}=0} & =\left(O(\mathfrak{q}^{-1}),\left\{q_{Z, j}(-1)^l \lambda_j^{l+1}(\phi_{Z, m}+O(\mathfrak{q}^{-\frac{1}{r-1}}))\right\}_{0 \leq j \leq r-2}\right).
\end{aligned}
$$
	\item The maps $\tau(\td\tau)$ and $\varsigma_j(\td\tau)$ are homogeneous of degree $2$.
	\item $\tau(\td\tau)|_{Q=\td\tau=0}=\q^{-1}[Z]+O(\q^{-2})$ and $\varsigma_j(\td\tau)|_{Q=\td\tau=0}=-(r-1)\lambda_j+h_{Z,j}+O(\q^{-\frac{1}{r-1}}).$
	\item The Jacobian matrix of $(\tau(\td\tau),\varsigma_j(\td\tau))$ at $Q=\td\tau=0$ is given by \[\begin{split}
	\frac{\partial\tau}{\partial\td\tau^\alpha}(\td\tau)|_{Q=\td\tau=0}&=\begin{cases}
		\phi_{X,i}+O(\q^{-1}) & \textrm{ if } c_\alpha=c_i,\\
		O(\q^{-1}) & \textrm{ if } c_\alpha=c_{l,m},
	\end{cases}\\
	\frac{\partial\varsigma_j}{\partial\td\tau^\alpha}(\td\tau)|_{Q=\td\tau=0}&=\begin{cases}
		\iota^*\phi_{X,i}+O(\q^{-\frac{1}{r-1}}) & \textrm{ if } c_\alpha=c_i,\\
		(-1)^l\lambda_j^{l+1}(\phi_{Z,m}+O(\q^{-\frac{1}{r-1}})) & \textrm{ if } c_\alpha=c_{l,m}.
	\end{cases}
\end{split}\]
	\end{enumerate}
\end{theorem}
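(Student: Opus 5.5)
The plan is to obtain the statement by restricting the isomorphism $\Psi=\Phi\circ(\ft_{\td X}\sphat)^{-1}$ of the previous theorem to the finite dimensional slice $\fds$, and then reparametrizing by $\td\tau$ through the inverse change of variables $\vartheta=\vartheta(\td\tau)$. This inverse exists because the Jacobian of $\vartheta\mapsto\td\tau(\vartheta)$ at $\sq=\vartheta=0$ is the identity with respect to the basis $\{\kappa_{\td X}(c_\alpha)\}$, by Lemma \ref{lem:jacinv}.

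\textbf{Properties (1)--(3).} Restricting to $\fds$ means setting $\theta=\vartheta$, so all the $\nabla_{\theta^{i,k}}$ outside the slice disappear. The intertwining properties of $\ft_{\td X}\sphat$, $\ft_X\sphat$ and $\ft_{Z,j}\sphat$ then give:
\begin{itemize}
\item $\Psi$ intertwines the pulled-back connections $\td\tau^*\nabla$, $\tau^*\nabla$ and $\varsigma_j^*\nabla$ in the directions $\vartheta^\alpha$, $z\partial_z$ and $\xi\wht S\partial_{\wht S}$. The common shift $\nabla_{z\partial_z}+\frac12$ cancels in the composition with $(\ft_{\td X}\sphat)^{-1}$.
\item Composing with $\vartheta(\td\tau)$ turns the $\vartheta$-connections into the $\td\tau$-connections. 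For $\td X$ this is exactly $\nabla_{\td\tau^\alpha}=\partial_{\td\tau^\alpha}+z^{-1}\phi_{\td X,\alpha}\star_{\td\tau}$.
\item The $\wht S$-directions are split into $\xi Q\partial_Q$ and $\q\partial_\q$. For this I identify $\td Q^{\td d}$ with $Q^{\varphi_*\td d}\q^{-[D]\cdot\td d}$ and use $\kappa_{\td X}$ on $H^2_T(W)$. On $Z$ the term $-z^{-1}(r-1)^{-1}\rho_Z$ comes from $\kappa_Z$ together with the shift $\sigma_j\mapsto\varsigma_j$.
\item Property (2), compatibility of pairings, is the corresponding assertion of the previous theorem.
\item Property (3), the degrees $0$ and $-r$, follows from property (5) of $\ft_{\td X}\sphat$, $\ft_X\sphat$ and $\ft_{Z,j}\sphat$.
\end{itemize}

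\textbf{Properties (4)--(7).} These are leading order computations.
\begin{itemize}
\item Property (4) is the computation already done in the previous proof at $\sq=\bth=0$. There I use $\ft_{\td X}\sphat(c_\alpha)|_{0}=\kappa_{\td X}(c_\alpha)$, and I rescale the $c_{l,m}$-row by $(-1)^l\lambda_j^{l+1}$ using $i_Z^*c_{l,m}=(-1)^l\lambda^{l+1}\phi_{Z,m}$ and \eqref{eq:ftlot}.
\item Property (5) follows from homogeneity of $J_W$ and from Propositions \ref{prop:cft} and \ref{prop:dft}.
\item Properties (6) and (7) follow from Lemma \ref{lem:jacinv} via the chain rule, since $\partial\vartheta/\partial\td\tau$ equals the identity at the origin. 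The value $\tau|_0=\q^{-1}[Z]+O(\q^{-2})$ is read off from the $k=1$ term of \eqref{eq:dftlot}. The value $\varsigma_j|_0=-(r-1)\lambda_j+h_{Z,j}+O(\q^{-1/(r-1)})$ comes from the leading term $h_{Z,j}$ of $\sigma_j$.
\end{itemize}

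\textbf{Main obstacle.} The hardest step is showing that after restriction to $\fds$, and after passing from the coefficient ring $\C\laur{\q^{-1/s}}\fps{\sq}$ to $\C\laur{\q^{-1/s}}\fps{Q}$, nothing depends on $x$ and $y$ separately. I would argue by a grading and divisor-type argument in the spirit of \cite[Proposition 5.16]{Iri25}:
\begin{itemize}
\item The slice $\fds$ contains no class whose derivative would detect $x$ or $y$ on its own.
\item By Lemma \ref{lem:wclass} and the homogeneity in property (5), every occurrence of $x$ or $y$ in $\tau$, $\varsigma_j$ and $\Psi$ has to be combined with $S$ into $\q=yS^{-1}$ and $xS^{-1}$.
\item The $xS^{-1}$ contributions cancel, because $\kappa_{\td X}^*$ and the extension \eqref{eq:novext} only involve $Q$ and $\q$.
\end{itemize}
Once this is established, the quantities can be specialized along \eqref{eq:novext}. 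The isomorphism then descends to $\C[z]\laur{\q^{-1/s}}\fps{Q,\td\tau}$, and it is still invertible because its leading term, given by property (4), is invertible: the matrix $(\zeta^{-j(l+1)})$ is invertible.
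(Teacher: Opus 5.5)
Your route is the paper's: restrict to $\fds$, invert $\vartheta\mapsto\td\tau(\vartheta)$ using Lemma \ref{lem:jacinv}, compose, and read off (1)--(7). You also correctly single out the $x,y$-independence as the one nontrivial step; the paper does not prove it but cites \cite[Proposition 5.16]{Iri25}. The argument you sketch for that step does not work.
\begin{itemize}
\item Homogeneity is a single $\Z$-grading ($\deg x=4$, $\deg y=2r$, $\deg S=2$, $\deg\q=2r-2$). It cannot force $x,y$ into prescribed combinations; for instance $x\q^{-2/(r-1)}$ is homogeneous of degree $0$.
\item The slice is not blind to $y$. For $\phi_{Z,m}=1$ one has $c_{0,m}=\hat\jmath_*1=[\wht D]\in\fds$, and its divisor equation multiplies $\sq^\delta$ by $[\wht D]\cdot\delta$, which is minus the $y$-exponent.
\item That $\kappa_{\td X}^*$ and \eqref{eq:novext} only involve $Q$ and $\q$ is a statement about the Novikov rings of $\td X,X,Z$. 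It says nothing about $\tau(\vartheta)$, $\varsigma_j(\vartheta)$ or the matrix of $\Psi$, which are built from invariants of $W$ and a priori depend on all of $\sq$ and $S$.
\item Since $xS^{-1}=(xy^{-1})\q$, your intermediate conclusion would not even be $x,y$-free.
\end{itemize}

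What does work is a flatness argument with $\xi_1=[X\times\{\infty\}]$ and $\xi_2=\lambda-[\wht D]$ in $H^2_T(W)$.
\begin{itemize}
\item From $[X\times\{\infty\}]|_X=-\lambda$, $[\wht D]|_{\td X}=[D]$ and $[\wht D]|_Z=\lambda$ one gets $\kappa_{\td X}(\xi_i)=\kappa_X(\xi_i)=\kappa_Z(\xi_i)=0$.
\item In the coordinates $(Q,x,y,\q)$, with $S=y\q^{-1}$, the derivations $\xi_1\wht S\partial_{\wht S}$ and $\xi_2\wht S\partial_{\wht S}$ are $x\partial_x$ and $y\partial_y$. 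Both kill $Q$ and $\q$, so the correction $-\frac{\lambda_j}{z}\frac{d\q}{\q}$ in $\varsigma_j^*\nabla$ plays no role.
\item By property (2) of $\ft_{\td X}\sphat$, $\ft_X\sphat$, $\ft_{Z,j}\sphat$, after restricting to $\fds$ and reparametrizing by $\td\tau$, $\Psi$ intertwines the plain derivation $\xi_i\wht S\partial_{\wht S}$ on the $\td X$ side with $\xi_i\wht S\partial_{\wht S}+z^{-1}(\xi_i\wht S\partial_{\wht S}\tau)\star_\tau$ on the $X$ side. The same holds with $\varsigma_j$ on the $Z$ sides.
\item Apply this to the section $1$ and take the $z^{-1}$-coefficient, using that the image is polynomial in $z$. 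This gives $(\xi_i\wht S\partial_{\wht S}\tau)\star_\tau\Psi(1)_X|_{z=0}=0$, and similarly for each $\varsigma_j$.
\item $\Psi(1)|_{z=0}$ is invertible by the leading-order computation behind (4). Hence $\tau$ and each $\varsigma_j$ are killed by $x\partial_x$ and $y\partial_y$, and then $\Psi$ commutes with them. This is what justifies passing to $\C[z]\laur{\q^{-1/s}}\fps{Q,\td\tau}$.
\end{itemize}

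A smaller point concerns (4). You use $\ft_{\td X}\sphat(c_\alpha)|_{\sq=\vartheta=0}=\kappa_{\td X}(c_\alpha)$, but the proof of Theorem \ref{thm:qdmiso} only gives this modulo $\q$ as well. The positive $\q$-powers do drop out: by \eqref{eq:dftlot}, $\dft_{\td X}(c_\alpha)|_{\sq=0}-\kappa_{\td X}(c_\alpha)$ is $O(z^{-1})$ for slice classes, and $M_{\td X}$ is the identity plus $O(z^{-1})$. Since $\ft_{\td X}\sphat(c_\alpha)$ is polynomial in $z$, the $O(z^{-1})$ difference must vanish.
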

Theorem \ref{thm:blowup} follows immediately.

\section{Symplectic Irrationality}\label{sec:irr}
In this section we give the proof of Theorem \ref{thm:main}. 

Small quantum cohomology of the quartic threefold $X$ has been computed by Givental \cite{Giv96} and Lian--Liu--Yau \cite{LLY97}. Let $P\in H^2(X)$ be the hyperplane class, so $c_1(X)=P.$ 
\begin{theorem}[{\cite[Corollary 10.9]{Giv96}}]
	The small quantum cohomology algebra of the quartic threefold is isomorphic to \[\C[P,Q]/\left((P+24Q)^4-256Q(P+24Q)^3\right).\] Here $Q$ is the Novikov variable.
\end{theorem}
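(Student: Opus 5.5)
The theorem is a computation of the small quantum ring of a smooth quartic $X\subset\cp^4$, so I would derive it from Givental's mirror theorem for Fano hypersurfaces. The even cohomology is spanned by $1,P,P^2,P^3$ with $\int_X P^3=4$, and $c_1(X)=P$, so $\deg Q=2$. Fix the small parameter $\tau=tP$ and absorb $e^t$ into $Q$ via the divisor equation.

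\textbf{Step 1: the $I$-function and its quantum differential equation.} Consider the hypergeometric series
\[
I(Q,z)=\sum_{d\ge0}Q^d\,\frac{\prod_{m=1}^{4d}(4P+mz)}{\prod_{m=1}^{d}(P+mz)^5}.
\]
Write $D=P+zQ\partial_Q$. A direct computation on the coefficients shows that $I$ is annihilated by
\[
D^4-4Q\prod_{m=1}^{3}(4D+mz),
\]
up to the standard normalization. Because $X$ has index $1$, the series has the form $I=1+a(Q)/z+O(z^{-2})$ with $a(Q)=\sum_{d\ge1}\frac{(4d)!}{(d!)^4}Q^d$, a scalar; no $P/z$ term appears. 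By Givental's mirror theorem (\cite{Giv96}, in its symplectic form via the virtual techniques of the Appendix), $J_X(Q)=e^{-a(Q)/z}I(Q,z)$, with no change of the variable $Q$. The scalar shift contributes only the identity direction, so it shifts eigenvalues of $c_1\star$ but does not change the ring structure.

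\textbf{Step 2: pass to the small quantum ring.} The $D$-module generated by $J_X$ under $zQ\partial_Q$ is the quantum $D$-module restricted to the small locus. Its semiclassical limit $z\to0$, with $zQ\partial_Q+P\mapsto P\star$, recovers the small quantum ring. The operator above then degenerates to the quantum relation for $P\star$, after accounting for the conjugation by $e^{-a/z}$. That conjugation replaces $D$ by $D-Q a'(Q)$ in the principal symbol, so we are replacing $P$ by $P+Qa'(Q)$. Here $Qa'(Q)$ vanishes to first order in $Q$, and its leading coefficient is $\frac{4!}{1}=24$.

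\textbf{Step 3: match Givental's normal form.} The remaining task is to match the result with Givental's normal form, where the shift is exactly $P\mapsto P+24Q$. This is the step I expect to be the main obstacle. The full semiclassical limit produces, with $\tilde P=P+Qa'(Q)$, a relation of the shape $\tilde P^4=256Q\,\tilde P^3$ plus correction terms coming from $Q$-derivatives of $a$. One has to check that in the normalization of \cite[Corollary 10.9]{Giv96} these corrections combine so that the relation reads $(P+24Q)^4-256Q(P+24Q)^3=0$.

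I would handle this by a degree check together with a low-order computation. The relation is homogeneous of degree $8$ in $(P,Q)$ with $\deg P=\deg Q=2$, and the ring has rank $4$, so it is determined by finitely many coefficients. These can be fixed using the known $3$-point invariants: $\langle P,P,P\rangle_{0,3,1}=4\cdot 320$ from the $320$ lines, and the conic count $\langle P,P,P\rangle_{0,3,2}$. Alternatively, one can cite \cite{Giv96} and \cite{LLY97} directly, since the statement is theirs.
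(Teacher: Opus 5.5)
\textbf{Gap: $a(Q)$ is miscomputed in Step 1, and the Step 3 repair would fail.} The paper gives no argument of its own here; it cites \cite[Corollary 10.9]{Giv96}. Your Steps 1--2 are essentially Givental's derivation of that corollary. The operator $D^4-4Q\prod_{m=1}^3(4D+mz)$ is correct and annihilates $I$ exactly in $H^*(X)$; the $d=0$ term uses $P^4=0$. The form $J_X=e^{-a/z}I$ and the semiclassical-limit step are also right.

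The problem is $a(Q)$. The $Q^d$-coefficient of $I$ has $4d$ factors linear in $z$ upstairs and $5d$ downstairs, so it equals $\frac{(4d)!}{(d!)^5}z^{-d}(1+O(z^{-1}))$. Only $d=1$ contributes to the $z^{-1}$ term, and $a(Q)=24Q$ exactly. Homogeneity forces this too: $a$ must have degree $\deg z=2=\deg Q$, whereas your $\sum_d\frac{(4d)!}{(d!)^4}Q^d$ is not homogeneous.

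Conjugation is also exact: $e^{-a/z}\circ zQ\partial_Q\circ e^{a/z}=zQ\partial_Q+Qa'(Q)$. So there are never further ``corrections from $Q$-derivatives of $a$''. With your $a$, the argument would output a wrong relation, with $P$ shifted by $Qa'(Q)=24Q+5040Q^2+\cdots$.

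With $a=24Q$, $J_X$ is annihilated by $(D+24Q)^4-4Q\prod_{m=1}^3(4D+96Q+mz)$. Its $z\to0$ limit under $D\mapsto P\star$ is literally $(P+24Q)^4-256Q(P+24Q)^3$. The map $\C[P,Q]/(f)\to QH^*(X)$, $P\mapsto P$, is surjective, since it is so at $Q=0$ and $\deg Q>0$. Both sides are free $\C[Q]$-modules of rank $4$, so it is an isomorphism. Step 3 is therefore unnecessary.

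Step 3 as you propose it would not have worked anyway:
\begin{enumerate}[(1)]
\item $\langle P,P,P\rangle_{0,3,1}$ and $\langle P,P,P\rangle_{0,3,2}$ vanish for dimension reasons: $X_{0,3,d}$ has virtual complex dimension $3+d$, while $P,P,P$ has total degree $3$.
\item Lines on a smooth quartic threefold form a one-dimensional family, not $320$ isolated lines. The number $320$ is the degree of the Fano curve of lines. It enters as $\langle P,P,P^2\rangle_{0,3,1}=\langle P^2\rangle_{0,1,1}=320$, consistent with the $80QP/z^2$ term of $J_X$.
\item A monic homogeneous relation $P^4+c_1QP^3+c_2Q^2P^2+c_3Q^3P+c_4Q^4$ has four unknowns. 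The $Q^4$ coefficient involves $\langle P,P^3,P^3\rangle_{0,3,4}$, so lines and conics alone cannot fix it.
\end{enumerate}

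Finally, the Appendix contains no symplectic mirror theorem. What you actually use is Givental's algebraic result together with the agreement of symplectic and algebraic invariants for the projective variety $X$. The paper relies on the same comparison implicitly when it cites \cite{Giv96}.
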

As an immediate consequence:
\begin{corollary}
	The elements $1,P,P^2,P^3$ (these are powers in the quantum product) form a basis of $H^*(X)[Q]$ over $\C[Q]$. Small quantum multiplication by $P$, viewed as a $\C[Q]$-linear operator on $H^*(X)[Q]$, has characteristic polynomial \[(\lambda+24Q)^3(\lambda-256Q).\] In particular, $-24Q$ is an eigenvalue of (algebraic) multiplicity $3$.
\end{corollary}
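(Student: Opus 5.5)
Since $1,P,P^2,P^3$ are quantum powers, the plan is to read the corollary off the presentation $\C[P,Q]/(f)$, where $f(P,Q)=(P+24Q)^4-256Q(P+24Q)^3$.

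\textbf{Step 1 (basis).} As a polynomial in $P$ over $\C[Q]$, $f$ is monic of degree $4$. Division with remainder by a monic polynomial works over any commutative ring. Hence every class in $\C[P,Q]/(f)$ has a unique representative $a_0+a_1P+a_2P^2+a_3P^3$ with $a_i\in\C[Q]$, so $\C[P,Q]/(f)$ is free over $\C[Q]$ on $1,P,P^2,P^3$. Transporting this along Givental's isomorphism with $(H^*(X)[Q],\star)$ gives the first claim. The isomorphism is graded and $\C[Q]$-linear. Setting $Q=0$ recovers the classical ring $\C[P]/(P^4)$ of the quartic threefold, whose even cohomology has rank $4$, which is consistent.

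\textbf{Step 2 (characteristic polynomial).} In the basis $1,P,P^2,P^3$, quantum multiplication by $P$ is the companion matrix of $f$. Its characteristic polynomial is therefore $f(\lambda,Q)$ itself: $\det(\lambda-A)$ equals the monic relation for a cyclic module generated by $1$. It then remains to factor $f(\lambda,Q)$ and read off the eigenvalue $-24Q$ with multiplicity $3$ together with one simple root. Because the cyclic module forces minimal polynomial $=$ characteristic polynomial, the factor $(\lambda+24Q)^3$ also shows that $-24Q$ is a non-semisimple eigenvalue. The later argument only needs algebraic multiplicity, though.

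\textbf{Main obstacle.} Taken literally, $f(\lambda)=(\lambda+24Q)^3\bigl((\lambda+24Q)-256Q\bigr)=(\lambda+24Q)^3(\lambda-232Q)$. This matches the stated triple root but not the stated simple root $256Q$. So the delicate step is pinning down conventions: which class in $H^2(X)$ the symbol $P$ in Givental's presentation stands for, and how his $Q$ is normalized relative to ours (mirror map and degree-zero shifts).

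I would check this with a trace computation. The sum of the eigenvalues equals $\operatorname{tr}(c_1\star)=Q\sum_i\langle P,\phi_i,\phi^i\rangle_{0,3,1}$. By the divisor axiom this becomes a count of lines through the diagonal class, which is computable from the degree-one two-point invariants of the quartic threefold. Comparing the answer with $184Q$, the trace of the stated polynomial, versus $160Q$, the trace of the literal factorization, decides the convention.

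If it turns out that $c_1$ corresponds to a shift of the presentation's generator, I would redo Step 2 for the shifted generator. A constant shift only translates the eigenvalues, so the triple root and the freeness in Step 1 survive in any case.

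For the purpose of the main theorem, the essential conclusion is the eigenvalue of algebraic multiplicity $3$, and that is robust under all of these normalizations.
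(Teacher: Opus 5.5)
Your Steps 1 and 2 are the argument the paper intends. It records the corollary as an immediate consequence of Givental's presentation: the relation is monic of degree $4$ in $P$, so $1,P,P^2,P^3$ is a $\C[Q]$-basis, and $P\star$ acts by the companion matrix of the relation.

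Your ``main obstacle'' is real, but it is not a convention issue. $P$ is the hyperplane class $c_1(X)$, $Q$ is the Novikov variable of the line class, and Givental's presentation is correct. So the characteristic polynomial is $(\lambda+24Q)^3(\lambda-232Q)$. The factor $(\lambda-256Q)$ in the statement is an arithmetic slip in the paper ($24-256=-232$), repeated as the initial value ``$256Q$'' in the proof of Theorem~\ref{thm:eigen}.

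Your proposed trace test confirms this, and you should carry it out rather than leave it open. By the divisor and fundamental class axioms,
$\operatorname{tr}(P\star)=\tfrac{Q}{4}\bigl(\langle P,P^2\rangle_{0,2,1}+\langle P^2,P\rangle_{0,2,1}\bigr)=\tfrac{Q}{2}\langle P^2\rangle_{0,1,1}$.
The invariant $\langle P^2\rangle_{0,1,1}$ counts lines on a general quartic threefold meeting a general plane. It equals $\int_{G(2,5)}c_5(\mathrm{Sym}^4S^\vee)\,\sigma_1$. Here $c_5(\mathrm{Sym}^4S^\vee)\,\sigma_1=96\,\sigma_{11}\sigma_1^4+128\,\sigma_{11}^2\sigma_1^2$, which integrates to $96\cdot 2+128\cdot 1=320$. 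Hence the trace is $160Q$. This is minus the $\lambda^3$-coefficient of Givental's relation, and it rules out $184Q$.

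No renormalization could have rescued the stated roots either. A shift $P\mapsto P+aQ$ translates both roots and keeps their difference at $256Q$, whereas $-24Q$ and $256Q$ differ by $280Q$.

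So assert the corrected polynomial outright. What the paper uses later is the basis and the eigenvalue $-24Q$ of algebraic multiplicity $3$, and your argument proves exactly that. Your remark that this eigenvalue is non-semisimple is also correct, though not needed.
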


Our goal is to show that under bulk deformation by even classes, there still exists an eigenvalue of multiplicty 3:
\begin{theorem}\label{thm:eigen}
	Let $X$ be the quartic threefold. Big quantum multiplication by the Euler vector field $E_X$, viewed as a $\nov{Q,\tau}$-linear operator on $H^*(X)\fps{Q,\tau}$, has an eigenvalue of multiplicity $3$.
\end{theorem}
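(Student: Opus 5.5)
We prove that the characteristic polynomial of $\mathcal E:=E_X\star_\tau$ keeps a triple root over a suitable finite extension of $\nov{Q,\tau}$. The mechanism is a system of first-order differential equations for the eigenvalues, driven by the bulk variables, combined with the observation that the even cohomology $H^*(X)=\langle 1,P,P^2,P^3\rangle$ is generated by $c_1(X)=P$.

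\textbf{Step 1 (cyclic generator).} At $\tau=0$, the Corollary above says that $1,\mathcal E,\mathcal E^2,\mathcal E^3$ (applied to $1$) form a basis of $H^*(X)[Q]$. By Nakayama over the complete local ring $\nov{Q,\tau}$, the same holds for all $\tau$: the classes $1, E, E^{\star2}, E^{\star3}$ form a basis of $H^*(X)\fps{Q,\tau}$. Hence quantum multiplication by any class $\phi_i$ is a polynomial in $\mathcal E$:
\[\phi_i\star=\sum_{a=0}^3 f_{i,a}(\mathcal E)\,\mathcal E^a\quad\text{with } f_{i,a}\in\nov{Q,\tau},\]
equivalently $\phi_i=p_i(E)$ for a polynomial $p_i$ of degree at most $3$ with coefficients in $\nov{Q,\tau}$. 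If $\lambda_1,\dots,\lambda_4$ are the eigenvalues (in an algebraic closure), then $\phi_i\star$ has eigenvalues $p_i(\lambda_k)$ on the corresponding generalized eigenspaces.

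\textbf{Step 2 (differential equations for the eigenvalues).} We use the flatness of the quantum connection, specifically the component $[\nabla_{\tau^i},\nabla_{z\partial_z}]=0$. Its $z^{-1}$ part gives
\[\partial_{\tau^i}(E\star)=\phi_i\star+[\phi_i\star,\mu_X]\cdot(\text{grading-type terms}).\]
More cleanly, we use the identity $\partial_{\tau^i}(E\star)-\phi_i\star=[\mu_X,\phi_i\star]$-type commutator terms, which are traceless against powers of $\mathcal E$ because $\phi_i\star$ commutes with $\mathcal E$. Taking traces of $\mathcal E^m$ gives
\[\partial_{\tau^i}\operatorname{tr}(\mathcal E^{m})=m\operatorname{tr}(\mathcal E^{m-1}\partial_{\tau^i}\mathcal E)=m\operatorname{tr}(\mathcal E^{m-1}\phi_i\star),\]
since the trace of a commutator with an operator commuting with $\mathcal E$ vanishes. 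Via Newton's identities and Cayley--Hamilton, this yields, on each generalized eigenspace and for distinct eigenvalues, the ODE $\partial_{\tau^i}\lambda_k=p_i(\lambda_k)$. These are the commutator identities for powers of the Euler field mentioned in the introduction.

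\textbf{Step 3 (uniqueness).} The vector field $\lambda\mapsto p_i(\lambda)$ is the same for every $k$. Its coefficients are determined as rational expressions in the power sums of the $\lambda_k$; more precisely, $p_i$ is determined by the $\lambda$'s together with the data $\operatorname{tr}(\mathcal E^m\phi_i\star)$, so we must write the system as a closed ODE in the symmetric functions $e_1,\dots,e_4$ of the eigenvalues. Consider the locus where the characteristic polynomial has the shape $(\lambda-a)^3(\lambda-b)$. We show that the ODE system is tangent to this locus. On a triple eigenspace $\mathcal E$ acts as $a+N$, and Step 2 applied with $\operatorname{tr}$ restricted to that block gives $\partial a=p_i(a)$, with no splitting term, because each of the three roots of the block obeys the same ODE. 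Since the initial value at $\tau=0$, namely $-24Q,-24Q,-24Q,256Q$, lies on this locus, uniqueness of formal solutions (the ODE is in formal power series in $\tau$) keeps the solution on it.

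\textbf{Main obstacle.} The hard step is making Step 3 rigorous when eigenvalues coincide, since the eigenvalues are then not individually smooth functions. I would handle this by working with the block decomposition: the eigenvalues $-24Q$ and $256Q$ are distinct modulo $\tau$, so Hensel's lemma splits $H^*(X)\fps{Q,\tau}$ (after inverting $Q$ if needed) into a rank-$3$ and a rank-$1$ $\mathcal E$-stable summand. On the rank-$3$ summand, I would show that the discriminant-type quantities, such as $3\operatorname{tr}(\mathcal E^2)-(\operatorname{tr}\mathcal E)^2$ and the analogous cubic expression, satisfy a linear homogeneous ODE in $\tau$ obtained from the trace identities. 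Since these quantities vanish at $\tau=0$, they vanish identically, which forces a single eigenvalue of multiplicity $3$.
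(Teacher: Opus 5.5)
Your argument is correct once the ``Main obstacle'' paragraph is taken as the actual Step 3, but it takes a different route from the paper.

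\textbf{Comparison with the paper.} The paper stays inside the F-manifold formalism. It differentiates the Cayley--Hamilton identity $\prod_i(E-\lambda_i)=0$ along the vector fields $E^k$, using the Hertling--Manin identities for $[E^k,V\circ W]$ and $[E^k,E^l]=(l-k)E^{k+l-1}$. This gives $\sum_i(\lambda_i^k-E^k(\lambda_i))\prod_{j\ne i}(E-\lambda_j)=0$, from which it reads off $E^k(\lambda_i)=\lambda_i^k$ and then appeals to uniqueness for the initial value $-24Q$. Since $\partial_{\tau^i}=p_i(E)$ as vector fields, this is the same ODE as your $\partial_{\tau^i}\lambda=p_i(\lambda)$. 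You obtain it instead from $z$-flatness, which gives exactly $\partial_{\tau^i}(E_X\star)=\phi_i\star+[\phi_i\star,\mu_X]$; state it in that form rather than with ``grading-type terms''. You combine this with $\operatorname{tr}(\mathcal E^{m-1}[\phi_i\star,\mu_X])=0$.

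The paper's route is shorter and intrinsic: it needs no grading operator or flat structure. Yours has the advantage of never treating coincident eigenvalues as individually differentiable functions. Reading off $E^k(\lambda_i)=\lambda_i^k$ from the displayed identity needs the polynomials $\prod_{j\ne i}(x-\lambda_j)$ to be independent. That fails exactly at a triple root, where three of them coincide. So your Hensel-plus-discriminant argument is a more careful version of the paper's uniqueness step.

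\textbf{What to fix.} Discard the first half of Step 3 (``on a triple eigenspace $\mathcal E$ acts as $a+N$\dots''), since it presupposes the conclusion.

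The crux, that the discriminant quantities satisfy a linear homogeneous system, is asserted rather than proved. It does hold, for the following reasons.
\begin{enumerate}[(a)]
\item Split off the block $M_3=\Pi_3M$ by Hensel over $\C\laur{Q}\fps{\tau}$. Inverting $Q$ is genuinely needed, because $-24Q\equiv256Q$ modulo the maximal ideal of $\C\fps{Q,\tau}$.
\item From $\Pi_3^2=\Pi_3$ one gets that $\partial_{\tau^i}\Pi_3$ is block off-diagonal, so $\operatorname{tr}(\partial_{\tau^i}\Pi_3\cdot\mathcal E^m)=0$. Hence $\partial_{\tau^i}\operatorname{tr}(\Pi_3\mathcal E^m)=m\operatorname{tr}(\Pi_3\mathcal E^{m-1}p_i(\mathcal E))$.
\item Consequently the coefficients $e_1,e_2,e_3$ of the characteristic polynomial of $\mathcal E|_{M_3}$ evolve by the derivation $D$ of $R[\mu_1,\mu_2,\mu_3]^{S_3}$ induced by $D\mu_k=p_i(\mu_k)$. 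The power sums match the trace identity, and Newton's identities then transfer this to the $e_j$.
\item Since $p_i(\mu_k)-p_i(\mu_l)\in(\mu_k-\mu_l)$, the derivation $D$ preserves $J=(\mu_1-\mu_2,\mu_2-\mu_3)$.
\item One has $J\cap R[e_1,e_2,e_3]=(A,B)$ with $A=3e_2-e_1^2$ and $B=27e_3-e_1^3$. Indeed, $R[e]/(A,B)\cong R[e_1]$, and this injects into $R[\mu]/J\cong R[t]$ via $e_1\mapsto3t$.
\item Therefore $\partial_{\tau^i}A$ and $\partial_{\tau^i}B$ are ring-linear combinations of $A$ and $B$. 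Both vanish at $\tau=0$, so induction on $\tau$-degree, using Euler's formula on homogeneous parts, gives $A=B=0$.
\end{enumerate}
It follows that the block has characteristic polynomial $(x-e_1/3)^3$. Since $e_1/3\equiv-24Q\neq256Q$ mod $\tau$, the multiplicity is exactly $3$.
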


For the proof, we will need the following lemma.
\begin{lemma}
	Let $E$ be the Euler vector field on an F-manifold. Let $[\cdot,\cdot]$ denote the Lie bracket, and $\circ$ the multiplication. Then \begin{equation}\label{eq:der}
		[E^k,V\circ W]=[E^k,V]\circ W+V\circ[E^k,W]+kE^{k-1}V\circ W
	\end{equation} for $k\ge0$ and vector fields $V,W$.
\end{lemma}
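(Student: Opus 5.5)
We prove \eqref{eq:der} by induction on $k$, using only the two axioms of an F-manifold with Euler field.

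\emph{The axioms.} The F-manifold axiom states that for all vector fields $A,B,V,W$,
\[[A\circ B,V\circ W]-[A\circ B,V]\circ W-V\circ[A\circ B,W]=A\circ\big([B,V\circ W]-[B,V]\circ W-V\circ[B,W]\big)+B\circ\big([A,V\circ W]-[A,V]\circ W-V\circ[A,W]\big).\]
Write $D_A(V,W):=[A,V\circ W]-[A,V]\circ W-V\circ[A,W]$. The axiom then reads
\[D_{A\circ B}(V,W)=A\circ D_B(V,W)+B\circ D_A(V,W).\]
The Euler field is characterized by $\mathrm{Lie}_E(\circ)=\circ$, which means $D_E(V,W)=V\circ W$. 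With this notation, the statement \eqref{eq:der} is exactly $D_{E^k}(V,W)=kE^{k-1}\circ V\circ W$, where $E^k$ is the $k$-th power in the multiplication $\circ$.

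\emph{Base cases.} For $k=0$ we have $E^0=e$, the unit field. Here $D_e=0$ holds on any F-manifold: take $A=B=e$ in the axiom to get $D_e=2D_e$. (Alternatively, $\mathrm{Lie}_e(\circ)=0$ is the standard property of the unit.) This gives $D_{E^0}=0$, matching the right-hand side. For $k=1$ the statement is the Euler property $D_E(V,W)=V\circ W$.

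\emph{Inductive step.} Take $A=E$ and $B=E^{k-1}$ in the axiom. By the induction hypothesis and the Euler property,
\[D_{E^k}(V,W)=E\circ D_{E^{k-1}}(V,W)+E^{k-1}\circ D_E(V,W)=E\circ\big((k-1)E^{k-2}\circ V\circ W\big)+E^{k-1}\circ V\circ W=kE^{k-1}\circ V\circ W.\]
This uses associativity and commutativity of $\circ$, and completes the induction.

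\emph{Where care is needed.} The only delicate point is whether the paper's quantum-cohomology F-manifold (over the formal base $\nov{Q,\tau}$, with $Q$-directions) satisfies these two axioms. The Euler condition $\mathrm{Lie}_E(\circ)=\circ$ follows from the homogeneity of Gromov--Witten invariants, given the gradings $\deg\tau^i=2-\deg\phi_i$ and $\deg Q^d=2c_1\cdot d$. The F-manifold axiom holds for any potential Frobenius manifold; this follows from the WDVV associativity and the fact that $\circ$ is given by third derivatives of a potential, as in \cite{Man99}. So we cite these and the algebraic induction above suffices.
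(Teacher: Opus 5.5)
Your proof is correct and follows the same route as the paper. The paper defines the same operator $P_U$ (your $D_U$), uses the Euler property $P_E(V,W)=V\circ W$, and invokes the F-identity $P_{A\circ B}=A\circ P_B+B\circ P_A$ to assert $P_{E^k}=kE^{k-1}P_E$; your induction, including the $k=0$ case via $D_e=2D_e$, simply writes out that step explicitly.
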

\begin{proof}
	For a vector field $U$, define \[P_U(V,W):=[U,V\circ W]-[U,V]\circ W-V\circ[U,W].\] By \cite[I.2.2.1]{Man99}, the Euler vector field $E$ satisfies the identity \[P_E(V,W)=V\circ W\] for any $V,W$. The F-identity \cite[I.5.1.1]{Man99} implies that \[P_{E^k}=kE^{k-1}P_E.\] Then \[[E^k,V\circ W]-[E^k,V]\circ W-V\circ[E^k,W]=P_{E^k}(V,W)=kE^{k-1}P_E(V,W)=kE^{k-1}V\circ W.\] Rearranging the terms we obtain \eqref{eq:der}.
\end{proof}

We will also need the following commutator identities, which follow from the previous lemma.

\begin{lemma}[{\cite[I.5.6]{Man99}}]
	Let $E$ be the Euler vector field on an F-manifold. Then \begin{equation}\label{eq:comm}
		[E^k,E^l]=(l-k)E^{k+l-1}
	\end{equation} for $k,l\ge0$.
\end{lemma}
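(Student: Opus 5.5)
The plan is to prove \eqref{eq:comm} by induction on $l$, with $k$ fixed. The only tool is the Leibniz-type rule \eqref{eq:der} from the previous lemma. We interpret $E^0$ as the unit vector field $e$ and $E^m$ as the $m$-th power under $\circ$. The two base cases $l=0$ and $l=1$ are handled separately, and the general step applies \eqref{eq:der} to the factorization $E^{l}=E^{l-1}\circ E$.

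\emph{Case $l=0$.} Apply \eqref{eq:der} with $V=W=e$. Since $e\circ e=e$ and $e$ is the unit, this gives
\[
[E^k,e]=[E^k,e]\circ e+e\circ[E^k,e]+kE^{k-1}=2[E^k,e]+kE^{k-1}.
\]
Hence $[E^k,e]=-kE^{k-1}$, which is \eqref{eq:comm} for $l=0$.

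\emph{Case $l=1$.} We first compute $[E,E^m]$ by induction on $m$, using \eqref{eq:der} with $k=1$:
\[
[E,E^{m}]=[E,E^{m-1}\circ E]=[E,E^{m-1}]\circ E+E^{m-1}\circ[E,E]+E^{m}.
\]
Here $[E,E]=0$. The base value is $[E,e]=-e$, which is the $k=1$ instance of the case $l=0$. Assuming $[E,E^{m-1}]=(m-2)E^{m-1}$, the display gives $[E,E^m]=(m-1)E^m$. Antisymmetry of the bracket then yields $[E^k,E]=(1-k)E^k$, which is \eqref{eq:comm} for $l=1$.

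\emph{Inductive step.} For $l\ge2$, assume \eqref{eq:comm} holds for $l-1$. Apply \eqref{eq:der} with $V=E^{l-1}$ and $W=E$:
\[
[E^k,E^l]=[E^k,E^{l-1}]\circ E+E^{l-1}\circ[E^k,E]+kE^{k-1}\circ E^{l-1}\circ E.
\]
Substituting the inductive hypothesis and the case $l=1$, and using associativity and commutativity of $\circ$, the right-hand side becomes
\[
\big((l-1-k)+(1-k)+k\big)E^{k+l-1}=(l-k)E^{k+l-1}.
\]

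There is no real obstacle here. The only points needing care are the conventions $E^0=e$ and $E^{-1}$-terms vanishing when $k=0$ (so the $kE^{k-1}$ terms are harmless), and the need to establish the case $l=1$ separately, since it feeds into the inductive step.
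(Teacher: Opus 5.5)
Your proof is correct and takes essentially the same approach as the paper, which gives no separate argument and simply states that these identities follow from the preceding Leibniz-type rule (citing Manin). Your induction on $l$ is the natural way to carry that out: the $l=0$ case comes from $e=e\circ e$, and the $l=1$ case, which you rightly note is not covered by the inductive step, comes from $[E,E^m]=(m-1)E^m$.
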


We can now give the proof of Theorem \ref{thm:eigen}. Write $E=E_X$.
\begin{proof}[Proof of Theorem \ref{thm:eigen}]
Let $\lambda_1,\dots,\lambda_4$ be the eigenvalues of $E$. The characteristic polynomial of $E$ is \[p(x)=\prod_{i=1}^4(x-\lambda_i).\] By the Cayley-Hamilton theorem, we have \[\prod_{i=1}^4(E-\lambda_i)=0\] where the product is quantum multiplication. Take Lie derivative of the left hand side along the vector field $E^k$, and using the identities \eqref{eq:der}, \eqref{eq:comm}, we obtain \begin{align}
	0&=\sum_{i=1}^4[E^k,E-\lambda_iE^0]\circ\prod_{j\ne i}(E-\lambda_j)+3kE^{k-1}\prod_{i=1}^4(E-\lambda_i) \nonumber\\
	&=\sum_{i=1}^4\left((1-k)E^k-E^k(\lambda_i)+k\lambda_i E^{k-1}\right)\circ\prod_{j\ne i}(E-\lambda_j)+0 \label{eq:key}
\end{align} where $E^k(\lambda_i)$ denotes the vector field $E^k$ differentiating the function $\lambda_i$. Since \[(E-\lambda_i)\circ\prod_{j\ne i}(E-\lambda_j)=0,\] we have that \[E\circ\prod_{j\ne i}(E-\lambda_j)=\lambda_i\prod_{j\ne i}(E-\lambda_j).\] Substituting this into equation \eqref{eq:key}, we obtain \[\sum_{i=1}^4\left(\lambda_i^k-E^k(\lambda_i)\right)\prod_{j\ne i}(E-\lambda_j)=0.\]
Note that this equation has degree 3 in $E$.
Since $1,P,P^2,P^3$ are linearly independent, and $E=P+O(\tau)$, this implies that $1,E,E^2,E^3$ are linearly independent over $\nov{Q,\tau}$. Hence \[E^k(\lambda_i)=\lambda_i^k.\] This can be viewed as a system of ODEs governing $\lambda_i$. The eigenvalues for small quantum cohomology, namely $-24Q$ and $256Q$, give initial values for this system. Clearly, these initial value problems possess unique solutions. In particular, the solution along any $E^k$ for the initial value $-24Q$ is still an eigenvalue of multiplicity 3. Since $1,E,E^2,E^3$ is a basis, the proof is complete.
\end{proof}

\begin{theorem}\label{thm:rat}
	Let $Y$ be a symplectic $6$-manifold which is symplectically rational. Then $E_Y$ has eigenvalues of multiplicity at most $2$.
\end{theorem}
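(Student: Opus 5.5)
Throughout, by ``multiplicity'' of an eigenvalue of $E_Y\star$ we mean algebraic multiplicity, taken after base change to an algebraically closed extension of the coefficient ring (e.g. a field of Puiseux-type series). The plan is to show that the property $(\mathcal P)$: ``every eigenvalue of $E_Y\star$ has multiplicity at most $2$'' holds for $\cp^3$ and is preserved in both directions by each elementary move of Definition \ref{def:rat}. It then holds for every symplectically rational $6$-manifold.

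\emph{Deformations and $\cp^3$.} Gromov--Witten invariants, hence the big quantum product and $E_Y$, depend only on the deformation class of the symplectic form, so $(\mathcal P)$ is deformation invariant. For $\cp^3$, small quantum cohomology is $\C[H,Q]/(H^4-Q)$ and $c_1=4H$. Thus $c_1\star$ has eigenvalues $4\zeta Q^{1/4}$ with $\zeta^4=1$, which are pairwise distinct. Distinctness of eigenvalues (nonvanishing of the discriminant of the characteristic polynomial) is an open condition. Since the leading term of the discriminant of $E\star$ at $\tau=0$ is nonzero, the discriminant remains nonzero over $\nov{Q,\tau}$ after passing to fractional powers. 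Hence all eigenvalues of $E_{\cp^3}$ are simple. Alternatively, one can rerun the ODE argument from the proof of Theorem \ref{thm:eigen}, using $E^k(\lambda_i)=\lambda_i^k$ with distinct initial values.

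\emph{Blow-ups.} In real dimension $6$ a symplectic blow-up centre $Z$ has codimension $2r$ with $r=3$ (a point) or $r=2$ (a surface). By Theorem \ref{thm:blowup}, after base change to $\C\laur{\q^{-1/(r-1)}}\fps{Q,\td\tau}$, there is an algebra isomorphism $(H^*(\td X),\star_{\td\tau})\cong (H^*(X),\star_{\tau(\td\tau)})\oplus\bigoplus_{j=0}^{r-2}(H^*(Z),\star_{\varsigma_j(\td\tau)})$ sending $E_{\td X}$ to $(E_X,E_Z,\dots,E_Z)$. The characteristic polynomial of $E_{\td X}\star$ is therefore the product of those of the summands. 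For a point, $H^*(Z)=\C$ and $E_Z=\sum(1-0)\varsigma^0_j\cdot 1$, so each of the two point summands contributes a single eigenvalue $\varsigma_j^0$. By item (6) of the final theorem of Section \ref{sec:decomp}, $\varsigma_j|_{Q=\td\tau=0}=-(r-1)\lambda_j+\dots$ with $\lambda_j\sim\q^{1/(r-1)}$. This leading term has positive $\q$-order and differs between $j=0,1$, whereas eigenvalues of $E_X$ on the $X$-summand have leading $\q$-order at most $0$ (since $\tau(\td\tau)=\q^{-1}[Z]+\dots$). So the new eigenvalues are distinct from each other and from those of $X$.

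For a surface ($r=2$), there is one summand $H^*(Z)$ of rank $2$ in even degrees. Its eigenvalues are $\varsigma^0+O(\cdot)$ with leading term $-\lambda_0\sim\q^{1}$, so both have positive $\q$-order and are disjoint from those of $X$. Moreover, the $Z$-summand contributes at most multiplicity $2$. Consequently, eigenvalue multiplicities of $E_{\td X}$ equal the maximum over the $X$-block and the $Z$-blocks: $\le 2$ if $X$ satisfies $(\mathcal P)$. Conversely, the $X$-block embeds as a direct factor, so $E_X$'s multiplicities are bounded by those of $E_{\td X}$. Hence $(\mathcal P)$ for $\td X$ implies it for $X$, with the caveat that we only know this after base change and specialization of the bulk parameters $\tau(\td\tau)$. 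I would argue that $\tau(\td\tau)$ is a submersion onto the bulk parameters of $X$ by item (7), so an eigenvalue of multiplicity $3$ for $E_X$ over $\nov{Q,\tau}$ pulls back to one for $E_{\td X}$.

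\emph{Main obstacle.} The main obstacle is the blow-down direction and making ``multiplicity'' comparisons legitimate across different coefficient rings: eigenvalues live in algebraic closures of rings of formal series, and the base change $\nov{\td Q}\to\C\laur{\q^{-1/(r-1)}}\fps{Q}$ must not create coincidences. I would handle this by working with the characteristic polynomials themselves. Multiplicity $\ge3$ of some root is equivalent to a polynomial factorization condition, and that condition is preserved under injective ring extensions. I would also use the valuation in $\q$ to separate the $X$- and $Z$-blocks as above.
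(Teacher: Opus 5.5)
Your proposal is correct. It has the same skeleton as the paper's proof: simple eigenvalues for $\cp^3$, deformation invariance, and the block decomposition of Theorem \ref{thm:blowup} with centre a point or a surface. The difference is in the key step.

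\textbf{How the key step differs.} The paper simply asserts that, after the invertible change of variables, $E_{\td M}$ has an eigenvalue of multiplicity $\ge 3$ if and only if $E_M$ or $E_S$ does. It then checks $E_S$ case by case. That equivalence tacitly rules out collisions between blocks, for example the double eigenvalue of a genus $>0$ block meeting an eigenvalue of $E_M$, or the two point blocks for $r=3$ meeting each other and $E_M$. The paper does not argue this.

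You do, using item (6). At $Q=\td\tau=0$ the product is the cup product, so the $X$-block is its identity coefficient, of non-positive $\q$-order, plus a nilpotent. The $Z$-blocks sit at $-(r-1)\lambda_j+\cdots$, with leading terms of positive order that are pairwise distinct. Hence the resultant of the block characteristic polynomials is nonzero at $Q=\td\tau=0$, so it is nonzero, and the separation holds over the whole coefficient ring.

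Your rank bound on the $Z$-block replaces the paper's case analysis, and your discriminant argument for $\cp^3$ replaces the ODE argument; both are fine. Your route gives an honest treatment of collisions and of the change of coefficient rings, at the cost of relying on the asymptotics in item (6). A route that avoids asymptotics: by the string equation, each block is $\tau^0$ (respectively $\varsigma_j^0$) times the identity plus an operator independent of it. These are independent coordinates because the change of variables is invertible, so eigenvalues from different blocks cannot coincide identically.

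\textbf{Where the submersion belongs.} The submersion from item (7) is needed in the blow-up direction, not the blow-down direction. Having a root of multiplicity $\ge 3$ is a Zariski-closed condition on the coefficients of the characteristic polynomial, so it survives any specialization $\tau\mapsto\tau(\td\tau)$. What a specialization can destroy is multiplicity $\le 2$. Invertibility of the full change of variables makes $f(Q,\tau)\mapsto f(Q,\tau(\td\tau))$ injective. So the pulled-back $X$-block has exactly the eigenvalue multiplicities of $E_X$ over $\nov{Q,\tau}$, and that is what justifies your inference ``$\le 2$ if $X$ satisfies $(\mathcal P)$''.
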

\begin{proof}
	It is folklore that $E_{\cp^n}$ has simple eigenvalues; e.g. it follows from the same argument as Theorem \ref{thm:eigen}. Thus it suffices to show that if $\td M$ is a symplectic blow-up of $M$ along $S$, then $E_{\td M}$ has an eigenvalue of multiplicity at least 3 if and only if $E_M$ has an eigenvalue of multiplicity at least 3. By Theorem \ref{thm:blowup}, the operator $E_{\td M}$ can be identified with $(E_M,E_S,\dots,E_S)$ after an invertible change of variables. Hence $E_{\td M}$ has an eigenvalue of multiplicity at least 3 if and only if $E_M$ or $E_S$ has an eigenvalue of multiplicity at least 3. Now $S$ is either a point or $S^2$, in which case $E_S$ has simple eigenvalues; or $S$ is a surface of genus $>0$, in which case $E_S$ is given by the classical cup product, and has exactly one eigenvalue of multiplicity 2. In any case, $E_S$ does not have an eigenvalue of multiplicity 3. This concludes the proof.
\end{proof}

Theorem \ref{thm:main} follows from Theorem \ref{thm:eigen} and Theorem \ref{thm:rat}.

\appendix

\section{Symplectic virtual techniques}\label{sec:app}
In this appendix, we discuss symplectic virtual techniques.
We construct twisted Gromov--Witten invariants, and prove the quantum Riemann--Roch theorem (Theorem \ref{thm:qrr}) in the symplectic setting following \cite{Coa03}.

\subsection{Global Kuranishi charts and virtual fundamental classes}

We recall the construction of global Kuranishi charts and its properties from \cite{AMS24}. The most pleasant feature of this construction, for our purposes, is that the thickened moduli space admits a smooth submersion to a smooth quasi-projective variety, which is the locus of trivial isotropy of the moduli space of stable maps to $\cp^e$.

Let $\fms_{0,n,e}$ be the subspace of genus 0 stable maps to $\cp^e$ with $n$ marked points of degree $e$, with the property that its image is not contained in any linear subspace of $\cp^e$. Such maps are automorphism free, and $\fms_{0,n,e}$ is a smooth quasi-projective variety, with a universal family $\pi:\uvf_{0,n,e}\to\fms_{0,n,e}$. Henceforth we shall omit the subscripts. Let $\uvf^\circ$ denote the complement of the nodes in $\uvf$. Let $Y:=\Omega^{0,1}_{\uvf^\circ/\fms}\otimes TX$ be the vector bundle over $\uvf^\circ\times X$ whose fiber over $(c,x)$ is the space of anti-holomorphic maps from $T_c(\uvf^\circ|_{\pi(c)})$ to $T_xX$. The group $G=\U(e+1)$ acts on $\cp^e$, and hence on $\fms$ and $Y$. By \cite[Lemma 4.2]{AMS24} we can find a sequence of finite dimensional $G$-representations $V_i$ and $G$-equivariant linear maps $\lambda_i:V_i\to C_c^\infty(Y)$ such that \begin{itemize}
	\item $V_i$ is a subrepresentation of $V_{i+1}$ for each $i$,
	\item $\lambda_{i+1}|_{V_i}=\lambda_i$ for each $i$,
	\item $\bigcup_i\lambda_i(V_i)$ is dense in $C^\infty(Y)$ with respect to the $C^\infty_\textrm{loc}$-topology.
\end{itemize} Such $(V_i,\lambda_i)$ is called a finite dimensional approximation scheme for $C_c^\infty(Y)$. We now fix $i$, and define the pre-thickening $\thck^\textrm{pre}$ to be the set of triples $(\phi,v,u)$, where $\phi\in\fms$, $v\in V_i$, and $u:\uvf|_\phi\to X$ is a smooth map of class $d\in H_2(X,\Z)$ with the properties that \begin{itemize}
	\item $u$ is $J$-holomorphic near its nodes,
	\item $\int_Cu^*\omega\ge0$ on each irreducible component $C$,
	\item $\int_Cu^*\omega\ge\hbar$ on each unstable component $C$, where $\hbar$ is the minimal energy of non-constant $J$-holomorphic spheres;
\end{itemize} the triple $(\phi,v,u)$ is required to satisfy the equation \[\overline\partial_Ju|_{(\uvf^\circ|_\phi)}+\lambda_i(v)\circ\Gamma_u=0\] where $\Gamma_u:\uvf^\circ|_\phi\to\uvf^\circ\times X,\ \sigma\mapsto(\sigma,u(\sigma))$ is the graph. We equip $\thck^\textrm{pre}$ with the topology induced by the Hausdorff distance topology for the closure of the image of $\Gamma_u$ in $\uvf\times X$. The group $G$ acts on $\thck^\textrm{pre}$ in the obvious way. 

We next need to remove the ambiguity coming from incorporating maps to $\cp^e$ in the definition of $\thck^\textrm{pre}$. We choose a Hermitian line bundle $L$ on $X$ whose curvature form is $-2\sqrt{-1}\pi\Omega$, where $\Omega$ is a symplectic form taming $J$. We assume that $e=\deg(u^*L)=[\Omega]\cdot d$, and that it is sufficiently large. We consider ordered $\C$-bases $F=(F_0,\dots,F_e)$ of $H^0(\uvf|_\phi,u^*L)$ such that the Hermitian matrix $H_F:=\left(\int\langle F_i,F_j\rangle u^*\Omega\right)_{i,j}$ has all positive eigenvalues. Such $F$ is called a framing. Then we define the \emph{thickening} $\thck$ to be the space of quadruples $(\phi,v,u,F)$, where $(\phi,v,u)\in\thck^\textrm{pre}$ and $F$ is a framing of $u$. Define the \emph{obstruction bundle} $\obs$ to be the trivial bundle over $\thck$ with fiber $V_i\oplus\mathcal{H}_{e+1}$, where $\mathcal{H}_{e+1}$ is the space of $(e+1)\times(e+1)$ Hermitian matrices. Define the section $s:\thck\to\obs$ by $(\phi,v,u,F)\mapsto(v,\exp^{-1}H_F)$, where $\exp:\mathcal H\to\mathcal H^+$ is the exponential map which sends Hermitian matrices to Hermitian metrices with positive eigenvalues. The group $G$ acts on $\thck$ by the action on $\thck^\textrm{pre}$ and by changing the basis $F$; define the $G$-action on $\obs$ in such a way that $s$ is $G$-equivariant.

\begin{theorem}[{\cite{AMS24}}]\label{thm:ams}
	Assuming that $i$ is sufficiently large, and after shrinking $\thck$ to a neighborhood of $s^{-1}(0)$ (which we still denote by $\thck$), the space $\thck$ is a topological manifold on which $G$ acts with finite stabilizers and finitely many orbit types, and there is a natural homeomorphism \[s^{-1}(0)/G=X_{0,n,d}\] which is an isomorphism of orbispaces. 
\end{theorem}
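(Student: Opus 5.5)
The plan is to separate the statement into two parts: the identification of the zero locus modulo $G$ with $X_{0,n,d}$, and the manifold structure of $\thck$ near $s^{-1}(0)$. Throughout I use Gromov compactness of $X_{0,n,d}$ to make every choice ($e$, $i$, the size of the neighborhood) uniformly.

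\textbf{Step 1: the zero locus.} The condition $s(\phi,v,u,F)=0$ means $v=0$ and $\exp^{-1}H_F=0$, i.e.\ $H_F=\mathrm{Id}$. Hence $u$ is a genuine $J$-holomorphic map on $\uvf|_\phi$ and $F$ is an $L^2$-unitary basis of $H^0(\uvf|_\phi,u^*L)$. For the inverse construction, start from a stable map $[C,x_1,\dots,x_n,u]$.
\begin{itemize}
\item Components of positive energy carry $u^*L$ of positive degree. On ghost components the pulled-back bundle is trivial. In both cases $H^1(C,u^*L)=0$ on the tree $C$.
\item By Riemann--Roch, $H^0$ then has dimension $[\Omega]\cdot d+1=e+1$.
\item For $e$ large the resulting map $C\to\cp(H^0)^\vee\cong\cp^e$ is a stable map of degree $e$ not contained in a hyperplane. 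Ghost components, being stable as marked curves, remain stable.
\end{itemize}
This gives a point $\phi\in\fms$ together with $u$. The unitary frames form a $\U(e+1)$-torsor, and changing the frame is exactly the $G$-action. Therefore $s^{-1}(0)/G\to X_{0,n,d}$ is a bijection, and the stabilizer of a point equals $\mathrm{Aut}(C,x,u)$, which is finite. Continuity in both directions follows from the Hausdorff-graph topology together with Gromov compactness. The orbispace statement is then obtained from a local slice: fix a frame, and take a $\mathrm{Aut}$-invariant transverse slice in $\fms$ through $\phi$.

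\textbf{Step 2: transversality and the manifold structure.} At each point of $s^{-1}(0)$ I would show that the linearized operator
\[
(\xi,v)\mapsto D_u\xi+\lambda_i(v)\circ\Gamma_u
\]
is surjective onto $\Omega^{0,1}(u^*TX)$ for $i$ large. This should follow because $\bigcup_i\lambda_i(V_i)$ is dense in the $C^\infty_{\mathrm{loc}}$-topology, $\operatorname{coker}D_u$ is finite dimensional and can be represented by sections supported away from the nodes, and $s^{-1}(0)$ is compact; an openness argument then lets a single $i$ work everywhere. Surjectivity is an open condition, so after shrinking, $\thck^{\mathrm{pre}}\to\fms$ should be a family whose fibers are cut out transversally. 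The framings form an open subset of the rank $e+1$ vector bundle $(H^0)^{\oplus(e+1)}$, which is a vector bundle because $H^1=0$ persists nearby. Hence $\thck$ would be a topological manifold of dimension $\dim\fms+\operatorname{ind}+\dim V_i+2(e+1)^2$. Finitely many orbit types then follow from finiteness of stabilizers, local slices, and compactness.

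\textbf{Main obstacle.} The hard step is proving that $\thck^{\mathrm{pre}}$ is locally Euclidean near maps whose domain has nodes. There the domains vary in the smooth family $\uvf\to\fms$ through node smoothings, so the implicit function theorem must be applied uniformly in the gluing parameters. I would attempt this by a gluing theorem relative to the base $\fms$: build pregluings over a neighborhood in $\fms$ and correct them by a right inverse of $D_u\oplus\lambda_i$ whose bounds are uniform in the gluing parameter. This is where the requirements that $u$ be $J$-holomorphic near the nodes and that $\lambda_i$ be compactly supported on $\uvf^\circ$ are used. The output would be a homeomorphism onto $\fms\times\R^{\operatorname{ind}+\dim V_i}$ locally, giving a topological manifold that is $C^1$ along the fibers.
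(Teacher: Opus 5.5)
The paper gives no proof of this statement; it is quoted from Abouzaid--McLean--Smith. Your outline follows their architecture:
\begin{itemize}
\item identify $s^{-1}(0)/G$ through the complete linear system of $u^*L$ and unitary frames;
\item get surjectivity of $D_u\oplus\lambda_i$ for a single large $i$ from density of $\bigcup_i\lambda_i(V_i)$, cokernels supported away from the nodes, and compactness;
\item obtain the topological-manifold structure by gluing relative to $\fms$.
\end{itemize}
Step~1 and the transversality part of Step~2 are fine. There is, however, a concrete error in how the framing enters Step~2, and it makes Step~2 inconsistent with Step~1.

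In Step~1 you use, correctly, that $\phi$ \emph{is} the map $\iota_F\colon C\to\cp^e$ determined by the frame $F$. This compatibility is essential. If $F$ were independent of $\phi$, then $\phi$ could be any nondegenerate degree-$e$ map on the same domain, and $s^{-1}(0)/G$ would be a non-compact family over $X_{0,n,d}$.

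In Step~2, however, you let $F$ range over an open subset of $(H^0)^{\oplus(e+1)}$ over $\thck^{\mathrm{pre}}$, independently of $\phi$. Under the constraint $\iota_F=\phi$, the admissible frames over a point $(\phi,v,u)$ correspond to isomorphisms $\phi^*\mathcal O(1)\cong u^*L$. Near $s^{-1}(0)$ such isomorphisms exist (in genus $0$ only the multidegrees must match), and they are unique up to $\C^*$. So near $s^{-1}(0)$, $\thck$ is a principal $\C^*$-bundle over an open part of $\thck^{\mathrm{pre}}$. It is locally $\fms\times\R^{\operatorname{ind}+\dim V_i}\times\C^*$, of dimension $\dim\fms+\operatorname{ind}+\dim V_i+2$, not $\dim\fms+\operatorname{ind}+\dim V_i+2(e+1)^2$. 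Your count double-counts the $PGL_{e+1}(\C)$ directions already contained in $\fms$.

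A dimension check exposes this. The quantity $\dim\thck-\operatorname{rank}\obs-\dim G$ must equal the virtual dimension $\dim X+2c_1(X)\cdot d+2n-6$. Use $\dim_\R\fms=2(e+1)^2+2n-8$, $\operatorname{rank}\obs=\dim V_i+(e+1)^2$ and $\dim G=(e+1)^2$. Only the corrected count gives the virtual dimension; yours overshoots it by $2(e+1)^2-2$.

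The topological-manifold conclusion survives once Step~2 is redone with this $\C^*$-bundle. Equivalently, parametrize by the domain, $u$, $v$ and a free frame $F$, and set $\phi:=\iota_F$. As written, though, Step~2 establishes local Euclideanity for a space whose zero locus modulo $G$ is not $X_{0,n,d}$.
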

By gluing theory, the forgetful map $q:\thck\to\fms$ is fiberwise smooth. It is shown in \cite{AMS21} by appealing to abstract smoothing theory that there exists some $G$-representation $V$ such that $\thck\times V$ is a smooth $G$-manifold; we can also ensure that $q\circ p:\thck\times V\to\fms$ and $\ev_i\circ p:\thck\times V\to X$ are smooth submersions, where $p:\thck\times V\to\thck$ is the projection. If we also replace $\obs$ by $p^*\obs\oplus p^*(\thck\times V)$ and $s$ by $p^*s\oplus\Delta$, where $\Delta$ is the tautological section of $p^*(\thck\times V)$, then the $G$-quotient of its zero locus is isomorphic to $X_{0,n,d}$ as well.

The operation of turning $(\thck,\obs,s,G)$ into $(\thck\times V,p^*\obs\oplus p^*(\thck\times V),p^*s\oplus\Delta,G)$ is called stabilization. In fact, we say that two global Kuranishi charts are equivalent if they can be related by the following operations: \begin{enumerate}[(1)]
	\item Germ equivalence: take a $G$-invariant neighborhood $U$ of $s^{-1}(0)$, and replace $(\thck,\obs,s,G)$ by $(U,\obs|_U,s|_U,G)$. This has already been used in the statement of Theorem \ref{thm:ams}.
	\item Stabilization: as defined above.
	\item Group enlargement: replace $(\thck,\obs,s,G)$ by $(P,q^*\obs,q^*s,G\times G')$, where $G'$ is a compact Lie group and $q:P\to\thck$ is a $G$-equivariant principal $G'$-bundle.
\end{enumerate} By \cite[Proposition 4.53, 4.66]{AMS24}, the global Kuranishi chart $(\thck,\obs,s,G)$ is independent of the various choices made during the construction up to equivalence. Thus we may assume that $\thck$ is a smooth $G$-manifold and $\obs$ is a smooth $G$-vector bundle; the forgetful map $\thck\to\fms$ and  evaluation maps $\ev_i:\thck\to X$ are smooth submersions. (The section $s$ is not necessarily smooth.)

To the global Kuranishi chart $(\thck,\obs,s,G)$ we associate a $\Q$-virtual fundamental class as follows. Let $\ofd=\thck/G,\ \zlc=s^{-1}(0),\ \mdl=\zlc/G,\ r=\rank\obs,\ m=\dim\thck-r-\dim G$. For a pair of spaces $(A,B)$, we denote $H_*(A|B;\Q):=H_*(A,A-B;\Q)$. The virtual fundamental class $\vfc{\mdl}$ is defined to be the composition \[\check{H}^m(\mdl;\Q)\xrightarrow[\cong]{\textrm{A-L duality}}{H}_r(\ofd|\mdl;\Q)\cong H_r^G(\thck|\zlc;\Q)\xrightarrow{s_*} 
	H_r^G(\obs|\thck;\Q)\xrightarrow[\cong]{\textrm{Thom}}H_0^G(\thck;\Q)\cong\Q\] where $\check{H}^m(\cdot)$ is \v{C}ech cohomology, ${H}_r(\cdot)$ is Borel-Moore homology, and the first map is Alexander-Lefschetz duality for $\Q$-homology manifolds (see \cite{Skl71}). Thus $[\mdl]^\textrm{vir}$ lives in the Steenrod homology of $\mdl$, which we simply write as $H_m(\mdl;\Q)$ by abuse of notation. It only depends on the global Kuranishi chart up to equivalence. We may also consider $[\mdl]^\textrm{vir}$ with $\C$-coefficients if we wish.
	
For the moduli space $X_{0,n,d}$, we use this procedure to associate the virtual fundamental class $\vfc{X_{0,n,d}}$. Hence we may define the primary Gromov--Witten invariants of $X$. We now explain how to define descendant invariants. Consider the forgetful map $q:\ofd_{0,n,d}\to\fms_{0,n,d}$. We define the $i$th cotangent line bundle $\lb_i$ on $\ofd_{0,n,d}$ to be the pull-back of the $i$th cotangent line bundle on $\fms_{0,n,d}$. Since $\ofd_{0,n,d}$ is stably almost complex, the map $q$ is complex oriented, and hence $\lb_i$ is a complex line bundle. We also denote by $\lb_i$ its restriction to $X_{0,n,d}$. (We shall often use the same notation for a vector bundle on $\ofd_{0,n,d}$ and its restriction to $X_{0,n,d}$ if there is no fear of confusion.) We define the $i$th psi class $\psi_i$ to be the first Chern class of $\lb_i$. Its image in $\check{H}^2(X_{0,n,d};\Q)$ can be capped with the virtual fundamental class. Hence we may define the descendant Gromov--Witten invariants of $X$. Clearly they only depend on the global Kuranishi charts up to equivalence.

We state the fundamental axioms satisfied by the virtual fundamental classes $\vfc{X_{0,n,d}}$. To state the forgetful axiom and splitting axiom, we need some preliminaries. Let $f:X_{0,n+1,d}\to X_{0,n,d}$ be the map forgetting the last marked point. It is in fact the restriction of a forgetful map $f:\ofd_{0,n+1,d}\to\ofd_{0,n,d}$, which is well-defined as long as we make the same choices when constructing $\ofd_{0,n+1,d}$ and $\ofd_{0,n,d}$. Then we have a push-forward in homology $f_*:H_*(\ofd_{0,n+1,d}|X_{0,n+1,d};\Q)\to H_*(\ofd_{0,n,d}|X_{0,n,d};\Q)$, which by Alexander-Lefschetz duality induces a map in \v{C}ech cohomology $f_*:\check{H}^*(X_{0,n+1,d};\Q)\to\check{H}^*(X_{0,n,d};\Q)$, and thus we can make sense of the pull-back $f^*\vfc{X_{0,n,d}}$ as a homology class in $X_{0,n+1,d}$. Let $g:X_{0,n_1+1,d_1}\times_X X_{0,n_2+1,d_2}\to X_{0,n,d}$ be the gluing map. Since the evaluation map is a smooth submersion, the fiber product $\ofd_{0,n_1+1,d_1}\times_X \ofd_{0,n_2+1,d_2}$ is still a smooth orbifold, and thus we can make sense of the pull-back $g^*\vfc{\ofd_{0,n,d}}$ in the same way as before. Let $\Delta$ be the map in the pull-back diagram 
\[\begin{tikzcd}
	{X_{0,n'+1,d'}\times_XX_{0,n''+1,d''}} & {X_{0,n'+1,d'}\times X_{0,n''+1,d''}} \\
	X & {X\times X}
	\arrow["\Delta", from=1-1, to=1-2]
	\arrow["{\ev_{n'+1}=\ev_{n''+1}}"', from=1-1, to=2-1]
	\arrow["{(\ev_{n'+1},\ev_{n''+1})}", from=1-2, to=2-2]
	\arrow["{\textrm{diagonal}}"', from=2-1, to=2-2]
\end{tikzcd}.\] As before, we can make sense of $\Delta^*\left(\vfc{X_{0,n_1+1,d_1}}\times\vfc{X_{0,n_2+1,d_2}}\right)$.
\begin{theorem}[{\cite{AMS24}}]
	The virtual fundamental classes $[X_{0,n,d}]^\textnormal{vir}$ satisfy the following axioms: \begin{enumerate}
		\item[(C)] $\vfc{X_{0,3,0}}=[X]$ under the canonical identification $X_{0,3,0}=X.$
		\item[(F)] $f^*\vfc{X_{0,n,d}}=\vfc{X_{0,n+1,d}}$.
		\item[(P)] $\vfc{X_{0,n,d}}$ is invariant under permutation of marked points.
		\item[(S)] $g^*\vfc{X_{0,n,d}}=\sum\limits_{\substack{n'+n''=n\\d'+d''=d}}\Delta^*\left(\vfc{X_{0,n'+1,d'}}\times\vfc{X_{0,n''+1,d''}}\right)$.
	\end{enumerate}
\end{theorem}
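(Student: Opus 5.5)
The plan is to prove all four axioms at the level of global Kuranishi charts. I would exhibit, for each axiom, a chart for one side that is equivalent (via germ equivalence, stabilization and group enlargement) to a chart built from the chart(s) on the other side by a geometric operation: restriction, pull-back along a submersion, or fiber product. Then I would check that the chain defining $\vfc{\mdl}$ (Alexander--Lefschetz duality, $s_*$, Thom isomorphism) is natural for that operation. By \cite[Proposition 4.53, 4.66]{AMS24} the class depends only on the equivalence class of the chart, so we are free to make whatever compatible choices of $e$, $(V_i,\lambda_i)$ and $L$ are convenient.

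For \textbf{(C)}, $d=0$ and $e=[\Omega]\cdot d=0$, so $\fms_{0,3,0}$ is a point and $G$ is trivial up to group enlargement. The constant maps are cut out transversally by $\overline\partial_J$. We can therefore take $V_i=0$, giving the chart $(X,0,0,\{1\})$, and the chain of isomorphisms reduces to Poincar\'e duality, giving $[X]$.

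For \textbf{(P)}, $S_n$ acts on $\fms_{0,n,e}$ and $\uvf$ by relabeling marked points, commuting with the $G$-action. If the approximation scheme is chosen $S_n$-invariant (averaging over the finite group), then $S_n$ acts on $(\thck,\obs,s)$. The virtual class is natural under such isomorphisms of charts, since the orientation of $\obs$ and $\thck$ is complex and hence preserved.

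For \textbf{(F)}, I would build the $(n+1)$-pointed chart from the $n$-pointed one. The forgetful map $\fms_{0,n+1,e}\to\fms_{0,n,e}$ is the universal curve, a flat projective morphism whose fibers are the curves themselves. Pull back the approximation scheme $\lambda_i$ along the stabilization map $\uvf_{0,n+1,e}\to\uvf_{0,n,e}$; this is fine because $\lambda_i$ has compact support away from nodes, and the contracted components carry constant maps. With this choice, $\thck_{n+1}$ is the pull-back of $\thck_n$ along the universal curve, and $\obs_{n+1}$ and $s_{n+1}$ are pulled back. The step $s_*$ and the Thom isomorphism commute with pull-back along this submersion, so one needs a Gysin-type compatibility of Alexander--Lefschetz duality with proper submersions, together with the definition of $f^*$ given above. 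Since the framing data is unchanged ($u^*L$ is unaffected by adding a point), no group change is needed.

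For \textbf{(S)}, which I expect to be the main obstacle, I would restrict the chart for $X_{0,n,d}$ to the preimage of the boundary divisor $D\subset\fms_{0,n,e}$ of curves with a node separating the markings $n'\mid n''$. The goal is to identify this restriction, up to equivalence, with the disjoint union over $d'+d''=d$ of $\Delta^*$ of the product chart of $X_{0,n'+1,d'}$ and $X_{0,n''+1,d''}$. The difficulties are the following.
\begin{enumerate}[(i)]
\item The degrees $e=e'+e''$ split additively. The framing of $H^0(u^*L)$, which has dimension $e+1$, must be compared with pairs of framings of dimensions $e'+1$ and $e''+1$ glued along the node. This will be done by group enlargement, replacing $\U(e+1)$ by $\U(e+1)\times\U(e'+1)\times\U(e''+1)$ and using the principal bundles of compatible splittings.
\item The approximation schemes must be chosen to restrict compatibly to the two components.
\item The class must be pulled back along the diagonal. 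This uses that $\ev$ is a smooth submersion, so the fiber product is a smooth orbifold and $\Delta^*$ is defined as above.
\end{enumerate}
Once the charts are matched, the sum over splittings $d'+d''$ is just the decomposition of the zero locus into components. Naturality of the Thom class under the normal-bundle pull-back then gives (S).
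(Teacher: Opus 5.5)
The paper does not reprove these axioms; it cites \cite{AMS24} for each: Proposition 5.5 for (C), Lemma 4.67 for (F), Proposition 5.3 for (P) and Proposition 4.68 for (S). Your outlines of (C) and (P) are fine.

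The gap is in (F), and it recurs in (S). You assume that the compatible data you want to use are admissible choices, so that \cite[Proposition 4.53, 4.66]{AMS24} identifies your chart with the standard one. They are not admissible.

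\emph{Support.} The pulled-back forms $\mathrm{st}^*\lambda_i(v)$ on $\mathcal{C}_{0,n+1,e}$ are not supported away from the nodes. When $x_{n+1}$ collides with a marked point $x_j$, a ghost bubble carrying $x_j,x_{n+1}$ is contracted to $x_j$. Near the new node, the pulled-back form equals $\lambda_i(v)$ near $x_j$, which need not vanish: support is only required to avoid nodes, not marked points. So solutions are not $J$-holomorphic near that node, and the space you get is not a thickening of the type constructed.

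\emph{Density.} The pulled-back forms are also not dense in $C^\infty(Y_{n+1})$. They vanish on ghost components and do not depend on the position of $x_{n+1}$.

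Your chart $\mathcal{T}_n\times_{\mathcal{F}_{0,n,e}}\mathcal{F}_{0,n+1,e}$ is a legitimate global Kuranishi chart for $X_{0,n+1,d}$. It is smooth because $\mathcal{T}_n\to\mathcal{F}_{0,n,e}$ is a submersion. But showing it is equivalent to the standard chart needs a genuine comparison argument. For example, one can build a chart from the sum of the two perturbation schemes and recover each chart from it by stabilization and germ equivalence, after checking that the pulled-back scheme remains transverse (ghost components are unobstructed). That comparison is the content of \cite[Lemma 4.67]{AMS24}, and your proposal does not supply it.

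In (S) the same issue is sharper, and you leave it open. No dense scheme on $\mathcal{C}_{0,n,e}$ restricts over the boundary divisor to a product of schemes on the two factors, because the perturbation on one component depends on the whole nodal curve. Likewise, the part of $\mathcal{F}_{0,n,e}$ over the boundary must be compared with $\mathcal{F}_{0,n'+1,e'}\times\mathcal{F}_{0,n''+1,e''}$ together with the choice of two spanning linear subspaces of $\mathbb{P}^e$. Your items (i)--(iii) name these difficulties but resolve none of them. So the identification of the restricted chart with $\Delta^*$ of the product chart, which is the whole content of (S), is unproved; it is \cite[Proposition 4.68]{AMS24}.

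A minor point: the forgetful map $\mathcal{F}_{0,n+1,e}\to\mathcal{F}_{0,n,e}$ is the universal curve, so it is not a submersion at the nodes of the fibres. For $f^*$ you only need properness, as in the paper's definition via Gysin push-forward and Alexander--Lefschetz duality.
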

\begin{proof}
	Axiom (C) follows from \cite[Proposition 5.5]{AMS24}. Axiom (F) follows from \cite[Lemma 4.67]{AMS24}. Axiom (P) is \cite[Proposition 5.3]{AMS24}. Axiom (S) follows from \cite[Proposition 4.68]{AMS24}.
\end{proof}

With these axioms in hand, standard arguments lead to the string equation, dilaton equation, divisor equation, topological recursion relations. 
\begin{theorem}\label{thm:eqns}
	For $(n,d)\ne(0,0),(1,0),(2,0)$, we have the string equation \[\langle\alpha_1\psi^{k_1},\dots,\alpha_n\psi^{k_n},1\rangle_{0,n+1,d}=\sum_{i=1}^n\langle\alpha_1\psi^{k_1},\dots,\alpha_{i-1}\psi^{k_{i-1}},\alpha_i\psi^{k_i-1},\alpha_{i+1}\psi^{k_{i+1}},\dots,\alpha_n\psi^{k_n}\rangle_{0,n,d},\] the dilaton equation \[\langle\alpha_1\psi^{k_1},\dots,\alpha_n\psi^{k_n},\psi\rangle_{0,n+1,d}=(n-2)\langle\alpha_1\psi^{k_1},\dots,\alpha_n\psi^{k_n}\rangle_{0,n,d},\] the divisor equation (for $\alpha_{n+1}\in H^2(X;\Q)$) \begin{multline*}
		\langle\alpha_1\psi^{k_1},\dots,\alpha_n\psi^{k_n},\alpha_{n+1}\rangle_{0,n+1,d}=(d\cdot\alpha_{n+1})\langle\alpha_1\psi^{k_1},\dots,\alpha_n\psi^{k_n}\rangle_{0,n,d}\\
		+\sum_{i=1}^n\langle\alpha_1\psi^{k_1},\dots,\alpha_{i-1}\psi^{k_{i-1}},(\alpha_i\cup\alpha_{n+1})\psi^{k_i-1},\alpha_{i+1}\psi^{k_{i+1}},\dots,\alpha_n\psi^{k_n}\rangle_{0,n,d}.
	\end{multline*} Let $\{\phi_i\}$ be a basis of $H^*(X)$, and $(g^{ij})$ the inverse matrix of $\left(\int_X\phi_i\cup\phi_j\right)$. We have the topological recursion relations \begin{multline*}
		\langle\alpha_1\psi^{k_1},\alpha_2\psi^{k_2},\alpha_3\psi^{k_3},\dots,\alpha_n\psi^{k_n}\rangle_{0,n,d}=\\
		\sum_{\substack{d'+d''=d\\ I'\sqcup I''=\{4,\dots,n\}}}\sum_{i,j}g^{ij}\langle\alpha_1\psi^{k_1-1},\alpha_{I'}\psi^{k_{I'}},\phi_i\rangle_{0,|I'|+2,d'}\langle\alpha_2\psi^{k_2},\alpha_3\psi^{k_3},\alpha_{I''}\psi^{k_{I''}},\phi_j\rangle_{0,|I''|+3,d''}
	\end{multline*} where both $I'$ and $I''$ are in order (i.e. we are summing over unshuffles of $\{4,\dots,n\}$), and $\alpha_I\psi^{k_I}$ is multi-index notation representing $|I|$ many insertions.
\end{theorem}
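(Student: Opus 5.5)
The plan is to derive all four identities formally from the axioms (C), (F), (P), (S), together with tautological relations among the $\psi$-classes. I will prove those relations on the smooth quasi-projective varieties $\fms_{0,n,e}$ and pull them back to the thickenings $\thck$. The key structural input is that the cotangent lines $\lb_i$ on $\ofd_{0,n,d}$ are \emph{defined} as pullbacks under $q:\ofd_{0,n,d}\to\fms_{0,n,e}$. So every relation among $\psi$-classes and boundary divisors that holds on $\fms$, an honest algebraic variety, holds on $\ofd$ as well and can be capped with $\vfc{X_{0,n,d}}$. I will make the same auxiliary choices (the same $e=[\Omega]\cdot d$, compatible approximation schemes) for all the charts involved, so that forgetful and gluing maps exist at the level of charts.

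\textbf{Comparison of $\psi$-classes.} On $\fms_{0,n+1,e}$, the forgetful map $f$ (forget the last point, no stabilization needed, since stability is guaranteed by the map being nonconstant, or by $(n,d)\ne(0,0),(1,0),(2,0)$) satisfies $\psi_i=f^*\psi_i+D_{i,n+1}$. Here $D_{i,n+1}$ is the divisor where the $i$th and $(n+1)$st points lie on a contracted bubble. Pulling back to $\ofd$, the class $D_{i,n+1}$ is the image of the gluing map from $X_{0,3,0}\times_X X_{0,n,d}$, and $\psi_i\cdot D_{i,n+1}=0$. I combine this with (F), which gives $f^*\vfc{X_{0,n,d}}=\vfc{X_{0,n+1,d}}$, and with the projection formula for $f_*$ in Čech cohomology via Alexander--Lefschetz duality. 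Expanding $\psi_i^{k_i}=f^*\psi_i^{k_i}+(\text{terms supported on }D_{i,n+1})$, (S) and (C) turn the boundary terms into the shifted insertions $\alpha_i\psi^{k_i-1}$. Then:
\begin{itemize}
\item For the string equation, the remaining term has fibre integral $f_*(1)=0$ on the one-dimensional fibres.
\item For the dilaton equation, $f_*\psi_{n+1}=2g-2+n=n-2$ is the degree of the relative dualizing sheaf twisted by the marked points on the fibres of the universal curve.
\item For the divisor equation, $f_*\ev_{n+1}^*\alpha_{n+1}=d\cdot\alpha_{n+1}$, because the fibre of $f$ is the domain curve mapping with class $d$.
\end{itemize}
These fibre computations take place on the smooth orbifold charts, where $f$ is a genuine family of curves. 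This is why I insist that $\ev$ and $q$ be submersions and use stabilization.

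\textbf{Topological recursion.} On $\fms_{0,n,e}$ I will use the genus zero relation $\psi_1=\sum D(1,I'\mid 2,3,I'')$. It is pulled back from $\overline{M}_{0,n}$ through the stabilization map, with correction terms that are themselves boundary divisors of the stated form. The sum runs over boundary divisors of $\fms$ separating point $1$ from points $2,3$, with the degree split $d'+d''$. I pull this relation back to $\ofd$. Each divisor is the image of a gluing map $g$, so (S) rewrites $\int\psi_1^{k_1-1}\cdot D\cdots$ as a sum of $\Delta^*$ of products of virtual classes. The diagonal class $\sum g^{ij}\phi_i\otimes\phi_j$ then produces the stated product formula. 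On the component containing point $1$, the restriction of $\psi_1$ is the $\psi_1$ of that factor, since the node lies away from point $1$.

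\textbf{Main obstacle.} I expect the hardest step to be justifying that a boundary divisor $D\subset\fms$, pulled back to the chart, is Poincaré dual to the pushforward along the gluing map in a way compatible with the virtual class. Concretely, I need
\[
\vfc{X_{0,n,d}}\cap q^*[D]=g_*\bigl(g^*\vfc{X_{0,n,d}}\bigr),
\]
with the correct orientation and multiplicity, and then (S) applies. I would first try to prove this using the fact that $q$ is a submersion, so $q^{-1}(D)$ is a smooth suborbifold of $\thck$ of codimension two, transverse to everything in sight. Its normal bundle is pulled back from that of $D$, whose complex orientation agrees with the orientation of the obstruction bundle data. The Thom class of $q^{-1}(D)$ is then $q^*$ of the Thom class of $D$. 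Once that is in place, the remaining steps are the standard Kontsevich--Manin manipulations.
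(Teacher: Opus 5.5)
Your proposal is correct and follows essentially the same route as the paper. The paper's proof observes that the standard Cox--Katz derivations from axioms (F) and (S) use only identities among $\psi_i$, the forgetful map and the sections $\sigma_i$ that hold on the algebraic family $\fms_{0,n+1,e}\to\fms_{0,n,e}$, and therefore pull back to $\ofd$; you spell out, more carefully than the paper, why capping with a pulled-back boundary divisor matches $g^*$ of the virtual class.

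One small slip: forgetting the last point does require stabilization, namely contracting a ghost bubble carrying $i$ and $n+1$. That contraction is exactly the origin of the correction term $D_{i,n+1}$ you go on to use correctly, so the parenthetical ``no stabilization needed'' should be dropped.
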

\begin{proof}
In the algebraic setting, these equations follow from the same axioms (F) and (S), combined with various identities relating the psi classes $\psi_i$, forgetful map $f$, and sections $\sigma_i$ (see e.g. \cite{CK99}). Since those identities hold for the algebraic universal family $\fms_{0,n+1,d}\to\fms_{0,n,d}$, they also hold for $\ofd_{0,n+1,d}\to\ofd_{0,n,d}$ by pull-back, and thus for $X_{0,n+1,d}\to X_{0,n,d}$.
\end{proof}

\subsection{Twisted invariants and quantum Riemann--Roch}
Proof of the quantum Riemman-Roch theorem in the symplectic setting is outlined in \cite[Appendix B]{Coa03}, based on work of Siebert \cite{Sie98,Sie99}. We carry out Coates's argument in the genus 0 case using global Kuranishi charts, which are quite similar to Siebert's construction. The main topological input is Atiyah-Hirzebruch's Grothendieck--Riemann--Roch formula in topological K-theory (see e.g. \cite[Corollary 4.18]{Kar78}).

Let $V\to X$ be a Hermitian vector bundle. Write $\ofd_{0,n,d}=\thck_{0,n,d}/G$. Consider the diagram 
\[\begin{tikzcd}
	{\ofd_{0,n+1,d}} & X \\
	{\ofd_{0,n,d}}
	\arrow["\ev", from=1-1, to=1-2]
	\arrow["f"', from=1-1, to=2-1]
\end{tikzcd}\] where by abuse of notation, $\ev$ is the evaluation map at the last marked point, and $f$ is the forgetful map at the last marked point.

We need to make sense of the K-theoretic push-forward $V_{0,n,d}=f_*\ev^*V$. Let $BG^{(i)}$ be the Grassmanian of subspaces in $\C^i$, and $EG^{(i)}$ the space of orthonormal frames in $\C^i$, so $BG=\varinjlim_iBG^{(i)},\ EG=\varinjlim_iEG^{(i)}$. We set $\ofd_{0,n,d}^{(i)}:=(\thck_{0,n,d}\times EG^{(i)})/G$. This is in fact a smooth manifold, and is stably almost complex. Then we define \[K(\ofd_{0,n,d}):=\varprojlim_iK(\ofd_{0,n,d}^{(i)}).\] The forgetful map $f$ induces maps $f^{(i)}$ fitting in the diagram 
\[\begin{tikzcd}
	\cdots & {\ofd_{0,n+1,d}^{(i-1)}} & {\ofd_{0,n+1,d}^{(i)}} & \cdots \\
	\cdots & {\ofd_{0,n,d}^{(i-1)}} & {\ofd_{0,n,d}^{(i)}} & \cdots
	\arrow[from=1-1, to=1-2]
	\arrow[from=1-2, to=1-3]
	\arrow["{f^{(i-1)}}"', from=1-2, to=2-2]
	\arrow[from=1-3, to=1-4]
	\arrow["{f^{(i)}}"', from=1-3, to=2-3]
	\arrow[from=2-1, to=2-2]
	\arrow[from=2-2, to=2-3]
	\arrow[from=2-3, to=2-4]
\end{tikzcd}.\] Each $f^{(i)}$ is proper and complex oriented. Hence we may define the push-forward \[f^{(i)}_*:K(\ofd_{0,n+1,d}^{(i)})\to K(\ofd_{0,n,d}^{(i)})\] by the discussions in \cite[Section 4.5]{Kar78} --- we homotope $f^{(i)}$ to a smooth proper complex oriented map, factor it into an embedding and a projection, and define $f^{(i)}_*$ to be the composition of the Thom isomorphism, collapse map, and Bott periodicity isomorphism. By taking the limit of $f^{(i)}_*$, we obtain \[f_*:K(\ofd_{0,n+1,d})\to K(\ofd_{0,n,d}).\] We define $V_{0,n,d}:=f_*\ev^*V\in K(\ofd_{0,n,d})$, and consider its restriction to $X_{0,n,d}$. Then we may define the twisted Gromov--Witten invariants of $X$. We may also incorporate descendants in the same way as before. Clearly the invariants only depend on the global Kuranishi charts up to equivalence.

Let $T_f:=T\ofd_{0,n+1,d}-f^*T\ofd_{0,n,d}\in K(\ofd_{0,n+1,d})$ be the virtual relative tangent bundle of $f$. Let $\ch$ be the Chern character, $\Td$ the Todd class. 
\begin{lemma}\label{lem:ahrr}
	The Grothendieck--Riemann--Roch formula \[\ch(f_*V)=f_*(\ch(V)\cup\Td(T_f))\] holds for $V\in K(\ofd_{0,n+1,d}).$
\end{lemma}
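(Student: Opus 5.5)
We reduce Lemma \ref{lem:ahrr} to the classical Atiyah--Hirzebruch Grothendieck--Riemann--Roch theorem for proper complex oriented maps between smooth manifolds \cite[Corollary 4.18]{Kar78}, applied to each finite approximation $f^{(i)}:\ofd^{(i)}_{0,n+1,d}\to\ofd^{(i)}_{0,n,d}$, and then pass to the limit in $i$. Both sides of the formula are defined through this inverse system. $K(\ofd_{0,n,d})$ was defined as $\varprojlim_i K(\ofd^{(i)}_{0,n,d})$, and rational cohomology of $\ofd_{0,n,d}$ is identified with $\varprojlim_i H^*(\ofd^{(i)}_{0,n,d};\Q)$, i.e.\ $G$-equivariant cohomology of $\thck_{0,n,d}$. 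It therefore suffices to prove the identity levelwise and check compatibility with the restriction maps $\ofd^{(i-1)}\to\ofd^{(i)}$.

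\emph{Step 1.} At a fixed level $i$, the spaces $\ofd^{(i)}_{0,n+1,d}$ and $\ofd^{(i)}_{0,n,d}$ are smooth stably almost complex manifods, and $f^{(i)}$ is proper and complex oriented. As in the definition of $f^{(i)}_*$, we homotope $f^{(i)}$ to a smooth proper complex oriented map and factor it as an embedding $\ofd^{(i)}_{0,n+1,d}\hookrightarrow \ofd^{(i)}_{0,n,d}\times\C^N$ followed by the projection. Note that $f^{(i)}$ is a proper submersion up to the universal family structure, with fibers the domain curves pulled back from $\uvf\to\fms$.

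\emph{Step 2.} For this factorization, the K-theoretic push-forward is the composition of the Thom isomorphism, the collapse map, and Bott periodicity. The cohomological push-forward is the analogous composition of Thom isomorphisms. The Chern character intertwines the two Thom isomorphisms up to multiplication by the inverse Todd class of the normal bundle. This is the content of Atiyah--Hirzebruch's theorem, and it gives
\[\ch(f^{(i)}_*V)=f^{(i)}_*\bigl(\ch(V)\cup\Td(T_{f^{(i)}})\bigr),\]
where $T_{f^{(i)}}$ is the stable relative tangent bundle. This equals $T_f$ restricted to level $i$, because the $EG^{(i)}$ factors cancel in $T\ofd^{(i)}_{0,n+1,d}-f^{(i)*}T\ofd^{(i)}_{0,n,d}$.

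\emph{Step 3.} We check naturality: the level-$(i-1)$ diagram is the pullback of the level-$i$ diagram along the inclusion $EG^{(i-1)}\subset EG^{(i)}$, and this is a transverse (in fact fibered) square. Push-forwards in both K-theory and cohomology commute with pullback in such squares, so the levelwise identities form a compatible system and define the identity in the limit.

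The main obstacle is that $f^{(i)}$ is only a topological or fiberwise-smooth map between spaces whose smooth structures come from abstract smoothing after stabilization, so the complex orientation of $f$ must be matched with the one coming from $T_f$. I would handle this by using the stabilized charts in which $\thck_{0,n+1,d}\to\thck_{0,n,d}$ can be chosen smooth, with the same choices for $n$ and $n+1$ marked points. The complex orientation is then the one induced from the pullback of the algebraic universal curve $\uvf\to\fms$, and both push-forwards are taken with respect to this single orientation. Independence of homotopy in Karoubi's construction then ensures that the resulting classes do not depend on these choices.
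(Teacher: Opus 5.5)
Your proposal is correct and follows the paper's proof: apply the Atiyah--Hirzebruch Grothendieck--Riemann--Roch theorem \cite[Corollary 4.18]{Kar78} to each finite approximation $f^{(i)}$, then pass to the limit using $H^*(\ofd_{0,n,d};\Q)\cong H^*_G(\thck_{0,n,d};\Q)\cong\varprojlim_i H^*(\ofd^{(i)}_{0,n,d};\Q)$. The paper justifies this identification by the finiteness of the stabilizers and the fact that $EG^{(i)}$ is the $i$-skeleton of $EG$, which you only assert; conversely, your levelwise compatibility check and your identification of the relative tangent bundle are details the paper leaves implicit.
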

\begin{proof}
	Observe that $H^*(\ofd_{0,n,d},\Q)\cong H_G^*(\thck_{0,n,d},\Q)\cong\varprojlim_{i}H^*(\ofd_{0,n,d}^{(i)},\Q)$. Indeed, the first isomorphism holds over $\Q$ since $G$ acts on $\thck_{0,n,d}$ with finite stabilizers, and the second isomorphism holds since $EG^{(i)}$ is the $i$-skeleton of $EG$. The desired formula follows by applying \cite[Corollary 4.18]{Kar78} to each $f^{(i)}_*$.
\end{proof}

Let $\sigma_i:\ofd_{0,n,d}\to\ofd_{0,n+1,d}$ be the section of $f$ given by the $i$th marked point. This is also proper and complex oriented, so we may define the push-forward $\sigma_{i*}:K(\ofd_{0,n,d})\to K(\ofd_{0,n+1,d})$ in the same way as before. Let $\iota:\nds\to\ofd_{0,n+1,d}$ be the inclusion of the set of nodes in the fibers. Recall that we have a smooth submersion $\ofd_{0,n+1,d}\to\fms_{0,n+1,d}$, where $\fms_{0,n+1,d}$ is the locus of trivial isotropy in the moduli space of stable maps to $\cp^d$. Since the singular locus in $\fms_{0,n+1,d}$ is a smooth submanifold (of complex codimension 2), so is $\nds\subset\ofd_{0,n+1,d}$. Hence we may define the push-forward $\iota_*:K(\nds)\to K(\ofd_{0,n+1,d})$ as well.
Let $\lb_i$ be the $i$th cotangent line bundle on $\ofd_{0,n+1,d}$, pulled back from $\fms_{0,n+1,d}$. Let $\lb_+,\lb_-$ be the line bundles over $\nds$ formed by the cotangent lines at the nodes. 

\begin{lemma}\label{lem:reltan}
	We have \begin{align*}
		&T_f^*=\mathcal{L}_{n+1}-\sum_{i=1}^n\sigma_{i*}\shf_{\ofd_{0,n,d}}-\iota_*\shf_{\nds},\\
		&T_{\sigma_i}^*=-\lb_i,\\
		&T_{\iota}^*=-\lb_+-\lb_-,
	\end{align*} as elements of $K(\ofd_{0,n+1,d}).$ 
\end{lemma}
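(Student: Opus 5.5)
The lemma is a statement in K-theory of the smooth stably almost complex orbifold $\ofd_{0,n+1,d}$. All three bundles involved, $T_f$, $T_{\sigma_i}$ and $T_\iota$, are compatible with the smooth submersion $q:\ofd_{0,n+1,d}\to\fms_{0,n+1,d}$. So the plan is to prove the analogous identities on the algebraic space $\fms_{0,n+1,d}\to\fms_{0,n,d}$, where they are classical, and then pull them back along $q$.

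\emph{Reduction to the algebraic family.} By construction, and after the stabilizations allowed above, the forgetful map $f:\ofd_{0,n+1,d}\to\ofd_{0,n,d}$ covers the algebraic forgetful map $\fms_{0,n+1,d}\to\fms_{0,n,d}$. Here $\fms_{0,n+1,d}$ is identified with the universal curve $\uvf_{0,n,d}$. We may also arrange that $\ofd_{0,n+1,d}$ is (up to equivalence of charts) the fiber product $\ofd_{0,n,d}\times_{\fms_{0,n,d}}\fms_{0,n+1,d}$. Both thickenings parametrize the same maps $u$ with the same perturbation data, and the extra marked point only changes the domain parameter. In a fiber square of this kind with $q$ a submersion, the virtual relative tangent bundle of $f$ is the pullback of the relative tangent bundle of the algebraic forgetful map, i.e. $T_f=q^*T_{\uvf/\fms}$. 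The sections $\sigma_i$ are the pullbacks of the algebraic marked-point sections. The nodal locus $\nds$ is $q^{-1}$ of the nodal locus of the universal curve, and $q$ is transverse to it. Since $\lb_i$, $\lb_\pm$ are pulled back by definition, and K-theoretic pushforwards along proper complex-oriented maps commute with pullback in transverse squares (base change for Thom isomorphisms), it suffices to prove the three identities on $\uvf_{0,n,d}=\fms_{0,n+1,d}$.

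\emph{The algebraic identities.} The second and third identities are normal bundle computations. The normal bundle of the $i$th section in the universal curve is $T_{x_i}C$, so $T_{\sigma_i}=-N_{\sigma_i}=-\lb_i^{-1}$. (I read the stated formula with the paper's convention for duals.) The nodal locus has codimension two, with normal bundle $T_+C\otimes T_-C\oplus\ldots$; more precisely, its normal bundle is identified with $\lb_+^{\vee}\oplus\lb_-^{\vee}$ via the local model $xy=t$. This gives $T_\iota^*=-\lb_+-\lb_-$.

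For the first identity I would use the standard exact sequence on the universal curve $\pi:\uvf\to\fms$,
\[0\to\pi^*\Omega_{\fms}\to\Omega_{\uvf}\to\Omega_{\uvf/\fms}\to0,\qquad 0\to\Omega_{\uvf/\fms}\to\omega_{\uvf/\fms}\to\iota_*\shf_{\nds}\to0.\]
The second sequence says that the Kähler differentials differ from the dualizing sheaf by the node skyscraper. Combine this with $\omega_{\uvf/\fms}(\sum\sigma_i)=\lb_{n+1}$, i.e. the cotangent line at the extra point is the log dualizing sheaf of the stabilized curve. In K-theory this gives
\[T_f^*=[\Omega_{\uvf/\fms}]=\lb_{n+1}-\sum_i\sigma_{i*}\shf-\iota_*\shf_{\nds}.\]

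\emph{Main obstacle.} The delicate point is the reduction step: justifying that the topological pushforwards $\sigma_{i*}$, $\iota_*$ (defined via Thom isomorphisms on the finite approximations $\ofd^{(i)}$) agree with pullbacks of the algebraic coherent pushforwards. On a smooth variety, coherent pushforward of $\shf$ along a closed smooth embedding agrees with the topological Gysin map (Atiyah–Hirzebruch/Karoubi). Transverse base change along the submersion $q$, applied levelwise over $EG^{(i)}$ and passed to the inverse limit, then gives the claim. I would check that $\nds\subset\ofd$ is exactly the preimage of the nodal stratum with matching normal bundle, which follows because $q$ is a submersion.
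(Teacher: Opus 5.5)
Your proposal is correct and follows the paper's route: establish the three identities on the algebraic forgetful map $\fms_{0,n+1,d}\to\fms_{0,n,d}$, pull them back along the submersion to the finite approximations $\ofd^{(i)}_{0,n+1,d}\to\ofd^{(i)}_{0,n,d}$, and pass to the limit. The only differences are in detail, not in method. Where the paper simply cites \cite[Equation (1.15)]{Coa03}, you derive the first identity from $0\to\Omega_{\uvf/\fms}\to\omega_{\uvf/\fms}\to\iota_*\shf_{\nds}\to0$ and $\omega_{\uvf/\fms}(\sum_i\sigma_i)=\lb_{n+1}$, tacitly using that $\lb_{n+1}$ is trivial along each section by the residue map. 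You also make explicit the base-change and Gysin-compatibility step, which the paper leaves implicit.
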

\begin{proof}
	By \cite[Equation (1.15)]{Coa03} (see also \cite[Proposition 5.3]{Ton14}) the first equation holds for $\fms_{0,n+1,d}\to\fms_{0,n,d}$; that the other two equations hold is trivial. By pulling back we get the equations for $\ofd_{0,n+1,d}^{(i)}\to\ofd_{0,n,d}^{(i)}$, and in the limit the equations hold for $\ofd_{0,n+1,d}\to\ofd_{0,n,d}$.
\end{proof}

Using Lemma \ref{lem:ahrr} and \ref{lem:reltan} in place of their algebraic counterparts, the same arguments as \cite[Proposition 1.6.3]{Coa03} yields:
\begin{proposition}
	We have \[\ch(V_{0,n,d})= f_*\ev^*\ch(V)\cup\left(\fbox{codim-0}+\fbox{codim-1}+\fbox{codim-2}\right)\] where \begin{align*}
		&\fbox{codim-0}=\Td\lb_{n+1},\\
		&\fbox{codim-1}=-\sum_{i=1}^n\sigma_{i*}\frac{\Td\lb_i}{\psi_i},\\
		&\fbox{codim-2}=\iota_*\left(\frac{1}{\psi_++\psi_-}\left(\frac{\Td L_+}{\psi_+}+\frac{\Td L_-}{\psi_-}\right)\right).
	\end{align*} The last two formulas should be understood as expansions in $\psi_i$ and $\psi_\pm$ respectively, and negative powers are understood to be $0$.
\end{proposition}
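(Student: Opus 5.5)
The plan is to follow Coates's argument \cite[Proposition 1.6.3]{Coa03} verbatim. The only new point is to check that every identity used is available on the finite-dimensional approximations $\ofd_{0,n+1,d}^{(i)}\to\ofd_{0,n,d}^{(i)}$, and so in the limit.

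First apply Lemma \ref{lem:ahrr} to the class $\ev^*V\in K(\ofd_{0,n+1,d})$. This gives
\[\ch(V_{0,n,d})=f_*\bigl(\ev^*\ch(V)\cup\Td(T_f)\bigr),\]
so it remains to compute $\Td(T_f)$ in $H^*(\ofd_{0,n+1,d};\Q)$. By Lemma \ref{lem:reltan}, $T_f$ is the dual of $\lb_{n+1}-\sum_i\sigma_{i*}\shf-\iota_*\shf_{\nds}$. Since $\Td$ is multiplicative,
\[\Td(T_f)=\Td(\lb_{n+1}^\vee)\cdot\prod_{i=1}^n\Td\bigl((\sigma_{i*}\shf)^\vee\bigr)^{-1}\cdot\Td\bigl((\iota_*\shf_{\nds})^\vee\bigr)^{-1}.\]
The first factor gives the codim-0 term, after matching the paper's convention $\Td\lb$ with the appropriate sign of $\psi$. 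The divisors $\sigma_i(\ofd_{0,n,d})$ are pairwise disjoint and disjoint from $\nds$, because marked points never coincide with each other or with nodes. These disjointness statements hold on $\fms_{0,n+1,d}$ and therefore on $\ofd_{0,n+1,d}$ by pullback along the submersion. Consequently each correction factor has the form $1+(\text{class supported on its locus})$, and all mixed products vanish. After multiplying out, the product becomes the sum of the three boxes.

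To evaluate a single factor such as $\Td((\sigma_{i*}\shf)^\vee)^{-1}$, apply the Grothendieck--Riemann--Roch formula of Lemma \ref{lem:ahrr} to the proper complex-oriented map $\sigma_i$, using $T_{\sigma_i}=-\lb_i^\vee$ from Lemma \ref{lem:reltan}. This gives
\[\ch(\sigma_{i*}\shf)=\sigma_{i*}\bigl(\Td(\lb_i^\vee)^{-1}\bigr).\]
Powers of a class $\sigma_{i*}a$ are then computed with the self-intersection formula
\[\sigma_{i*}a\cdot\sigma_{i*}b=\sigma_{i*}\bigl(ab\,e(N_{\sigma_i})\bigr),\qquad e(N_{\sigma_i})=\pm\psi_i.\]
Hence any power series $F$ with $F(0)=1$, evaluated on $\sigma_{i*}a$, equals $1+\sigma_{i*}\bigl((F(\psi_i a)-1)/\psi_i\bigr)$. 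Feeding in the Todd power series (through its logarithm, $\log\Td=\sum s_k\ch_k$) and comparing generating functions gives $-\sigma_{i*}(\Td\lb_i/\psi_i)$, with negative powers of $\psi_i$ discarded. Pushing forward and using $\ev\circ\sigma_i=\ev_i$ yields the codim-1 term. The nodal factor is treated the same way, with $\iota$ of codimension two and $N_\iota=\lb_+^\vee\oplus\lb_-^\vee$, so $e(N_\iota)$ is, up to sign, $\psi_+\psi_-$. The Todd contribution then produces the symmetric expression
\[\frac{1}{\psi_++\psi_-}\left(\frac{\Td\lb_+}{\psi_+}+\frac{\Td\lb_-}{\psi_-}\right),\]
which is the codim-2 term.

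I expect the main obstacle to be the nodal term. There are two issues. First, $\nds$ and the branch line bundles $\lb_\pm$ must be genuinely defined: the two branches at a node are only defined after passing to the double cover that orders them. Second, the self-intersection and GRR identities must hold for $\iota$ in the orbifold, limit-of-approximations setting. I would handle both by pulling everything back from $\fms_{0,n+1,d}$. There $\nds$ is smooth and the identities are the algebraic ones. I would then verify that, on each $\ofd^{(i)}$, these identities survive pullback along the smooth submersion to $\fms_{0,n+1,d}$, and pass to $\varprojlim_i$ as in the proof of Lemma \ref{lem:ahrr}. The sign conventions relating $\Td\lb$ to $\Td\lb^\vee$ I would fix at the end by checking the case $V=\shf$, where $V_{0,n,d}=\shf$.
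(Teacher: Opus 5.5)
Your proposal is essentially the paper's proof. The paper runs Coates's argument with Lemma \ref{lem:ahrr} and Lemma \ref{lem:reltan} replacing their algebraic counterparts; the $T_{\sigma_i}$ and $T_\iota$ entries of Lemma \ref{lem:reltan} serve exactly the GRR-plus-self-intersection computation you describe, and you have spelled that argument out.

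One justification needs correcting. Disjointness kills the products among the $\sigma_{i*}$- and $\iota_*$-supported corrections, but not the products of $\Td(\lb_{n+1}^\vee)-1$ with those corrections. These vanish because $\sigma_i^*\psi_{n+1}=0$ and $\iota^*\psi_{n+1}=0$ in rational cohomology: on both loci the $(n+1)$st point lies on a contracted component with exactly three special points. This fact holds on $\fms_{0,n+1,d}$ and pulls back to $\ofd_{0,n+1,d}$ like your other identities.
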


We also have the following lemmas from \cite[Appendix B]{Coa03}:
\begin{lemma}[{\cite[Lemma B.3.1]{Coa03}}]
	$f^*\ch(V_{0,n,d})=\ch(V_{0,n+1,d})$.
\end{lemma}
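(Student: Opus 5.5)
We prove $f^*\ch(V_{0,n,d})=\ch(V_{0,n+1,d})$ at the level of K-theory and then apply $\ch$. The claim is that $f^*V_{0,n,d}=V_{0,n+1,d}$ in $K(\ofd_{0,n+1,d})$, i.e. $f^*f_*\ev^*V=f'_*\ev'^*V$, where $f':\ofd_{0,n+2,d}\to\ofd_{0,n+1,d}$ forgets the last point and $\ev'$ evaluates there. Since $\ch$ is a natural ring homomorphism and commutes with pull-backs, the identity on Chern characters follows immediately.

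The key step is a base change statement for the square
\[\begin{tikzcd}
	{\ofd_{0,n+2,d}} & {\ofd_{0,n+1,d}} \\
	{\ofd_{0,n+1,d}} & {\ofd_{0,n,d}}
	\arrow["{f_{n+1}}", from=1-1, to=1-2]
	\arrow["{f'}"', from=1-1, to=2-1]
	\arrow["f", from=1-2, to=2-2]
	\arrow["f"', from=2-1, to=2-2]
\end{tikzcd}\]
where $f_{n+1}$ forgets the $(n+1)$st point. This square is not a fiber square: $\ofd_{0,n+2,d}$ is the blow-up of the fiber product along the locus where the two extra points collide. The relevant fact is that $f'$ is the universal curve over $\ofd_{0,n+1,d}$, and it is obtained from the pull-back of the universal curve $f$ by a contraction that collapses a rational bubble. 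On this bubble $\ev'^*V$ is trivial, and the contraction $c$ satisfies $c_*\shf=\shf$ with vanishing higher direct images. Hence, topologically,
\[
f'_*\ev'^*V=f'_*c^*(\ev^*V)=(\textrm{pullback fibration})_*\ev^*V=f^*f_*\ev^*V,
\]
where the last equality is ordinary base change for a genuine fiber square of proper complex oriented submersions in topological K-theory.

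To carry this out in the global Kuranishi setting, I would argue exactly as in Lemma \ref{lem:reltan}. All maps are pulled back, via the smooth submersions $\ofd\to\fms$, from the corresponding algebraic maps on $\fms_{0,n+2,d}\to\fms_{0,n+1,d}\to\fms_{0,n,d}$. Over $\fms$, the identity $f^*f_*=f'_*c^*$ is the standard algebraic statement: cohomology and base change for flat families of nodal curves, together with $Rc_*\shf=\shf$. It therefore holds on the K-theory of each approximation $\ofd^{(i)}$ after pulling back, and passing to the inverse limit gives the identity on $K(\ofd_{0,n+1,d})$.

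The main obstacle is that $V$ lives on $X$ rather than on $\fms$, so the algebraic identity cannot literally be pulled back. Two routes are available.

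The first and cleaner route is to express $f_*\ev^*V$ via Grothendieck--Riemann--Roch (Lemma \ref{lem:ahrr}), as in the preceding Proposition. One then checks that the codim-0, codim-1 and codim-2 terms on $\ofd_{0,n+1,d}$ pull back correctly under $f$. The needed inputs are the comparison formulas $\psi_i=f^*\psi_i+\sigma_i$-divisor, the pull-back of the node locus, and the vanishing of the correction terms, since $\ev'^*V$ is pulled back along the contracted bubble. These are the same identities used to prove Theorem \ref{thm:eqns}.

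The second route is the direct K-theoretic base change sketched above. I would try the first route: the cancellation of the $\sigma_{n+1}$-correction against the change in $\psi$-classes is the only computation that needs care.
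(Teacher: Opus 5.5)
The paper gives no argument of its own for this lemma; it imports it from Coates's Appendix~B. Your proof therefore has to stand on its own, and as written it does not. All of the content lies in commuting $f^*$ past the push-forward along the forgetful square, and neither of your routes establishes that.

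\textbf{Route 2.} You appeal to ``ordinary base change for a genuine fiber square of proper complex oriented submersions'', but $f:\ofd_{0,n+1,d}\to\ofd_{0,n,d}$ is not a submersion. Its fibres are the nodal domain curves, whose topology jumps (a proper submersion would be a fibre bundle), and near a node $f$ looks like $(x,y)\mapsto xy$. Hence $P:=\ofd_{0,n+1,d}\times_{\ofd_{0,n,d}}\ofd_{0,n+1,d}$ is not a transverse fibre product. It is singular where both extra points sit at the same node (locally $xy=zw$). So Karoubi's Gysin maps for $P\to\ofd_{0,n+1,d}$ and for the contraction $b:\ofd_{0,n+2,d}\to P$ are not defined. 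Moreover, ``$Rc_*\shf=\shf$'' is a coherent-sheaf statement with no direct meaning for topological push-forwards. And since $\ev^*V$ is not pulled back from $\fms$, you cannot import the algebraic base change either, as you note yourself. Route 2 thus fails exactly at the step that matters.

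\textbf{Route 1.} This route is only announced, and it has two problems.

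First, it silently needs the cohomological version of the same base change, $f^*f_*\alpha=f'_*f_{n+1}^*\alpha$, for classes such as $\alpha=\ev^*\ch(V)\,\Td(T_f)$ that are not pulled back from $\fms$. This is not among the identities behind Theorem~\ref{thm:eqns} and needs its own argument. One option is refined Gysin maps, using that $b$ is an isomorphism off a subset of $P$ of real codimension at least $4$.

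Second, your claim that the only delicate point is the cancellation of $\psi_{n+2}=f_{n+1}^*\psi+[D_{n+1,n+2}]$ against the new section term is wrong. That cancellation is indeed exact, because $b$ is generically a local isomorphism along $D_{n+1,n+2}$. But $b$ also does two other things:
\begin{itemize}
\item It blows up the locus where both extra points sit at $x_i$. The exceptional divisor is $D_{i,n+1,n+2}$, where $x_i,x_{n+1},x_{n+2}$ lie alone on a ghost bubble.
\item It has $\mathbb{P}^1$-fibres over the locus where both extra points sit at the same node.
\end{itemize}
There the relative tangent classes genuinely differ. Near $D_{i,n+1,n+2}$, combining $f_{n+1}^*[\sigma_i(\ofd_{0,n,d})]=[D_{i,n+2}]+[D_{i,n+1,n+2}]$ with the new nodes separating $\{x_i,x_{n+1}\}$ from the rest, one finds that $T^*_{f'}-f_{n+1}^*T^*_f$ is the push-forward of the relative cotangent bundle of the $\mathbb{P}^1$-bundle $D_{i,n+1,n+2}\to D_{i,n+1}$. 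Its fibrewise Euler characteristic is $\chi(\mathbb{P}^1,\Omega_{\mathbb{P}^1})=-1$, so nothing cancels term by term.

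In the Proposition's formula these discrepancies appear as:
\begin{itemize}
\item the $D_{i,n+1}$-corrections to $\psi_i$ for $i\le n$;
\item the new node terms supported on $D_{i,n+1}$;
\item the corrections to $\psi_\pm$ when $x_{n+1}$ sits on a ghost bubble at an old node.
\end{itemize}
You must show that all of these vanish after pairing with $\ev'^*\ch(V)$ and pushing forward. That is the cohomological shadow of $Rb_*\shf=\shf$: birational invariance of $b_*\Td$, combined with the fact that $\ev'$ factors through $b$. This is the real content of the lemma. Your proposal identifies the obstacle correctly but overcomes it in neither route.
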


\begin{lemma}[{\cite[Lemma B.3.2]{Coa03}}]
	Consider the gluing map \[g:\bigsqcup_{\substack{n_++n_-=n\\d_++d_-=d}}\ofd_{0,n_++1,d_+}\times_X\ofd_{0,n_-+1,d_-}\to\nds\subset\ofd_{0,n+1,d}.\] We have \[g^*\iota^*\ch(V_{0,n+1,d})=\sum_{\substack{n_++n_-=n\\d_++d_-=d}}\left(p_+^*\ch(V_{0,n_++1,d_+})+p_-^*\ch(V_{0,n_-+1,d_-})-\ev_\Delta^*\ch(V)\right)\] where $p_\pm:\ofd_{0,n_++1,d_+}\times_X\ofd_{0,n_-+1,d_-}\to\ofd_{0,n_\pm+1,d_\pm}$ are the projections, and $\ev_\Delta:\ofd_{0,n_++1,d_+}\times_X\ofd_{0,n_-+1,d_-}\to X$ is the evaluation map at the point of gluing.
\end{lemma}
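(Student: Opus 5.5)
The plan is to compare both sides after applying the Chern character, using the Grothendieck--Riemann--Roch expression for $\ch(V_{0,n+1,d})$ from the preceding Proposition rather than constructing a normalization sequence directly in topological K-theory. The guiding intuition is pointwise. At a nodal map $u:C_+\cup_p C_-\to X$, the normalization sequence $0\to\shf_C\to\shf_{C_+}\oplus\shf_{C_-}\to\shf_p\to0$, tensored with $u^*V$, gives the additivity
\[H^0(C,u^*V)\ominus H^1(C,u^*V)=\bigl(H^0-H^1\bigr)(C_+)+\bigl(H^0-H^1\bigr)(C_-)-V_{u(p)}.\]
The lemma is the family version of this identity over the fiber product $\ofd_{0,n_++1,d_+}\times_X\ofd_{0,n_-+1,d_-}$. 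Since every ingredient is natural under pull-back from $\ofd^{(i)}$, I will work on the finite approximations $\ofd^{(i)}$ throughout and pass to the limit at the end.

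\textbf{Step 1: base change.} Consider the forgetful map $f:\ofd_{0,n+2,d}\to\ofd_{0,n+1,d}$. I pull back its universal curve along $\iota\circ g$. By construction, the global Kuranishi charts are compatible with gluing, and the fiber product over $X$ is a smooth orbifold because $\ev$ is a submersion. Hence the pulled-back family is the union of the two universal curves $\ofd_{0,n_\pm+2,d_\pm}\to\ofd_{0,n_\pm+1,d_\pm}$ (pulled back along $p_\pm$), glued along the sections at the gluing point. The family $\ofd\to\fms$ is a submersion and the nodal locus $\nds$ is pulled back from $\fms$. Using this, I will verify the base-change property
\[g^*\iota^*f_*=\tilde f_*\tilde g^*\]
for the Atiyah--Hirzebruch push-forward in this transverse square. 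For that I will check that the factorization of $f$ into an embedding and a projection can be chosen compatibly with $g$.

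\textbf{Step 2: splitting the push-forward.} On the glued family, I split $\tilde f_*$ into contributions from the two components and the node, at the level of cohomological push-forwards. Concretely, I apply $g^*\iota^*$ to the formula
\[\ch(V_{0,n+1,d})=f_*\Bigl(\ev^*\ch(V)\cup\bigl(\fbox{codim-0}+\fbox{codim-1}+\fbox{codim-2}\bigr)\Bigr)\]
from the Proposition. The fiber integral over a nodal fiber equals the sum of the integrals over $C_+$ and $C_-$. The codim-0 and codim-1 terms then split into the corresponding terms for $\ofd_{0,n_\pm+1,d_\pm}$, except at the special point: on each side the gluing point appears as a marked point, which contributes a codim-1 term $-\sigma_*(\Td\lb/\psi)$ that is absent on the glued side. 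The codim-2 term at the distinguished node is $\iota_*\bigl(\frac{1}{\psi_++\psi_-}(\frac{\Td L_+}{\psi_+}+\frac{\Td L_-}{\psi_-})\bigr)$. On the other hand, the codim-2 terms at the remaining nodes distribute to the two sides and match directly. The claimed identity then reduces to a cancellation: the two extra codim-1 terms plus the residual node term should total $-\ev_\Delta^*\ch(V)$. I will check this using the expansions of $\Td L/\psi$ and the fact that the normal bundle of the nodal locus is $L_+^\vee\otimes L_-^\vee$, following the algebraic computation in \cite[Appendix B]{Coa03}.

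\textbf{Main obstacle.} I expect the hardest step to be the justification of Step 1 and its use in Step 2: the topological push-forward $f_*$ for a family that is not a submersion at nodes, together with its compatibility with the gluing map. If the direct base-change argument becomes messy, the fallback is to use that $\Td(T_f)$ and all node and section data are pulled back from the algebraic universal family over $\fms$, where the identities of Lemma~\ref{lem:reltan} hold. In that case only the factor $\ev^*\ch(V)$ is non-algebraic, and it is a genuine cohomology class on the total space. This would reduce Step 2 to a statement purely about cohomological push-forward along $\ofd_{0,n+2,d}\to\ofd_{0,n+1,d}$, where functoriality and base change under transverse pull-back are standard.
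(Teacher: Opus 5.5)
The paper does not prove this lemma; it imports it from \cite[Appendix B]{Coa03}. So your sketch has to stand on its own, and as written there is a genuine gap: the load-bearing Step 1 fails, and the step that actually produces $-\ev_\Delta^*\ch(V)$ is deferred rather than carried out.

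\textbf{The set-up of Step 1.} First, the fiber is misidentified. Over $\nds\subset\ofd_{0,n+1,d}$ the $(n+1)$-st point sits on a contracted three-pointed bubble. So pulling back $\ofd_{0,n+2,d}\to\ofd_{0,n+1,d}$ along $\iota\circ g$ gives three components and two distinguished nodes, not two components glued at one node. The clean fix is to apply the preceding lemma $f^*\ch(V_{0,n,d})=\ch(V_{0,n+1,d})$. This rewrites the left side as $h^*\ch(V_{0,n,d})$ with $h:=f\circ\iota\circ g:\mathcal B\to\ofd_{0,n,d}$, where $\mathcal B$ is the source of $g$. Over $\mathcal B$ the universal curve $\mathcal C:=\ofd_{0,n+1,d}$ pulls back to $\mathcal C_+\cup\mathcal C_-$ glued along the gluing sections $s_\pm$, with $S_\pm=s_\pm(\mathcal B)$ and $f_\pm:\mathcal C_\pm\to\mathcal B$.

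Second, and more seriously, this square is not transverse. Near the node, $f$ has the local form $(x,y,b)\mapsto(t,b)=(xy,b)$ with $h(\mathcal B)=\{t=0\}$. At $x=y=0$ the image of $df$ misses exactly $\partial_t$, the normal direction to $h(\mathcal B)$.
\begin{itemize}
\item The Atiyah--Hirzebruch push-forward $\tilde f_*$ on the singular fiber product is therefore undefined.
\item Your fallback fails for the same reason: the non-transversality lives in the algebraic family over $\fms$, not in the factor $\ev^*\ch(V)$.
\item If you read $\tilde f_*$ in $K$-theory as the sum over components, you would get $h^*V_{0,n,d}=p_+^*V_{0,n_++1,d_+}+p_-^*V_{0,n_-+1,d_-}$. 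This contradicts the lemma by exactly $\ev_\Delta^*V$, so the correction term is created by this very failure of transversality.
\end{itemize}
What does hold, and suffices because the lemma only concerns $\ch$, is the cohomological identity $h^*f_*\beta=f_{+*}(\beta|_{\mathcal C_+})+f_{-*}(\beta|_{\mathcal C_-})$. It holds because $f^{-1}(h(\mathcal B))$ has the expected dimension and $f$ is a submersion at the generic point of each component.

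\textbf{The missing computation in Step 2.} With the geometry corrected, the whole content of the lemma sits in comparing $\Td(T_f)|_{\mathcal C_\pm}$ with $\Td(T_{f_\pm})$. You only assert this (``the codim-0 and codim-1 terms split'') and defer the rest (``I will check this''). The distinguished node term cannot be restricted naively. The nodal locus meets $\mathcal C_+$ along $S_+$ in excess dimension, with excess line $L_-^\vee$ (and $L_+^\vee$ on $\mathcal C_-$). Note also that $L_+^\vee\otimes L_-^\vee$ is the normal bundle $N_h$ of $h$, whereas the nodal locus has normal bundle $L_+^\vee\oplus L_-^\vee$.

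An efficient way to organise the computation is as follows. Along $\mathcal C_+=\{y=0\}$ one has $df(\partial_y)=x\,\partial_t$, so $N_{\mathcal C_+/\mathcal C}\cong f_+^*N_h(-S_+)$. Hence
\[T_f|_{\mathcal C_+}=T_{f_+}-(s_+)_*N_h\quad\text{in }K(\mathcal C_+).\]
Feed this into GRR for $f$ and for $f_\pm$; the latter holds because $f_\pm$ is pulled back along the submersions $p_\pm$. Use $f_+\circ s_+=\id$, $s_+^*T_{f_+}=L_+^\vee$, $\psi_\pm=c_1(L_\pm)$ and $\Td(L)=c_1(L)/(1-e^{-c_1(L)})$. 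Then the $\mathcal C_+$-side contributes
\[\ev_\Delta^*\ch(V)\cdot\frac{1}{1-e^{\psi_+}}\Bigl(\frac{\psi_-\,(1-e^{\psi_++\psi_-})}{(\psi_++\psi_-)(1-e^{\psi_-})}-1\Bigr)\]
to $h^*\ch(V_{0,n,d})-\sum_\pm p_\pm^*\ch(V_{0,n_\pm+1,d_\pm})$. The $\mathcal C_-$-side contributes the symmetric expression. The two add up to $-\ev_\Delta^*\ch(V)$ because
\[\frac{1-e^{a+b}}{(1-e^a)(1-e^b)}-\frac{1}{1-e^a}-\frac{1}{1-e^b}=-1.\]

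With these repairs your GRR route becomes a legitimate alternative to the family normalization sequence. It needs only a cohomological base change across the nodes rather than a $K$-theoretic one. The price is this local Todd-class computation, which is the actual substance of the lemma and which your proposal leaves undone.
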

Then the same arguments as in \cite[Theorem 1.6.4]{Coa03} deduces the quantum Riemann--Roch theorem, or more precisely, the first statement in Theorem \ref{thm:qrr}. To be precise, Coates works with the full potential encoding twisted Gromov--Witten invariants of all genera, which is viewed as a quantized version of the genus zero potential. The key computation is that of the partial derivatives of the potential with respect to the variables $s_i$. For our purposes, carrying out the same computation with the genus zero potential yields the genus zero quantum Riemann--Roch theorem; there is no need to go through higher genus invariants.

The second, slightly more general statement in Theorem \ref{thm:qrr} follows since the operators $\Delta_\alpha$ associated to the different eigenbundles mutually commute (see \cite{Ton14}).

\bibliographystyle{amsalpha}
\bibliography{quartic_irrat.bib}

\end{document}